\definecolor{webgreen}{rgb}{0,.4,0}
\definecolor{webbrown}{rgb}{.4,0,0}
\newtheorem{Thm}{Theorem}[section]
\newtheorem{thm}{Theorem}
\newtheorem{Def}[Thm]{Definition}
\newtheorem{Lm}[Thm]{Lemma}
\newtheorem{Prop}[Thm]{Proposition}
\newtheorem{Cor}[Thm]{Corollary}
\newtheorem{Cnj}[Thm]{Conjecture}
\newtheorem*{convention}{Convention}
\theoremstyle{definition}
\newtheorem*{ack}{Acknowledgements}
\theoremstyle{remark}
\newtheorem{Rem}[Thm]{Remark}
\newtheorem{Exp}[Thm]{Example}
\numberwithin{equation}{section}
\newcommand{\DDot}[1]{\ddot{\rm #1}}
\newcommand{\DDDot}[1]{\dddot{\rm #1}}
\newcommand{\fH}[1]{\text{\rm \H{#1}}}
\newcommand{\eala}[1]{\mathsf{#1}}
\newcommand{\tquest}{??}
\def\<{\langle}
\def\>{\rangle}
\DeclareMathOperator{\GL}{GL}
\DeclareMathOperator{\SL}{SL}
\DeclareMathOperator{\Aut}{Aut}
\DeclareMathOperator{\Out}{Out}
\DeclareMathOperator{\Hom}{Hom}
\DeclareMathOperator{\stab}{stab}
\DeclareMathOperator{\Real}{Re}
\DeclareMathOperator{\Imag}{Im}
\DeclareMathOperator{\ord}{ord}
\DeclareMathOperator{\rank}{rank}
\def\Re{\mathbb R}
\def\Rat{\mathbb Q}
\def\F{\mathbb F}
\def\Co{\mathbb C}
\def\Z{\mathbb Z}
\def\a{\alpha}
\def\b{\beta}
\def\d{\delta}
\def\th{\theta}  
\def\thp{\tilde{\th}}  
\def\Thetap{\widetilde{\Theta}} 
\def\varThetap{\widetilde{\varTheta}} 
\def\ph{\varphi} 
\def\php{\tilde{\ph}} 
\def\Phip{\widetilde{\Phi}} 
\def\varPhip{\widetilde{\varPhi}} 
\def\eps{\varepsilon}
\def\l{\lambda}
\def\L{\Lambda}
\def\isP{\rotatebox[origin=c]{180}{$\Psi$}}
\def\varisP{\rotatebox[origin=c]{180}{$\varPsi$}}
\def\A{\mathcal A} 
\def\Atilde{\widetilde{\A}} 
\def\Htilde{\widetilde{\mathcal H}} 
\def\B{\mathbf B} 
\def\Cox{\mathbf C} 
\def\C{\mathcal C} 
\def\T{\mathcal T} 
\def\bH{\mathbf H} 
\def\O{\mathcal O} 
\def\Q{\mathcal Q} 
\def\P{\mathcal P} 
\def\Gaff{\mathfrak g} 
\def\G{\mathring{\mathfrak g}} 
\def\Haff{\mathfrak h} 
\def\Hring{\mathring {\mathfrak h}} 
\def\mfa{\mathfrak{a}} 
\def\mfb{\mathfrak{b}} 
\def\mfc{\mathfrak{c}} 
\def\mfe{\mathfrak{e}} 
\def\mfs{\mathfrak{s}} 
\def\mfr{\mathfrak{r}} 
\def\mfu{\mathfrak{u}} 
\def\mfv{\mathfrak{v}} 
\def\Aring{\mathring A} 
\def\W{\mathring W} 
\def\Wdaff{\widetilde{W}} 
\def\w{\mathring w} 
\def\Rring{\mathring R} 
\def\Pring{\mathring P} 
\def\Qring{\mathring Q} 
\def\wGamma{\widetilde{\Gamma}_1} 
\def\wXi{\widetilde{\Xi}_1} 
\def\wUpsilon{\widetilde{\Upsilon}_1} 
\def\Ti{\varTheta_0} 
\def\Tiii{\varPhi_0} 
\begin{document}
\title[]
{Double affine Hecke algebras and congruence groups}
\author[]{Bogdan Ion}
\address{Department of Mathematics, University of Pittsburgh, Pittsburgh, PA 15260}
\email{bion@pitt.edu}
\author[]{Siddhartha Sahi}
\address{Department of Mathematics, Rutgers University, New Brunswick, NJ 08903}
\email{sahi@math.rutgers.edu}
\thanks{Work partially supported by Simons Foundation grants 420882 (B.I.) and 509766 (S.S.) and by the National Science Foundation grant DMS-162350 (S.S.).}
\date{October 10, 2017}%
\subjclass[2010]{20C08, 17B67, 20F36, 20F34}
\begin{abstract}
The most general construction of double affine Artin groups (DAAG) and Hecke algebras (DAHA) associates such objects to pairs of compatible reductive group data. We show that DAAG/DAHA \emph{always} admit a faithful action by automorphisms of a finite index subgroup of the Artin group of type $A_{2}$, which descends to a faithful outer action of a congruence subgroup of $\SL(2,\mathbb{Z})$ or $\mathrm{PSL}(2,\mathbb{Z})$. This was previously known only in some special cases and, to the best of our knowledge, not even conjectured to hold in full generality.

It turns out that the structural intricacies of DAAG/DAHA are captured by the underlying \emph{semisimple} data and, to a large extent, even by \emph{adjoint} data; we prove our main result by reduction to the adjoint
case. Adjoint DAAG/DAHA correspond in a natural way to affine Lie algebras, or more precisely to their affinized Weyl groups, which are the semi-direct products $W\ltimes Q^{\vee}$ of the Weyl group $W$ with the coroot lattice
$Q^{\vee}$. They were defined topologically by van der Lek, and independently, algebraically, by Cherednik. We now describe our results for the adjoint case in greater detail.

We first give a new Coxeter-type presentation for adjoint DAAG as quotients of the Coxeter braid groups associated to certain crystallographic diagrams that we call double affine Coxeter diagrams. As a consequence we show that the rank two Artin groups of type $A_{2},B_{2},G_{2}$ act by automorphisms on the adjoint DAAG/DAHA associated to affine Lie algebras of twist number $r=1,2,3$, respectively. This extends a fundamental result of Cherednik for $r=1$.

We show further that the above rank two Artin group action descends to an outer action of the congruence subgroup $\Gamma_{1}(r)$. In particular, $\Gamma_{1}( r) $ acts naturally on the set of isomorphism classes of representations of an adjoint DAAG/DAHA of twist number $r$, giving rise to a projective representation of $\Gamma_{1}( r) $ on the space of a $\Gamma_{1}( r) $-stable representation. We also provide a classification of the involutions of Kazhdan-Lusztig type that appear in the context of these actions. 
\end{abstract}

\maketitle
\newpage
\tableofcontents

\section{Introduction}

\subsection{}\label{sec: 1.1}  Over the past two decades, double affine Hecke algebras have had a significant impact on an impressive number of mathematical fields, from mathematical physics, special functions and combinatorics, to topology, geometry, representation theory and harmonic analysis. They are objects of Lie theoretic nature, the most general construction being attached to a pair of compatible reductive group data. However, as we explain in \S\ref{sec: reductive-DABG}, \S\ref{sec: reductiveHecke}, they are isomorphic to the almost-direct product of the objects attached to the semisimple and the central part of the data. This reduces the study of their structure to the objects attached to semisimple data.

As part of the double reductive group data one must specify a pair $R_1$, $R_2$ of finite root systems with the same underlying Coxeter diagram. There are two possibilities: $R_1$ and $R_2$ are of the same Dynkin type $X_n$ or of dual (but not equal) Dynkin type, say $X_n$ and $X_n^\vee$, respectively. We call the former double reductive data \emph{untwisted} double data of type $X_n$ (or simply double reductive data of type $X_n^u$) and the latter \emph{twisted} double data of type $X_n$ (or simply double reductive data of type $X_n^t$). There are finitely many double semisimple data for fixed $R_1$ and $R_2$ and this finite set can be partially ordered in a natural fashion. Mirroring the terminology for semisimple Lie groups, we call the largest double semisimple data \emph{simply connected} and the smallest \emph{adjoint}.
 
One of the most profound results in the theory is the emergence of the modular group as a group of outer automorphisms for the double affine Hecke algebras associated to \emph{untwisted simply connected double data} \cite{CheMac}*{Theorem 4.3} or \emph{untwisted adjoint double data} \cite{ISTri}*{\S4.2} (the two results overlap in type $E_8, F_4$ and $G_2$)\footnote{Our use of the words \emph{twisted} and \emph{untwisted} is the same as that in \cites{CheMac} and dual to the one on \cites{ISTri}.}. There is an overtone in this construction: a central extension of the modular group, more precisely, the braid group of type $A_2$, also known as the braid group on three strands, acts as a group of automorphisms and this action descents to the outer action of the modular group. The importance of this phenomenon can be hardly overstated although its implications have not yet been fully explored. To only briefly allude to its role let us mention that Cherednik's difference Fourier transform is given by the action of the element $\left[\begin{smallmatrix}0 & 1\\ -1 & 0\end{smallmatrix} \right]$ of the modular group, and this has played the key role in his solution of Macdonald's evaluation-duality conjecture \cite{CheMac} as well as in subsequent developments in the harmonic analysis in the context of double affine Hecke algebras (e.g. \cite{CheDou-3}). Other important applications are concerned with Rogers-Ramanujan type identities attached to affine root systems \cite{CFRog} and the topology of torus knots \cites{CheJon, GNRef}.

One of our main results is that \emph{all} double affine Hecke algebras have a \emph{congruence group} of outer automorphisms that in fact arises from  a central extension of the congruence group acting as automorphisms. Aside from the two situations mentioned above, this result is new. We emphasize that even among untwisted semisimple data there are many examples of data that are neither simply connected nor adjoint. Furthermore, \emph{all} double affine Hecke algebras arise naturally as Hecke algebras associated to (extended) affine Kac-Moody groups over local fields \cite{BGR}*{\S 7.9}.

Nevertheless, as it turns out, the crucial case that needs to be considered is that of adjoint data. We focus our discussion here and for most of the paper on the objects attached to adjoint data which we simply refer to as double affine Hecke algebras and, in essence, to the underlying groups which we refer to as double affine Artin groups. These objects correspond in a natural way to (and canonically arise from) \emph{affine Lie algebras} such that untwisted adjoint double data correspond to untwisted affine Lie algebras and twisted adjoint double data correspond to twisted affine Lie algebras. The congruence groups and their central extensions associated to each double affine Artin group can be more efficiently indicated if we use the labeling of double affine Artin groups by affine Lie algebras. The labelling convention that we adopt in this paper is identical to the one in \cites{CheDou-2, CheDou, CheMac} and dual to the one in \cites{CheDou-1, MacAff, HaiChe, IonNon, IonInv, ISTri, IonSta, StoKoo-2}.  

We prove that, if $\Gaff$ is an affine Lie algebra listed in table Aff $r$  in \cite{KacInf}*{pg. 54-55} ($r=1,2,3$) then the corresponding double affine Artin group and Hecke algebra admits an action as automorphisms of the Coxeter braid group of type $A_2$ (for $r=1$), $B_2$ (for $r=2$), or $G_2$ (for $r=3$). These actions give rise to actions as outer automorphisms for the congruence groups $\Gamma_1(r)$, where  $\Gamma_1(1)=\rm{SL}(2,\Z)$  and, for $r=2,3$,
$$\Gamma_{1}(r) =\left\{ \begin{bmatrix}a&b\\ c&d \end{bmatrix}\in \SL(2,\Z)~\Bigg\vert~  \begin{bmatrix}a&b\\ c&d \end{bmatrix}= \begin{bmatrix}1& * \\ 0&1 \end{bmatrix} (\text{mod}~r)\right\}.$$
The fact that the Coxeter braid groups of type $A_2$, $B_2$, and $G_2$ are central extensions of the congruence groups $\Gamma_1(r)$, for $r=1,2,3$, respectively,  seems to be new for the last two cases. The results of \cite{ISTri}*{\S4.2} correspond here to the case $r=1$. As we will explain later on, one can construct natural projective representations of these congruence groups that arise from simple $\Gamma$-stable representations of double affine Artin groups. We also provide the classification of the involutions of Kazhdan-Lusztig type, which we call basic involutions, that appear in the context of the congruence group actions. 

The objects attached to simply connected semisimple data are the so-called \emph{extended} double affine Artin groups and Hecke algebras, which are in fact the objects that are most often considered in the literature.  We prove that the above braid group and congruence group actions are also valid for the extended double affine Artin groups and Hecke algebras. The results of \cite{CheMac}*{Theorem 4.3} correspond here to the case $r=1$. Finally, we show that the braid group and congruence group actions are valid for the objects attached to arbitrary reductive data, after potentially restricting to a finite index subgroup. 

The crucial role in our proof is played by a new Coxeter-type presentation for double affine Artin groups, as quotients of the Coxeter braid groups associated to certain crystallographic, non-positive-semi-definite diagrams that we call double affine Coxeter diagrams. Since the double affine Weyl groups are not Coxeter groups (see e.g. Appendix \ref{sec: nonCoxeter}) these quotients are, of course, proper but we are able to identify the relations in the kernel explicitly. As  an immediate consequence we reveal that there are redundancies \emph{and} omissions on the list of double affine Artin groups considered in the literature (e.g. in \cite{MacAff}). Furthermore, we obtain a complete list of double affine Hecke algebras, identifying in particular the generic Hecke algebras  associated to the affine root systems $A_{2n}^{(2)}$, $(BC_n, C_n)$, and $(C_n^\vee, BC_n)$ which were not considered before in the literature. Those associated to the affine root systems $(B_n, B_n^\vee)$ and  $(C_2, C_2^\vee)$ have only recently been considered in \cite{StoKoo-2}. We also indicate a conjectural presentation of the extended double affine Artin groups that is akin to our Coxeter-type presentation of the double affine Artin groups. 

In \cite{ISEALA} we have established a connection between $\eala{2}$-extended affine Lie algebras ($\eala{2}$-EALAs) and double affine Artin groups (and therefore double affine Hecke algebras). More precisely, the Artin groups associated to $\eala{2}$-EALAs are (up to isomorphism) precisely the double affine Artin groups. The congruence group  (but not the braid group) actions on double affine Artin groups canonically arise in the context of Artin groups associated to $\eala{2}$-EALAs.

In what follows we describe our results in more detail.

  
\subsection{Main results}

We emphasize that aside from the theory of affine Lie algebras, affine root systems also appear as relative root systems in the theory of reductive $p$-adic groups. A classification of all these root systems is due to Macdonald \cite{MacAff-1}. It turns out that the reduced affine root systems that appear in this context are precisely the root systems of affine Lie algebras, but there are also five infinite families of nonreduced affine root systems. The full list can be consulted in \cite{MacAff-1} and \cite{MacAff}*{\S1.3}. As we point out in \S\ref{sec: nonreduced}, the definition of a double affine Artin group depends only on the set of non-multipliable roots in the affine root system in question and this is always a reduced affine root system. Therefore, as far as double affine Artin groups are concerned  it is enough to consider reduced affine root systems. The double affine Hecke algebras that correspond to affine root systems that have the same set of non-multipliable roots are quotients of the group algebra of the same double affine Artin group modulo ideals generated by quadratic relations. The distinguishing feature is the specialization of the parameters that appear in the quadratic relations, which reflect the orbit structure of the affine root system under the action of the affine Weyl group.

Our first main result gives a Coxeter-type presentation for double affine Artin groups which is probably of independent interest. As we already mentioned, the double affine Artin groups are not Coxeter braid groups. However, we are able to give a presentation for double affine Artin groups as quotients of the Coxeter braid groups  associated to the Coxeter diagrams that we will describe in what follows. The double affine Weyl groups will admit, of course, a corresponding presentation as quotients of the Coxeter groups associated to the same diagrams.

For the purpose of this paper, we will use the term (crystallographic) Coxeter diagram to refer to a graph with at most four edges between any pair of nodes and with some nodes marked with a dot  as in Figure \ref{marked}. The marked nodes will be called affine nodes, whereas the unmarked nodes will be called finite nodes. 

\begin{figure}[ht]
\centering
\caption{Marked nodes}\label{marked}
\begin{tikzpicture}[scale=.4]
	\draw[thick] (0,0) circle [radius=3mm];
	\draw[thick, fill] (0,0) circle [radius=.8mm];
	\draw (0,-2) node[anchor=north]  {Affine node};
	\draw[xshift=6cm, thick] (0,0) circle [radius=3mm];
        \draw[xshift=6cm, thick,fill] (90:1.4mm) circle [radius=1mm];
        \draw[xshift=6cm, thick,fill] (-90:1.4mm) circle [radius=1mm];
        \draw[xshift=9cm, thick] (0,0) circle [radius=3mm];
	\draw[xshift=9cm, thick] (0: 2) circle [radius=3mm];
	\draw[xshift=9cm, thick, fill] (0,0) circle [radius=.8mm];
	\draw[xshift=9cm, thick, fill] (0: 2) circle [radius=.8mm];
	\draw[xshift=9cm, thick] (90: .3) -- +(0: 2);
	\draw[xshift=9cm, thick] (20: .3) -- +(0:1.43); 
	\draw[xshift=9cm, thick] (-20: .3) -- +(0:1.43); 
	\draw[xshift=9cm, thick] (-90: .3) -- +(0: 2); 
	\draw (8.5,-2) node[anchor=north]  {Double node};
	\draw (6.75,0) node[anchor=west]  {$\approx$};
	\draw[xshift=17cm, thick] (0,0) circle [radius=3mm];
	\draw[xshift=17cm, thick,fill] (0:1.4mm) circle [radius=.8mm];
        \draw[xshift=17cm, thick,fill] (120:1.4mm) circle [radius=.8mm];
        \draw[xshift=17cm, thick,fill] (-120:1.4mm) circle [radius=.8mm];
	\draw[xshift=20cm, thick] (0,0) circle [radius=3mm];
	\draw[xshift=20cm, thick] (30:2) circle [radius=3mm];
	\draw[xshift=20cm, thick] (-30:2) circle [radius=3mm]; 
	\draw[xshift=20cm, thick, fill] (0,0) circle [radius=.8mm];
	\draw[xshift=20cm, thick, fill] (30:2) circle [radius=.8mm];
	\draw[xshift=20cm, thick, fill] (-30:2) circle [radius=.8mm]; 
	\draw[xshift=20cm] (60: .3) -- +(30:1.49); 
	\draw[xshift=20cm] (40: .3) -- +(30:1.42); 
	\draw[xshift=20cm] (20: .3) -- +(30:1.42); 
	\draw[xshift=20cm] (0: .3) -- +(30:1.49);
	\draw[xshift=20cm] (0: .3) -- +(-30:1.49);
	\draw[xshift=20cm] (-20: .3) -- +(-30:1.42); 
	\draw[xshift=20cm] (-40: .3) -- +(-30:1.42); 
	\draw[xshift=20cm] (-60: .3) -- +(-30:1.49); 
	\draw[xshift=20cm] (-30:2)++(120: .3) -- ++(90:1.49); 
	\draw[xshift=20cm] (-30:2)++(100: .3) -- ++(90:1.42); 
	\draw[xshift=20cm] (-30:2)++(80: .3) -- ++(90:1.42); 
	\draw[xshift=20cm] (-30:2)++(60: .3) -- ++(90:1.49);  
	\draw (19.5,-2) node[anchor=north]  {Triple node};
	\draw (17.75,0) node[anchor=west]  {$\approx$};
\end{tikzpicture}
\end{figure}

We consider a particular set of Coxeter diagrams that we call double affine Coxeter diagrams. Each of these diagrams has either two or three affine nodes. When the two or three affine nodes of a diagram  are pairwise connected with four edges, and each connects to the finite part of diagram in the same fashion, we \emph{abbreviate} the diagram by replacing the two or three affine nodes by  a \emph{double node} and, respectively, a \emph{triple node}, which connects to the finite part of the diagram in the same manner. A double or  triple node is depicted as a node marked with two or three dots as in Figure \ref{marked}. In Figure \ref{doublenodediag} and Figure \ref{triplenodediag} we include examples of diagrams that use this abbreviation.

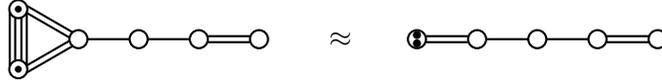
\begin{figure}[ht]
\centering
\caption{Double node abbreviation}\label{doublenodediag}
\bigskip
\begin{tikzpicture}[scale=.4]
   \foreach \x in {1,...,4}
   	\draw[xshift=\x cm,thick] (13.25cm+\x cm,0) circle [radius=3mm];
    	\draw[xshift=13.25cm, thick] (0,0) circle [radius=3mm];
    	\draw[xshift=13.25cm, thick,fill] (90:1.4mm) circle [radius=1mm];
        \draw[xshift=13.25cm, thick,fill] (-90:1.4mm) circle [radius=1mm];
    \foreach \y in {1.15, 2.15}
    	\draw[xshift=13.25cm+\y cm,thick] (\y cm,0) -- +(1.4 cm,0);
    	\draw[xshift=13.25cm, thick] (6.3 cm, .1 cm) -- +(1.4 cm,0);
    	\draw[xshift=13.25cm, thick] (6.3 cm, -.1 cm) -- +(1.4 cm,0);
    	\draw[xshift=13.25cm, thick] (.3 cm, 1mm) -- +(1.4 cm,0);
    	\draw[xshift=13.25cm, thick] (.3 cm, -1mm) -- +(1.4 cm,0);
	
	\draw (10,0) node[anchor=west]  {$\approx$};
  
	\draw[thick] (-90: 1) circle [radius=3mm];
	\draw[thick] (90: 1) circle [radius=3mm];
	\draw[thick, fill] (-90: 1) circle [radius=.8mm];
	\draw[thick, fill] (90: 1) circle [radius=.8mm];
	\draw[thick] (-90: 1)++(20: .3) -- +(90: 1.8);
	\draw[thick] (-90: 1)++(70: .3) -- +(90:1.43); 
	\draw[thick] (-90: 1)++(110: .3)-- +(90:1.43); 
	\draw[thick] (-90: 1)++(160: .3) -- +(90: 1.8); 
	   \foreach \x in {1,...,4}
   	\draw[xshift=\x cm,thick] (\x cm,0) circle [radius=3mm];
    \foreach \y in {1.15, 2.15}
    	\draw[xshift=\y cm,thick] (\y cm,0) -- +(1.4 cm,0);
    	\draw[thick] (6.3 cm, .1 cm) -- +(1.4 cm,0);
    	\draw[thick] (6.3 cm, -.1 cm) -- +(1.4 cm,0);
	\draw[xshift=2cm, thick] (130: .3) -- +(150:1.77); 
	\draw[xshift=2cm, thick] (170: .3) -- +(150:1.66); 
	\draw[xshift=2cm, thick] (190: .3) -- +(210:1.66); 
	\draw[xshift=2cm, thick] (230: .3) -- +(210: 1.77);
\end{tikzpicture}

\end{figure}

\begin{figure}[ht]
\centering
\caption{Triple node abbreviation}\label{triplenodediag}
\bigskip
\begin{tikzpicture}[scale=.4]
	 \foreach \x in {-2.45}
	\draw[xshift=\x cm, thick, white] (\x cm,0) circle [radius=3mm];
	\draw[thick] (0,0) circle [radius=3mm];
	\draw[thick] (150:2) circle [radius=3mm];
	\draw[thick] (-150:2) circle [radius=3mm]; 
	\draw[thick, fill] (0,0) circle [radius=.8mm];
	\draw[thick, fill] (150:2) circle [radius=.8mm];
	\draw[thick, fill] (-150:2) circle [radius=.8mm]; 
	\draw (120: .3) -- +(150:1.49); 
	\draw (140: .3) -- +(150:1.42); 
	\draw (160: .3) -- +(150:1.42); 
	\draw (180: .3) -- +(150:1.49);
	\draw (180: .3) -- +(-150:1.49);
	\draw (-160: .3) -- +(-150:1.42); 
	\draw (-140: .3) -- +(-150:1.42); 
	\draw (-120: .3) -- +(-150:1.49); 
	\draw (-150:2)++(120: .3) -- ++(90:1.49); 
	\draw (-150:2)++(100: .3) -- ++(90:1.42); 
	\draw (-150:2)++(80: .3) -- ++(90:1.42); 
	\draw (-150:2)++(60: .3) -- ++(90:1.49);  
 \foreach \x in {1,...,4}
    	\draw[thick,xshift=\x cm] (\x cm,0) circle [radius=3mm];
        \foreach \y in {1,3}
    	\draw[thick,xshift=\y cm] (\y cm,0) ++(.3 cm, 0) -- +(14 mm,0);
	\draw (0,0) ++(.3 cm, 0) -- +(14 mm,0);
    	\draw[thick] (4.3 cm, 1mm) -- +(1.4 cm,0);
    	\draw[thick] (4.3 cm, -1mm) -- +(1.4 cm,0);
	\draw (-150:2)++(-15: .3) -- ++(17.4: 3.32); 
	\draw (150:2)++(15: .3) -- ++(-17.4: 3.32); 
	
\draw (10,0) node[anchor=west]  {$\approx$};

\foreach \x in {1,...,4}
   	\draw[thick,xshift=13.25cm+ \x cm] (\x cm,0) circle [radius=3mm];
    	\draw[xshift=13.25cm,thick] (0,0) circle [radius=3mm];
        \draw[xshift=13.25cm, thick,fill] (0:1.4mm) circle [radius=.8mm];
        \draw[xshift=13.25cm, thick,fill] (120:1.4mm) circle [radius=.8mm];
        \draw[xshift=13.25cm, thick,fill] (-120:1.4mm) circle [radius=.8mm];
  	\foreach \x in {5,...,7}
    	\draw[white,xshift=13.25cm+\x cm] (\x cm,0) circle [radius=3mm];
        \foreach \y in {0,1,3}
    	\draw[thick,xshift=13.25cm + \y cm] (\y cm,0) ++(.3 cm, 0) -- +(14 mm,0);
    	\draw[thick, xshift=13.25cm] (4.3 cm, 1mm) -- +(1.4 cm,0);
    	\draw[thick, xshift=13.25cm] (4.3 cm, -1mm) -- +(1.4 cm,0);
\end{tikzpicture}

\end{figure}
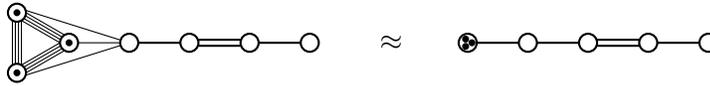

There are three kinds of double affine Coxeter diagrams, listed in Figure \ref{dddot-diagrams}, Figure \ref{ddot-diagrams}, and Figure \ref{fH-diagrams}, which we refer to as \emph{triple dot}, \emph{double dot} and \emph{umlaut} diagrams, respectively.  The umlaut diagrams have two affine nodes, while the double dot and triple dot  diagrams have a double and triple node respectively.  By convention, $\DDDot{C}_1$ will refer to the diagram $\DDDot{A}_1$ and $\DDot{C}_1$ will refer to the diagram $\DDot{A}_1$. 

We emphasize that in any cycle of a double Coxeter diagram there is an even number of double edges and an even number of triple edges. Therefore, the diagrams satisfy the hypothesis of \cite{HumRef}*{Proposition 6.6} and the associated Coxeter groups are crystallographic with respect to their reflection representation.

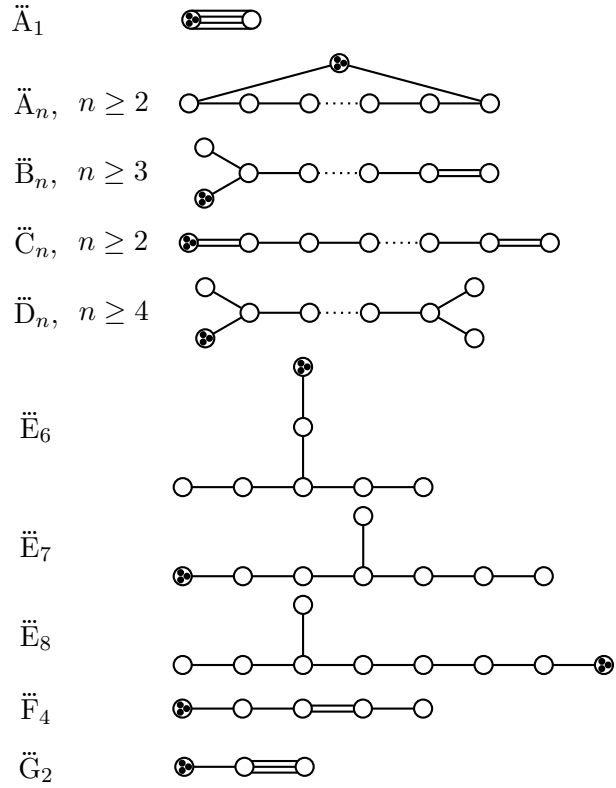
\begin{figure}[ht]
\caption{Triple dot diagrams}\label{dddot-diagrams}
\begin{center}
  \begin{tikzpicture}[scale=.4]
    \draw (-4.5,0) node[anchor=east]  {$\DDDot{A}_1$};
    \draw[xshift=2 cm,thick] (0,0) circle [radius=3mm];
    \draw[thick] (0,0) circle [radius=3mm];
    	\draw[thick,fill] (0:1.4mm) circle [radius=.8mm];
        \draw[thick,fill] (120:1.4mm) circle [radius=.8mm];
        \draw[thick,fill] (-120:1.4mm) circle [radius=.8mm];
     \foreach \x in {2,...,7}
    \draw[white,xshift=\x cm] (\x cm,0) circle [radius=3mm];
    \draw[thick] (90: .3) -- +(0: 2);
    \draw[thick] (-90: .3) -- +(0: 2); 
    \draw[thick] (20: .3) -- +(0:1.43); 
    \draw[thick] (-20: .3) -- +(0:1.43); 
  \end{tikzpicture}
\end{center}

\begin{center}
  \begin{tikzpicture}[scale=.4]
    \draw (-4,0) node[anchor=east]  {$\DDDot{A}_n,$};
    \draw (-1,0) node[anchor=east]  {$n\geq 2$};
    \foreach \x in {0,...,5}
    \draw[xshift=\x cm,thick] (\x cm,0) circle [radius=3mm];
     \foreach \x in {6,7}
    \draw[white,xshift=\x cm] (\x cm,0) circle [radius=3mm];
    \draw[xshift=0 cm,thick] (15: 51.75 mm) circle [radius=3mm];
    	\draw[shift=(15: 51.75 mm), thick,fill] (0:1.4mm) circle [radius=.8mm];
        \draw[shift=(15: 51.75 mm), thick,fill] (120:1.4mm) circle [radius=.8mm];
        \draw[shift=(15: 51.75 mm), thick,fill] (-120:1.4mm) circle [radius=.8mm];
    \draw[dotted,thick] (4.3 cm,0) -- +(1.4 cm,0);
    \foreach \y in {0.15,1.15,3.15, 4.15}
    \draw[xshift=\y cm,thick] (\y cm,0) -- +(1.4 cm,0);
    \draw[xshift=0 cm,thick] (15: 3 mm) -- (15: 48.75 mm);
    \draw[xshift=10 cm,thick] (165: 3 mm) -- (165: 48.75 mm);
  \end{tikzpicture}
\end{center}

\begin{center}
  \begin{tikzpicture}[scale=.4]
    \draw (-4,0) node[anchor=east]  {$\DDDot{B}_n,$};
    \draw (-1,0) node[anchor=east]  {$n\geq 3$};
    \draw[xshift=2 cm,thick] (150: 17 mm) circle [radius=3mm];
    \draw[xshift=2 cm,thick] (-150: 17 mm) circle [radius=3mm];
    	\draw[xshift=2 cm, thick,fill] (-150: 17 mm)+(0:1.4mm) circle [radius=.8mm];
        \draw[xshift=2 cm, thick,fill] (-150: 17 mm)+(120:1.4mm) circle [radius=.8mm];
        \draw[xshift=2 cm, thick,fill] (-150: 17 mm)+(-120:1.4mm) circle [radius=.8mm];
    \foreach \x in {1,...,5}
    \draw[xshift=\x cm,thick] (\x cm,0) circle [radius=3mm];
         \foreach \x in {6,7}
    \draw[white,xshift=\x cm] (\x cm,0) circle [radius=3mm];
    \foreach \y in {1.15,3.15}
    \draw[xshift=\y cm,thick] (\y cm,0) -- +(1.4 cm,0);
    \draw[xshift=2 cm,thick] (150: 3mm) -- +(150: 1.1 cm);
    \draw[xshift=2 cm,thick] (-150: 3mm) -- +(-150: 1.1 cm);
    \draw[dotted,thick] (4.3 cm,0) -- +(1.4 cm,0);
    \draw[thick] (8.3 cm, .1 cm) -- +(1.4 cm,0);
    \draw[thick] (8.3 cm, -.1 cm) -- +(1.4 cm,0);
  \end{tikzpicture}
\end{center}

\begin{center}
  \begin{tikzpicture}[scale=.4]
    \draw (-4,0) node[anchor=east]  {$\DDDot{C}_n,$};
    \draw (-1,0) node[anchor=east]  {$n\geq 2$};
    \foreach \x in {1,...,6}
    \draw[xshift=\x cm,thick] (\x cm,0) circle [radius=3mm];
         \foreach \x in {7}
    \draw[white,xshift=\x cm] (\x cm,0) circle [radius=3mm];
    \draw[thick] (0,0) circle [radius=3mm];
    	\draw[thick,fill] (0:1.4mm) circle [radius=.8mm];
        \draw[thick,fill] (120:1.4mm) circle [radius=.8mm];
        \draw[thick,fill] (-120:1.4mm) circle [radius=.8mm];
    \foreach \y in {1.15, 2.15, 4.15}
    \draw[xshift=\y cm,thick] (\y cm,0) -- +(1.4 cm,0);
    \draw[dotted,thick] (6.3 cm,0) -- +(1.4 cm,0);
    \draw[thick] (10.3 cm, .1 cm) -- +(1.4 cm,0);
    \draw[thick] (10.3 cm, -.1 cm) -- +(1.4 cm,0);
    \draw[thick] (.3 cm, 1mm) -- +(1.4 cm,0);
    \draw[thick] (.3 cm, -1mm) -- +(1.4 cm,0);
  \end{tikzpicture}
\end{center}

\begin{center}
  \begin{tikzpicture}[scale=.4]
    \draw (-4,0) node[anchor=east]  {$\DDDot{D}_n,$};
    \draw (-1,0) node[anchor=east]  {$n\geq 4$};
    \draw[xshift=2 cm,thick] (150: 17 mm) circle [radius=3mm];
    \draw[xshift=2 cm,thick] (-150: 17 mm) circle [radius=3mm];
    	\draw[xshift=2 cm,thick,fill] (-150: 17 mm)+(0:1.4mm) circle [radius=.8mm];
        \draw[xshift=2 cm,thick,fill] (-150: 17 mm)+(120:1.4mm) circle [radius=.8mm];
        \draw[xshift=2 cm,thick,fill] (-150: 17 mm)+(-120:1.4mm) circle [radius=.8mm];
    \foreach \x in {1,...,4}
    \draw[xshift=\x cm,thick] (\x cm,0) circle [radius=3mm];
    \draw[xshift=8 cm,thick] (30: 17 mm) circle [radius=3mm];
    \draw[xshift=8 cm,thick] (-30: 17 mm) circle [radius=3mm];
         \foreach \x in {6,7}
    \draw[white,xshift=\x cm] (\x cm,0) circle [radius=3mm];
    \draw[dotted,thick] (4.3 cm,0) -- +(1.4 cm,0);
    \draw[xshift=2 cm,thick] (150: 3mm) -- +(150: 1.1 cm);
    \draw[xshift=2 cm,thick] (-150: 3mm) -- +(-150: 1.1 cm);
    \foreach \y in {1.15,3.15}
    \draw[xshift=\y cm,thick] (\y cm,0) -- +(1.4 cm,0);
    \draw[xshift=8 cm,thick] (30: 3 mm) -- (30: 14 mm);
    \draw[xshift=8 cm,thick] (-30: 3 mm) -- (-30: 14 mm);
  \end{tikzpicture}
\end{center}

\begin{center}
  \begin{tikzpicture}[scale=.4]
    \draw (-4,2) node[anchor=east]  {$\DDDot{E}_6$};
    \foreach \x in {0,...,4}
    \draw[thick,xshift=\x cm] (\x cm,0) circle [radius=3mm];
    \draw[thick,xshift=4 cm, yshift=2 cm] (0,0) circle [radius=3mm];
    \draw[thick,xshift=4 cm, yshift=4 cm] (0,0) circle [radius=3mm];
        	\draw[xshift=4cm, yshift=4cm, thick,fill] (0:1.4mm) circle [radius=.8mm];
        \draw[xshift=4cm, yshift=4cm, thick,fill] (120:1.4mm) circle [radius=.8mm];
        \draw[xshift=4cm, yshift=4cm, thick,fill] (-120:1.4mm) circle [radius=.8mm];
         \foreach \x in {7}
    \draw[white,xshift=\x cm] (\x cm,0) circle [radius=3mm];
    \foreach \y in {0,...,3}
    \draw[thick,xshift=\y cm] (\y cm,0) ++(.3 cm, 0) -- +(14 mm,0);
    \foreach \t in {0,1}
    \draw[thick,xshift=4 cm, yshift=\t cm] (0, \t cm) ++(0, .3 cm) -- +(0,14 mm);
  \end{tikzpicture}
\end{center}

\begin{center}
  \begin{tikzpicture}[scale=.4]
    \draw (-4,1) node[anchor=east]  {$\DDDot{E}_7$};
    \foreach \x in {1,...,6}
    \draw[thick,xshift=\x cm] (\x cm,0) circle [radius=3mm];
    \draw[thick] (0,0) circle [radius=3mm];
        \draw[thick,fill] (0:1.4mm) circle [radius=.8mm];
        \draw[thick,fill] (120:1.4mm) circle [radius=.8mm];
        \draw[thick,fill] (-120:1.4mm) circle [radius=.8mm];
         \foreach \x in {7}
    \draw[white,xshift=\x cm] (\x cm,0) circle [radius=3mm];
    \foreach \y in {0,...,5}
    \draw[thick,xshift=\y cm] (\y cm,0) ++(.3 cm, 0) -- +(14 mm,0);
    \draw[thick] (6 cm,2 cm) circle [radius=3mm];
    \draw[thick] (6 cm, 3mm) -- +(0, 1.4 cm);
  \end{tikzpicture}
\end{center}

\begin{center}
  \begin{tikzpicture}[scale=.4]
    \draw (-4,1) node[anchor=east]  {$\DDDot{E}_8$};
    \foreach \x in {0,...,6}
    \draw[thick,xshift=\x cm] (\x cm,0) circle [radius=3mm];
    \draw[thick,xshift=14 cm] (0,0) circle [radius=3mm];
        \draw[xshift=14cm, thick,fill] (0:1.4mm) circle [radius=.8mm];
        \draw[xshift=14cm, thick,fill] (120:1.4mm) circle [radius=.8mm];
        \draw[xshift=14cm, thick,fill] (-120:1.4mm) circle [radius=.8mm];
    \foreach \y in {0,...,6}
    \draw[thick,xshift=\y cm] (\y cm,0) ++(.3 cm, 0) -- +(14 mm,0);
    \draw[thick] (4 cm,2 cm) circle [radius=3mm];
    \draw[thick] (4 cm, 3mm) -- +(0, 1.4 cm);
  \end{tikzpicture}
\end{center}

\begin{center}
  \begin{tikzpicture}[scale=.4]
    \draw (-4,0) node[anchor=east]  {$\DDDot{F}_4$};
        \foreach \x in {1,...,4}
    \draw[thick,xshift=\x cm] (\x cm,0) circle [radius=3mm];
    \draw[thick] (0,0) circle [radius=3mm];
        \draw[thick,fill] (0:1.4mm) circle [radius=.8mm];
        \draw[thick,fill] (120:1.4mm) circle [radius=.8mm];
        \draw[thick,fill] (-120:1.4mm) circle [radius=.8mm];
         \foreach \x in {5,...,7}
    \draw[white,xshift=\x cm] (\x cm,0) circle [radius=3mm];
        \foreach \y in {0,1,3}
    \draw[thick,xshift=\y cm] (\y cm,0) ++(.3 cm, 0) -- +(14 mm,0);
    \draw[thick] (4.3 cm, 1mm) -- +(1.4 cm,0);
    \draw[thick] (4.3 cm, -1mm) -- +(1.4 cm,0);
  \end{tikzpicture}
\end{center}

\begin{center}
  \begin{tikzpicture}[scale=.4]
    \draw (-4,0) node[anchor=east]  {$\DDDot{G}_2$};
            \foreach \x in {1,2}
    \draw[thick,xshift=\x cm] (\x cm,0) circle [radius=3mm];
    \draw[thick] (0,0) circle [radius=3mm];
         \draw[thick,fill] (0:1.4mm) circle [radius=.8mm];
        \draw[thick,fill] (120:1.4mm) circle [radius=.8mm];
        \draw[thick,fill] (-120:1.4mm) circle [radius=.8mm];
         \foreach \x in {4,...,7}
    \draw[white,xshift=\x cm] (\x cm,0) circle [radius=3mm];
            \foreach \y in {0,1}
    \draw[thick,xshift=\y cm] (\y cm,0) ++(.3 cm, 0) -- +(14 mm,0);
    \draw[xshift=2cm, thick] (40: 3mm) -- +(1.545 cm, 0);
    \draw[xshift=2cm, thick] (-40: 3 mm) -- +(1.545 cm, 0);
  \end{tikzpicture}
\end{center}
\end{figure}

\begin{figure}[ht]
\caption{Double dot diagrams}\label{ddot-diagrams}
\begin{center}
  \begin{tikzpicture}[scale=.4]
    \draw (-4,0) node[anchor=east]  {$\DDot{A}_1$};
    \draw[xshift=2 cm,thick] (0,0) circle [radius=3mm];
    \draw[thick] (0,0) circle [radius=3mm];
    	\draw[thick,fill] (90:1.4mm) circle [radius=1mm];
        \draw[thick,fill] (-90:1.4mm) circle [radius=1mm];
     \foreach \x in {2,...,7}
    \draw[white,xshift=\x cm] (\x cm,0) circle [radius=3mm];
    \draw[thick] (90: .3) -- +(0: 2);
    \draw[thick] (-90: .3) -- +(0: 2); 
    \draw[thick] (20: .3) -- +(0:1.43); 
    \draw[thick] (-20: .3) -- +(0:1.43); 
  \end{tikzpicture}
\end{center}

\begin{center}
  \begin{tikzpicture}[scale=.4]
    \draw (-4,0) node[anchor=east]  {$\DDot{C}_n,$};
    \draw (-1,0) node[anchor=east]  {$n\geq 2$};
    \foreach \x in {1,...,6}
    \draw[xshift=\x cm,thick] (\x cm,0) circle [radius=3mm];
         \foreach \x in {7}
    \draw[white,xshift=\x cm] (\x cm,0) circle [radius=3mm];
    \draw[thick] (0,0) circle [radius=3mm];
    	\draw[thick,fill] (90:1.4mm) circle [radius=1mm];
        \draw[thick,fill] (-90:1.4mm) circle [radius=1mm];
    \foreach \y in {1.15, 2.15, 4.15}
    \draw[xshift=\y cm,thick] (\y cm,0) -- +(1.4 cm,0);
    \draw[dotted,thick] (6.3 cm,0) -- +(1.4 cm,0);
    \draw[thick] (10.3 cm, .1 cm) -- +(1.4 cm,0);
    \draw[thick] (10.3 cm, -.1 cm) -- +(1.4 cm,0);
    \draw[thick] (.3 cm, 1mm) -- +(1.4 cm,0);
    \draw[thick] (.3 cm, -1mm) -- +(1.4 cm,0);
  \end{tikzpicture}
\end{center}
\end{figure}
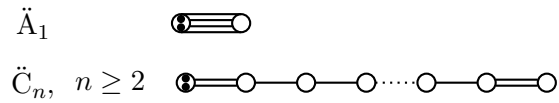

\begin{figure}[ht]
\caption{Umlaut diagrams}\label{fH-diagrams}
\begin{center}
  \begin{tikzpicture}[scale=.4]
    \draw (-5,0) node[anchor=east]  {$\fH{B}_n/\fH{C}_n,$};
    \draw (-2,0) node[anchor=east]  {$n\geq 3$};
    \draw[thick] (45: 14.15 mm) circle [radius=3mm];
    \draw[thick] (-45: 14.15 mm) circle [radius=3mm];
    \draw[thick, fill] (0,0) circle [radius=.8mm];
    \draw[thick, fill] (-45: 14.15 mm) circle [radius=.8mm];
    \foreach \x in {0,...,5}
    \draw[xshift=\x cm,thick] (\x cm,0) circle [radius=3mm];
         \foreach \x in {6,7}
    \draw[white,xshift=\x cm] (\x cm,0) circle [radius=3mm];
    \foreach \y in {1.15,3.15}
    \draw[xshift=\y cm,thick] (\y cm,0) -- +(1.4 cm,0);
    \draw[xshift=2 cm,thick] (135: 3mm) -- +(135: .815 cm);
    \draw[xshift=2 cm,thick] (-135: 3mm) -- +(-135: .815 cm);
    \draw[thick] (65: 3mm) -- +(45: .85 cm);
    \draw[thick] (25: 3mm) -- +(45: .85 cm);
    \draw[thick] (-25: 3mm) -- +(-45: .85 cm);
    \draw[thick] (-65: 3mm) -- +(-45: .85 cm);
    \draw[dotted,thick] (4.3 cm,0) -- +(1.4 cm,0);
    \draw[thick] (8.3 cm, .1 cm) -- +(1.4 cm,0);
    \draw[thick] (8.3 cm, -.1 cm) -- +(1.4 cm,0);
\end{tikzpicture}
\end{center}

\begin{center}
  \begin{tikzpicture}[scale=.4]
    \draw (-5,0) node[anchor=east]  {$\fH{B}_2/\fH{C}_2$};
    \foreach \x in {45,135,225,315}
    \draw[thick, xshift=1cm] (\x: 1.4cm) circle [radius=3mm];
    \draw[thick, xshift=1cm, fill] (-45:1.4) circle [radius=.8mm];
    \draw[thick, xshift=1cm, fill] (-135:1.4) circle [radius=.8mm];
         \foreach \x in {6,7}
    \draw[white,xshift=\x cm] (\x cm,0) circle [radius=3mm];
    \draw[xshift=1cm, shift=(135:1.4cm), thick] (20: 3mm) -- +(1.42 cm, 0);
    \draw[xshift=1cm, shift=(135:1.4cm), thick] (-20: 3 mm) -- +(1.42 cm, 0);
    \draw[xshift=1cm, shift=(135:1.4cm), thick] (-70: 3 mm) -- +(0,-1.42 cm);
    \draw[xshift=1cm, shift=(135:1.4cm), thick] (-110: 3 mm) -- +(0,-1.42 cm);
    \draw[xshift=1cm, shift=(-135:1.4cm), thick] (20: 3mm) -- +(1.42 cm, 0);
    \draw[xshift=1cm, shift=(-135:1.4cm), thick] (-20: 3 mm) -- +(1.42 cm, 0);
    \draw[xshift=1cm, shift=(45:1.4cm), thick] (-70: 3 mm) -- +(0,-1.42 cm);
    \draw[xshift=1cm, shift=(45:1.4cm), thick] (-110: 3 mm) -- +(0,-1.42 cm);
  \end{tikzpicture}
\end{center}

\begin{center}
  \begin{tikzpicture}[scale=.4]
    \draw (-6,0) node[anchor=east]  {$\fH{F}_4$};
    \foreach \x in {0,60,120,180,240,300}
    \draw[thick] (\x: 2cm) circle [radius=3mm];
    \draw[thick, fill] (-60:2) circle [radius=.8mm];
    \draw[thick, fill] (-120:2) circle [radius=.8mm];  
         \foreach \x in {6}
    \draw[white,xshift=\x cm] (\x cm,0) circle [radius=3mm];
    \draw[xshift=-2cm, thick] (60:.3) -- +(60:1.4);
    \draw[xshift=-2cm, thick] (-60:.3) -- +(-60:1.4);
    \draw[xshift=2cm, thick] (-120:.3) -- +(-120:1.4);
    \draw[xshift=2cm, thick] (120:.3) -- +(120:1.4);
    \draw[shift=(120:2cm), thick] (.3, 1mm) -- +(1.4cm,0);
    \draw[shift=(120:2cm), thick] (.3, -1mm) -- +(1.4cm,0);
    \draw[shift=(-120:2cm), thick] (.3, 1mm) -- +(1.4cm,0);
    \draw[shift=(-120:2cm), thick] (.3, -1mm) -- +(1.4cm,0);
  \end{tikzpicture}
\end{center}

\begin{center}
  \begin{tikzpicture}[scale=.4]
    \draw (-4,0) node[anchor=east]  {$\fH{G}_2$};
    \foreach \x in {45,135,225,315}
    \draw[thick, xshift=1cm] (\x: 1.4cm) circle [radius=3mm];
    \draw[thick, xshift=1cm, fill] (-45:1.4) circle [radius=.8mm];
    \draw[thick, xshift=1cm, fill] (-135:1.4) circle [radius=.8mm];
         \foreach \x in {6,7}
    \draw[white,xshift=\x cm] (\x cm,0) circle [radius=3mm];
    \draw[xshift=1cm, shift=(135:1.4cm), thick] (30: 3mm) -- +(1.46 cm, 0);
    \draw[xshift=1cm, shift=(135:1.4cm), thick] (0: 3 mm) -- +(1.38 cm, 0);
    \draw[xshift=1cm, shift=(135:1.4cm), thick] (-30: 3 mm) -- +(1.46 cm, 0);
    \draw[xshift=1cm, shift=(135:1.4cm), thick] (-90: 3 mm) -- +(0,-1.38 cm);
    \draw[xshift=1cm, shift=(-135:1.4cm), thick] (30: 3mm) -- +(1.46 cm, 0);
    \draw[xshift=1cm, shift=(-135:1.4cm), thick] (0: 3 mm) -- +(1.38 cm, 0);
    \draw[xshift=1cm, shift=(-135:1.4cm), thick] (-30: 3 mm) -- +(1.46 cm, 0);
    \draw[xshift=1cm, shift=(45:1.4cm), thick] (-90: 3 mm) -- +(0,-1.38 cm);
  \end{tikzpicture}
\end{center}
\end{figure}
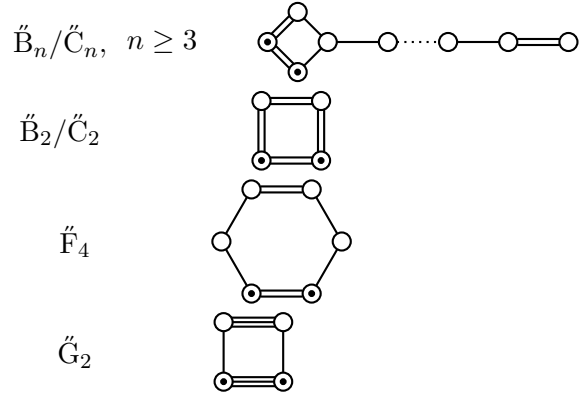

A canonical group associated to a Coxeter diagram $D$ is the Coxeter braid group $B(D)$, defined as the group generated by a set of elements, one for for each node in $D$, that satisfy the braid relations specified by the number of edges between nodes. We consider the Coxeter braid groups $B(\DDDot{X}_n)$, $B(\fH{X}_n)$, and $B(\DDot{C}_n)$. The marked nodes play, of course, no role in the definition of the corresponding Coxeter braid groups but they do play a role in the definition of some quotients, that we denote by $\B(\DDDot{X}_n)$, $\B(\fH{X}_n)$, and $\B(\DDot{C}_n)$. The precise relations in the kernel are specified in Definition \ref{def: untwisted}, \ref{def: twisted}, and \ref{def: twodot}, respectively. We only note here that, aside from $\B(\DDDot{C}_n)$ and $\B(\DDot{C}_n)$, the kernel of the canonical projection is generated by a relation imposing the centrality of one explicit element (see \eqref{centralrel-1}, \eqref{centralrel}). We refer to this as the central relation.  For  $\B(\DDDot{C}_n)$, $n\geq 2$, which in fact plays a distinguished role in the theory,  the kernel is generated by the central relation and a second relation \eqref{ellbraid} which we call the elliptic braid relation. The group $\B(\DDot{C}_n)$, $n\geq 1$ is a quotient of $\B(\DDDot{C}_n)$. 

\begin{thm}\label{state2}
The double affine Artin groups are isomorphic to the groups $\B(\DDDot{X}_n)$, $\B(\fH{X}_n)$, and $\B(\DDot{C}_n)$. The precise correspondence is the following 
\begin{equation}\label{DCvsDA}
\begin{aligned}
&\DDDot{X}_n && {X}_n^{(1)}\\
&\fH{B}_n/ \fH{C}_n && D_{n+1}^{(2)},  A_{2n-1}^{(2)}&& n\geq 2\\
&\fH{F}_4 && E_6^{(2)}\\
&\fH{G}_2 && D_4^{(3)}\\
&\DDot{C}_n && A_{2n}^{(2)} && n\geq 1.
\end{aligned}
\end{equation}
\end{thm}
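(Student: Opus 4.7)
The plan is to prove the theorem by taking one of the known algebraic presentations of the double affine Artin group (DAAG) attached to each affine Lie algebra on the right-hand side of \eqref{DCvsDA} --- for instance the one built from the affinization of a finite Weyl group together with the translation lattice $Q^\vee$, or equivalently from van der Lek's topological description --- and exhibiting an explicit isomorphism with the corresponding quotient of the Coxeter braid group of the double affine diagram $\dddot{X}_n$, $\ddot{X}_n$, or $\dddot{C}_n^*$. The comparison is essentially a Tietze-type argument: construct a homomorphism in each direction, verify the defining relations on generators, and check that the maps are mutually inverse. Since both sides are presented by explicit generators and relations, this reduces to a finite verification, but a case analysis by diagram type is unavoidable.

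The first step is to define a surjective homomorphism from $\B(\dddot{X}_n)$ (and its analogues) to the DAAG. The $n$ nodes of the finite subdiagram are sent to the braid generators $T_1,\ldots,T_n$ associated with the simple roots of the underlying finite root system. The three affine nodes encapsulated in the triple node are sent to the affine generator $T_0$ of one affine completion together with its two conjugates coming from the order-three symmetry that permutes the three cominuscule-type directions present in the doubly affine situation (the two affine simple roots and the null-direction generator built from a translation in $Q^\vee$). The braid relations encoded by the edges of $\dddot{X}_n$ are then verified case by case, using the two affine Hecke-type relations and the known conjugation action of the $Q^\vee$-translations on the $T_i$. The central relation expresses the fact that the product around the triple cycle equals a central lattice element (essentially the null root of $\Gaff$), and for $\dddot{C}_n$ the elliptic braid relation \eqref{ellbraid} must be verified separately, reflecting a specific identity in the doubly affine Weyl group of type $C$ that is not implied by Coxeter-type braid moves alone.

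For the inverse map, each standard DAAG generator must be expressed as a word in the images of the new generators: the $T_i$ are essentially tautological, the affine generators of both affine completions appear directly via the triple node, and the translation elements in $Q^\vee$ arise as explicit products of the affine reflection generators across the two affine sub-diagrams embedded in $\dddot{X}_n$. The twisted cases $\ddot{X}_n$ are handled by a parallel argument with the appropriate rank-two twist built into the diagram; the nonreduced case $\dddot{C}_n^*$ corresponding to $A_{2n}^{(2)}$ follows from the $\dddot{C}_n$ case by taking the additional quotient implementing its extra defining relation, as in Definition \ref{def: A2n2Artin}.

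The main obstacle, and the real content of the proof, is the injectivity of the map $\B(\dddot{X}_n) \to \mathrm{DAAG}$, i.e.\ showing that the imposed relations --- Coxeter braid, central, and (for type $C$) elliptic braid --- are \emph{complete}. The cleanest approach is to compare the two presentations after passing to normal forms adapted to the semi-direct product structure of the underlying double affine Weyl group $\W \ltimes Q^\vee \ltimes Q^\vee$: one shows that both groups project onto this semi-direct product, and then checks that the kernels of the two projections are generated by the same set of braid-type relations. The subtlest family is $\dddot{C}_n$, where the elliptic braid relation is genuinely new and reflects the stronger failure of the doubly affine Weyl group of type $C$ to be Coxeter, as substantiated by the non-Coxeter analysis in Appendix \ref{sec: nonCoxeter}; this is where the bulk of the case-by-case verification will lie.
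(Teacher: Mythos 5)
Your first two steps coincide with the paper's strategy: explicit homomorphisms in both directions between $\B(\dddot{X}_n)$ (resp.\ $\B(\ddot{X}_n)$) and the double affine Artin group, verified on generators and shown to be mutually inverse, with the $A_{2n}^{(2)}$ case deduced from $\dddot{C}_n$ via the extra quotient of Definition \ref{def: A2n2Artin}. But your final paragraph contains a genuine gap in how you propose to close the argument. You treat injectivity of $\B(\dddot{X}_n)\to\Atilde(R)$ as a separate problem, to be settled by normal forms adapted to $\W\ltimes Q^\vee\ltimes Q^\vee$ and by comparing kernels of the two projections onto the double affine Weyl group. This is both unnecessary and, as described, not viable. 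It is unnecessary because if you really do produce homomorphisms $\phi$ and $\psi$ in both directions and check on generators that they are mutually inverse, injectivity is automatic; this is exactly how the paper concludes (Propositions \ref{lemma1}, \ref{lemma2}, \ref{prop1}, \ref{prop2} and Theorems \ref{main}, \ref{thm1}). The essential external input that makes this a finite verification is that the double affine Artin group already carries a known finite presentation, namely van der Lek's (Propositions \ref{defcherednik}, \ref{first-presentation}, \ref{second-presentation}); the paper's Section \ref{refinement} then reduces van der Lek's lattice-commutation relations to a single relation (\eqref{eq5-1}, \eqref{eq5-2}, or \eqref{eq5}), which is what makes the well-definedness check for $\psi$ tractable. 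Your plan never invokes this presentation, and without it you have no finite list of relations on the DAAG side to verify $\psi$ against.

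The kernel-comparison route you sketch would not substitute for it. The kernel of $\Atilde(R)\to\Wdaff$ is not generated by the squares of the generators in general (Remark \ref{rem: artin-to-weyl}: for $A_1^{(1)}$, $C_n^{(1)}$, $A_{2n}^{(2)}$ it is strictly larger), there is no available normal-form theory for these groups precisely because $\Wdaff$ is not a Coxeter group (Appendix \ref{sec: nonCoxeter}), and showing that two kernels are "generated by the same braid-type relations" is not a finite check --- it is essentially the hard content that van der Lek's topological theorem supplies. Two smaller inaccuracies: the three affine generators are not $T_0$ and two conjugates of it under an order-three symmetry, but $T_0$, $T_0^{-1}X_{\alpha_0^\vee}$, and $X_{\theta^\vee}\Theta^{-1}$ (the $S_3$-type symmetry is a consequence, via the $\wGamma(1)$-action, not an input); and the elliptic braid relation \eqref{ellbraid} for $\ell_0=2$ is verified as part of the well-definedness of $\phi$ (and is shown in Lemma \ref{reducel=2} to be needed only once), not as part of a separate completeness argument.
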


For the double affine Artin groups associated to untwisted affine root systems Theorem \ref{state2} is proved in \cite{ISTri}*{Theorem 3.11}. For completeness, we include in this paper a streamlined version of the argument.

It is perhaps important to mention that a considerable part of the existing literature on double affine Hecke algebras considers extended double affine Hecke algebras. One cannot expect a topological interpretation or a Coxeter-type presentation for extended double affine Artin groups as these aspects fail even for extended affine Artin groups. Furthermore, just as the extended affine Artin groups and extended affine Hecke algebras can be recovered as semidirect products of affine Artin groups and affine Hecke algebras with a group induced from affine Dynkin diagram automorphisms, the extended double affine Artin groups and Hecke algebras can be recovered in a similar fashion.

One consequence of Theorem \ref{state2} is that it readily identifies the number of possible independent Hecke parameters for each double affine Artin group. We call the Hecke algebras that depend on the largest possible number of independent parameters generic Hecke algebras. Just as for any other Coxeter braid algebra, this number is the number of  connected components of the diagram obtained from the Coxeter diagram by erasing all multiple edges. For each double affine Coxeter diagram this number of connected components is recorded in Table \ref{table: generic-parameters}.

\begin{table}[ht]
\caption{Maximal number of Hecke parameters}\label{table: generic-parameters}
\begin{tabular}{l l l l l l l}
$\DDDot{A}_1$ && 4  &&  $\DDot{A}_1$ && 3\\
$\DDDot{A}_n, n\geq 2$ && 1 &&  $\DDot{C}_n, n\geq 2$ && 4 \\
$\DDDot{B}_n, n\geq 3$ && 2  &&  &&  \\
$\DDDot{C}_n, n\geq 2$ && 5  && $\fH{B}_n/\fH{C}_n, n\geq 3$   && 3\\
$\DDDot{D}_n, n\geq 4$ && 1   && $\fH{B}_2/\fH{C}_2$   && 4\\
$\DDDot{E}_n, n=6,7,8$ && 1  && $\fH{F}_4$  && 2\\ 
$\DDDot{F}_4$  && 2 && $\fH{G}_2$  && 2\\ 
$\DDDot{G}_2$ && 2 && && \\
\end{tabular}
\end{table}

The Hecke algebras are defined as quotients of the $\F$-group algebras of $\B(\DDDot{X}_n)$, $\B(\fH{X}_n)$, and $\B(\DDot{C}_n)$ (where $\F$ is the field of parameters) by imposing quadratic relations on the generators associated to the nodes in the double Coxeter diagrams. The generic Hecke algebras are denoted by $\bH(\DDDot{X}_n)$, $\bH(\fH{X}_n)$, and $\bH(\DDot{C}_n)$. Using this definition and Theorem \ref{state2},  we derive in Theorem \ref{thm: cherednik-presentation} the conventional presentation of double affine Hecke algebras \cites{MacAff, StoKoo-2}. The double affine Hecke algebras associated to (reduced or nonreduced) affine root systems are obtained from $\bH(\DDDot{X}_n)$, $\bH(\fH{X}_n)$, and $\bH(\DDot{C}_n)$ by setting Hecke parameters equal according to the structure of affine Weyl group orbits on the affine root system in question. Table \ref{table: parameters} lists this specialization of parameters for each affine root system. We note that the generic double affine Hecke algebras associated to the affine root systems $A_{2n}^{(2)}$, $(BC_n, C_n)$, $(C_n^\vee, BC_n)$ (and until recently \cite{StoKoo-2} $(B_n, B_n^\vee)$, $(C_2, C_2^\vee)$) were previously missing from the literature.

Let the group $\Gamma$ be  of the form $\Gamma_1(r)$. Let $\widetilde{\Gamma}$ be the inverse image of $\Gamma\subset \SL(2,\Re)$ inside its universal cover $\widetilde{\SL(2,\Re)}$. The group $\wGamma(r)$ is isomorphic to the Coxeter braid group of type $A_2$, $B_2$, or $G_2$, if $r=1,2$ or $3$, respectively. Therefore, it is generated by two elements, $\mfa$ and $\mfb$, that satisfy the $r$-braid relation. To the double affine Artin group $\B$ such that the corresponding  affine root system appears in Table Aff $r$ in \cite{KacInf}*{pg. 54-55} we associate the group $\Gamma=\Gamma_1(r)$.  Using Theorem \ref{state2} we define an explicit action of $\mfa$ and $\mfb$ on $\B$ as group automorphisms such that the following holds.

\begin{thm}\label{state3}
There is a faithful action  $\widetilde{\Gamma}\to \Aut(\B)$ that descends to a morphism $\Gamma\to \Out(\B)$.
\end{thm}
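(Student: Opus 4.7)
The plan is to invoke the Coxeter-type presentation of Theorem \ref{state2} and construct the generators $\mfa$ and $\mfb$ as explicit automorphisms of $\B$, verifying the braid and centrality relations directly from that presentation. The crucial feature of the diagrams in Figure \ref{diagrams} is a distinguished cluster of affine nodes: the triple node, which carries an $S_3$ symmetry for the untwisted diagrams $\dddot{X}_n$ and the $*$-diagrams $\dddot{A}_1^*, \dddot{C}_n^*$; and the pair of marked nodes, together with a long/short duality symmetry, for the twisted diagrams $\ddot{X}_n$. These symmetries produce automorphisms of the corresponding Coxeter braid groups which, by inspection, preserve the central relation \eqref{centralrel-1}/\eqref{centralrel} and, where relevant, the elliptic braid relation \eqref{ellbraid}, and so descend to the quotients $\B(\dddot{X}_n)$, $\B(\ddot{X}_n)$, $\B(\dddot{C}_n^*)$. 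To promote a Weyl-group-level symmetry to an actual rank-two Artin-group action, one modifies these by conjugation by carefully chosen elements of $\B$ supported on the affine cluster, in the spirit of Cherednik's original construction of the $\mathrm{PSL}(2,\Z)$ action in the untwisted simply connected case.

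The first verification is the braid relation between $\mfa$ and $\mfb$: length $3$ for $r=1$, length $4$ for $r=2$, and length $6$ for $r=3$. Because both $\mfa$ and $\mfb$ are given by explicit formulas on the Coxeter generators of $\B$, this reduces to a finite check localized to the affine cluster and its neighbors in the diagram. The diagram-symmetry half of the argument is essentially tautological; the conjugation half is governed by braid relations already encoded in the presentation. Along the way one also verifies that $\mfa$ and $\mfb$ preserve the central and elliptic braid relations, so that they do define automorphisms of $\B$ and not merely of the ambient Coxeter braid group.

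For the descent $\Gamma\to\Out(\B)$, one must show that the generator of the kernel of $\widetilde{\Gamma}\to\Gamma$ acts on $\B$ by an inner automorphism. This kernel is infinite cyclic and, under the identification of $\widetilde{\Gamma}$ with the Coxeter braid group of type $A_2$, $B_2$, or $G_2$, it coincides with the center of that group, generated by the full twist of the rank-two Artin group. A direct computation from the formulas for $\mfa$ and $\mfb$ identifies this full twist, acting on $\B$, with conjugation by a distinguished element — morally a translation by an element in the image of the coroot lattice of the rank-two sub-root system lurking inside the double affine data.

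The main obstacle is faithfulness of $\widetilde{\Gamma}\to\Aut(\B)$. Since the Weyl-group-level action factors through the finite quotient $S_3$, $W(B_2)$, or $W(G_2)$, one must exclude the possibility that any nontrivial power of the full twist acts trivially. The plan is to exploit a faithful linear representation of $\B$ — for instance the polynomial representation of the generic double affine Hecke algebra $\bH$ at transcendental parameters, where faithfulness in the $r=1$ simply connected case \cite{CheMac} can be transferred to our adjoint and twisted setting by embedding $\B$ into an appropriate extended DAAG and comparing polynomial representations of the associated affine Hecke subalgebras. Alternatively, one can argue via the filtration of $\B$ by translation length in the underlying affinized Weyl group and exhibit that the full twist shifts this filtration in a detectable way. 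Either route rules out central elements acting as the identity, and combined with the injectivity of the induced map on the finite Weyl quotient, yields faithfulness.
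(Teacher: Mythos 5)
Your construction and descent steps are essentially the paper's: $\mfa$ and $\mfb$ are defined by explicit formulas on the affine generators of the Coxeter-type presentation, the $r$-braid relation and the preservation of \eqref{centralrel-1}, \eqref{ellbraid}, \eqref{centralrel} are finite checks localized near the affine cluster (Theorems \ref{thm: auto1}, \ref{thm: auto2}, \ref{P1}, \ref{P2}), and the generator of the kernel of $\wGamma(r)\to\Gamma_1(r)$ is shown to act by an inner automorphism. One inaccuracy there: the inner element is not ``morally a translation by a coroot'' but conjugation by $\T_{w_\circ}$ (or $\T_{w_\circ}^2$), the lift of the longest element of the \emph{finite} Weyl group; this identification is not cosmetic, since whether $(\mfa\mfb)^3$ or only $(\mfa\mfb)^6$ is inner on all of $\B$ is exactly what produces the kernel $\pm I_2$ in types $A_n$, $D_{2n}$, $E_6$ (Theorem \ref{thm: auto1}iii),iv)). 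Also, for $\dddot{C}_n^*$ the full triple-node $S_3$ symmetry is broken and only $\wGamma(2)^\prime$ acts (Theorem \ref{thm: auto2n2}), so lumping $\dddot{A}_1^*,\dddot{C}_n^*$ with the $S_3$-symmetric case is not quite right.

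The genuine gap is in the faithfulness argument. You claim that ``the Weyl-group-level action factors through the finite quotient $S_3$, $W(B_2)$, or $W(G_2)$'' and therefore that it suffices to rule out nontrivial powers of the full twist acting trivially. This is backwards: the induced outer action of $\Gamma_1(r)$ on the double affine \emph{Weyl} group $\Cox(\dddot{X}_n)$, resp.\ $\Cox(\ddot{X}_n)$, is already faithful up to $\pm I_2$ (the canonical action of \cite{ISEALA}), and it is precisely this fact that the paper uses in Theorem \ref{thm: auto1main} to force the kernel of $\wGamma(r)\to\Aut(\B)$ into the center $\<\mfc\>$; only after that reduction does the nontriviality of conjugation by $\T_{w_\circ}^k$ finish the proof. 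If all you have at the Weyl level is a finite permutation action (say of the three affine nodes), the kernel of $\wGamma(1)\to S_3$ is the entire pure braid group $P_3$, so excluding powers of the full twist excludes almost nothing. Your fallback devices do not repair this: a faithful polynomial representation of $\bH$ (or of $\B$) says nothing by itself about which elements of $\wGamma(r)$ act as the \emph{trivial automorphism}, and since the full twist acts by an inner automorphism it cannot ``shift'' any conjugation-invariant translation-length filtration—what is needed there is only that $\T_{w_\circ}^k$ is non-central in $\B$, which is the easy half. The missing ingredient is the faithfulness (mod $\pm I_2$) of the congruence-group action on the infinite double affine Weyl group, not injectivity into any finite Weyl quotient.
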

For the double affine Artin groups associated to untwisted affine root systems (that is when $r=1$) this action was constructed in \cite{ISTri}*{\S4.2}. For the \emph{extended} double affine Artin groups associated to untwisted affine root systems the action was constructed earlier by Cherednik \cite{CheMac}*{Theorem 4.3}. For  the double affine Artin groups associated to twisted affine root systems ($r=2,3$) the result is new. All these automorphisms descend to automorphisms of the Hecke algebras $\bH(\DDDot{X}_n)$, $\bH(\fH{X}_n)$, and $\bH(\DDot{C}_n)$ and we explain which subgroups descend to actions on the double affine Hecke algebras associated to the various affine root systems. The map $\Gamma\to \Out(\B)$ is precisely the outer action defined in \cite{ISEALA} in the context of the connection between double affine Artin groups and the Artin groups associated to $\eala{2}$-EALAs.

In the hierarchy of double affine Artin groups associated to (double) semisimple data for fixed root systems, the double affine Artin groups $\B=\B(\DDDot{X}_n)$ or $\B(\fH{X}_n)$ correspond to adjoint data (the smallest semisimple data).  The double affine Artin groups $\B^e=\B^e(\DDDot{X}_n)$ or $\B^e(\fH{X}_n)$ that correspond to simply connected semisimple data (the largest semisimple data) are called in the literature extended double affine Artin groups. In general, a double affine Artin group attached to arbitrary semisimple data is nothing else but an intermediate group
$$
\B\leq \B^s\leq \B^e.
$$  
A double affine Artin group associated to reductive data is isomorphic to the almost-direct product of the double affine Artin groups associated to the underlying semisimple data and central data. In this generality, we have the following version of Theorem \ref{state3}.
\begin{thm}\label{state6}
Let $\B^r$ be a double affine Artin group attached to reductive data and let $\B\leq \B^s\leq \B^e$ and $\B^c$ be the double affine Artin groups attached to the underlying semisimple data and central data, respectively. There exists a finite index subgroup $\widetilde{K}\leq \widetilde{\Gamma}$, a congruence subgroup $K\leq \Gamma$, and a faithful action  $\widetilde{K}\to \Aut(\B^r)$ that descends to a morphism $K\to \Out(\B^r)$. Both $\widetilde{K}$ and $K$ act trivially on $\B^c$.
\end{thm}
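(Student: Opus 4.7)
The plan is to reduce to the adjoint case established in Theorem \ref{state3} by exploiting two structural features of reductive double affine Artin groups: the almost-direct product decomposition $\B^r \cong \B^s \cdot \B^c$ of semisimple and central factors, and the intermediate chain $\B \leq \B^s \leq \B^e$ between adjoint and simply connected data, in which both quotients are finite abelian. I would proceed in three stages.

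First, the central factor $\B^c$ is toral (abelian), and the desired action will be trivial on it. Because $\B^r$ is an almost-direct product, any automorphism of $\B^s$ fixing the finite intersection $\B^s\cap\B^c$ pointwise glues with the identity on $\B^c$ to a well-defined automorphism of $\B^r$. Thus it suffices to produce a compatible action on $\B^s$ extending the action of Theorem \ref{state3} on $\B$, and then glue.

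Second, I would extend from $\B$ to $\B^s$. The quotient $A := \B^e/\B$ is a finite abelian group, and $\B^s$ corresponds to a subgroup $F\leq A$. As in the case of extended double affine Artin groups, the $\widetilde{\Gamma}$-action on $\B$ extends to an action on $\B^e$, and therefore induces an action of $\widetilde{\Gamma}$ on the finite group $A$. A direct inspection of the explicit formulas for the generators $\mfa,\mfb$ given after Theorem \ref{state3} should show that this action on $A$ factors through a finite \emph{congruence} quotient of $\widetilde{\Gamma}$, essentially because the induced action on $A$ is controlled by the action on the coroot/coweight lattices reduced modulo the orders of the elements of $A$. Let $\widetilde{K}\leq\widetilde{\Gamma}$ be the preimage of the stabilizer of $F$ in this quotient; then $\widetilde{K}$ has finite index, its image $K$ in $\Gamma$ is a congruence subgroup, and the $\widetilde{\Gamma}$-action restricted to $\widetilde{K}$ lifts from $\Aut(\B)$ to $\Aut(\B^s)$. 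Faithfulness of $\widetilde{K}\to\Aut(\B)$ is inherited from Theorem \ref{state3}, since a faithful action restricts to a faithful action on any subgroup; combined with stage one, this yields a faithful $\widetilde{K}\to\Aut(\B^r)$ acting trivially on $\B^c$.

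Third, the descent $K\to\Out(\B^r)$ is obtained by restricting the map $\Gamma\to\Out(\B)$ from Theorem \ref{state3} to $K$, lifting through the extension of stage two to $\Out(\B^s)$, and extending trivially on $\B^c$ via stage one; here one uses that inner automorphisms of $\B^s$ induced by elements of $\B^s$ are interior on $\B^r$ after the gluing, so the descent is well-defined. The main obstacle in my view is the assertion in stage two that the $\widetilde{\Gamma}$-action on $A=\B^e/\B$ factors through a congruence quotient of $\widetilde{\Gamma}$: this is the genuinely new content beyond Theorem \ref{state3}, and it must be verified by unpacking the effect of $\mfa$ and $\mfb$ on the lattice data that distinguishes $\B^s$ from $\B$. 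Once that finite-quotient factorization is in hand, the rest of the argument is bookkeeping on the almost-direct product structure.
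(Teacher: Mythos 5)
Your proposal follows the paper's own route: the paper reduces to semisimple data via the almost-direct product decomposition (Proposition \ref{prop: reductive}), extends the $\widetilde{\Gamma}$-action to $\B^e$ (Theorem \ref{thm: autoall-ext}), and in Theorem \ref{thm: autoall-ss} computes the induced action on the finite abelian quotient $\mho\times\mho^{\mfe}$ of $\B^e$ by the subgroup generated by $\B$ and the center, checking explicitly that $\Gamma(|\mho|)$ acts trivially --- exactly the verification you flag as the remaining step --- so that the kernel (or, in your variant, the stabilizer of the subgroup corresponding to $\B^s$) is a congruence subgroup of level at most $|\mho|$, with faithfulness inherited from Theorem \ref{state3} as you say. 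The only slip is immaterial: $\B^s\cap\B^c$ is not finite but the infinite cyclic central lattice $\tfrac{1}{m}\Z\d$, which the action fixes pointwise, so your gluing with the identity on $\B^c$ still goes through.
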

It is important to remark that both for $\B$ and $\B^e$ we have $\widetilde{K}=\widetilde{\Gamma}$.

Theorem \ref{state3} also allows us to give a description of the involutions of $\B$ that originate from the above action of $\Gamma$ and that essentially extend the Kazhdan-Lusztig involution on the finite Artin group. We call these involutions basic involutions.  

\begin{thm}\label{state4}
The elements of $\wGamma(r)$ that induce basic involutions are precisely those which, under the canonical projection $\wGamma(r)\to \Gamma_1(r)$, are mapped to elements of the form
$$
\begin{bmatrix}a & b\\ -rb & d\end{bmatrix}.
$$
\end{thm}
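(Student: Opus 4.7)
Proof plan.

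The strategy is to combine the explicit action $\wGamma(r)\to \Aut(\B)$ furnished by Theorem \ref{state3} with the earlier definition of a basic involution as an involution of $\B$ whose restriction to the finite Artin subgroup of $\B$ realizes the Kazhdan--Lusztig inversion $g\mapsto g^{-1}$. Both the involution condition and the compatibility with the Kazhdan--Lusztig inversion will be translated into explicit conditions on the image $M=\begin{bmatrix}a & b\\ c & d\end{bmatrix}\in \Gamma_1(r)$ of the corresponding element $\tilde g\in \wGamma(r)$.

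First, I would fix the explicit description of the generators $\mfa,\mfb\in \wGamma(r)$ provided by Theorem \ref{state3}: under the projection $\wGamma(r)\to \Gamma_1(r)$ they map to the standard generators $\begin{bmatrix}1 & 1\\ 0 & 1\end{bmatrix}$ and $\begin{bmatrix}1 & 0\\ -r & 1\end{bmatrix}$. The automorphism of $\B$ induced by $\mfa$ fixes the finite Artin subgroup (up to an explicit inner twist), while the automorphism induced by $\mfb$ modifies the affine node by a rule scaled by the twist $r$. For $r=1$ this is precisely the setup of \cite{ISTri}*{\S4.2}, whose argument I would adapt to $r=2,3$.

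Second, I would compute the induced effect on the finite Artin subgroup of a general word $\tilde g$ in $\mfa,\mfb$. The net action factors through the image matrix $M\in \Gamma_1(r)$, and the obstruction to the induced automorphism acting as the identity on the finite Artin subgroup (so that its composition with global inversion extends the Kazhdan--Lusztig involution) is a single linear invariant of $M$: tracking the generators, the entry $b$ records the net shift contributed by factors of $\mfa$, which is absorbed by inner automorphisms, while $c$ records the shift from $\mfb$, which contributes $r$ times the same quantity with opposite sign because of the twisted braid relation. Cancellation requires $c+rb=0$, which is the asserted matrix condition; the converse follows because the projection $\wGamma(r)\to \Gamma_1(r)$ is surjective.

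The main obstacle is the explicit bookkeeping in the second step. The actions of $\mfa$ and $\mfb$ on generators of $\B$ are not $\Z$-linear, and tracking a general word requires a coordinate system adapted to the Coxeter-type presentation of Theorem \ref{state2}. Once convenient coordinates are fixed, the computation reduces to linear algebra on the weight lattice attached to the affine root system, from which the condition $c=-rb$ emerges directly, with the factor $r$ tracing back to the projection $\mfb\mapsto \begin{bmatrix}1 & 0\\ -r & 1\end{bmatrix}$.
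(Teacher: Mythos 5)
The premise behind your main step is not correct. Every element of $\wGamma(r)$ already acts \emph{trivially} on the finite Artin subgroup $B(X_n)$: by construction the image of $\wGamma(r)$ lies in $\Aut(\B(\dddot{X}_n);X_n)$, resp.\ $\Aut(\B(\ddot{X}_n);X_n)$ (Theorems \ref{thm: auto1main}, \ref{thm: auto2main}, \ref{thm: auto3main}), whose elements fix the generators attached to finite nodes. So there is no ``obstruction to acting as the identity on the finite Artin subgroup'' that could single out the matrices with $c=-rb$; the invariant you propose to extract vanishes identically. Moreover, the action of a word in $\mfa,\mfb$ on $\B$ does not factor through its image matrix in $\Gamma_1(r)$: the action of $\wGamma(r)$ is faithful (Theorem \ref{state3}), and the central element $\mfc$ acts by a nontrivial inner automorphism (conjugation by $\T_{w_\circ}$ or $\T_{w_\circ}^2$), so ``the net action factors through $M$'' is false and the bookkeeping you outline cannot be carried out as described.

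What actually characterizes the elements inducing basic involutions is their relation to the anti-involution $\mfe$: a basic (anti-)involution is by definition of the form $\mfe\gamma$ with $\gamma\in\Aut(\B;X_n)$ satisfying $\mfe\gamma\mfe=\gamma^{-1}$, i.e.\ condition \eqref{eq27}. Using the faithfulness of the $\wXi(r)$-action (Theorem \ref{thm: autoall}), this is equivalent to the group identity $\mfv\widetilde{\gamma}\mfv=\widetilde{\gamma}^{-1}$ in $\wXi(r)$ (condition \eqref{eq28}), and projecting to $\Gamma_1(r)$ gives $e(r)Ae(r)=A^{-1}$, i.e.\ $A=\begin{bmatrix}a&b\\ -rb&d\end{bmatrix}$, which is exactly the set $\Upsilon_1(r)$. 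The genuinely nontrivial point --- which your remark ``the converse follows because the projection is surjective'' does not address --- is that \eqref{eq28} must hold in the central extension $\wXi(r)$, and a priori a preimage of a matrix in $\Upsilon_1(r)$ could satisfy it only up to a central factor. The paper's Proposition \ref{P3} closes this gap: central elements satisfy \eqref{eq28}, so it suffices to exhibit one good preimage per matrix, and this is done by induction on $|b|$, multiplying by $\mfu_1,\mfu_2^{\pm r}$ on both sides to decrease $|b|$. Without an argument of this kind neither direction of the stated equivalence is established, so as it stands your plan has a genuine gap both in the mechanism producing the condition $c=-rb$ and in the lifting step from $\Gamma_1(r)$ to $\wGamma(r)$.
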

Each involution endows  the corresponding double affine Artin group and Hecke algebra with a $*$-structure. In turn, this leads to a $C^*$-algebra completion, a notion of unitary representation, and to a host of analytical and representation theoretical questions naturally associated to such a context.

It is important to note that if we remove the affine nodes in a double affine Coxeter diagram of type $\DDDot{X}_n$ or $\fH{X}_n$ then we obtain a finite Coxeter diagram of type $X_n$. Consequently, the groups $\B(\DDDot{X}_n)$ and $\B(\fH{X}_n)$ have $B(X_n)$, the Coxeter braid group associated to the diagram $X_n$, as a subgroup. The actions of the groups  $\widetilde{\Gamma}$  on $\B(\DDDot{X}_n)$ and $\B(\fH{X}_n)$ are always trivial on $B(X_n)$.

A (complex) representation $V$ of $\B(\DDDot{X}_n)$ or $\B(\fH{X}_n)$ is said to be $B(X_n)$-finite if for any vector $v\in V$, the vector space $B(X_n)\cdot v$ is finite dimensional. For $\hat{\pi}$ a simple representation of $B(X_n)$ we denote by $V(\hat{\pi})$ the isotypic component of $\hat{\pi}$ inside $V$, which is defined as the direct sum of all copies of $\hat{\pi}$ inside $V$ (seen as a $B(X_n)$-representation, by restriction). The representation $V$ is said to be  $B(X_n)$-admissible if it is  $B(X_n)$-finite and the isotypical component $V(\hat{\pi})$ is finite dimensional for any $\hat{\pi}$ a finite-dimensional simple representation of $B(X_n)$.

Let $V$ be a representation of $\B(\DDDot{X}_n)$ or $\B(\fH{X}_n)$ and let $\gamma\in \widetilde{\Gamma}$. The action on $V$ can be pre-composed with $\gamma$ to obtain a new representation that is denote by ${}^\gamma V$. If $\gamma$ is an inner automorphism then ${}^\gamma V$ and $V$ are isomorphic. Therefore, the action of  $\widetilde{\Gamma}$  on the set of isomorphism types of representations of $\B(\DDDot{X}_n)$ or $\B(\fH{X}_n)$ descends to an action of $\Gamma$. An orbit of $\Gamma$ acting on the set of isomorphism types of simple representations of $\B(\DDDot{X}_n)$ or $\B(\fH{X}_n)$ is called a $\Gamma$-packet. We say that a simple representation $V$ is a $\Gamma$-stable representation if  the ${}^\gamma V$ and $V$ are isomorphic for any  $\gamma\in\widetilde{\Gamma}$. In other words, $V$ is $\Gamma$-stable if and only if its $\Gamma$-packet consists of a single element.

\begin{thm}\label{state5}
For any $V$ simple $\Gamma$-stable representation of $\B(\DDDot{X}_n)$ or $\B(\fH{X}_n)$ and any $\hat{\pi}$ simple representation of $B(X_n)$, we have a projective representation
$$
{\Gamma} \to {\rm PGL}(V(\hat{\pi})).
$$
 If $V$ is $B(X_n)$-admissible then this projective representation is finite dimensional. Except for $\DDDot{X}_n$ of type $\DDDot{A}_n$, $n\geq 2$, $\DDDot{D}_{2n}$, $n\geq 2$, or $\DDDot{E}_6$, the projective representation descends to the quotient $\Gamma/\{\pm I_2\}$.
\end{thm}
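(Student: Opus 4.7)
The plan is to apply Schur's lemma systematically, first to $V$ as a simple $\B$-module and then to its $B(X_n)$-isotypic components. Since $V$ is simple and $\Gamma$-stable, for each $\gamma\in\widetilde{\Gamma}$ there is an intertwiner $\phi_\gamma\colon V\to {}^\gamma V$, unique up to a nonzero scalar by Schur. Both $\phi_{\gamma_1\gamma_2}$ and $\phi_{\gamma_1}\phi_{\gamma_2}$ are intertwiners $V\to{}^{\gamma_1\gamma_2}V$ and so agree up to scalar, making $\gamma\mapsto[\phi_\gamma]$ a projective representation $\widetilde{\Gamma}\to\mathrm{PGL}(V)$. Since, as noted just before the theorem, the $\widetilde{\Gamma}$-action on $\B$ is trivial on $B(X_n)$, each $\phi_\gamma$ is $B(X_n)$-equivariant and preserves the isotypic decomposition $V=\bigoplus_{\hat{\pi}}V(\hat{\pi})$. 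Restricting gives the sought projective representation $\widetilde{\Gamma}\to\mathrm{PGL}(V(\hat{\pi}))$, which is finite-dimensional under $B(X_n)$-admissibility.

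To see that this factors through $\Gamma$, let $\gamma$ lie in the kernel of $\widetilde{\Gamma}\twoheadrightarrow\Gamma$. By Theorem \ref{state3}, $\gamma$ acts on $\B$ as an inner automorphism $\mathrm{Int}(z_\gamma)$, and this kernel is central in $\widetilde{\Gamma}$. Triviality on $B(X_n)$ places $z_\gamma$ in the centralizer of $B(X_n)$ in $\B$, while centrality of $\gamma$ in $\widetilde{\Gamma}$ yields $\gamma'(z_\gamma)\,z_\gamma^{-1}\in Z(\B)$ for every $\gamma'\in\widetilde{\Gamma}$. An analysis of the explicit $\widetilde{\Gamma}$-action from the proof of Theorem \ref{state3} on the remaining (affine) generators of $\B$ shows that $z_\gamma$ can then be chosen in $Z(\B)$. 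Schur's lemma applied to the simple module $V$ then forces $z_\gamma$ to act as a scalar, so $[\phi_\gamma]=1$ in $\mathrm{PGL}(V(\hat{\pi}))$, and the projective representation descends to $\Gamma$.

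The further descent to $\Gamma/\{\pm I_2\}$ is meaningful only when $-I_2\in\Gamma$ (i.e.\ $r\in\{1,2\}$), and amounts to showing that a chosen lift $\widetilde{\iota}\in\widetilde{\Gamma}$ of $-I_2$ acts as a scalar on $V(\hat{\pi})$. Using the formulas of Theorem \ref{state3}, $\widetilde{\iota}$ acts by $\mathrm{Int}(z_0)$, where $z_0$ is assembled from a lift of the longest element $w_0$ of the underlying finite Weyl group $W$ together with a compatible involution determined by the arrangement of affine nodes in the double affine Coxeter diagram. I expect the main technical obstacle to be the resulting type-by-type check: one must track how $\widetilde{\iota}$ permutes the generators of $\B$ in each diagram in Figure \ref{diagrams} and determine when $z_0$ can be taken central in $\B$ (equivalently, scalar on the simple module $V$). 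This verification singles out $\dddot{A}_n$ ($n\geq 2$), $\dddot{D}_{2n}$ ($n\geq 2$), and $\dddot{E}_6$ as precisely the types where $z_0$ fails to act as a scalar; in all others the argument of the previous paragraph applies verbatim and the descent to $\Gamma/\{\pm I_2\}$ is established.
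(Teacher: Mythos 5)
Your first paragraph reproduces the paper's argument (Schur's lemma plus the triviality of the $\widetilde{\Gamma}$-action on $B(X_n)$, which preserves isotypic components), but the descent step contains a genuine error. The kernel of $\widetilde{\Gamma}\to\Gamma$ is generated by a power of the central element $\mfc$, and by Theorems \ref{thm: auto1}, \ref{P1}, \ref{P2} this element acts on $\B$ by conjugation by $\T_{w_\circ}$ or $\T_{w_\circ}^2$. These elements are central in the \emph{finite} braid group $B(X_n)$ but never central in $\B$: if $z_\gamma$ could be chosen in $Z(\B)$, then $\mathrm{Int}(z_\gamma)$ would be the identity automorphism and the corresponding element of $\wGamma(r)$ would act trivially, contradicting the faithfulness of the action (Theorem \ref{state3}, Theorem \ref{thm: auto1main}); since any two elements implementing the same inner automorphism differ by an element of $Z(\B)$, no choice of $z_\gamma$ is central. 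So the claim ``$z_\gamma$ can then be chosen in $Z(\B)$'' is false, and the ensuing appeal to Schur's lemma on $V$ does not go through. The fact the paper actually uses is weaker and suffices: $z_\gamma$ is central in $B(X_n)$, hence acts by a scalar on each isotypic component $V(\hat{\pi})$ (not on all of $V$), hence trivially in $\mathrm{PGL}(V(\hat{\pi}))$ --- which is all that is needed, precisely because the projective representation in question lives on $V(\hat{\pi})$ rather than on $V$.

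The same flaw undermines your third paragraph: the criterion ``$z_0$ can be taken central in $\B$ (equivalently, scalar on the simple module $V$)'' is both not an equivalence and not the relevant dichotomy. What actually separates the excluded types is the content of Theorem \ref{thm: auto1} iii)--iv) versus Theorems \ref{P1} and \ref{P2}: either $\mfc$ (the generator of the kernel of $\widetilde{\Gamma}\to\Gamma/\{\pm I_2\}$) acts on \emph{all} of $\B$ by conjugation by the $B(X_n)$-central element $\T_{w_\circ}$, in which case one descends to $\Gamma/\{\pm I_2\}$, or (types $\dddot{A}_n$, $\dddot{D}_{2n}$, $\dddot{E}_6$) it agrees with that conjugation only on the affine generators, and only $\mfc^2$ is inner by $\T_{w_\circ}^2$, giving descent merely to $\Gamma$. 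The type-by-type verification you defer is thus not an open obstacle --- it is already recorded in those theorems --- but your proposed replacement criterion would not recover it, since in \emph{no} type is the implementing element central in $\B$.
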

\begin{proof}
For each $\gamma\in\widetilde{\Gamma}$ the representations ${}^\gamma V$ and $V$ are isomorphic. Because $V$ is simple, an isomorphism between ${}^\gamma V$ and $V$ is unique up to scaling. Therefore, we obtain a group morphism
$$
\widetilde{\Gamma} \to {\rm PGL}(V).
$$
We remark that, since the action of  $\widetilde{\Gamma}$ is trivial on $B(X_n)$, an isomorphism between  ${}^\gamma V$ and $V$ necessarily preserves the isotypic components of the two representations. Hence, for any  $\hat{\pi}$ simple representation of $B(X_n)$ we have a projective representation
$$
\widetilde{\Gamma} \to {\rm PGL}(V(\hat{\pi})).
$$
If $V$ is $B(X_n)$-admissible then, by definition, $V(\hat{\pi})$ is finite dimensional. In all situations, the generator  of the kernel of the canonical projection $\widetilde{\Gamma}\to \Gamma$ acts by an inner automorphism, more precisely by conjugation with a central element in $B(X_n)$ (see Theorem \ref{thm: auto1}, Theorem \ref{P1}, and Theorem \ref{P2}). Since a central element in $B(X_n)$ acts as a scaling on $V(\hat{\pi})$, and therefore trivially as an element of  ${\rm PGL}(V(\hat{\pi}))$ we obtain that the projective representation of $\widetilde{\Gamma}$ factors through to a projective representation of $\Gamma$. In fact, unless we are in the situations indicated in the statement, the generator  of the kernel of the canonical projection $\widetilde{\Gamma}\to \Gamma/\{\pm I_2\}$ acts by an inner automorphism given  by conjugation with a central element in $B(X_n)$. In this case, the projective representation of $\widetilde{\Gamma}$ factors through to a projective representation of $\Gamma/\{\pm I_2\}$.
\end{proof}

One source of examples of simple, finite-dimensional, $\Gamma$-stable representations are the representations of the double affine Artin groups that factor through representations of the associated Hecke algebras (with specialized parameters), more precisely Cherednik's perfect and quasi-perfect representations of double affine Hecke algebras \cite{CheDou-1}*{\S2.9.3, \S3.10.3} are of this type. Depending on the particular situation, we expect that the projective representations of $\Gamma$ described in Theorem \ref{state5} will lift to (usual) representations of a finite cover of $\Gamma$.

One particular class of examples of simple, finite-dimensional, $\Gamma$-stable representations are those that factor through a representation of the double affine Weyl groups $\Cox(\DDDot{X}_n)$ and $\Cox(\fH{X}_n)$. Such representations of $\Cox(\DDDot{X}_n)$ and $\Cox(\fH{X}_n)$ were constructed by Kac and Peterson  \cite{KacInf}*{Ch. 13} on spaces $Th_k$  of level $k$ theta functions. In this situation, $Th_k^{C(X_n)}$, which is the isotypic component of the trivial representation of the finite Weyl group $C(X_n)$ (the Coxeter group to the diagram $X_n$), can be seen as the representation ring of the level $k$ irreducible objects in the category $\O$ for the corresponding affine Lie algebra, or equivalently, as the linear span of the characters of its level $k$ highest-weight modules. Theorem \ref{state5} states that there is a projective representation of $\Gamma$ on  $Th_k^{C(X_n)}$. It can be directly verified that this projective representation lifts to a representation of $\Gamma$, recovering thus the classical result of Kac and Peterson \cite{KacInf}*{\S13.8, \S13.9}. Just as in the classical Kac-Peterson situation, it is reasonable to expect that, in the general context of Theorem \ref{state5}, functions arising from the structure of a simple $\Gamma$-stable representation (such as $B(X_n)$-type multiplicities) are modular forms.

Since, depending on one's point of view, the double affine Artin groups are labeled in various ways (double affine Coxeter diagrams, irreducible reduced affine root systems/affine Dynkin diagrams, adjoint double data) 
 we indicate in Table \ref{table: all} the precise correspondence between  labels. The groups labeled by symbols in the same row coincide. Whenever labels are missing from column (indicated by \tquest), the construction of the corresponding double affine Artin group from that point of view is missing from the literature. For completeness we have specified which are the double affine Artin groups corresponding to nonreduced irreducible affine root systems.

\begin{table}[ht]
\caption{Labels for double affine Artin groups}\label{table: all}
\begin{tabular}{ l | l l c l c}
$r$& Coxeter 						& Affine Dynkin			&  Adjoint  Data				 
&& Macdonald \\
\hline
1&$\DDDot{A}_1$ 					& $A_1^{(1)}, (BC_1, C_1),$	&$A_1^u$
&& $\left(\begin{smallmatrix} S(A_1)^\vee & S(A_1)^\vee \\ A_1 & A_1 \\ Q(A_1)^\vee & Q(A_1)^\vee \end{smallmatrix} \right) $\\
" &	"	 					& $(C_1^\vee, C_1)$						& " 
&& $ \left(\begin{smallmatrix} (C_1^\vee, C_1) & (C_1^\vee, C_1) \\ C_1 & C_1 \\ Q(C_1)^\vee & Q(C_1)^\vee \end{smallmatrix} \right) $\\ 
1&$\DDDot{A}_n, n\geq 2$ 			& $A_n^{(1)}$						&$A_n^u$		
&&  $\left(\begin{smallmatrix} S(A_n)^\vee & S(A_n)^\vee \\ A_n & A_n \\ Q(A_n)^\vee & Q(A_n)^\vee \end{smallmatrix} \right)$\\ 
1&$\DDDot{B}_n, n\geq 3$ 			& $B_n^{(1)}$						&$B_n^u$		
&& $\left(\begin{smallmatrix} S(B_n)^\vee & S(B_n)^\vee \\ B_n & B_n \\ Q(B_n)^\vee & Q(B_n)^\vee \end{smallmatrix} \right) $ \\ 
1&$\DDDot{C}_n, n\geq 2$ 			& $C_n^{(1)}, (BC_n, C_n), $			&	$C_n^u$	
&& $\left(\begin{smallmatrix} S(C_n)^\vee & S(C_n)^\vee \\ C_n & C_n \\ Q(C_n)^\vee & Q(C_n)^\vee \end{smallmatrix} \right) $ \\
" &	"						& $(C_n^\vee, C_n)$						&"	
&& $ \left(\begin{smallmatrix} (C_n^\vee, C_n) & (C_n^\vee, C_n) \\ C_n & C_n \\ Q(C_n)^\vee & Q(C_n)^\vee \end{smallmatrix} \right)  $ \\ 
1&$\DDDot{D}_n, n\geq 4$ 			& $D_n^{(1)}$						&$D_n^u$		
&& $\left(\begin{smallmatrix} S(D_n)^\vee & S(D_n)^\vee \\ D_n & D_n \\ Q(D_n)^\vee & Q(D_n)^\vee \end{smallmatrix} \right)$ \\ 
1&$\DDDot{E}_n, n=6,7,8$ 			& $E_n^{(1)}$						&$E_n^u$		
&& $\left(\begin{smallmatrix} S(E_n)^\vee & S(E_n)^\vee \\ E_n & E_n \\ Q(E_n)^\vee & Q(E_n)^\vee \end{smallmatrix} \right)$ \\ 
1&$\DDDot{F}_4$ 					& $F_4^{(1)}$						&$F_4^u$		
&& $\left(\begin{smallmatrix} S(F_4)^\vee & S(F_4)^\vee \\ F_4 & F_4 \\ Q(F_4)^\vee & Q(F_4)^\vee \end{smallmatrix} \right) $ \\ 
1&$\DDDot{G}_2$ 					& $G_2^{(1)}$						&$G_2^u$		
&&$\left(\begin{smallmatrix} S(G_2)^\vee & S(G_2)^\vee \\ G_2 & G_2 \\ Q(G_2)^\vee & Q(G_2)^\vee \end{smallmatrix} \right) $ \\ \hline
2&$\DDot{A}_1$ 				& $A_2^{(2)}, (C_1^\vee, BC_1)$			&\tquest	
&& \tquest \\ 
2&$\DDot{C}_n, n\geq 2$ 			& $A_{2n}^{(2)}, (C_n^\vee, BC_n)$		&\tquest		
&& \tquest \\ \hline
2&$\fH{B}_2/\fH{C}_2$ 					& $D_3^{(2)}, A_3^{(2)}, (C_2, C_2^\vee)$			&$B_2^t, C_2^t$		
&& $\left(\begin{smallmatrix} S(C_2) & S(C_2^\vee) \\ C_2 & C_2^\vee \\ Q(C_2) & Q(C_2)^\vee \end{smallmatrix} \right) $\\ 
2&$\fH{B}_n/\fH{C}_n, n\geq 3$ 	& $D_{n+1}^{(2)}, A_{2n-1}^{(2)}, (B_n, B_n^\vee)$	&$B_n^t, C_n^t$ 
&& $\left(\begin{smallmatrix} S(C_n) & S(C_n^\vee) \\ C_n & C_n^\vee \\ Q(C_n) & Q(C_n)^\vee \end{smallmatrix} \right) $\\  
2&$\fH{F}_4$ 					& $E_6^{(2)}$						&$F_4^t$		
&& $\left(\begin{smallmatrix} S(F_4) & S(F_4^\vee) \\ F_4 & F_4^\vee \\ Q(F_4) & Q(F_4)^\vee \end{smallmatrix} \right) $ \\ 
3&$\fH{G}_2$ 					& $D_4^{(3)}$						&$G_2^t$		
&& $\left(\begin{smallmatrix} S(G_2) & S(G_2^\vee) \\ G_2 & G_2^\vee \\ Q(G_2) & Q(G_2)^\vee \end{smallmatrix} \right) $ \\ 

\end{tabular}
\end{table}

We have also included Macdonald's parametrization in Table \ref{table: all}. Macdonald \cite{MacAff} labels the extended double affine Artin groups by data of the form 
$$\left(
\begin{matrix}
S & S^\prime \\
R & R^\prime \\
L & L^\prime
\end{matrix}
\right)
$$
where $S, S^\prime$ are irreducible affine root systems, $R, R^\prime$ are irreducible finite root systems, and $L, L^\prime$ are lattices. Only specific choices entries are allowed and they are specified in \cite{MacAff}*{(1.4.1), (1.4.2), (1.4.3)}. Macdonald constructs the extended double affine Artin groups. In Table \ref{table: all} we list the label that corresponds to the double affine Artin groups. This corresponds to letting $L, L^\prime$ be root or coroot lattices instead of weight or coweight lattices. The notation in Table \ref{table: all} for the Macdonald label uses $X_n$ and $X_n^\vee$ for a finite root system and its dual, $Q(X_n)$ and $Q(X_n)^\vee$ for the root and coroot lattice of $X_n$, and $S(X_n)$  and $S(X_n)^\vee$ for the untwisted affine root system corresponding to $X_n$  and the dual affine root system. As it can be seen from the table there are redundancies in the Macdonald's parametrization and also in the parametrization by affine root systems and these were not previously observed in the literature.

\subsection{Structure of the proofs} 
The double affine Artin groups appeared for the first time (under the name of extended Artin groups) in the work of van der Lek \cites{vdLExt, vdLHom} as the fundamental groups of the space of regular orbits of a natural action of the double affine Weyl groups. The regular points for the action consists of the complement of a certain complex hyperplane arrangement. Van der Lek described the fundamental group by generators and relations and his presentation matches precisely the subsequent definition by Cherednik \cite{CheDou-2} of the group that is ultimately fully responsible for the double affine Hecke algebra structure. The complex hyperplane arrangement itself originated as the  discriminant of a semi-universal deformation of simply-elliptic singularities \cite{LooInv} and thus the fundamental group becomes the monodromy group for this type of singularities.

From topological considerations, van der Lek identifies a finite set of relations that are sufficient to present a double affine Artin group. We refine his presentation to further reduce the number of relations in the presentation that do not typically appear in the context of Coxeter braid groups to one class of relations (there is an exception, which we identify, where an additional relation is required). Using this refined presentation we can easily establish Theorem \ref{state2}. The extra relations from the refined van der Lek presentation match the relations used in defining the quotients $\B(\DDDot{X}_n)$, $\B(\DDot{C}_n)$, and $\B(\fH{X}_n)$. The three affine generators in the definition of $\B(\DDDot{X}_n)$ explicitly appear for the first time in \cite{SahNon}  where the double affine Hecke algebra of type $(C_n^\vee, C_n)$ was considered. 

Theorem \ref{state3} is easily verified from the vantage point  of the Coxeter-type presentation. The action of $\widetilde{\Gamma}$ is always trivial on the generators associated to finite nodes and it is therefore determined by the action on the affine generators. For $\wGamma(1)$, which is the Coxeter braid group of type $A_2$, this action is precisely the canonical action of the braid group of type $A_2$ on the free group on three letters. The action of the  Coxeter braid groups of type $B_2$ and $G_2$  is quite subtle and does not seem to have been considered classically. An explicit calculation of the action of the center of $\widetilde{\Gamma}$ allows us to deduce that  the action of $\widetilde{\Gamma}$ descents to the canonical topological action of $\Gamma$ as outer automorphisms. As it is clear from the statement, it is enough to prove Theorem \ref{state6} for the double affine Artin groups attached to semisimple data and this is the subject of Theorem \ref{thm: autoall-ss}. The proof of Theorem \ref{thm: autoall-ss} proceeds by first extending the action of $\widetilde{\Gamma}$ to $\B^e$ and then deducing that there exists a finite index subgroup of $\widetilde{\Gamma}$ that stabilizes all the intermediate subgroups between $\B$ and $\B^e$. Theorem \ref{state4} is a consequence of the explicit knowledge of the action of $\widetilde{\Gamma}$.

\subsection{Structure of the paper}\label{sec: structure-paper} The expository material pertaining to double affine Artin groups is contained in \S\ref{sec: DAAGs}. The Coxeter-type presentation for double affine Artin groups (Theorem \ref{state2}) is the subject of \S\ref{sec: Coxeter}, but some preliminary technical facts that refine van der Lek's presentation are obtained in \S\ref{refinement}. The construction of the $\widetilde{\Gamma}$ action and the proof of Theorem \ref{state3} are contained in \S\ref{sec: auto}. The basic involutions that originate from the action of $\widetilde{\Gamma}$ are described in \S\ref{sec: involutions}. In \S\ref{sec: other} we discuss other automorphisms of the double affine Artin groups that are revealed by the Coxeter-type presentation. In \S\ref{sec: reductive-DABG} we study the double affine Artin groups attached to reductive data and obtain their structure as an almost-direct product of the double affine Artin groups attached to the underlying semisimple data and central data. The groups attached to simply connected semisimple data (extended double affine Artin groups) are investigated in more detail in \S\ref{sec: ext-DABG}, which also contains the proof of Theorem \ref{state6}. The results concerning double affine Hecke algebras are contained in \S\ref{sec: hecke}.

\begin{ack} Part of this work was carried during the Workshop on Hecke Algebras and Lie Theory, which was held at the University of Ottawa during May 12-15, 2016. The authors thank the National Science Foundation, the Fields Institute, and the University of Ottawa for funding this workshop. We thank  Ivan Cherednik and Jasper Stokman for their interest and their comments on the manuscript.
\end{ack}

\section{Notation and conventions}

\subsection{}
Hereafter, unless otherwise specified all vector spaces are complex vector spaces. When a complex structure on a complex vector space is specified, $\Real$ and $\Imag$ refer to the maps identifying the real and imaginary components of an element. The symbols  $\Z$, $\Re$, $\Co$ refer, as usual, to the set of integers, real numbers, and complex numbers, respectively. If $G$ is a group, $\Aut(G)$ and $\Out(G)$  refer to the groups of automorphisms and, respectively, outer automorphisms of $G$. Furthermore, if $\mathcal{S}$ is a subset of $G$ we denote by $\<\mathcal{S}\>$ the subgroup of $G$ generated by $\mathcal{S}$.

\subsection{} Let $0\leq p\leq 4$ and let $a,b$ two elements of a fixed group. We say that $a$ and $b$ satisfy the $p$-braid relation if
\begin{equation}
aba\cdots=bab\cdots,
\end{equation}
where there are $2,3,4,6$ or $\infty$ factors on each side if $p=0,1,2,3$, or greater than $4$, respectively. As usual, for $p\geq 4$, the $p$-braid relation is interpreted as the empty relation between $a$ and $b$. A concise way to specify pairwise braid relations for a set of elements is by encoding the information in a graph, called Coxeter graph or Coxeter diagram, as follows: the nodes correspond to elements and if two nodes are connected by $p$ edges ($p\leq 4$) then the corresponding two elements satisfy the $p$-braid relation. 

\subsection{}  
To a Coxeter diagram $D$ we can associate the corresponding Coxeter braid group $B(D)$ defined as the group generated by a set of elements $T_i$ , one for for each node in $D$, that satisfy the braid relations specified by the number of edges between nodes. Remark that, by the nature of the braid relations, $B(D)^{\rm op}$ (defined as $B(D)$ with the opposed multiplication) is always isomorphic to $B(D)$, a canonical isomorphism being the inverse map; a second canonical isomorphism between $B(D)$ and $B(D)^{\rm op}$ is the group morphism that acts as identity on generators. Another important group associated  to $D$ is the Coxeter group $C(D)$ defined as the quotient of $B(D)$  by the normal subgroup generated by the squares of the generators. The canonical projection
\begin{equation}
\pi_D: B(D)\to C(D)
\end{equation}
has a canonical (set-theoretic) section
\begin{equation}
t_D: C(D)\to B(D)
\end{equation}
which is defined as follows: for $w\in C(D)$, let $t_D(w)$ be the unique element of $\pi_D^{-1}(w)$ that minimizes the length of its expression as a product of the generators of $B(D)$.

\subsection{}\label{sec: cl} 
The diagrams that we will ultimately consider in this paper are extensions of the finite crystallographic Coxeter diagrams. The additional nodes, which will be clearly identified, will be called affine nodes, and we will refer to the nodes of the finite crystallographic Coxeter diagram as finite nodes. Whenever it is necessary to label the finite nodes (by positive integers)  we will label then in the conventional fashion (see e.g. \cite{KacInf}*{Table Fin}). If there is only one affine node, it will be labeled by $0$. If there are two or more affine nodes their labelling will be carefully specified.

\subsection{}  To the Coxeter diagram $D$ we can also associate a Hecke algebra. For this one needs a field $\F=\Rat(t_1^{\frac{1}{2}},\dots,t_n^{\frac{1}{2}})$ with the formal parameters $t_i$ indexed by the nodes of $D$. The Coxeter Hecke algebra $H(D)$ is  defined as the quotient of the $\F$-group algebra of $B(D)$ by the ideal generated by the set quadratic relations 
$$
T_i-T_i^{-1}=t_i^{\frac{1}{2}}-t_i^{-\frac{1}{2}}.
$$
The Hecke parameters have to coincide if the corresponding generators are conjugate inside $B(D)$, or equivalently, if the corresponding nodes can be connected in $D$ by a path consisting of simple edges. We call $1$-connected component of $D$ a path-connected component of the graph obtained from  $D$ by erasing all the multiple edges. Therefore, the set of independent Hecke parameters can be  labelled by the $1$-connected components of $D$ or by any choice of representatives  for the $1$-connected components.


\section{Affine root systems}\label{sec: DAAGs}


\subsection{}  For the most part we follow the notation in \cite{KacInf}. Let $A=(a_{ij})_{0\leq i,j\leq n}$ be an indecomposable affine Cartan matrix of rank $n$, $D(A)$ the Dynkin diagram, and  $(a_0,\dots, a_n)$  the numerical labels of $D(A)$ in Table Aff from \cite{KacInf}*{pg.54--55}. Note that we consider that the nodes $i$ and $j$ in $D(A)$ are connected by $a_{ij}a_{ji}$ laces. Unless $A=A_1^{(1)}$ this produces the same diagrams as in \cite{KacInf}.

We denote by $(a_0^\vee,\dots, a_n^\vee)$ the labels of the dual Dynkin diagram $D({}^t\! A)$ which is obtained from $D(A)$ by reversing the direction of all arrows and keeping the same enumeration of the vertices. The associated finite Cartan matrix is $\Aring = (a_{ij})_{1\leq i,j\leq n}$.   Note that  $a_0^\vee=1$ for all indecomposable affine Cartan matrices while $a_0=1$ in all cases except for $A=A_{2n}^{(2)}$ for which $a_0=2$.

\subsection{} 
Let $(\Haff, R, R^{\vee})$ and $(\Hring, \Rring, \Rring^{\vee})$ be realizations of $A$ and  $\Aring$, respectively.  The root system $R$ is said to be untwisted if  the corresponding Dynkin diagram appears in Table Aff 1 from \cite{KacInf}*{pg.54} and it is said to be twisted (of twist number $2$ or $3$) if  the corresponding Dynkin diagram appears in Table Aff 2 or Aff 3 in \cite{KacInf}*{pg.55}.

We can arrange that $\Hring\subset \Haff$ and  $\Rring\subset R$.  Let  $\{\a_i\}_{0\leq i\leq n}\subset \Haff^*$ be a basis of $R$ such that $\{\a_i\}_{1\leq i\leq n}$ is  a basis of $\Rring$ and let $\{\a_i^\vee\}_{0\leq i\leq n}$ the corresponding set of coroots. The choice of basis determines subsets of positive roots $R^+\subset R$ and $\Rring^+=R^+\cap \Rring \subset \Rring$;  with the notation $R^-:=-R^+$, $\Rring^-:=\Rring^+$ we have $R=R^+\cup R^-$ and $\Rring=\Rring^+\cup \Rring^-$. We denote by $R^{re}$ and $R^{im}$ the set of real roots and, respectively, null-roots of $R$.

\subsection{}
The positive non-divisible null-root in $R$ is
\begin{equation}
\d=a_0\a_0+\cdots+a_n\a_n.
\end{equation}
  Fix $\L_0\in\Haff^*$ such that $\L_0(\a_i^\vee)=\delta_{i,0}a_0$; $\L_0$ is unique modulo the subspace spanned by $\delta$. The vector space $\Haff^*$ can be written as
\begin{equation}
\Haff^*=\Hring^*\oplus\Co \d\oplus\Co \L_0.
\end{equation}

Let us denote by $\ph$ the highest root in $\Rring^+$.  An important role is played by the root
\begin{equation}
\th=a_1\a_1+\cdots+a_n\a_n.
\end{equation}
If $R$ is  untwisted or of type $A_{2n}^{(2)}$ we have $\th=\ph$; otherwise, $\Rring$ is not simply-laced and $\th$ is the short dominant root in $\Rring^+$.

The weight, coweight, root, and coroot lattices of $\G$  are denoted by $\Pring$, $\Pring^\vee$, $\Qring$ and $\Qring^\vee$, respectively. The root and coroot lattices of $\Gaff$ are denoted by $Q$ and $Q^\vee$, respectively. We note that we always have $Q^\vee=\Qring^\vee\oplus\Z\d$.

\subsection{} 

We will add $\Re$ as a subscript whenever we refer to the real form   of  $\Hring^*$ spanned by  the simple roots, or the real form of $\Haff^*$ spanned by  the simple roots and $\L_0$, respectively. 

The following defines the non-degenerate normalized standard bilinear  form $(\ , \ )$ on $\Haff_\Re^*$:
\begin{equation}
(\a_i,\a_j):=d_i^{-1}a_{ij}, \ 0\leq i,j\leq n\ ,\quad
(\L_0,\a_i):=\d_{i,0}a_0^{-1},\quad \text{and}\quad(\L_0,\L_0):=0,
\end{equation}
with $d_i:= a_ia_i^{{\vee}-1}$. In particular, we have
\begin{equation}
(\d,\Hring_\Re^*)=0, \quad (\d,\d)=0, \quad\text{and}\quad (\d,\L_0)=1.
\end{equation}
The corresponding isomorphism $\nu:\Haff_\Re\to\Haff_\Re^*$ sends $\a_i^\vee$ to $d_i\a_i$. This isomorphism allows us to routinely identify elements via $\nu$ and regard, for example, coroots as elements of $\Haff_\Re^*$, which is something we will do without further warning.  The form $(\ , \ )$ is extended to a symmetric form on $\Haff^*$ by $\Co$-bilinearity.

\subsection{} 

With respect to $(\ ,\ )$, the elements of $R^{re}$ have three possible lengths if $R$ is of type $A_{2n}^{(2)}$, $n\geq 2$, the one possible length if the affine Dynkin diagram is simply laced, and two possible lengths otherwise. We denote the set of short roots by $R_s$, the set of long roots by $R_\ell$, and, for $\Gaff$ of type $A_{2n}^{(2)}$, we denote the set of medium length roots by $R_m$. To avoid making the distinction later on, if there is only one root length we consider all real roots to be long. Similar notation and conventions apply to $\Rring$.

\subsection{}

For $0\leq i\leq n$, let $e_i=\max\{a_0^{-1}, d_i\}$.  Let us remark that the numbers $e_i$ depend only on the length of the corresponding simple root. The integer 
$$r:={\max_{0\leq i\leq n}\{d_i^{-1}\}}$$ 
is an important invariant. By definition, 
$$
\max_{\a\in R}(\a,\a)=2r.
$$
It is easy to see that $r\in\{1,2,3\}$ and that $A$ is listed in Table Aff $r$ in \cite{KacInf}*{pg. 54--55}. In other words, $r$ is the twist number of the root system $R$.
\subsection{}

Given $\a\in R^{re}$  and $x\in \Haff^*$ let
\begin{equation}
s_\a(x):=x-(x,\a^\vee)\a\ .
\end{equation}
The affine Weyl group $W$ is the subgroup of ${\GL}(\Haff^*)$ generated by all $s_\a$ (the simple reflections $s_i=s_{\a_i}$, $0\le i\leq n$ are enough). The finite Weyl group $\W$ is the subgroup generated by $s_1,\dots,s_n$. The bilinear form on $\Haff_\Re^*$ is equivariant with respect to the affine Weyl group action. Both the finite and the affine Weyl group are Coxeter groups and they can be abstractly defined the Coxeter groups associated $D(\Aring)$ and $D(A)$, respectively. For $x\in \Haff^*$ we denote by $W(x)$ and $\W(x)$ the orbit of $x$ under the action of $W$ and $\W$, respectively. 

\subsection{}
For each $w$ in $W$ let $\ell(w)$ be the length of a reduced decomposition of $w$ in terms of the simple reflections $s_i$, $0\leq i\leq n$. We call $\ell: W\to \Z_{\geq 0}$ the length function of $W$ and, for any $w\in W$, we refer to $\ell(w)$ as the length of $w$. The longest element of $\W$ is denoted by $w_\circ$.

\subsection{}
Let $M\subset\Hring_\Re^*$ be the lattice generated by $\W(a_0^{-1}\th)$; note that $a_0^{-1}\th=\nu(\th^\vee)$.  The lattice $M$ can be described more explicitly as  the lattice generated by $\{A_i=e_i\a_i\}_{1\leq i\leq n}$. It turns  out that each $A_i$ is either $\a_i$ or $\a_i^\vee$; we denote by $A_i^\vee$ the element $2A_i/(A_i,A_i)$, which equals $\a_i^\vee$ or $\a_i$, respectively. The lattice $M$ is equal to $\nu(\Qring^\vee)$ if $r=1$ and equal to $\Qring$ if $r=2,3$.

The affine Weyl group contains the finite Weyl group and a normal abelian subgroup isomorphic to $M$. We will denote the latter by $\l(M)$ and its elements by $\l_\mu$, $\mu\in M$. The action by conjugation of $\W$ on $\l(M)$ and the usual action of $\W$ on $M\subset \Hring_\Re^*$ are related by
\begin{equation}
\w\l_\mu \w^{-1}=\l_{\w(\mu)}.
\end{equation}
This allows $W$ to be presented as the semidirect product $\W\ltimes M$. Thus, $W$ is isomorphic to $\W\ltimes \Qring^\vee$ if $R$ is untwisted, and isomorphic to $\W\ltimes \Qring$ if $R$ is twisted.

\subsection{}

The double affine Weyl group $\Wdaff$ is defined to be the semidirect product $W\ltimes Q^\vee$ of the affine Weyl group and the coroot lattice $Q^\vee$. 
We use $\tau(Q^\vee)$ to refer to $Q^\vee$ as a subgroup of $\Wdaff$ and we denote its elements by  $\tau_\b$, $\b\in Q^\vee$. The action by conjugation of $W$ on $\tau(Q^\vee)$ and the usual action of $W$ on $Q^\vee\subset \Haff_\Re^*$ are related by
\begin{equation}
w\tau_\b w^{-1}=\tau_{w(\b)}.
\end{equation}

\begin{Prop}\label{wdef}
The double affine Weyl group $\Wdaff$ is the group generated by the finite Weyl group $\W$, two lattices $\{\l_\mu\}_{\mu\in M}$, $\{\tau_\b\}_{\b\in \Qring^\vee}$ and an element $\tau_{\d}$ satisfying the relations:
\begin{subequations} 
 \begin{alignat}{2}
& \w\l_\mu \w^{-1}=\l_{\w(\mu)} \text{ and } \w\tau_\b \w^{-1}=\tau_{\w(\b)} ,\\ 
&\l_\mu\tau_\b=\tau_\b\l_\mu\tau^{-(\b,\mu)}_{\d},\\ 
&\tau_{\d} \text{ is central}.
 \end{alignat}
 \end{subequations}
for any $\w\in\W$, $\mu\in M$, and $\b\in \Qring^\vee$.
\end{Prop}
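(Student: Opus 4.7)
The plan is to introduce the abstract group $G$ defined by the presentation in the proposition, construct a surjective homomorphism $\phi: G \to \Wdaff$ by sending generators to their namesakes, and then prove $\phi$ is injective by exhibiting a normal form on $G$.

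First I would verify that the three relations hold in $\Wdaff$, so that $\phi$ is well defined. Relation (a) records the iterated semidirect product structure $\Wdaff = (\W \ltimes \l(M)) \ltimes \tau(Q^\vee)$, together with the fact that $\W$ acts on $\Qring^\vee$ through its natural (finite) action. Relation (b) amounts to computing the affine action of the translation $\l_\mu \in W$ on $\b \in \Qring^\vee \subset \H^*$: the standard formula for the action of a translation on $\Haff^*$, together with $(\b,\d)=0$ (because $\b$ lies in the finite part), gives $\l_\mu(\b) = \b - (\mu,\b)\d$. Hence
\[
\l_\mu \tau_\b \l_\mu^{-1} = \tau_{\b - (\mu,\b)\d} = \tau_\b\, \tau_\d^{-(\b,\mu)},
\]
which, after moving $\l_\mu$ back to the right using the centrality of $\tau_\d$, is exactly (b). Relation (c) is immediate since $\d$ is the null root, hence $W$-invariant, and $\tau(Q^\vee)$ is abelian. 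Surjectivity of $\phi$ is clear, because $\Wdaff$ is generated by $W$ and $\tau(Q^\vee)$, and $Q^\vee = \Qring^\vee \oplus \Z\d$.

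For injectivity I would establish that every element of $G$ admits a normal form $w\,\l_\mu\,\tau_\b\,\tau_\d^{k}$ with $(w,\mu,\b,k) \in \W \times M \times \Qring^\vee \times \Z$. Starting from an arbitrary word in the generators, relation (a) transports each $\W$-letter to the extreme left past all $\l$'s and $\tau$'s; relation (b) transports each $\l_\mu$ past any $\tau_\b$ at the cost of a power of $\tau_\d$; centrality from (c) collects these powers at the right. The abelian lattice structures on $\{\l_\mu\}$ and $\{\tau_\b\}$ then consolidate consecutive $\l$'s and $\tau$'s into single factors. This produces a surjection of sets $\W \times M \times \Qring^\vee \times \Z \twoheadrightarrow G$.

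To conclude, the composition
\[
\W \times M \times \Qring^\vee \times \Z \twoheadrightarrow G \xrightarrow{\phi} \Wdaff, \qquad (w,\mu,\b,k) \mapsto w\,\l_\mu\,\tau_\b\,\tau_\d^{k},
\]
is a bijection, because $\Wdaff = \W \ltimes \l(M) \ltimes \tau(Q^\vee)$ with $\tau(Q^\vee) \cong \tau(\Qring^\vee) \oplus \tau(\Z\d)$ admits exactly this unique factorization of each element. Therefore both arrows in the composition are bijections, and $\phi$ is an isomorphism. The main subtlety is purely computational, namely the verification of (b): getting the sign and coefficient right in $\l_\mu(\b) = \b - (\mu,\b)\d$, which hinges on the standard formula for the affine action and on the vanishing of the quadratic correction term because $(\b,\d)=0$. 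Everything else is a routine semidirect-product style presentation argument.
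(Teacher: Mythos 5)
Your proposal is correct: the verification of relations (a)--(c) in $\Wdaff$ (with the quadratic term in the translation formula vanishing because $(\b,\d)=0$), surjectivity from $Q^\vee=\Qring^\vee\oplus\Z\d$, and injectivity via the normal form $\w\,\l_\mu\,\tau_\b\,\tau_\d^{k}$ matched against the unique factorization in $(\W\ltimes\l(M))\ltimes\tau(Q^\vee)$ is exactly the routine semidirect-product argument this statement calls for. The paper itself offers no proof of Proposition \ref{wdef}, treating it as immediate from the definition $\Wdaff=W\ltimes Q^\vee$ with $W=\W\ltimes M$, so your write-up simply supplies the standard details the authors left implicit.
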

The double affine Weyl group has a natural affine action on $\Haff^*$ that extends the linear action of $W$ on $\Haff^*$ by  
\begin{equation}
\tau_\b(x)=x+\b, \quad \text{for all } \b\in Q^\vee, ~x\in \Haff^*.
\end{equation}
We refer to this action as the  defining action of $\Wdaff$.
It is important to remark that $\W\ltimes \tau(\Qring^\vee)$, which is a subgroup of $\Wdaff$, is also a Coxeter group isomorphic to the affine  Weyl group corresponding to $A$ if $r=1$ or to ${}^t\! A$ if $r=2,3$.
\subsection{} Following van der Lek \cite{vdLHom} we define the affine Artin group as the fundamental group of a certain topological space. We briefly recall this construction. Let
\begin{equation}
\Omega=\{ x\in \Haff^*~|~ \Imag (x , \d)> 0 \}.
\end{equation}
For $\a\in {R}^{re}$ denote by ${H}_{\a}$ the complex hyperplane $\{x\in \Haff^*~|~(x,\a)=0\}\subset \Haff^*$ and consider
\begin{equation}
Y=\Omega\setminus \bigcup_{\a\in {R}^{re}} {H}_{\a}.
\end{equation}
The affine Weyl group $W$ acts freely and properly discontinuously on $Y$. Denote by  $X$  the space of $W$-orbits on $Y$. The affine Artin group  is defined as the fundamental group of $X$ and is denoted by $\A(R)$.

The Artin group associated to the finite root system $\Rring$, called the finite Artin group  and denoted by  $\A(\Rring)$, can be defined in a similar manner, but  it can also be realized as subgroup of $\A(R)$ \cite{vdLHom}*{Ch. III, Lemma 4.1}.

\subsection{}  The finite and affine Weyl groups are Coxeter groups; in these cases, the associated Coxeter braid groups turn out to be isomorphic to the corresponding Artin groups \cite{DelImm}.

\begin{Prop} With the notation above we have
\begin{enumerate}[label={\roman*)}]
\item the finite Artin group $\A(\Rring)$ is the group generated
by elements
$$T_1,\dots,T_n$$
satisfying the same braid relations as the reflections
$s_1,\dots,s_n$; 
\item the affine Artin group $\A(R)$ is the
group generated by the elements
$$T_0,\dots,T_n$$
satisfying the same braid relations as the reflections
$s_0,\dots,s_n$.
\end{enumerate}
We refer to the above presentations as the Coxeter presentations of $\A(\Rring)$ and $\A(R)$.
\end{Prop}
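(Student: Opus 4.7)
Both presentations reduce to the classical theorem of Brieskorn--Deligne cited as \cite{DelImm}, which asserts that for any finite real reflection group the fundamental group of the regular-orbit space in the complexified reflection representation is the associated Coxeter braid group. The task is to recognize that this theorem applies directly in (i) and, via a local-to-global argument, in (ii).

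For part (i), I would apply Deligne's theorem to $\W$ acting on $\H_\Re^*$ with its standard reflection representation. His result shows that the fundamental group of $\bigl(\H_\Co^*\setminus\bigcup_{\a\in\Rring^{re}} H_\a\bigr)/\W$ is generated by meridian loops $T_i$, $1\leq i\leq n$, encircling the simple hyperplanes $H_{\a_i}$ once in the positive direction from a basepoint in the fundamental chamber, subject only to the braid relations dictated by the orders of the products $s_is_j$ in $\W$. Since the finite Artin group has already been identified with a subgroup of $\A(W)$ via \cite{vdLHom}*{Ch. III, Lemma 4.1}, and that identification is precisely the one coming from inclusion of the finite arrangement into the affine one, the presentation transfers verbatim to $\A(\W)$.

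For part (ii), the same philosophy applies inside $\Omega$. I would fix a fundamental alcove for the $W$-action on $\Omega\cap\Haff_\Re^*$, choose a basepoint $p$ in its complexification, and define $T_0,\dots,T_n$ to be the meridian loops around the walls $H_{\a_0},\dots,H_{\a_n}$ of this alcove. The braid relations are then verified locally: near any codimension-two stratum where $H_{\a_i}\cap H_{\a_j}$ is generic, a transverse slice carries the complexification of the finite rank-two real reflection arrangement generated by $s_i,s_j$, so Deligne's theorem applies on the slice and yields exactly the braid relation determined by the order of $s_is_j$.

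The main obstacle, and the heart of van der Lek's contribution \cite{vdLHom}, is completeness: one must show that these local braid relations generate \emph{all} relations in $\pi_1(X)$. I would handle this by constructing a $W$-equivariant cell decomposition of $Y$ compatible with the hyperplane stratification, using the contractibility of $\Omega$ and the fact that the union of the closed alcoves tiles $\Omega\cap\Haff_\Re^*$. A van Kampen analysis on the $2$-skeleton of the resulting CW structure on $X$ then produces the meridians $T_i$ from the $1$-cells (which correspond to walls of the fundamental alcove) and the braid relations from the $2$-cells (which correspond to codimension-two strata), so that no further relation survives. This completes the identification of $\A(W)$ with the Coxeter braid group of $W$.
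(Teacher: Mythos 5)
The paper offers no proof of this proposition: it is recalled as a known result, with the single citation \cite{DelImm}, which in fact covers only the finite (spherical) case (i); the affine case (ii) is van der Lek's theorem from \cite{vdLHom}, the source the paper relies on for all of its topological presentations. Your part (i) is exactly the cited Deligne theorem, so there is nothing to add there.

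For part (ii) your strategy (meridian loops around the walls of a fundamental alcove, rank-two braid relations verified on transverse slices near generic codimension-two strata, completeness via a CW/van Kampen argument) is indeed the route taken in \cite{vdLHom}, but the decisive step is asserted rather than argued: you posit a $W$-equivariant cell decomposition of $Y$ ``compatible with the hyperplane stratification'' whose quotient $2$-skeleton carries exactly the meridian generators and the codimension-two braid relations, with no further relations surviving. Producing such a homotopy model for the complement of this infinite arrangement (a Salvetti-type complex built from the alcove/gallery combinatorics together with a proof that it is homotopy equivalent to $Y$, or van der Lek's inductive reduction) is precisely the content of the theorem; contractibility of $\Omega$ and the alcove tiling do not by themselves exclude relations coming from the global topology of the quotient, so the ``heart of van der Lek's contribution'' that you correctly identify is left unestablished by the sketch. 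There is also a slip in the setup: $\Omega\cap\Haff_\Re^*=\emptyset$, since points of the real form pair really with $\delta$, so ``a fundamental alcove for the $W$-action on $\Omega\cap\Haff_\Re^*$'' must be rephrased — the alcove decomposition lives on the imaginary parts, i.e.\ one should choose a basepoint $x$ with $\Imag x$ in the open fundamental alcove inside the interior of the Tits cone $\{y\in\Haff_\Re^*\mid (y,\delta)>0\}$. In short, your plan is the correct outline of the results the paper cites, but as a proof it stops where \cite{vdLHom} begins.
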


\begin{Rem}\label{rem: affine=untwisted}
The Coxeter presentation makes clear that the finite and affine Artin groups depend only on the Coxeter diagram underlying the relevant Dynkin diagram. In particular, any affine Artin group is isomorphic to the affine Artin group associated to an untwisted affine root system. The Coxeter presentation makes also clear how the finite Artin group can be realized as a subgroup inside the affine Artin group. 
\end{Rem}

\subsection{} 
For $w\in W$ we denote by $T_w$ the element of $\A(R)$ defined as $T_{i_p}\cdots T_{i_1}$ if $w=s_{i_p}\cdots s_{i_1}$ is a reduced expression. Since the elements $T_i$ satisfy the same braid relations as the elements $s_i$, the element $T_w$ does not depend on the choice of reduced expression for $w$. We have $T_{s_i}=T_i$ for any  $0\leq i\leq n$.  If $u,v\in W$ such that $\ell(uv)=\ell(u)+\ell(v)$ then we have $T_{uv}=T_u T_v$.

We will use the notation $\Phi=T_{s_\ph}$ and $\Theta=T_{s_\th}$.
\subsection{}

For further use, we introduce the following lattices
\begin{equation}
\Q_Y:=\{Y_\mu;\mu\in M\}\quad \text{and} \quad\Q_X:=\{X_\b;\b\in
\Qring^\vee\}.
\end{equation}
Recall that the affine Weyl group has a second presentation, as a semidirect product. There is a corresponding description of the affine Artin group  due to van der Lek \cite{vdLHom}*{Ch. III, Theorem 5.5} and a closely related presentation independently obtained by Bernstein (unpublished) and Lusztig \cite{LusAff}. To be more precise, Bernstein  and Lusztig  give  the corresponding description of what is called in the literature the \emph{extended Hecke algebra} (the proof also works for the extended Coxeter braid group). Van der Lek's result is more subtle and the proof relies on the topological description of the affine Artin group.

\begin{Prop}\label{bernstein-presentation}
The affine Artin group $\A(R)$ is generated by the finite Artin group $\A(\Rring)$ and the lattice $\Q_Y$ such that the following relations are satisfied for all $1\leq i\leq n$ and $\mu\in M$
\begin{subequations} 
 \begin{alignat}{2}
& T_iY_\mu=Y_\mu T_i   \text{ if } (\mu, A_i^\vee)=0, \\
& T_iY_\mu T_i=Y_{s_i(\mu)} \text{ if } (\mu,A_i^\vee)=1.
\end{alignat}
\end{subequations}
\end{Prop}
\begin{Rem}
The presentation of $\A(R)$ described in Proposition \ref{bernstein-presentation} is referred to in the literature as the Bernstein presentation.
\end{Rem}
\begin{Rem}\label{braid01}
In this description $Y_\mu=T_{\l_\mu}$ for $\mu$ any anti-dominant element of $M$. For example, $$Y_{-a_0^{-1}\th}=\Theta T_0.$$ Proposition \ref{bernstein-presentation} implies that the element $\Theta^{-1}Y_{-a_0^{-1}\th}$ satisfies the $a_{0i}a_{i0}$-braid relations with the generators $T_i$, $1\leq i\leq n$. 
\end{Rem}
\begin{Rem}\label{rem: double braid}
As we already pointed out in Remark \ref{rem: affine=untwisted} we may assume that any affine Artin group is the affine Artin group associated to an untwisted root system. In this case, if $(\a_i, \th^\vee)\neq 0$  then necessarily $(\a_{i},\th^\vee)=1$ and Proposition \ref{bernstein-presentation} implies that $Y_{\th^\vee}$ and $T_{i}Y_{\th^\vee}T_{i}$ commute. In other words, in any affine Artin group, $T_{i}$ and $T_0^{-1}\Theta^{-1}$ satisfy the $2$-braid relation. Keeping in mind that any Coxeter braid group is self-anti-isomorphic through the anti-morphism that acts as identity of the generators, we infer that the same true for $T_{i}$ and $\Theta^{-1}T_0^{-1}$. On the other hand, if $(\a_i, \th^\vee)= 0$ then Proposition \ref{bernstein-presentation} implies that $T_{i}$  and $Y_{\th^\vee}$ commute or, in other words, $T_{i}$  and  $T_0^{-1}\Theta^{-1}$ commute. As before, we infer that $T_{i}$ and $\Theta^{-1}T_0^{-1}$ also commute.
\end{Rem}

\subsection{} 
In fact, van der Lek's description is even more precise; he identifies a finite set of relations which should be imposed.
\begin{Prop}
The affine Artin group $\A(R)$ is generated by the finite Artin group $\A(\Rring)$ and the lattice $\Q_Y$ such that the following
relations are satisfied for $1\leq i,j\leq n$
\begin{enumerate}[label={\alph*)}]
\item For any pair of indices $(i,j)$ such that
  $2r_{ji}=-(A_j,A_i^\vee)$,
with $r_{ji}$ a non-negative integer  we have
\begin{equation}
T_iY_{\mu_j}=Y_{\mu_j} T_i
\end{equation}
where $\mu_j=A_j+r_{ji}A_i$; note that $(\mu_j,A_i^\vee)=0$;
\item For any pair of indices $(i,j)$ such that \
$2r_{ji}-1=-(A_j,A_i^\vee)$, with $r_{ji}$ a non-negative integer we
have
\begin{equation}
T_iY_{\mu_j} T_i=Y_{s_i(\mu_j)}
\end{equation}
where $\mu_j=A_j+r_{ji}A_i$; note that $(\mu_j,A_i^\vee)=1$.
\end{enumerate}
\end{Prop}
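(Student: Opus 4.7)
The plan is to derive every Bernstein relation of Proposition \ref{bernstein-presentation} from the finite family (a), (b), using only the commutativity of the $Y_\mu$ and the braid relations of $\A(\W)$. Since (a) and (b) are themselves instances of the Bernstein relations, the canonical map from the group defined by the refined presentation to $\A(W)$ is well defined, and the proposition amounts to the claim that this map is an isomorphism.

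Fix $i$ and partition the remaining indices $\{1,\ldots,n\}\setminus\{i\}$ into $S_a=\{j:(A_j,A_i^\vee)\text{ is even}\}$ (case (a)) and $S_b=\{j:(A_j,A_i^\vee)\text{ is odd}\}$ (case (b)). Let $M_i^0=\{\mu\in M:(\mu,A_i^\vee)=0\}$. Writing $\mu=\sum_k c_k A_k$ and using $(A_i,A_i^\vee)=2$, $(A_j,A_i^\vee)=-2r_{ji}$ for $j\in S_a$, $(A_j,A_i^\vee)=1-2r_{ji}$ for $j\in S_b$, one sees that $\mu\in M_i^0$ forces $\sum_{j\in S_b}c_j\equiv 0\pmod 2$ together with $c_i=\sum_{j\neq i}c_j r_{ji}-\tfrac12\sum_{j\in S_b}c_j$. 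A direct expansion then shows that $M_i^0$ is generated over $\Z$ by
\[
\{\mu_j^{(a)}:j\in S_a\}\;\cup\;\{\mu_j-\mu_{j_0}:j\in S_b\setminus\{j_0\}\}\;\cup\;\{2\mu_{j_0}-A_i\}
\]
for any fixed $j_0\in S_b$ (and by $\{\mu_j^{(a)}:j\in S_a\}$ alone when $S_b=\emptyset$), where $\mu_j^{(a)}$ denotes the case-(a) element and $\mu_j$ the case-(b) element.

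The next step is to show that $T_i$ commutes with $Y_\nu$ for each of these generators. For $\nu=\mu_j^{(a)}$ this is exactly relation (a). For $\nu=\mu_j-\mu_{j_0}$, inserting $T_i^{-1}T_i$ in the middle and using (b) on $\mu_j$ together with its inverted form $T_i^{-1}Y_{\mu_{j_0}}^{-1}T_i=Y_{\mu_{j_0}-A_i}^{-1}T_i^2$ yields
\[
T_iY_{\mu_j-\mu_{j_0}}T_i=(T_iY_{\mu_j}T_i)(T_i^{-1}Y_{\mu_{j_0}}^{-1}T_i)=Y_{\mu_j-A_i}Y_{\mu_{j_0}-A_i}^{-1}T_i^2=Y_{\mu_j-\mu_{j_0}}T_i^2,
\]
so $T_iY_{\mu_j-\mu_{j_0}}=Y_{\mu_j-\mu_{j_0}}T_i$. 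For $\nu=2\mu_{j_0}-A_i$, using the rearrangements $Y_{\mu_{j_0}}T_i=T_i^{-1}Y_{\mu_{j_0}-A_i}$ and $T_i^{-1}Y_{\mu_{j_0}-A_i}T_i^{-1}=Y_{\mu_{j_0}}$ of relation (b), I compute
\[
Y_{2\mu_{j_0}-A_i}T_i=Y_{\mu_{j_0}-A_i}\cdot Y_{\mu_{j_0}}T_i=Y_{\mu_{j_0}-A_i}\cdot T_i^{-1}Y_{\mu_{j_0}-A_i}=T_i\cdot Y_{\mu_{j_0}}Y_{\mu_{j_0}-A_i}=T_iY_{2\mu_{j_0}-A_i}.
\]
Since the $Y_\mu$ commute pairwise, these facts extend to $T_iY_\mu=Y_\mu T_i$ for every $\mu\in M_i^0$, which is the first Bernstein relation.

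Finally, for $\mu$ with $(\mu,A_i^\vee)=1$ (necessarily $S_b\neq\emptyset$), write $\mu=\mu_{j_0}+\nu$ with $\nu\in M_i^0$; then
\[
T_iY_\mu T_i=T_iY_{\mu_{j_0}}T_i\cdot Y_\nu=Y_{\mu_{j_0}-A_i}Y_\nu=Y_{\mu-A_i}=Y_{s_i(\mu)},
\]
using $T_iY_\nu=Y_\nu T_i$ from the previous step and relation (b) for $\mu_{j_0}$. This establishes the second Bernstein relation for arbitrary $\mu$ with $(\mu,A_i^\vee)=1$ and completes the derivation. The main obstacle is the lattice identity for $M_i^0$, which is elementary but must be checked with care; together with the management of the transient factor $T_i^2$ appearing in the $\mu_j-\mu_{j_0}$ calculation, it is the only place where the argument is not purely formal.
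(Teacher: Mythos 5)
Your argument is correct, and it takes a genuinely different route from the paper. The paper does not prove this proposition at all: it is quoted as van der Lek's refinement, established topologically in \cite{vdLHom}, just like the full Bernstein presentation of Proposition \ref{bernstein-presentation}. You instead treat Proposition \ref{bernstein-presentation} as the known presentation and show, purely algebraically, that the finite subfamily (a), (b) already implies every Bernstein relation, so the canonical surjection between the two presented groups is an isomorphism. The two nontrivial ingredients check out: the sublattice $M_i^0=\{\mu\in M:(\mu,A_i^\vee)=0\}$ is indeed generated by the elements $\mu_j^{(a)}$ ($j\in S_a$), $\mu_j-\mu_{j_0}$ ($j\in S_b\setminus\{j_0\}$), and $2\mu_{j_0}-A_i$ (the coefficient bookkeeping, including $c_i=\sum_{j\neq i}r_{ji}c_j-\tfrac12\sum_{j\in S_b}c_j$ with $\sum_{j\in S_b}c_j$ even, works out), and the group computations handling the transient $T_i^2$ and the rearrangements $Y_{\mu_{j_0}}T_i=T_i^{-1}Y_{\mu_{j_0}-A_i}$, $Y_{\mu_{j_0}-A_i}T_i^{-1}=T_iY_{\mu_{j_0}}$ are all valid consequences of relation (b); the final step for $(\mu,A_i^\vee)=1$ via $\mu=\mu_{j_0}+\nu$, $\nu\in M_i^0$, correctly notes that $S_b=\emptyset$ makes that case vacuous. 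What each approach buys: the paper's citation keeps the topological provenance (van der Lek proves both presentations from the fundamental-group description, so nothing is circular), whereas your reduction isolates exactly what the refined proposition adds relative to Proposition \ref{bernstein-presentation} and proves that increment by elementary lattice and cancellation arguments — very much in the spirit of the paper's own Section \ref{refinement}, which performs analogous reductions in the double affine setting, though there the authors propagate commutation relations along the Dynkin diagram (Lemma \ref{propagation}) rather than via a generating set of the sublattice as you do. The one point to keep in mind is that the hard content — that the Bernstein relations present $\A(W)$ — is still imported from van der Lek; your proof is correctly scoped to the equivalence of the two relation families.
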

\subsection{} To the double affine Weyl group $\Wdaff$ van der Lek associated another topological space that we will now describe. For $\a\in {R}^{re}$ and $k\in\Z$ denote by ${H}_{\a,k}$ the complex hyperplane $\{x\in \Haff^*~|~(x,\a)=k\}\subset \Haff^*$ and consider
\begin{equation}
\widetilde{Y}=\Omega\setminus \bigcup_{\a\in {R}^{re},~k\in\Z} {H}_{\a,k}.
\end{equation}
The double affine Weyl group $\Wdaff$ acts freely and properly discontinuously on $\widetilde{Y}\subset \Haff^*$. Denote by $\widetilde{X}$ the space  of $\Wdaff$-orbits on $\widetilde{Y}$. The double affine Artin group, defined as the fundamental group of $\widetilde{X}$, is denoted by  $\Atilde(R)$. The affine braid group $\A(R)$ can be realized as as subgroup of $\Atilde(R)$ \cite{vdLHom}*{Ch. III, Lemma 4.1}.

\subsection{} 

Van der Lek's results  \cite{vdLHom}*{Ch. III, Theorem 2.5} provide a Bernstein-type presentation for $\Atilde(R)$. 
\begin{Prop}\label{defcherednik}
The double affine Artin group $\Atilde(R)$ is generated by the
affine Artin group $\A(R)$, the lattice $\Q_X$, and the element
$X_{\d}$  such that the following relations are satisfied for
all $0\leq i\leq n$ and $\b\in \Qring^\vee$
\begin{subequations} 
 \begin{alignat}{2} \label{dc1}
& T_iX_\b=X_\b T_i \text{ if }  (\b,\a_i)=0,\\ \label{dc2}
& T_iX_\b T_i=X_{s_i(\b)}   \text{ if }  (\b,\a_i)=-1,\\ \label{dc3}
& X_{\d} \text{ is central}.
\end{alignat}
\end{subequations}
\end{Prop}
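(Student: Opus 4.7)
The plan is to follow van der Lek's topological approach, generalizing the Bernstein-type presentation of $\A(W)$ in Proposition \ref{bernstein-presentation}. The key structural input is the semidirect decomposition $\Wdaff = W \ltimes \tau(Q^\vee)$ from Proposition \ref{wdef}, combined with the fact that the hyperplane arrangement $\{H_{\a,k}\}$ in $\widetilde{Y}$ is invariant under $\tau(Q^\vee)$ since $(\b,\a)\in\Z$ for $\b\in Q^\vee$ and $\a\in R^{re}$. Because $Q^\vee=\Qring^\vee\oplus\Z\d$, the lattice $\tau(Q^\vee)$ is generated by $\{\tau_\b\}_{\b\in\Qring^\vee}$ together with $\tau_\d$, matching the generators in the statement.

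First I would construct the elements $X_\b$, $\b\in Q^\vee$, topologically: fix a base point $y_0\in\widetilde{Y}$, and let $X_\b\in\pi_1(\widetilde{X},[y_0])$ be the class of a path in $\widetilde{Y}$ from $y_0$ to $\tau_\b(y_0)$ that avoids all hyperplanes. This yields a set-theoretic lift of $\tau(Q^\vee)$, and together with the embedded subgroup $\A(W)\subset\A(\Wdaff)$ it generates $\A(\Wdaff)$ by standard covering-space arguments applied to the intermediate cover between $\widetilde{Y}$ and $\widetilde{X}$ associated to the normal subgroup $\tau(Q^\vee)\trianglelefteq\Wdaff$.

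Next I would verify relations (\ref{dc1})--(\ref{dc3}) geometrically. Relations (\ref{dc1}) and (\ref{dc2}) are established in parallel with the affine case: the word $T_iX_\b T_i^{\pm1}$ represents a translation path conjugated by a meridian around $H_{\a_i}$, and the number of level-hyperplanes $H_{\a_i,k}$ crossed by the translation, controlled by $(\b,\a_i)$, dictates commutation in the case $(\b,\a_i)=0$ and the braid-type identity in the case $(\b,\a_i)=-1$, with $s_i(\b)$ appearing via $W$-equivariance of the lift. The centrality relation (\ref{dc3}) reflects that $\d$ is a null vector with $(\d,\a)=0$ for every real root $\a$: the translation $\tau_\d$ preserves each hyperplane $H_{\a,k}$ setwise, so a representative of $X_\d$ can be freely isotoped past any other loop.

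The main obstacle is proving completeness, namely that no further relations are needed. Here I would invoke van der Lek's general presentation \cite{vdLHom}*{Ch. III} for fundamental groups of quotients of hyperplane-arrangement complements by properly discontinuous group actions, applied to the $W$-action on $\widetilde{Y}/\tau(Q^\vee)$. The intermediate quotient has fundamental group naturally expressible as an extension of $\A(W)$ by $\tau(Q^\vee)$, and passing to the further quotient by $W$ reduces matters to an enumeration of codimension-two intersection strata of $\{H_{\a,k}\}$ up to $\Wdaff$-equivalence. The key calculation is to verify that each such stratum contributes either a relation already present in $\A(W)$, or a translation-reflection relation of type (\ref{dc2}), or a null-direction relation of type (\ref{dc3}); a finite root-theoretic case analysis, using the affine Coxeter combinatorics of the pair $(R,Q^\vee)$, closes the proof.
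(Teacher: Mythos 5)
The paper does not prove this proposition at all: it is quoted verbatim from van der Lek \cite{vdLHom}*{Ch.~III, Theorem 2.5}, so your proposal has to be judged as an attempted reconstruction of van der Lek's theorem rather than compared with an argument in the text. Your general strategy (lift $\tau(Q^\vee)$ by translation paths, check the relations geometrically, then prove completeness by a covering-space/stratification argument) is indeed the right family of ideas, and the verification that \eqref{dc1}--\eqref{dc3} \emph{hold} is essentially fine.

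The genuine gap is in the completeness step, which is exactly where the content of the theorem lies. There is no off-the-shelf ``general presentation for fundamental groups of quotients of hyperplane-arrangement complements by properly discontinuous group actions'' in which all relations are read off codimension-two strata: results of that shape (Brieskorn, Deligne, van der Lek's Ch.~II) apply to \emph{reflection-type} actions, whereas $\Wdaff$ is not a reflection group --- the translations, in particular $\tau_\d$ and more generally the lattice part, act freely and have no associated hyperplanes. Correspondingly, several of the relations you must show suffice are not braid relations attached to codimension-two strata at all: the centrality of $X_\d$, the commutativity and additivity inside the lattice $\Q_X$ (which your construction of $X_\b$ as a path class does not even pin down --- the class depends on the chosen path, and you need a canonical choice to get $X_\b X_{\b'}=X_{\b+\b'}$), and the mixed relations \eqref{dc1}--\eqref{dc2}. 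Your description of the intermediate cover is also misstated: $\pi_1(\widetilde{Y}/\tau(Q^\vee))$ is an extension of $\tau(Q^\vee)$ by the pure group $\pi_1(\widetilde{Y})$, not ``an extension of $\A(W)$ by $\tau(Q^\vee)$'' ($\A(W)$ is the fundamental group of the quotient of the level-zero arrangement complement by $W$, a different space). Van der Lek's actual proof exploits the semidirect structure of Proposition \ref{wdef} together with a delicate analysis building on the topological model of the affine Artin group (cf.\ Proposition \ref{bernstein-presentation}), and that analysis is precisely what your sketch defers to the result being proved; as written, the argument is circular at its crucial point.
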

\begin{Rem}\label{rem: artin-to-weyl}
There is canonical group morphism $\Atilde(R)\to \Wdaff$ that sends $T_i$ to $s_i$, $0\leq i\leq n$, and $X_\b$ to $\tau_\b$, $\b\in \Qring^\vee$. The kernel of morphism is the normal subgroup of $\Atilde(R)$ generated by $T_i^2$, $0\leq i\leq n$, except for $R$ of type $A_1^{(1)}$, $C_n^{(1)}$, $n\geq 2$, and $A_{2n}^{(2)}$, $n\geq 1$. More precisely, for $R$ of type  $A_{2n}^{(2)}$, $n\geq 1$, the kernel is generated by $T_i^2$, $0\leq i\leq n$, and $(X_{\th^\vee}\Theta^{-1})^2$, and for $R$ of type $A_1^{(1)}$, $C_n^{(1)}$, $n\geq 2$, the kernel is generated by $T_i^2$, $0\leq i\leq n$, $(X_{\th^\vee}\Theta^{-1})^2$, and $(T_0^{-1}X_{\a^\vee_0})^2$. 
\end{Rem}

In \cite{vdLHom}*{Corollary 2.11} a smaller set of relations that need to be imposed in the algebraic description of $\Atilde(R)$ is identified. 
\begin{Prop}\label{first-presentation}
The double affine Artin group $\Atilde(R)$ is generated by the affine Artin group $\A(R)$, the lattice $\Q_X$, and the element $X_\d$ such that the following relations are satisfied for $0\leq i \leq n$ and $1\leq j\leq n$
\begin{enumerate}[label={\alph*)}]
\item For any pair of indices $(i,j)$ such that $2r_{ji}=-(\a_j^\vee,\a_i)$, with $r_{ji}$ a non-negative integer  we have
\begin{equation}\label{eq1}
T_iX_{\mu_j}=X_{\mu_j} T_i
\end{equation}
where $\mu_j=\a_j^\vee+r_{ji}\a_i^\vee$; note that $(\mu_j,\a_i)=0$;
\item For any pair of indices $(i,j)$ such that 
$2r_{ji}+1=-(\a_j^\vee,\a_i)$, with $r_{ji}$ a non-negative integer we
have
\begin{equation}\label{eq2}
T_iX_{\mu_j} T_i=X_{s_i(\mu_j)}
\end{equation}
where $\mu_j=\a_j^\vee+r_{ji}\a_i^\vee$; note that $(\mu_j,\a_i)=-1$;
\item $X_{\d}$ is central.
\end{enumerate}
\end{Prop}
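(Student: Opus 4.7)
The plan is to verify that the group $G$ defined by the presentation in the statement already satisfies every Bernstein-type relation of Proposition \ref{defcherednik}; since the relations listed here are a fortiori consequences of those, the canonical surjection $G\twoheadrightarrow\A(\Wdaff)$ will then be an isomorphism. Fix $0\le i\le n$, set
$$
L_i^0=\{\b\in\Qring^\vee:(\b,\a_i)=0\},\qquad L_i^-=\{\b\in\Qring^\vee:(\b,\a_i)=-1\},
$$
and define
$$
Z_i=\{\b\in\Qring^\vee:T_iX_\b=X_\b T_i\text{ in }G\},\qquad C_i=\{\b\in\Qring^\vee:T_iX_\b T_i=X_{s_i(\b)}\text{ in }G\}.
$$
The target is $L_i^0\subseteq Z_i$ and $L_i^-\subseteq C_i$; together with the centrality of $X_\d$ (retained in (c)), this recovers the full set of Bernstein-type relations over $Q^\vee=\Qring^\vee\oplus\Z\d$.

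The technical heart is a few elementary manipulations inside $G$. Because $\Q_X$ is abelian, $Z_i$ is automatically a subgroup of $\Qring^\vee$. Inverting the $C_i$-relation for $\b'$ yields $T_i^{-1}X_{-\b'}T_i=X_{-s_i(\b')}T_i^2$; thus for $\b,\b'\in C_i$,
$$
T_iX_{\b-\b'}T_i=(T_iX_\b T_i)(T_i^{-1}X_{-\b'}T_i)=X_{s_i(\b-\b')}T_i^2,
$$
so $T_iX_{\b-\b'}=X_{s_i(\b-\b')}T_i=X_{\b-\b'}T_i$ using $(\b-\b',\a_i)=0\Rightarrow s_i(\b-\b')=\b-\b'$; hence $\b-\b'\in Z_i$. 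The same inversion trick gives $-s_i(\mu)\in C_i$ whenever $\mu\in C_i$, so that $\mu+s_i(\mu)\in Z_i$. Finally, a distributive computation shows $\gamma\in Z_i$ and $\b\in C_i$ imply $\gamma+\b\in C_i$.

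Relation (a) of the statement places $\mu_j=\a_j^\vee-\tfrac12(\a_j^\vee,\a_i)\,\a_i^\vee\in L_i^0$ into $Z_i$ for each $j$ with $a_{ji}:=(\a_j^\vee,\a_i)$ even, and relation (b) places $\mu_j=\a_j^\vee-\tfrac12(1+a_{ji})\,\a_i^\vee\in L_i^-$ into $C_i$ for each $j$ with $a_{ji}$ odd. Invoking the observations above, $Z_i$ contains three families of elements of $L_i^0$: the $\mu_j$ for even $a_{ji}$, the $\mu_j+s_i(\mu_j)$ for odd $a_{ji}$, and the differences $\mu_j-\mu_{j'}$ for pairs of odd $j,j'$. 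Their images modulo $\Z\a_i^\vee$ are $\a_j^\vee$, $2\a_j^\vee$, and $\a_j^\vee-\a_{j'}^\vee$ respectively, and a direct check shows these generate the image of $L_i^0$ inside the rank $n-1$ free quotient $\Qring^\vee/\Z\a_i^\vee$ (this image is the sublattice cut out by the parity condition $\sum_{a_{ji}\text{ odd}}c_j\equiv 0\pmod 2$ on the $\a_j^\vee$-coefficients). Because $L_i^0\cap\Z\a_i^\vee=0$ (as $(\a_i^\vee,\a_i)=2$), the injection $L_i^0\hookrightarrow\Qring^\vee/\Z\a_i^\vee$ forces $L_i^0\subseteq Z_i$. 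For $L_i^-$: if it is nonempty, fix an odd-type $\mu_{j_0}\in C_i\cap L_i^-$, decompose any $\b\in L_i^-$ as $\mu_{j_0}+(\b-\mu_{j_0})$ with $\b-\mu_{j_0}\in L_i^0\subseteq Z_i$, and conclude $\b\in C_i$ via $Z_i+C_i\subseteq C_i$.

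The main obstacle is the lattice-generation step above: although governed uniformly by the parity argument, it must ultimately be verified against the $i$-th column of each affine Cartan matrix—most delicately at $i=0$, where $\a_0=\d-\th$ couples $a_{j0}=-(\a_j^\vee,\th)$ to the highest-root combinatorics and where the constants $a_0,a_0^\vee$ distinguish the twisted from the untwisted cases.
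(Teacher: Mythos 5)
The paper offers no proof of this proposition: it is imported verbatim from van der Lek's thesis (\cite{vdLHom}, Corollary 2.11), where it is obtained from the topological description of $\A(\Wdaff)$. So your plan --- to show purely algebraically that the full set of Bernstein-type relations of Proposition \ref{defcherednik} already holds in the group presented by (a)--(c), whence the canonical surjection is an isomorphism --- is a legitimate and genuinely different route, and your elementary manipulations are correct: $Z_i$ is a subgroup, $\beta-\beta'\in Z_i$ for $\beta,\beta'\in C_i$, $-s_i(\mu)\in C_i$ for $\mu\in C_i$, and $\gamma+\beta\in C_i$ for $\gamma\in Z_i$, $\beta\in C_i$ provided $s_i(\gamma)=\gamma$ (which holds in your application, since there $\gamma\in L_i^0$).

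However, as written the argument has a genuine gap, and it sits exactly where you yourself place ``the main obstacle''. The decisive lattice-generation step is first asserted (``a direct check shows\dots'') and then deferred to an unexecuted verification ``against the $i$-th column of each affine Cartan matrix'', so the proof stops short at its key point; moreover, for $i=0$ your framework does not literally apply, because $\alpha_0^\vee\notin\Qring^\vee$: the seeds $\mu_j=\alpha_j^\vee+r_{j0}\alpha_0^\vee$ are not elements of the lattice $\Qring^\vee$ in which you defined $Z_0$ and $C_0$, and the quotient $\Qring^\vee/\Z\alpha_0^\vee$ makes no sense. The repair is to index the $X$'s by $Q^\vee=\bigoplus_{i=0}^{n}\Z\alpha_i^\vee$ (legitimate since $a_0^\vee=1$, and since $X_\delta$ is central and $s_i$ fixes $\delta$, the relations for $\beta$ and $\beta+m\delta$ are equivalent) and to run the argument in $Q^\vee/\Z\alpha_i^\vee$ uniformly for all $0\le i\le n$: the image of $L_i^0$ is the parity sublattice $\bigl\{\sum_{j\ne i}c_j\bar{\alpha}_j^\vee \ :\ \sum_{a_{ji}\,\mathrm{odd}}c_j\equiv 0\ (\mathrm{mod}\ 2)\bigr\}$, which is generated by the images $\bar{\alpha}_j^\vee$ ($a_{ji}$ even), $2\bar{\alpha}_j^\vee$ and $\bar{\alpha}_j^\vee-\bar{\alpha}_{j'}^\vee$ ($a_{ji},a_{j'i}$ odd), a purely combinatorial fact independent of which affine Cartan matrix one has; the lifting step ($\beta-\sigma=m\alpha_i^\vee$, pair with $\alpha_i$ to force $m=0$) then gives $L_i^0\subseteq Z_i$, and your $L_i^-$ argument goes through because $L_i^-\ne\emptyset$ forces some $a_{ji}$ to be odd. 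With this adjustment no case-by-case inspection of the tables is needed; without it, the submitted argument is incomplete precisely at its decisive step.
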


\begin{Rem}\label{rem: lattices}
For each $0\leq i\leq n$ denote by $\eala{M}_i\subseteq Q^\vee$ the lattice generated by $\d$ and  \[\mu_j=\a_j^\vee+r_{ji}\a_i^\vee\quad \text{and}\quad s_i(\mu_j), \quad\text{for } 1\leq j\leq n,\] with $r_{ji}$ as in Proposition \ref{first-presentation}. As it can be easily seen, 
\begin{align*}
&\mu_j=s_i(\mu_j)=\a_j^\vee, &&\text{ if } \;\;\;\;(\a_j^\vee,\a_i)=0, \\
&\mu_j=\a_j^\vee \quad \text{and}\quad s_i(\mu_j)=\a_j^\vee+\a_i^\vee, &&\text{ if } -(\a_j^\vee,\a_i)=1,\\
&\mu_j=s_i(\mu_j)=\a_j^\vee+\a_i^\vee, &&\text{ if } -(\a_j^\vee,\a_i)=2,  \\
&\mu_j=\a_j^\vee+\a_i^\vee \quad \text{and}\quad  s_i(\mu_j)=\a_j^\vee+2\a_i^\vee, &&\text{ if } -(\a_j^\vee,\a_i)=3.
\end{align*}
Note that in all situations we have $\eala{M}_i+\Z\a_i^\vee=Q^\vee$. In fact, $\eala{M}_i=Q^\vee$, unless $A=A_1^{(1)}$ or $\a_i$ is a long root whose neighbors in $D(A)$ are all short.  Therefore,  the nodes for which $\eala{M}_i\subset Q^\vee$ are as follows
\begin{equation*}
\begin{aligned}
&A_1^{(1)} && i=0,1,\\
&C_n^{(1)}, ~n\geq 2 && i=0,n,\\
&A_{2n}^{(2)}, ~n\geq 1 && i=n,\\
&A_{2n-1}^{(2)}, ~n\geq 2 && i=n,\\
&D_{3}^{(2)}  && i=1.
\end{aligned}
\end{equation*}
Recall our convention on labeling in \S\ref{sec: cl}. It is important to remark that if $\eala{M}_i\subset Q^\vee$ then $\a_i$ is orthogonal on $\eala{M}_i$.
\end{Rem}

\subsection{} 

For our purposes, the following hybrid presentation will also be useful.
\begin{Prop}\label{second-presentation}
The double affine Artin group $\Atilde(R)$ is generated by the affine Artin group $\A(R)$, the lattice $\Q_X$, and the element $X_\d$ such that the following relations are satisfied.

\begin{enumerate}[label={\alph*)}]
\item For all $1\leq i\leq n$ and $\b\in \Qring^\vee$
\begin{subequations} 
 \begin{alignat}{2} \label{dc1v2}
& T_iX_\b=X_\b T_i \text{ if }  (\b,\a_i)=0,\\ \label{dc2v2}
& T_iX_\b T_i=X_{s_i(\b)}   \text{ if }  (\b,\a_i)=-1;
\end{alignat}
\end{subequations}
\item For any pair of indices of the form $(0,j)$, $1\leq j\leq n$,  such that $2r_{j0}=-(\a_j^\vee,\a_0)$, with $r_{j0}$ a non-negative integer  we have
\begin{equation}\label{eq1v2}
T_0X_{\mu_j}=X_{\mu_j} T_0
\end{equation}
where $\mu_j=\a_j^\vee+r_{j0}\a_0^\vee$; note that $(\mu_j,\a_0)=0$;
\item For any pair of indices of the form $(0,j)$, $1\leq j\leq n$, such that 
$2r_{j0}+1=-(\a_j^\vee,\a_0)$, with $r_{j0}$ a non-negative integer we
have
\begin{equation}\label{eq2v2}
T_0X_{\mu_j} T_0=X_{s_0(\mu_j)}
\end{equation}
where $\mu_j=\a_j^\vee+r_{j0}\a_0^\vee$; note that $(\mu_j,\a_0)=-1$;
\item 
\begin{equation}\label{dc3v2}
X_{\d} \text{ is central}.
\end{equation}
\end{enumerate}
\end{Prop}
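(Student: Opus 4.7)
The plan is to prove Proposition \ref{second-presentation} by sandwiching it between Propositions \ref{defcherednik} and \ref{first-presentation}, both of which are known presentations of $\A(\Wdaff)$. Let $G$ denote the group abstractly presented by the generators of $\A(W)$ together with $\Q_X$ and $X_\d$, modulo the relations (a)--(d) of Proposition \ref{second-presentation}.

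First I would check that every defining relation of $G$ holds in $\A(\Wdaff)$. By Proposition \ref{defcherednik}: (a) consists of the instances of \eqref{dc1}--\eqref{dc2} with $1\le i\le n$; (b) and (c) are instances of \eqref{dc1}--\eqref{dc2} with $i=0$ specialized to $\b=\mu_j$; and (d) coincides with \eqref{dc3}. The universal property of presentations therefore produces a surjective group homomorphism $\pi : G \twoheadrightarrow \A(\Wdaff)$ acting as the identity on the distinguished generators.

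For the reverse direction I would verify that every defining relation of Proposition \ref{first-presentation} holds in $G$. When $i=0$, relations (a) and (b) of Proposition \ref{first-presentation} are identical to relations (b) and (c) of Proposition \ref{second-presentation}. When $i\ge 1$, the element $\mu_j=\a_j^\vee + r_{ji}\a_i^\vee$ belongs to $\Qring^\vee$, so each such relation is merely the specialization of (a) of Proposition \ref{second-presentation} at $\b=\mu_j$. The centrality relation is shared. Since Proposition \ref{first-presentation} already presents $\A(\Wdaff)$, this yields a surjection $\sigma : \A(\Wdaff) \twoheadrightarrow G$, again the identity on generators. Both compositions $\pi\circ\sigma$ and $\sigma\circ\pi$ fix all generators and are therefore the identity maps; hence $\pi$ and $\sigma$ are mutually inverse isomorphisms and $G \cong \A(\Wdaff)$.

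No step of this argument is genuinely hard; the whole proof is a transparent comparison of defining relations. The only delicate point is that for $i=0$ the element $\a_0^\vee$ need not lie in $\Qring^\vee$, but since $\a_0^\vee\in Q^\vee = \Qring^\vee\oplus\Z\d$ one writes $\a_0^\vee = \mathring\a_0^\vee + c\d$ with $\mathring\a_0^\vee\in\Qring^\vee$ and uses the centrality of $X_\d$ to interpret $X_{\mu_j}$ unambiguously in both presentations. The main conceptual content is simply that the full Bernstein-type relations of (a) for $i\ge 1$ automatically subsume the special-pair relations appearing in van der Lek's minimal list for those indices.
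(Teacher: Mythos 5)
Your argument is correct and is exactly the comparison the paper leaves implicit: Proposition \ref{second-presentation} is stated without proof because its relation set sits between the full Bernstein-type relations of Proposition \ref{defcherednik} and van der Lek's minimal list in Proposition \ref{first-presentation}, which is precisely your two-surjection sandwich. You also correctly isolate and resolve the one delicate point, namely interpreting $X_{\mu_j}$ for $i=0$ through $Q^\vee=\Qring^\vee\oplus\Z\d$, the centrality of $X_\d$, and the fact that $(\d,\a_i)=0$ leaves the pairing conditions unchanged.
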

\begin{Rem}\label{X-artin}
By comparing the relations \eqref{dc1v2}, \eqref{dc1v2} with Proposition \ref{bernstein-presentation} we obtain that the subgroup of $\Atilde(R)$ generated by $\A(\Rring)$ and $\Q_X$ is isomorphic to the affine Artin group corresponding to $A$ if $a_0^{-1} r=1$ or to ${}^t\! A$ if $a_0^{-1} r=2,3$. As in Remark \ref{braid01}, the affine generator of the group generated by $\A(\Rring)$ and $\Q_X$ is $X_{\ph^\vee}\Phi^{-1}$, which satisfies the $a_{0i}a_{i0}$-braid relations (if $a_0^{-1}r=1$), or the $a^\vee_{0i}a^\vee_{i0}$-braid relations (if $a_0^{-1}r=2,3$), with the generators $T_i$, $1\leq i\leq n$.
\end{Rem}

\subsection{}

One application of Proposition \ref{second-presentation} that will be useful for us is a comparison of $\Atilde(A_{2n}^{(2)}$) with $\Atilde(A_1^{(1)})$ (for $n=1$) and  $\Atilde(C_n^{(1)})$ (for $n\geq 2$). For this purpose, let us also consider a slightly larger group, which we will denote by  $\Atilde^c(A_{2n}^{(2)})$, defined as the group generated by  $\Atilde(A_{2n}^{(2)})$  and a central element $X_{\frac{1}{2}\d}$ such that $X^2_{\frac{1}{2}\d}=X_\d$. Also, to be able to state the comparison result more concisely we adopt the following convention.

\begin{convention}\label{bigdaddy}
In what follows $C_1^{(1)}$ refers to the affine root system of type $A_1^{(1)}$ and this extends also to all the objects associated affine root systems (e.g. affine/double affine Weyl group, affine/double affine Artin group).
\end{convention}

For $n\geq 1$, the affine Artin groups for the affine root systems of type $C_n^{(1)}$ and $A_{2n}^{(2)}$ are generated by generators satisfying the same set of Coxeter relations. We denote these generators by the same symbols $T_0, T_1, \dots, T_n$. We can also realize both affine root systems inside the same vector space. Let $V$ be an $n+1$--dimensional $\Re$-vector space with basis $\d, \eps_1, \dots, \eps_n$ and bilinear form $(\cdot, \cdot)$ with kernel $\Re\d$ and for which $\eps_1, \dots, \eps_n$ are orthonormal. With this notation, the simple affine roots in the usual realization of affine root system of type $C_n^{(1)}$ are $$\d-\sqrt{2}\eps_1, (\eps_1-\eps_2)/\sqrt{2}, \dots, (\eps_{n-1}-\eps_n)/\sqrt{2}, \sqrt{2}\eps_n,$$
and the finite co-root lattice is $\oplus_{i=1}^n \Z\sqrt{2}\eps_i$. The simple affine roots in the usual realization of affine root system of type $A_{2n}^{(2)}$ are $$\d/2-\eps_1, \eps_1-\eps_2, \dots, \eps_{n-1}-\eps_n, 2\eps_n,$$
and the finite co-root lattice is $\oplus_{i=1}^n \Z\eps_i$.

\begin{Prop} \label{comparison}
Let $n\geq 1$.
\begin{enumerate}[label={\roman*)}]
\item The map that sends $T_0$ to $T_0$, $T_i$ to $T_i$, $X_{\sqrt{2}\eps_i}$ to $X_{\eps_i}$ for $1\leq i\leq n$,   and $X_\d$ to $X_{\d}$, extends to a surjective group morphism
 $$\Atilde(C_n^{(1)})\to \Atilde(A_{2n}^{(2)})$$
 whose kernel is the normal subgroup generated by $X_\d(T_0^{-1} X_{-\sqrt{2}\eps_1})^2$.
\item The map that sends $T_0$ to $T_0$, $T_i$ to $T_i$, $X_{\sqrt{2}\eps_i}$ to $X_{\eps_i}$ for $1\leq i\leq n$,  and $X_\d$ to $X_{\frac{1}{2}\d}$, extends to a surjective group morphism
 $$\Atilde(C_n^{(1)})\to \Atilde^c(A_{2n}^{(2)})$$
 whose kernel is the normal subgroup generated by $(T_0^{-1} X_{\a_0^\vee})^2$.
 \end{enumerate}
\end{Prop}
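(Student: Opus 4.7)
My approach is to use the Bernstein-type presentation of Proposition \ref{second-presentation} for both source and target, reducing the proof to verifying a short list of relations together with one explicit computation involving $T_0$ and $X_{\eps_1}$.

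For part (i), I first check that the proposed assignment is well-defined. The finite Coxeter braid relations among $T_0,\ldots,T_n$ are common to $C_n^{(1)}$ and $A_{2n}^{(2)}$ since their affine Coxeter diagrams coincide. The finite lattice relations (a) of Proposition \ref{second-presentation} are preserved by the identification $\sqrt{2}\eps_i \leftrightarrow \eps_i$, which matches both the finite reflections $s_i$ and the pairings $(\b,\a_i)$ for $1\le i\le n$. In $\Atilde(C_n^{(1)})$ the only affine lattice relation is $T_0 X_{\d-\sqrt{2}\eps_2}=X_{\d-\sqrt{2}\eps_2}T_0$, coming from case (b) at $j=1$ with $r_{10}=1$ (since $(\a_1^\vee,\a_0)=-2$); its image $T_0 X_{\d-\eps_2}=X_{\d-\eps_2}T_0$ follows in $\Atilde(A_{2n}^{(2)})$ from $X_\d$ central and $(\eps_2,\a_0)=0$. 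Centrality of $X_\d$ is preserved, and surjectivity is clear because the image contains $T_0,\ldots,T_n$, $X_{\eps_1}$, and $X_\d$ (respectively $X_{\frac{1}{2}\d}$ in (ii)).

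To see that the claimed element lies in the kernel of (i), I apply the Cherednik relation \eqref{dc2} at $\b=\eps_1$, valid because $(\eps_1,\a_0)=-1$ in $A_{2n}^{(2)}$; this gives $T_0 X_{\eps_1} T_0 = X_{s_0(\eps_1)}=X_{\d-\eps_1}$, whence $(T_0 X_{\eps_1})^2=X_\d$ and, by inversion, $(T_0^{-1}X_{-\eps_1})^2=X_\d^{-1}$. So the image of $X_\d(T_0^{-1}X_{-\sqrt{2}\eps_1})^2$ is $1$. For part (ii), $\a_0^\vee=\d-\sqrt{2}\eps_1$ in $C_n^{(1)}$, and the image of $(T_0^{-1}X_{\a_0^\vee})^2$ is $X_{\frac{1}{2}\d}^2(T_0^{-1}X_{-\eps_1})^2=X_\d\cdot X_\d^{-1}=1$, using that $X_{\frac{1}{2}\d}$ is central.

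To finish, I would construct an inverse from the target to $\Atilde(C_n^{(1)})/N$, where $N$ is the stated normal closure, sending $T_i\mapsto T_i$, $X_{\eps_1}\mapsto X_{\sqrt{2}\eps_1}$, and the appropriate central element to $X_\d$. All routine relations go through; the essential check is the affine relation $T_0 X_{\eps_1-\eps_2}T_0=X_{\d-\eps_1-\eps_2}$ in the target (case (c) of Proposition \ref{second-presentation} at $j=1$). Working modulo $N$, the extra relation rearranges (by inversion and conjugation) to $(T_0 X_{\sqrt{2}\eps_1})^2=X_\d$ in (i), so $T_0 X_{\sqrt{2}\eps_1}T_0=X_\d X_{-\sqrt{2}\eps_1}$; combining with $T_0 X_{\sqrt{2}\eps_2}=X_{\sqrt{2}\eps_2}T_0$ (from $(\sqrt{2}\eps_2,\a_0)=0$) yields $T_0 X_{\sqrt{2}(\eps_1-\eps_2)}T_0=X_{\d-\sqrt{2}(\eps_1+\eps_2)}$, precisely the image of the target relation. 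Part (ii) is analogous with $X_\d$ replaced by $X_{2\d}$ throughout. The main subtlety is tracking the two different normalizations of $\d$, which in part (ii) differ by a factor of $2$ between source and target.
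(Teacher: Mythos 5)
Your proof is correct and takes exactly the route the paper intends: the paper's own proof of Proposition \ref{comparison} is just ``Straightforward from Proposition \ref{second-presentation}'', and your two-way verification of the relations, with the key observation that modulo the stated normal subgroup the extra relation rearranges to $(T_0X_{\sqrt{2}\eps_1})^2=X_{\d}$ (resp.\ $X_{2\d}$) and then yields the type $(0,1)$ relation of $A_{2n}^{(2)}$, is precisely the detail left to the reader. The only cosmetic omissions are the trivial commutations $T_0X_{\a_j^\vee}=X_{\a_j^\vee}T_0$ for $j\geq 2$ and the $n=1$ case (where there is no $\eps_2$ and the essential relation is the rearranged one itself), both immediate.
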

\begin{proof}
Straightforward from Proposition \ref{second-presentation}.
\end{proof}

\subsection{}\label{sec: nonreduced}

There are irreducible affine root systems that are nonreduced. We refer to \cite{MacAff}*{\S1.3} for the full list of irreducible affine root systems, noting that in \cite{MacAff} the labelling of the reduced irreducible affine root systems differs from the one in \cite{KacInf}.  We will use the labelling in \cite{MacAff}*{\S1.3} for the nonreduced irreducible affine root systems. The construction of the topological space $\widetilde{X}$, and consequently of the double affine Artin group, make sense for any irreducible affine root system $R$, reduced or nonreduced. However, it is clear form the definition that $\widetilde{X}$ only depends on the set 
$$
R_{nm}:=\{\a\in R~|~2\a\not \in R\}
$$
of non-multipliable roots in $R$, which is always an irreducible reduced affine root system (in the sense of Macdonald). Therefore, an double affine Artin group associated to a nonreduced irreducible affine root system is isomorphic to a double affine Artin group associated to a reduced irreducible affine root system. The precise correspondence is specified in Table \ref{table: nonreduced}.
\begin{table}[ht]
\caption{Irreducible nonreduced affine root systems}
\label{table: nonreduced}
\begin{tabular}{ l  l}
$R$ & $R_{nm}$ \\ \hline
$(BC_n, C_n), n\geq 1$& $C_n^{(1)}$\\
$(C_n^\vee, BC_n), n\geq 1$ & $A_{2n}^{(2)}$\\
$(B_n, B_n^\vee), n\geq 3$ & $A_{2n-1}^{(2)}$\\
$(C_n^\vee, C_n), n\geq 1$& $C_n^{(1)}$\\
$(C_2, C_2^\vee)$  & $A_{3}^{(2)}$
\end{tabular}
\end{table}


\section{A refinement of van der Lek's presentation}\label{refinement}

\subsection{}  As a result of Proposition \ref{comparison} we are able to remove the double affine Artin group  $\Atilde(A_{2n}^{(2)})$ from the considerations that follow as it can be studied through its relationship with $\Atilde(C_n^{(1)})$. Hereafter, unless otherwise stated, we assume that $A$ is an indecomposable affine Cartan matrix different from $A_{2n}^{(2)}$.

\subsection{}

Let us start by further analyzing the presentation of the double affine Artin group given in Proposition \ref{second-presentation}. We focus on the relations \eqref{eq1v2} and \eqref{eq2v2}.

\begin{description}
\item[Type $(0,j)_0$] These are relations associated to $1\leq j\leq n$ such that $$2r_{j0}=-(\a_j^\vee,\a_0)=(\a_j^\vee,\th),$$ with $r_{j0}$ a non-negative integer. For such a $j$, if we set $\mu_j=\a_j^\vee+r_{j0}\a_0^\vee$, the following relation holds
\begin{equation}
T_0X_{\mu_j}=X_{\mu_j} T_0.
\end{equation}
Since $X_{\d}$ is central, we can replace $\mu_j$ in the above relation by $\a_j^\vee-r_{j0}\th^\vee$. The only possible even non-zero value for the scalar product $(\a_j^\vee,\th)$ is $2$ and this can happen only if $A=C_n^{(1)}$, $n\geq 1$. Therefore, the relations of this type are
\begin{subequations} 
 \begin{alignat}{2} \label{eq3}
& T_0X_{\a_j^\vee} = X_{\a_j^\vee} T_0&& \text{if}\ \   (\a_j^\vee,\th)=0,\\ \label{eq4-2}
& T_0X_{\a_j^\vee-\th^\vee} = X_{\a_j^\vee-\th^\vee} T_0 \quad && \text{if}\ \   (\a_j^\vee,\th)=2.
\end{alignat}
\end{subequations}
Note that the relation \eqref{eq4-2} is present only for $A=C_n^{(1)}$, $n\geq 2$.

\item[Type $(0,j)_1$]  These are relations associated to $1\leq j\leq n$ such that $$2r_{j0}+1=-(\a_j^\vee,\a_0)=(\a_j^\vee,\th),$$ with $r_{j0}$ a non-negative integer. For such a $j$, if we set $\mu_j=\a_j^\vee+r_{j0}\a_0^\vee$, the following relation holds
\begin{equation}
T_0X_{\mu_j} T_0=X_{s_i(\mu_j)}.
\end{equation}
The only odd value the scalar product $(\a_j^\vee,\th)$ could take is $1$ and this happens only for $A \neq C_n^{(1)}$, $n\geq 1$. Therefore, the only relations of this type are
\begin{equation}\label{eq4}
T_0X_{\a_j^\vee} T_0 = X_{\a_j^\vee+\a_0^\vee}\quad  \text{if}\ \   (\a_j^\vee,\th)=1.
\end{equation}
Note that the relations \eqref{eq4} are not present if $A=C_n^{(1)}$, $n\geq 1$.
\end{description}

\subsection{}
The following result is useful in reducing the number of necessary relations in the above presentation. 
\begin{Lm}\label{propagation}
Let $\b$ and $\gamma$ be simple roots  whose nodes in the Dynkin diagram are connected, but none of them is connected to the node of $\a_0$. Assume that $\b$ is longer that $\gamma$ or they have the same length. Then, if $T_0$ commutes with $X_{\b^\vee}$ then $T_0$ commutes with $X_{\gamma^\vee}$.
\end{Lm}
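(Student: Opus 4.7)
The plan is to derive the desired commutation relation from the Bernstein-type relations of Proposition \ref{second-presentation} applied with the generator $T_j$ associated to the \emph{shorter} root $\gamma = \a_j$, using that $T_0$ commutes with both $T_j$ and $X_{\b^\vee}$.

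First I would verify the numerical hypothesis needed for relation \eqref{dc2v2}. Since $\b$ is longer than or equal to $\gamma$ and the two simple roots are connected in the Dynkin diagram, the standard computation with Cartan integers gives $(\gamma, \b^\vee) = -1$, equivalently $(\b,\gamma) = -(\b,\b)/2$. Using the normalization in which short roots have squared length $2$, this yields
\[
(\b^\vee, \a_j) = \frac{2(\b, \gamma)}{(\b, \b)} = -1,
\]
so that \eqref{dc2v2} applies with $i = j$ and $\mu = \b^\vee$, giving
\[
T_j\, X_{\b^\vee}\, T_j \;=\; X_{s_j(\b^\vee)}.
\]
Next I would compute the right-hand side explicitly. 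A parallel length computation shows $(\b^\vee, \gamma^\vee) = -2/(\gamma,\gamma) = -1$, and since $\gamma$ is short one has $\gamma = \gamma^\vee$. Therefore
\[
s_j(\b^\vee) \;=\; \b^\vee - (\b^\vee, \a_j^\vee)\, \a_j \;=\; \b^\vee + \gamma \;=\; \b^\vee + \gamma^\vee.
\]
Since the $X_\mu$ for $\mu\in \Qring^\vee$ commute, this gives the key identity
\[
T_j\, X_{\b^\vee}\, T_j \;=\; X_{\b^\vee}\, X_{\gamma^\vee}.
\]

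Finally I would conjugate with $T_0$. The hypothesis that neither $\b$ nor $\gamma$ is connected to the node of $\a_0$ means $a_{0j} = 0$, so $T_0$ and $T_j$ satisfy the $0$-braid relation, i.e.\ commute. Combined with the standing assumption $T_0 X_{\b^\vee} = X_{\b^\vee} T_0$, this forces $T_0$ to commute with each factor on the left-hand side, hence with the right-hand side:
\[
T_0\, X_{\b^\vee}\, X_{\gamma^\vee} \;=\; X_{\b^\vee}\, X_{\gamma^\vee}\, T_0.
\]
Using $T_0 X_{\b^\vee} = X_{\b^\vee} T_0$ again and cancelling $X_{\b^\vee}$ on the left yields $T_0\, X_{\gamma^\vee} = X_{\gamma^\vee}\, T_0$, which is the desired conclusion.

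The only real subtlety, and the step I would check most carefully, is the assertion $s_j(\b^\vee) = \b^\vee + \gamma^\vee$: it requires both that $\b$ is \emph{at least as long as} $\gamma$ (so the Cartan integers come out as $-1$ in the right slot) and that $\gamma^\vee = \gamma$, which is automatic for short simple roots under the paper's normalization $\max_{\a}(\a,\a) = 2r$ and holds trivially in the simply-laced case $r=1$. Once this identity is in place, the rest of the argument is a short commutation manipulation.
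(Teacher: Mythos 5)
Your strategy is the same as the paper's: use the Bernstein-type relation to write $T_{s_\gamma}X_{\b^\vee}T_{s_\gamma}=X_{\b^\vee+\gamma^\vee}$ and then exploit that $T_0$ commutes with $T_{s_\gamma}$ (no edge to the node of $\a_0$) and with $X_{\b^\vee}$; the final conjugate-and-cancel step is fine and is just a reorganization of the paper's formula $X_{\gamma^\vee}=T_{s_\gamma}X_{\b^\vee}T_{s_\gamma}X_{-\b^\vee}$.

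The problem is in the step you yourself flag as the only subtlety. Your justification of $s_\gamma(\b^\vee)=\b^\vee+\gamma^\vee$ rests on the claims $(\b^\vee,\gamma^\vee)=-2/(\gamma,\gamma)=-1$ and $\gamma=\gamma^\vee$, both of which require $(\gamma,\gamma)=2$. Under the paper's normalization ($\max_{\a\in R}(\a,\a)=2r$) this is \emph{not} automatic: in the untwisted non-simply-laced cases ($r=1$: $B_n^{(1)},C_n^{(1)},F_4^{(1)},G_2^{(1)}$) a short simple root has $(\gamma,\gamma)=2/k<2$, so $\gamma^\vee=k\gamma\neq\gamma$ and $(\b^\vee,\gamma^\vee)=-k$; and when both roots are long in a twisted system ($r=2,3$) one has $(\gamma,\gamma)=2r\neq2$. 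These are not hypothetical: the lemma is invoked in exactly such situations, e.g.\ in Proposition \ref{reduction2} for $B_n^{(1)}$ the propagation ends with $\b=\a_{n-1}$ long and $\gamma=\a_n$ short, where $(\gamma,\gamma)=1$. Fortunately the identity you need is true in complete generality and needs no length bookkeeping at all: with the paper's convention $s_\gamma(x)=x-(x,\gamma^\vee)\gamma$ one rewrites
\[
s_\gamma(\b^\vee)=\b^\vee-\frac{2(\b^\vee,\gamma)}{(\gamma,\gamma)}\,\gamma=\b^\vee-(\b^\vee,\gamma)\,\gamma^\vee,
\]
and the hypothesis ($\b$ at least as long as $\gamma$, nodes connected) gives precisely $(\b^\vee,\gamma)=-1$ — which is also exactly the condition needed to apply \eqref{dc2v2}. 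With this one-line replacement your argument is complete and coincides with the proof in the paper.
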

\begin{proof}
The hypothesis implies that $ (\b^\vee,\gamma)=-1 $ and $s_\gamma(\b^\vee)=\b^\vee+\gamma^\vee$. From equation Definition \ref{defcherednik}b) we know that
$$
T_{s_\gamma} X_{\b^\vee} T_{s_\gamma} =X_{\gamma^\vee+\b^\vee}
$$
or, equivalently, $X_{\gamma^\vee}=T_{s_\gamma} X_{\b^\vee} T_{s_\gamma} X_{-\b^\vee}$. From this expression it is clear that if  $T_0$ commutes with $X_{\b^\vee}$ then $T_0$ commutes with $X_{\gamma^\vee}$.
\end{proof}

\subsection{}

Some of the properties of affine root systems call for separate treatment of twisted and untwisted affine root systems. The properties that distinguish between untwisted and twisted roots systems that are most relevant for our purposes are the following. For untwisted root systems $\a_0$ and $\th$ have the longest possible length among the real roots and $M=\Qring^\vee$. Twisted root systems necessarily have real roots of two different lengths, $\a_0$ and $\th$ are short roots and $M=\Qring$.

We will denote by $i_\th$ the node in the finite Dynkin diagram that is connected to the affine node.  If there are two such nodes (which is the case for $S(A_n^{(1)})$, $n\geq 2$) we choose one of them. We denote by $\ell_0$ the number of laces by which the nodes corresponding to $0$ and $i_\th$ are connected. Note that $\ell_0$ is always 1, except for $A=  C_{n}^{(1)}, D_{n+1}^{(2)}$, $n\geq 2$  for which it takes the value 2, and for $A= A_{1}^{(1)}$  for which it takes the value 4. For twisted affine root systems $\theta$ is not the highest root; in this case we denote by $i_\ph$ the unique node in the finite Dynkin diagram for which the corresponding simple root is not orthogonal on $\ph$. Remark that  $(\ph^\vee,\a_{i_\ph})=1$.

\subsection{}\label{untwisted}
In this section we assume that $R$ is an irreducible untwisted affine root system.

\begin{Prop}\label{reduction2}
In Proposition \ref{second-presentation} the relations   \eqref{eq1v2} and \eqref{eq2v2} can be replaced by  the following relation
\begin{subequations} 
 \begin{alignat}{2}\label{eq5-1}
& T_0X_{\a_{i_\th}^\vee} T_0 = X_{\a_{i_\th}^\vee+\a_0^\vee} && \text{if}\ \   \ell_0=1,\\ \label{eq5-2}
& T_0X_{\a_{i_\th}^\vee-\th^\vee} = X_{\a_{i_\th}^\vee-\th^\vee} T_0 \quad && \text{if}\ \   \ell_0=2.
\end{alignat}
\end{subequations}
\end{Prop}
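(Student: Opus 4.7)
I would prove Proposition \ref{reduction2} by reducing the family of relations \eqref{eq1v2}, \eqref{eq2v2} of Proposition \ref{second-presentation}, which is indexed by $1\leq j\leq n$, to the single representative at $j=i_\th$, working modulo the relations (a) and the centrality of $X_\d$ from that proposition. First I would classify the $(0,j)$ relations by the value of $(\a_j^\vee,\th)$: in the untwisted case this value is $0$ for every $j$ whose node is not connected to $\a_0$ in the affine Dynkin diagram, equal to $1$ if $\ell_0=1$ and $\a_j$ is connected to $\a_0$, and equal to $2$ if $\ell_0=2$ and $\a_j$ is connected to $\a_0$ (a case which only arises when $A=C_n^{(1)}$). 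Consequently, for every $j$ whose node is not connected to $\a_0$ the corresponding relation is a commutation $T_0 X_{\a_j^\vee}=X_{\a_j^\vee}T_0$, and the main tool for deriving these commutations from a single seed is Lemma \ref{propagation}, which transports a known commutation along edges of the finite Dynkin diagram avoiding the node of $\a_0$, provided one walks from longer to (equal or) shorter roots.

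When $\ell_0=2$ the proposed relation \eqref{eq5-2} is itself a commutation of $T_0$ with $X_{\a_{i_\th}^\vee-\th^\vee}$. In type $C_n^{(1)}$ the element $\a_{i_\th}^\vee-\th^\vee$ is, up to sign, the sum of the remaining simple coroots, so I would first extract a seed commutation $T_0 X_{\a_k^\vee}=X_{\a_k^\vee}T_0$ for some $k\neq i_\th$ by conjugating \eqref{eq5-2} with a $T_j$ ($j\neq i_\th$) via the Bernstein relations (a) and exploiting the centrality of $X_\d$. With this seed, Lemma \ref{propagation} yields the commutations for all short simple coroots; the commutation for the single long simple coroot $\a_n^\vee$, which cannot be reached by the Lemma from a chain of short coroots because the length condition goes the wrong way, is then recovered by subtraction, using that $X_{\a_{i_\th}^\vee-\th^\vee}$ is an integer product of the $X_{-\a_j^\vee}$. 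When $\ell_0=1$ the relation \eqref{eq5-1} is not itself a commutation, but I would extract a seed by an analogous manipulation: combine \eqref{eq5-1} with the Bernstein relations (a) involving $T_{i_\th}$ and the three-term braid relation between $T_0$ and $T_{i_\th}$ in $\A(W)$, then cancel the resulting $X_\d$ factor by centrality, to deduce that $T_0$ commutes with $X_{\a_k^\vee}$ for some index $k$ with $\a_k$ not adjacent to $\a_0$.

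A remaining wrinkle is the case $A=A_n^{(1)}$ with $n\geq 2$, where two finite nodes are connected to $\a_0$ and hence a type-\eqref{eq2v2} relation must be verified for two distinct values of $j$. Starting from \eqref{eq5-1} at $j=i_\th$, I would derive the relation at the partner node by conjugating with a suitable element of $\A(\Wdaff)$ implementing the cyclic rotation of the affine Dynkin diagram. The main obstacle is the rigorous verification that the seed extraction in the $\ell_0=1$ case actually produces a commutation (rather than a more complicated identity) and that the iterated propagation thereafter covers every required $j$ in every untwisted type; this requires a careful case-by-case check across $A_n^{(1)}$, $B_n^{(1)}$, $D_n^{(1)}$, $E_6^{(1)}$, $E_7^{(1)}$, $E_8^{(1)}$, $F_4^{(1)}$, $G_2^{(1)}$, but no new ideas beyond those outlined above.
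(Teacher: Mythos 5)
Your overall architecture is the same as the paper's: classify the $(0,j)$ relations by the value of $(\a_j^\vee,\th)$, feed a seed commutation into Lemma \ref{propagation}, and treat separately the second node adjacent to $\a_0$ in type $A_n^{(1)}$ and the long simple coroot in type $C_n^{(1)}$; your $\ell_0=2$ discussion is correct and even supplies the detail (recovering $X_{\a_n^\vee}$ from \eqref{eq5-2} and the short simple coroots) that the paper dismisses as ``completely similar''. The genuine gap is exactly the step you flag yourself: the seed in the $\ell_0=1$ case. The mechanism you indicate --- combining \eqref{eq5-1} with Bernstein relations involving $T_{i_\th}$ and the $3$-term braid relation between $T_0$ and $T_{i_\th}$ --- is not the one that works: nothing in that toolkit links $T_0$ to an individual $X_{\a_k^\vee}$ with $k\neq i_\th$ (the only lattice elements tied to $T_0$ by \eqref{eq5-1} and the braid relation are $X_{\a_{i_\th}^\vee}$ and $X_{\a_0^\vee}$), so it is at best unclear that any commutation can be extracted. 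The derivation that works, and is the paper's, goes through the \emph{neighbor's} generator: for $\b$ any simple neighbor of $\a_{i_\th}$ (hence not adjacent to $\a_0$, and with $(\b,\a_{i_\th}^\vee)=-1$ since $\a_{i_\th}$ is long in every untwisted type with $\ell_0=1$) one writes $X_{\b^\vee}=T_{s_\b}X_{\a_{i_\th}^\vee}T_{s_\b}X_{-\a_{i_\th}^\vee}$ by \eqref{dc2v2} and conjugates by $T_0$, using the $0$-braid (commutation) relation between $T_0$ and $T_{s_\b}$ --- not the $T_0$, $T_{i_\th}$ braid relation --- together with \eqref{eq5-1} applied twice and \eqref{dc1v2}, \eqref{dc3v2}. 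Obtaining the seed for \emph{all} neighbors of $\a_{i_\th}$ at once also settles the coverage issue your ``some index $k$'' leaves open: every propagation step away from $\a_{i_\th}$ goes from a longer-or-equal root to a shorter one, so Lemma \ref{propagation} reaches every $j$ with $(\a_j^\vee,\th)=0$.

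The second problem is your treatment of the partner node $\a^\prime$ in type $A_n^{(1)}$, $n\geq 2$. The cyclic rotation of the affine diagram is an outer automorphism: it is not realized by conjugation inside $\A(\Wdaff)$ (only in the extended group $\mho\ltimes\A(\Wdaff)$ is it inner), and, more importantly, Proposition \ref{reduction2} asks for a formal derivation of \eqref{eq4} at $\a^\prime$ from the reduced presentation \eqref{dc1v2}, \eqref{dc2v2}, \eqref{eq5-1}, \eqref{dc3v2}; invoking a diagram-rotating automorphism presupposes that it is well defined on the group so presented, i.e.\ that it carries \eqref{eq5-1} to the very relation you are trying to prove, so the argument is circular. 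The fix is a short computation, not a symmetry argument: the commutations already derived show that $T_0$ commutes with $X_{\a_{i_\th}^\vee+\a^{\prime\vee}+\a_0^\vee}$ (its exponent is $\d$ minus the sum of the interior simple coroots), whence $T_0X_{\a^{\prime\vee}}T_0=T_0X_{-\a_{i_\th}^\vee-\a_0^\vee}T_0\,X_{\a_{i_\th}^\vee+\a^{\prime\vee}+\a_0^\vee}=X_{-\a_{i_\th}^\vee}X_{\a_{i_\th}^\vee+\a^{\prime\vee}+\a_0^\vee}=X_{\a^{\prime\vee}+\a_0^\vee}$ by \eqref{eq5-1}. (The converse direction --- that \eqref{eq5-1} and \eqref{eq5-2} do hold in $\A(\Wdaff)$ --- is immediate from Definition \ref{defcherednik} but should still be recorded.)
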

\begin{proof}
Assume that  $\ell_0=1$. Remark first that the relation \eqref{eq5-1} must be satisfied by Definition \ref{defcherednik}b). We will show that the relations \eqref{eq3} follow from the relation \eqref{eq5-1} and the relations in Proposition \ref{second-presentation}a),d). By using Lemma \ref{propagation} we see that the  type $(0,j)_0$ relations \eqref{eq3} are implied by the knowledge of the braid relations and of the commutation of $T_0$ with $X_{\b^\vee}$, where $\b$ is any simple root neighbor of $\a_{i_\th}$. The commutation of $T_0$ and $X_{\b^\vee}$ holds indeed: since $\a_{i_\th}$ is a long root we have $(\beta,\a_{i_\th}^\vee)=-1$ and
$$
X_{\b^\vee}=T_{s_\b} X_{\a_{i_\th}^\vee} T_{s_\b} X_{-\a_{i_\th}^\vee}.
$$
Now,
\begin{align*}
T_0 X_{\b^\vee} T_0^{-1} &= T_0 T_{s_\b} X_{\a_{i_\th}^\vee} T_{s_\b} X_{-\a_{i_\th}^\vee}T_0^{-1} & & \\
&= T_{s_\b} T_0 X_{\a_{i_\th}^\vee} T_0 T_{s_\b} T_0^{-1} X_{-\a_{i_\th}^\vee}T_0^{-1} & & \text{by the braid relations for $T_0$ and $T_{s_\b}$}  \\
&= T_{s_\b} X_{\a_{i_\th}^\vee+\a_0^\vee}  T_{s_\b} X_{-\a_{i_\th}^\vee-\a_0^\vee} & & \text{by \eqref{eq5-1}} \\
&= T_{s_\b} X_{\a_{i_\th}^\vee}  T_{s_\b} X_{-\a_{i_\th}^\vee} & & \text{by \eqref{dc1v2} and \eqref{dc3v2}} \\
&= X_{\b^\vee}. & &
\end{align*}
For $A=A_n^{(1)}$, $n\geq 2$, there are two type $(0,j)_1$ relations: \eqref{eq5-1}  and another one, associated to the second neighbor (let us call it $\a^\prime$) of the affine simple root in the Dynkin diagram. A straightforward computation, which exploits the fact that  $T_0$ commutes with $X_{\a_{i_\th}^\vee+\a^{\prime\vee} +\a_0^\vee}$, (fact which is a consequence of the commuting relations proved above) will show that \eqref{eq4} for $\a^\prime$ holds. For completeness, let us explain the details:
\begin{align*}
T_0 X_{\a^{\prime\vee}} T_0 &= T_0 X_{-\a_{i_\th}^\vee-\a_0^\vee} X_{\a_{i_\th}^\vee+\a^{\prime\vee}+\a_0^\vee} T_0 & &\\
&= T_0 X_{-\a_{i_\th}^\vee-\a_0^\vee} T_0 X_{\a_{i_\th}^\vee+\a^{\prime\vee}+\a_0^\vee} & & \text{by \eqref{eq3}}  \\
&= X_{-\a_{i_\th}^\vee}  X_{\a_{i_\th}^\vee+\a^{\prime\vee}+\a_0^\vee} & & \text{by \eqref{eq5-1}} \\
&=  X_{\a^{\prime\vee}+\a_0^\vee}. & &
\end{align*}
The proof of our result in the case $\ell_0=1$ is now completed. The case $\ell_0=2$ is treated completely similarly.
\end{proof}

\subsection{} \label{twisted}

In this section we assume that $R$ is an irreducible twisted affine root system. Recall that the twisted affine root systems are not simply-laced, $\th\neq \ph$, and $\Qring\subset \Qring^\vee$. In this situation there are two other positive roots that play a distinguished role: the long root $\php=-s_{\th}(\ph)$ and the short root $\thp=-s_{\ph}(\th)$. Remark that $\php^\vee=\th^\vee-\ph^\vee$ and $\thp=\ph-\th$. Set $\Phip=T_{s_{\php}}$, $\Thetap=T_{s_{\thp}}$, $\Psi=T_{s_\ph s_\th}^{-1}$, and  $\isP=T_{s_\th s_\ph}^{-1}$.

\begin{Prop}\label{reduction-2}
In Proposition \ref{second-presentation} the relations   \eqref{eq1v2} and \eqref{eq2v2} can be replaced by  the following relation
\begin{equation}\label{eq5}
T_0X_{\ph^\vee}T_0=X_{\ph^\vee+\a_0^\vee}.
\end{equation}
\end{Prop}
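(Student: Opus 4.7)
The plan is to imitate the proof of Proposition \ref{reduction2}, adapting the key computations to the twisted setting. The argument has two directions: verifying that \eqref{eq5} is a consequence of the relations in Proposition \ref{second-presentation}, and conversely deriving all the relations of type $(0,j)_0$ and $(0,j)_1$ from \eqref{eq5} together with parts a) and d) of Proposition \ref{second-presentation}.

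For the first direction, I would compute $(\ph^\vee,\a_0)=-(\ph^\vee,\th)$ and check, using the normalization $\max_{\a\in R}(\a,\a)=2r$, that $(\ph^\vee,\th)=1$ in every twisted non-$A_{2n}^{(2)}$ case (i.e. for $R$ of type $A_{2n-1}^{(2)}$, $D_{n+1}^{(2)}$, $E_6^{(2)}$, or $D_4^{(3)}$). Consequently $s_0(\ph^\vee)=\ph^\vee+\a_0^\vee$ and \eqref{eq5} is the instance of Definition \ref{defcherednik}b) with $\b=\ph^\vee$; since Proposition \ref{second-presentation} is a reformulation of Definition \ref{defcherednik}, the relation \eqref{eq5} already holds in $\A(\Wdaff)$.

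For the converse, I would begin by translating \eqref{eq5} into a commutation statement between $T_0$ and $X_{\php^\vee}$. Using relations \eqref{dc1v2} and \eqref{dc2v2}, together with suitable conjugation by elements of $\A(\W)$, one can express $X_{\php^\vee}$ as a product $T_w X_{\ph^\vee} T_w^{-1}\cdot X_{\mu}$ where $\mu$ is a coroot orthogonal to $\th$ already known to commute with $T_0$; conjugating \eqref{eq5} by $T_w$ and simplifying yields $[T_0,X_{\php^\vee}]=1$. This is the $(0,j)_0$ relation at the node adjacent to $i_\ph$, and it serves as the seed for propagation. From this seed, Lemma \ref{propagation} propagates commutativity of $T_0$ with $X_{\gamma^\vee}$ outward along the finite Dynkin diagram, recovering every $(0,j)_0$ relation \eqref{eq3}. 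The single remaining type $(0,j)_1$ relation \eqref{eq4} (which in the twisted, non-$A_{2n}^{(2)}$ case occurs only at $j=i_\th$) is then derived from \eqref{eq5} by the same template used at the end of the proof of Proposition \ref{reduction2}: express $X_{\a_{i_\th}^\vee}$ as a product involving $X_{\ph^\vee}$ and $X_{\gamma^\vee}$'s already shown to commute with $T_0$, conjugate by $T_0$, and apply \eqref{eq5}.

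The main obstacle will be justifying the passage from \eqref{eq5} to the seed commutation $[T_0,X_{\php^\vee}]=1$, because $\ph^\vee$ is not a simple coroot and the reduction requires precise bookkeeping of conjugations inside $\A(\W)$ that use only the relations in Proposition \ref{second-presentation}a) and the centrality of $X_\d$. A unified argument based on choosing $w\in\W$ with $w(\a_{i_\ph})=\ph$ should work in principle, but a case-by-case verification across the four twisted Dynkin types is likely the cleanest route, since the relative position of $i_\th$, $i_\ph$, and the long simple roots differs among them.
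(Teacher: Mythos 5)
Your overall architecture (verify \eqref{eq5} via $(\ph^\vee,\th)=1$ and Definition \ref{defcherednik}b); derive the single type $(0,j)_1$ relation at $i_\th$ directly from \eqref{eq5}; obtain commutations and propagate them with Lemma \ref{propagation}) is the paper's architecture, but the pivotal step of your converse direction is wrong. The claimed seed commutation $[T_0,X_{\php^\vee}]=1$ is false: since $(\ph^\vee,\th)=1$ we have $(\php^\vee,\th)=(\th^\vee-\ph^\vee,\th)=1$, i.e.\ $(\php^\vee,\a_0)=-1$, so the relation that actually holds is $T_0X_{\php^\vee}T_0=X_{\php^\vee+\a_0^\vee}$; already in the double affine Weyl group $s_0\tau_{\php^\vee}s_0\neq \tau_{\php^\vee}$, so no manipulation of the defining relations can yield that commutation (indeed, when $\ell_0=2$ one has $\php^\vee=\a_{i_\th}^\vee$). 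Moreover $\php^\vee$ is not a simple coroot when $\ell_0=1$, and Lemma \ref{propagation} only applies to pairs of \emph{simple} roots, with propagation going from the longer (or equal) root to the shorter one, so even a correct commutation at such an element could not be fed into it directly; and your phrase that $\mu$ is ``already known to commute with $T_0$'' is circular at the seed stage, where nothing has yet been established.

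What the paper actually does is derive two seeds from \eqref{eq5}: first, by a braid computation starting from $X_{\a_{i_\ph}^\vee}=X_{\a_{i_\ph}^\vee+\ph^\vee}X_{-\ph^\vee}$, the commutation of $T_0$ with the simple coroot $X_{\a_{i_\ph}^\vee}$; second, when $\ell_0=1$, the commutation of $T_0$ with $X_{\th^\vee-\ph^\vee-\a_{i_\th}^\vee}$ --- note the extra $-\a_{i_\th}^\vee$, which is precisely what makes this a (long) coroot orthogonal to $\th$, namely $s_{i_\th}s_\th(-\ph^\vee)$ --- which is then transported by an explicit conjugation chain inside $\stab_{\W}(\th)$ to the long simple coroot $\a_{j_0}^\vee$. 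This second seed is not optional: for $A_{2n-1}^{(2)}$ the root $\a_{i_\ph}$ is the short simple root adjacent only to $\a_{i_\th}$, so propagation from it alone produces nothing, and the length restriction in Lemma \ref{propagation} prevents reaching the long simple root from short ones. Your plan, with its single (and false) seed and no mechanism for passing from a non-simple coroot to a simple one, therefore does not close; also, the type $(0,j)_1$ relation at $i_\th$ is obtained in the paper from \eqref{eq5} and braid relations alone (and for $\ell_0=2$ it \emph{is} \eqref{eq5}), not from previously established commutations as you suggest. Your first direction is fine.
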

\begin{proof}
First, remark that the relation \eqref{eq5} must be satisfied in $\Atilde(R)$. Indeed, using basic facts about root systems, which can be found for example in \cite{BouLie}*{VI, \S 1.3}, one obtains that $(\ph^\vee,\th)=1$ and therefore $(\ph^\vee,\a_0)=-1$. Hence, by Definition \ref{defcherednik}{b)},  we obtain that \eqref{eq5} is satisfied .

Let us explain how the type $(0,j)_1$ relation \eqref{eq4} follows from \eqref{eq5} and relations in Proposition \ref{second-presentation}a),d). Indeed,  if $\ell_0=1$ or, equivalently,  if $(\th^\vee,\a_{i_\th})=1$, we have
\begin{align*}
T_0X_{\a_{i_\th}^\vee} T_0&=T_0X_{\a_{i_\th}^\vee-\th^\vee+\ph^\vee}X_{\th^\vee-\ph^\vee} T_0& &\\
&=T_0T_{i_\th} X_{-\th^\vee+\ph^\vee} T_{i_\th} X_{\th^\vee-\ph^\vee} T_0 & &\text{by \eqref{dc2v2}} \\
&=T_0T_{i_\th} T_0 T_0^{-1}X_{-\th^\vee+\ph^\vee}T_0^{-1} T_0T_{i_\th} T_0^{-1} T_0 X_{\th^\vee-\ph^\vee} T_0& &\\
&=T_0T_{i_\th} T_0 X_{-\delta+\ph^\vee} T_0 T_{i_\th} T_0^{-1} X_{\delta-\ph^\vee}& &\text{by \eqref{eq5} and \eqref{dc3v2}}\\ 
&=T_{s_\a} T_0 T_{i_\th} X_{-\delta+\ph^\vee} T_{i_\th}^{-1} T_0 T_{i_\th} X_{\delta-\ph^\vee}& &\text{by the $T_0$, $T_{i_\th}$ braid relations}\\ 
&=T_{i_\th} T_0  X_{-\delta+\ph^\vee} T_0 T_{i_\th} X_{\delta-\ph^\vee}& &\text{by  \eqref{dc1v2} and \eqref{dc3v2}}\\ 
&=T_{i_\th}  X_{-\th^\vee+\ph^\vee} T_{i_\th} X_{\delta-\ph^\vee}& &\text{by \eqref{eq5}}\\ 
&= X_{\a_{i_\th}^\vee+\a_0^\vee}& &\text{by \eqref{dc2v2}}\\ 
\end{align*}
If $\ell_0=2$  or, equivalently, if $(\th^\vee,\a_{i_\th})=2$, then $$s_{i_\th} s_\th(-\ph^\vee)=s_{i_\th}(\th^\vee-\ph^\vee)=\th^\vee-\ph^\vee-2\a_{i_\th}^\vee$$ is a negative co-root since $(\th-\ph^\vee-2\a_{i_\th}^\vee,\th)=-1$. Taking into account that $\a$ is a simple root, this implies that $\a_{i_\th}^\vee=\th^\vee-\ph^\vee$. In this case, \eqref{eq4} is precisely \eqref{eq5}.

Let us now explain how the type $(0,j)_0$ relations \eqref{eq3} follow from \eqref{eq5} and relations in Proposition \ref{second-presentation}a),d). 

We first establish the commutation relation for the simple root $\a_{i_\ph}$. Indeed,
\begin{align*}
T_0X_{\a_{i_\ph}^\vee} T_0^{-1}&=T_0X_{\a_{i_\ph}^\vee+\ph^\vee}X_{-\ph^\vee} T_0^{-1}& &\\
&=T_0T_{i_\ph}^{-1} X_{\ph^\vee} T_{i_\ph}^{-1} X_{-\ph^\vee} T_0^{-1} & &\text{by \eqref{dc2v2}} \\
&=T_0T_{i_\ph}^{-1}T_0^{-1} T_0 X_{\ph^\vee}T_0 T_0^{-1} T_{i_\ph}^{-1} T_0 T_0^{-1 }X_{-\ph^\vee} T_0^{-1} & &\\
&=T_0T_{i_\ph}^{-1}T_0^{-1}  X_{\ph^\vee+\a_0^\vee} T_0^{-1} T_{i_\ph}^{-1} T_0 X_{-\ph^\vee-\a_0^\vee} & &\text{by \eqref{eq5}}\\ 
&=T_{i_\ph}^{-1}  X_{\ph^\vee+\a_0^\vee} T_{i_\ph}^{-1} X_{-\ph^\vee-\a_0^\vee} & &\text{by the $T_0$, $T_{i_\ph}$ braid relations }\\ 
&=T_{i_\ph}^{-1}  X_{\ph^\vee-\th^\vee} T_{i_\ph}^{-1} X_{-\ph^\vee+\th^\vee}& &\text{by \eqref{dc3v2}}\\ 
&= X_{\a_{i_\ph}^\vee}& &\text{by  \eqref{dc2v2}}\\ 
\end{align*}
If $(\th^\vee,\a_{i_\th})=1$, we obtain in a similar way a commutation relation between $T_0$ and $X_{\th^\vee-\ph^\vee-\a_{i_\th}^\vee}$:
\begin{align*}
T_0X_{\th^\vee-\ph^\vee-\a_{i_\th}^\vee} T_0^{-1}&=T_0T_{i_\th}^{-1} X_{\th^\vee-\ph^\vee} T_{i_\th}^{-1} T_0^{-1} & &\text{by \eqref{dc2v2}} \\
&=T_0T_{i_\th}^{-1}T_0^{-1}T_0 X_{\th^\vee-\ph^\vee} T_0T_0^{-1}T_{i_\th}^{-1} T_0^{-1} & &\\
&=T_0T_{i_\th}^{-1}T_0^{-1}X_{\delta-\ph^\vee}T_0^{-1}T_{i_\th}^{-1} T_0^{-1}& &\text{by \eqref{eq5}}\\ 
&=T_{i_\th}^{-1}T_0^{-1}T_{i_\th} X_{\delta-\ph^\vee}T_{i_\th}^{-1}T_0^{-1}T_{i_\th}^{-1} & &\text{by the $T_0$, $T_{i_\th}$ braid relations}\\ 
&=T_{i_\th}^{-1}T_0^{-1} X_{\delta-\ph^\vee}T_0^{-1}T_{i_\th}^{-1} & &\text{by  \eqref{dc1v2} and \eqref{dc3v2}}\\ 
&=T_{i_\th}^{-1}X_{\th^\vee-\ph^\vee}T_{i_\th}^{-1} & &\text{by \eqref{eq5}}\\ 
&= X_{\th^\vee-\ph^\vee-\a_{i_\th}^\vee}& &\text{by \eqref{dc2v2}}\\ 
\end{align*}
This commutation relation implies a relation of type $(0,j)_0$. Indeed, since $(\th^\vee,\a_{i_\th})=1$, $\th^\vee-\ph^\vee-\a_{i_\th}^\vee$ is a the co-root of a long root, being equal to $s_{i_\th} s_\th(-\ph^\vee)$. Furthermore, it is orthogonal on $\th$. Therefore, the reflection $s_{\th^\vee-\ph^\vee-\a_{i_\th}^\vee}$ is conjugate (in the stabilizer of $\th$ in $\W$) to a simple reflection $s_{j_0}$, with respect to a long simple root $\a_{j_0}$. We can write 
$$
s_{j_p}\cdots s_{j_1}(\a_{j_0}^\vee)=\th^\vee-\ph^\vee-\a_{i_\th}^\vee
$$
such that $s_{j_k}(\th)=\th$ (or, equivalently the nodes $0$ and $j_k$ are not connected in the Dynkin diagram)  and, because $\a_{j_0}$ is long,  $(s_{j_{k-1}}\cdots s_{j_1}(\a_{j_0}^\vee), \a_{j_k})=-1$ for all $1\leq k\leq p$. By  \eqref{dc2v2} we obtain
$$
X_{\a_{j_0}^\vee}=T_{j_1}^{-1}\cdots T_{j_p}^{-1}X_{\th^\vee-\ph^\vee-\a^\vee}T_{j_p}^{-1}\cdots T_{j_1}^{-1}
$$
The fact that $T_0$ commutes with $X_{\th^\vee-\ph^\vee-\a_{i_\th}^\vee}$ and $T_{j_k}$, for all $1\leq k\leq p$, implies that $T_0$ and $X_{\a_{j_0}^\vee}$ commute.

By examining the twisted Dynkin diagrams we can see that Lemma \ref{propagation} produces all the relations of type $(0,j)_0$ starting from the commutation relation between $T_0$ and $X_{\a_{i_\ph}^\vee}$ except for $A^{(2)}_{2n-1}$, $n\geq 3$, for which we also need to use the commutation relation between $T_0$ and $X_{\a_{j_0}^\vee}$, where, in this case, $\a_{j_0}$ is the unique long simple root.
\end{proof} 
\section{The Coxeter presentation}\label{sec: Coxeter}

\subsection{} As we explain in Appendix \ref{sec: nonCoxeter}, double affine Weyl groups are not Coxeter groups, the most basic obstruction being the fact that they have infinite center. As we also point out in Appendix \ref{sec: nonCoxeter}, taking a quotient by a proper subgroup of the center, or removing the center altogether do not produce Coxeter groups either. Therefore, the other possibility for relating double affine Weyl groups to Coxeter groups is to resolve the center. We show that this is indeed possible and the corresponding relationship carries over to the double affine Artin groups.

\subsection{}
The goal of this section is to obtain a Coxeter-type presentation for double affine Artin groups and consequently for double affine  Weyl groups. This presentation involves realizing the double affine Artin groups as quotients of the Coxeter braid  groups associated to the double affine Coxeter diagrams  listed in Figure \ref{dddot-diagrams}, Figure \ref{ddot-diagrams}, and Figure \ref{fH-diagrams}. To be consistent with Convention \ref{bigdaddy}, $\DDDot{C}_1$ will refer to the diagram $\DDDot{A}_1$, and $\DDot{C}_1$ will refer to the diagram $\DDot{A}_1$. Remark that if we replace a double or triple node with a regular node, then the diagrams of type $\DDDot{X}_n$, $\DDot{X}_n$ are precisely the affine Coxeter diagrams of type $\widetilde{X}_n$ (in the notation of \cite{BouLie}*{Ch. VI, \S 4.3}). In fact, for any  double affine Coxeter diagram $\DDDot{X}_n$,  $\fH{X}_n$, or $\DDot{X}_n$, if we erase all the affine nodes we obtain the finite Coxeter diagram $X_n$ and if we erase all but one affine node we obtain an affine Coxeter diagram.

\subsection{} Let us fix notation for the generators of the Coxeter braid group associated to a double affine Coxeter diagram. The generators associated to the finite nodes will be denoted by $\T_1,\dots,\T_n$.

To a fixed affine node of $\DDDot{X}_n$, $\DDot{X}_n$, or $\fH{X}_n$ we can associate an element of $B(X_n)$ as follows. Temporarily denote by $\dot{X}_n$ the affine Coxeter diagram obtained from the double Coxeter diagram by erasing the other affine nodes. The vector space supporting the reflection representation of $C(X_n)$ can be naturally realized as a subspace of the reflection representation of $C(\dot{X}_n)$. In the reflection representation of $C(\dot{X}_n)$, the action of the reflection associated to the affine node of $\dot{X}_n$ coincides, when restricted to the reflection representation of $C(X_n)$, to the action of a unique reflection in $C(X_n)$. Hence, to the affine node we can associate a unique element of $C({X}_n)$ and, by applying $t_{X_n}$, a unique element of $B(X_n)$. The affine nodes that produce the same affine Coxeter diagram $\dot{X}_n$ give rise to the same element of $B(X_n)$. 

We fix notation for the affine node that is suggestive of this correspondence. More precisely, for $\DDDot{X}_n$, the generators corresponding to the affine nodes will be denoted by $\varTheta_{01}$, $\varTheta_{02}$, $\varTheta_{03}$ and the associated element of $B(X_n)$ will be denoted by $\varTheta$.  For $\DDot{X}_n$ and $\fH{X}_n$, the affine nodes will be denoted by $\Ti$ and $\Tiii$ and the associated elements of $B(X_n)$ will be denoted by $\varTheta$ and $\varPhi$, respectively. Note that for $\DDot{C}_n$ we have $\varTheta=\varPhi$.

As a general rule, we denote by $\T_{i_\th}$ the generator corresponding to the finite node connected to the affine node corresponding to $\varTheta_0, \varTheta_{01}, \varTheta_{02}$, or $\varTheta_{03}$, and we denote by $\ell_0$ the number of laces between these two nodes.  If there are two such finite roots (which is the case for $\DDDot{A}_n$, $n\geq 2$) we choose one of them. Note that $\ell_0$ is always 1, except for $\DDDot{C}_{n}$, $\DDot{C}_n$, $n\geq 2$,  $\fH{B}_{2}$, and possibly  $\fH{B}_{n}/\fH{C}_n$, $n\geq 3$  for which it takes the value 2, and  for $\DDDot{A}_{1}$,  $\DDot{A}_{1}$  for which it takes the value 4. Also, we denote by $\T_{i_\ph}$ the generator corresponding to the finite node connected to the affine node corresponding to $\varPhi_0$.

For the double affine Coxeter diagrams of type $\fH{X}_n$, there are two conjugacy classes of reflections in $C(X_n)$ and $\pi_{X_n}(\varTheta)$ and $\pi_{X_n}(\varPhi)$ are the reflections of maximal length in each conjugacy class. In particular, $\varTheta$ and $\varPhi$ coincide (up to a permutation) with $\Theta$ and $\Phi$ in Section \ref{app: finite}. With the notation,
\begin{equation}\label{eq: tildedef}
\varThetap=t_{X_n}\pi_{X_n}(\varPhi\varTheta\varPhi),\quad   \varPhip=t_{X_n}\pi_{X_n}(\varTheta\varPhi\varTheta),
\end{equation}
we have, according to Lemma \ref{Psi}iii),
\begin{equation}
\varThetap\varTheta=\varPhi\varPhip\quad \text{and}\quad \varPhip\varPhi=\varTheta\varThetap.
\end{equation}
Remark that the two relations are permuted if $\varTheta$ and $\varPhi$ are interchanged. We denote
\begin{equation}\label{eq: Phipdef}
\varPsi= \varPhip\varTheta^{-1}=\varPhi^{-1}\varThetap,\quad  \varisP=\varThetap\varPhi^{-1}=\varTheta^{-1}\varPhip.
\end{equation}

To emphasize the distinguished role played by these elements lets us record the following relations in  $B(\fH{X}_n)$, each being in fact a relation in an affine Coxeter braid group.

\begin{Prop}\label{B2-rels} If $X_n$ is double laced, in $B(\fH{X}_n)$ the following hold,
\begin{enumerate}[label={\roman*)}]
\item $\Tiii$ and $\varPhip$ satisfy the $0$-braid relation;
\item $\Tiii$ and $\varThetap$ satisfy the $2$-braid relation;
\item $\Ti$ and $\varThetap$ satisfy the $0$-braid relation;
\item $\Ti$ and $\varPhip$ satisfy the $2$-braid relation.
\end{enumerate}
Therefore, $\Ti$, $\Tiii$,  $\varThetap$, and $\varPhip$ satisfy the braid relations specified by the Coxeter diagram in Figure \ref{fig: labelledB2}.
\begin{figure}[ht]
\caption{Labelled $\fH{B}_2/\fH{C}_2$ diagram}\label{fig: labelledB2}
\begin{center}
  \begin{tikzpicture}[scale=.4]
      \draw (2.3,1.3) node[anchor=south]  {$\varPhip$};
      \draw (-0.3,1.3) node[anchor=south]  {$\varThetap$};
      \draw (2.3,-1.3) node[anchor=north]  {$\Ti$};
      \draw (-0.3,-1.3) node[anchor=north]  {$\Tiii$};
    \foreach \x in {45,135,225,315}
    \draw[thick, xshift=1cm] (\x: 1.4cm) circle [radius=3mm];
    \draw[xshift=1cm, shift=(135:1.4cm), thick] (20: 3mm) -- +(1.42 cm, 0);
    \draw[xshift=1cm, shift=(135:1.4cm), thick] (-20: 3 mm) -- +(1.42 cm, 0);
    \draw[xshift=1cm, shift=(135:1.4cm), thick] (-70: 3 mm) -- +(0,-1.42 cm);
    \draw[xshift=1cm, shift=(135:1.4cm), thick] (-110: 3 mm) -- +(0,-1.42 cm);
    \draw[xshift=1cm, shift=(-135:1.4cm), thick] (20: 3mm) -- +(1.42 cm, 0);
    \draw[xshift=1cm, shift=(-135:1.4cm), thick] (-20: 3 mm) -- +(1.42 cm, 0);
    \draw[xshift=1cm, shift=(45:1.4cm), thick] (-70: 3 mm) -- +(0,-1.42 cm);
    \draw[xshift=1cm, shift=(45:1.4cm), thick] (-110: 3 mm) -- +(0,-1.42 cm);
  \end{tikzpicture}
\end{center}
\end{figure}
\end{Prop}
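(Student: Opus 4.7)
The plan is to reduce each of the four braid relations to a relation inside an affine Coxeter braid subgroup of $B(\ddot{X}_n)$, obtained by deleting one of the two affine nodes of the double affine Coxeter diagram. This uses the standard fact that the Coxeter braid group of a full sub-diagram of a Coxeter diagram maps into the ambient Coxeter braid group by sending generators to generators (so identities in the sub-braid group yield identities in the ambient one).

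First I would handle (i), (ii) by deleting $\Ti$ from $\ddot{X}_n$ to obtain a sub-diagram $D_1$. An inspection of Figure \ref{diagrams} (and comparison with Theorem \ref{state2}) shows that $D_1$ is an affine Coxeter diagram whose generators are $\Tiii, \T_1,\dots,\T_n$; both elements in each of (i), (ii) lie in $B(D_1)$, since $\varThetap, \varPhip \in B(X_n) \subset B(D_1)$. A parallel reduction, deleting $\Tiii$ instead, places (iii), (iv) in an affine Coxeter braid group $B(D_2)$. Relations (iii), (iv) are then obtained from (i), (ii) by swapping the roles of $\ph$ and $\th$ (and of $\Ti, \Tiii$), so I only describe the argument for (i), (ii).

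Within $B(D_1)$, the image $\pi_{X_n}(\Tiii) = s_\ph$ is the reflection at the highest root relevant to this affine sub-diagram, so by Remark \ref{braid01} the product $\varPhi\Tiii$ equals a translation $Y_{-a_0^{-1}\ph}$ in the Bernstein--Lusztig presentation of $B(D_1)$. By Remark \ref{rem: double braid} (and self-anti-isomorphism of the Coxeter braid group), the element $\Tiii^{-1}\varPhi^{-1}$ then commutes with $\T_i$ when $(\a_i,\ph^\vee)=0$ and satisfies the $2$-braid relation with $\T_{i_\ph}$. On the Weyl-group side, the definitions give $\pi_{X_n}(\varPhip) = s_{s_\th(\ph)}$ and $\pi_{X_n}(\varThetap) = s_{s_\ph(\th)}$. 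Because $X_n$ is doubly laced, a direct computation using $(\ph,\th^\vee)\in\{1,2\}$ identifies $s_\th(\ph) = \ph - (\ph,\th^\vee)\th$ as a long root lying in the root subsystem spanned by $\{\a_i : i\neq i_\ph\}$, while $s_\ph(\th)=\th-\ph$ lies in the $\W$-orbit of $\th$ with a distinguished relation to $\a_{i_\ph}$. Consequently, $\varPhip$ can be written as a word in the generators $\{\T_i : i\neq i_\ph\}$ (each of which commutes with $\Tiii$ in $D_1$), which proves (i); and $\varThetap$ can be expressed as $w\,\T_{i_\ph}\,w^{-1}$ with $w\in\langle\T_i : i\neq i_\ph\rangle$, so the $2$-braid relation between $\Tiii$ and $\T_{i_\ph}$ in $D_1$ propagates to the desired $2$-braid relation between $\Tiii$ and $\varThetap$, giving (ii).

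The main obstacle is the type-by-type verification, for $X_n=B_n,C_n,F_4$, that the non-simple reflection $s_{s_\ph(\th)}$ lifts to an element of the expected form $w\,\T_{i_\ph}\,w^{-1}$ (with $w$ in the parabolic subgroup commuting with $\Tiii$), and symmetrically that $s_{s_\th(\ph)}$ lifts into the parabolic avoiding $\T_{i_\ph}$. This is handled by the same kind of explicit root-system reasoning used in Propositions \ref{reduction2} and \ref{reduction-2}: one exploits the identities $\varThetap\varTheta = \varPhi\varPhip$ and $\varPhip\varPhi = \varTheta\varThetap$ together with the Bernstein--Lusztig relations in $B(D_1)$ to reduce the non-simple lifts to manageable conjugates.
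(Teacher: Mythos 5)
Your overall strategy — working inside the affine Coxeter braid subgroup obtained by deleting one affine node, and treating (iii),(iv) by symmetry — is exactly the paper's reduction, and your argument for (i) (that $\php=-s_\th(\ph)$ is orthogonal to $\ph$, hence $\varPhip$ commutes with $\Tiii$, which the paper gets directly from $(\ph,\php)=0$ and length additivity) is fine. The gap is in (ii) (and hence (iv)). You assert that $\Tiii$ and $\T_{i_\ph}$ satisfy the $2$-braid relation in $D_1$ and that $\varThetap=w\,\T_{i_\ph}\,w^{-1}$ with $w\in\langle\T_i: i\neq i_\ph\rangle$. Both statements fail whenever the affine node is attached to its finite neighbour by a \emph{single} lace, i.e.\ when $(\ph,\a_{i_\ph}^\vee)(\ph^\vee,\a_{i_\ph})=1$; this is the generic situation (both affine nodes of $\ddot{F}_4$, and one of the two affine nodes of $\ddot{B}_n/\ddot{C}_n$ — e.g.\ for finite type $B_n$ one has $i_\ph=2$, $\a_{i_\ph}=e_2-e_3$ long, while $\thp=e_2$ is short). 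In that case $\Tiii$ and $\T_{i_\ph}$ only satisfy the $1$-braid relation, and since conjugate reflections correspond to roots of equal length, $s_{\thp}$ (short) is not conjugate to $s_{i_\ph}$ (long), so no such $w$ exists. Worse, any element obtained by conjugating with, or written as a word in, generators that commute with $\Tiii$ would itself commute with $\Tiii$, whereas $\varThetap$ must satisfy a genuine $2$-braid relation with it — so the mechanism cannot be repaired by a cleverer choice of $w$; your appeal to Remark \ref{rem: double braid} concerns the translation-type element $\Tiii^{-1}\varPhi^{-1}$ and is never connected to the braid relation you need for $\Tiii$ itself.

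The paper's proof handles exactly this dichotomy: when $(\ph,\a_{i_\ph}^\vee)(\ph^\vee,\a_{i_\ph})=2$ one has $\varThetap=\T_{i_\ph}$ (Lemma \ref{lemma-wy-artin} iv), vi)) and the $2$-braid relation is the diagram relation; when the product is $1$, one instead uses the palindromic (non-conjugation) identity $\varThetap=\T_{i_\ph}\T_{s_{i_\ph}s_{\thp}s_{i_\ph}}\T_{i_\ph}$ from Lemma \ref{lemma-extrabraid-1}, the $2$-braid relation between $\T_{i_\ph}$ and $\T_{s_{i_\ph}s_{\thp}s_{i_\ph}}$ from Lemma \ref{lemma-extrabraid-2}, the commutation of $\Tiii$ with $\T_{s_{i_\ph}s_{\thp}s_{i_\ph}}$ (from $(\ph^\vee,s_{i_\ph}(\thp))=0$), and the elementary propagation Lemma \ref{basic-braid-rels2} ii) to conclude that $\Tiii$ and $\T_{i_\ph}\T_{s_{i_\ph}s_{\thp}s_{i_\ph}}\T_{i_\ph}$ satisfy the $2$-braid relation. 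Some device of this kind — producing $\varThetap$ as a product genuinely involving $\T_{i_\ph}$ rather than as a parabolic conjugate — is what is missing from your proposal.
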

\begin{proof}
We present arguments for the first two claims, the last two claims being proved similarly. The first two claims involve only the subgroup of $B(\fH{X}_n)$ generated by $\Tiii, \T_1,\dots, \T_n$, which without loss of generality we may assume that is the Coxeter braid group associated to the untwisted affine Dynkin diagram ${X}^{(1)}_n$. In particular, we adopt the notation used in this context we freely make reference to the corresponding finite root system as set up in \S \ref{app-nsl}. 

Now, the first claim follows from the fact that, $(\ph, \php)=0$. If $(\ph, \a_{i_\ph}^\vee)(\ph^\vee,\a_{i_\ph})=2$ then $\Tiii$ and $\T_{i_\ph}$ satisfy the $2$-braid relation, but also, Lemma \ref{lemma-wy-artin}iv),vi) implies that $\T_{i_\ph}=\varThetap$. Therefore, $\Tiii$ and $\varThetap$ satisfy the $2$-braid relation. 

If $(\ph, \a_{i_\ph}^\vee)(\ph^\vee,\a_{i_\ph})=1$ then  $\Tiii$ and $\T_{i_\ph}$ satisfy the $1$-braid relation. We have $(\a_{i_\ph}^\vee, \thp)=1$ and $(\ph^\vee, s_{i_\ph}(\thp))=0$. This implies that $\Tiii$ and $\T_{s_{i_\ph}s_{\thp} s_{i_\ph}}$ commute. 

Note that since $(\th, \thp)=0$, $\php$ is a root in the standard parabolic sub-system of $\Rring$ obtained by removing $\a_{i_\th}$. In fact, the only simple root in this parabolic sub-system that has positive scalar product with $\thp$ is $\a_{i_\ph}$ and $(\a_{i_\ph}, \thp^\vee)(\a_{i_\ph}^\vee, \thp)=2$. Lemma \ref{lemma-extrabraid-2} implies that $\T_{s_{i_\ph}}$ and $\T_{s_{i_\ph}s_{\thp} s_{i_\ph}}$ satisfy the $2$-braid relation. From Lemma \ref{basic-braid-rels2}ii) for $a=\Tiii$, $b=\T_{i_\ph}$, and $c=\T_{s_{i_\ph}s_{\thp} s_{i_\ph}}$ we obtain that $\Tiii$ and $\T_{i_\ph}\T_{s_{i_\ph}s_{\thp} s_{i_\ph}}\T_{i_\ph}$ satisfy the $2$-braid relation.  By Lemma \ref{lemma-extrabraid-1} we have $\varThetap=\T_{i_\ph}\T_{s_{i_\ph}s_{\thp} s_{i_\ph}}\T_{i_\ph}$. In conclusion, $\Tiii$ and $\varThetap$ satisfy the $2$-braid relation. The fact that $\varThetap$ and $\varPhip$ satisfy the $2$-braid relation is proved in Lemma \ref{lemma-explicit-2}iii).
\end{proof}

For the double affine Coxeter diagrams of type $\DDot{C}_n$, as we already noted, $\varPhi=\varTheta$. For notational consistency, we still adopt the notation set up in \eqref{eq: tildedef} and \eqref{eq: Phipdef}, but remark that in this case
\begin{equation}\label{eq: twodotcollapsing}
\varThetap=\varPhip=\varTheta=\varPhi \quad \text{and}\quad \varPsi=\varisP=1.
\end{equation}

\subsection{}\label{sec: def-dddot}
We are now ready to associate to each double affine Coxeter diagram a quotient of the corresponding Coxeter braid group. We start with the double affine Coxeter diagrams of type $\DDDot{X}_n$.

\begin{Def}\label{def: untwisted} The group $\B(\DDDot{X}_n)$ is defined as the quotient of $B(\DDDot{X}_n)$ by the normal subgroup generated by the following relations:
\begin{enumerate}[leftmargin=40pt, label={\alph*)}]
\item The element
\begin{equation}\label{centralrel-1}
\C:=\varTheta_{01}\varTheta_{02}\varTheta_{03}\varTheta
\end{equation} 
is central.
\item If $\ell_0=2$, for $(i,j)\in \{(1,2),~(1,3),~(2,3)\}$ we have
\begin{equation}\label{ellbraid}
\varTheta_{0i}\T_{i_\th}^{-1}\varTheta_{0j}\T_{i_\th}=\T_{i_\th}^{-1}\varTheta_{0j}\T_{i_\th} \varTheta_{0i}.
\end{equation}
\end{enumerate}
\end{Def}

Let us briefly comment on the minimality of the set of generators and of the set of relations in the definition of $\B(\DDDot{X}_n)$.  To be able to use standard results on affine Coxeter braid groups we remark that the affine Coxeter diagram obtained by erasing from $\DDDot{X}_n$ all but one affine node coincide with the Coxeter diagram underlying the affine Dynkin diagram of type $X_n^{(1)}$. Therefore, we will freely use any result that holds in the Coxeter braid of type $X_n^{(1)}$. We note that the elements of $B(X_n)\subset B(\DDDot{X}_n)$ denoted by  $\varTheta$ and  $\T_{i_\th}$ correspond respectively to the elements of $B(X_n)\subset B({X}^{(1)}_n)$ denoted by  $\Theta$  and $T_{i_\th}$.

\begin{Lm}
In $\B(\DDDot{X}_n)$, if  $\ell_0=1$, $\varTheta_{02}$ can be expressed in terms of the other generators.
\end{Lm}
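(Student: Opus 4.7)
The plan is to exploit the central relation \eqref{centralrel-1} to isolate the generator in question,
\[
\varTheta_{02} \;=\; \varTheta_{01}^{-1}\,\C\,\varTheta^{-1}\,\varTheta_{03}^{-1},
\]
and reduce the lemma to producing the central element $\C$ as an explicit word in the subgroup
\[
H := \<\varTheta_{01},\varTheta_{03},\T_1,\dots,\T_n\>\subset \B(\dddot{X}_n).
\]
Since $\varTheta \in B(X_n)\subset \<\T_1,\dots,\T_n\>$ by construction, this is an equivalent reformulation of the claim.

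To place $\C$ inside $H$ I would invoke the hypothesis $\ell_0 = 1$ precisely through the $1$-braid relation $\varTheta_{03}\T_{i_\th}\varTheta_{03}=\T_{i_\th}\varTheta_{03}\T_{i_\th}$: this relation, together with the commutation of $\varTheta_{03}$ with the remaining $\T_i$, matches verbatim the relations \eqref{eq5-1} and \eqref{dc1v2} that govern the lattice element $X_{\a_{i_\th}^\vee}$ in the Bernstein-style presentation of Proposition \ref{reduction2}, with $\varTheta_{01}$ in the role of $T_0$. The first concrete step is to read off from the $1$-braid identity the word
\[
\varTheta_{03}^{-1}\varTheta_{01}\varTheta_{03}\varTheta_{01}\in H,
\]
which, under the above identification, plays the role of $X_{\a_0^\vee}$. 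The next step is to propagate $\varTheta_{03}$ across the finite Dynkin diagram by conjugation with $\T_w$ for a suitable $w\in\W$ carrying $\a_{i_\th}$ to $\th$, using Lemma \ref{propagation} together with the conjugation rules of Proposition \ref{second-presentation}(a), producing an element of $H$ that represents $X_{\th^\vee}$. Because $\a_0^\vee = \d - \th^\vee$ in the untwisted setting, multiplying these two elements gives $X_\d \in H$, and this central element is exactly $\C$.

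The main obstacle is making the identification $\varTheta_{03}\leftrightarrow X_{\a_{i_\th}^\vee}$ rigorous at a point in the paper where Theorem \ref{state2} has not yet been proved. I would sidestep this apparent circularity by observing that every step of the above derivation uses only the relations already listed in Definition \ref{def: untwisted} — the $1$-braid between $\varTheta_{03}$ and $\T_{i_\th}$, the commutations of $\varTheta_{03}$ with the other $\T_i$, and the centrality of $\C$ — and at no point does the third affine generator $\varTheta_{02}$ intervene. A secondary care point is to verify that each intermediate element produced by the Weyl-group propagation stays inside $H$; this is automatic since both the conjugating elements $\T_i$ and the element $\varTheta_{03}$ being conjugated lie in $H$ by definition.
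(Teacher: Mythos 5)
Your reduction of the lemma to the statement that $\C$ lies in $H=\<\varTheta_{01},\varTheta_{03},\T_1,\dots,\T_n\>$ is fine and equivalent, but the route you propose has a genuine gap: the concluding identity, namely that the product of your two constructed words equals $\C$ \emph{in} $\B(\dddot{X}_n)$, is never derived from the relations of Definition \ref{def: untwisted}; it is asserted through the dictionary $\varTheta_{01}\leftrightarrow T_0$, $\varTheta_{03}\leftrightarrow X_{\th^\vee}\Theta^{-1}$, $\C\leftrightarrow X_{\d}$. That dictionary is exactly what Theorem \ref{main} establishes, and the inverse map $\psi$ (Proposition \ref{lemma2}) is shown to be well defined precisely by means of the explicit formula \eqref{magic1} that this lemma produces, so the appeal is circular; and even granting the dictionary, an equality $w=\C$ with $w\in H$ is a statement about the presented group and must be proved from its defining relations. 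Moreover such an identity cannot avoid $\varTheta_{02}$: it fails in the Coxeter braid group $B(\dddot{X}_n)$, so any derivation must invoke the central relation \eqref{centralrel-1}, whose very statement involves $\varTheta_{02}$ --- your remark that ``at no point does $\varTheta_{02}$ intervene'' is a symptom of the missing step, not a safeguard against circularity. Finally, your first concrete step is incorrect: $\varTheta_{03}^{-1}\varTheta_{01}\varTheta_{03}\varTheta_{01}$ does not play the role of $X_{\a_0^\vee}$; already its image in the double affine Weyl group has a nontrivial $\lambda(M)$-component (the two occurrences of $s_0$ contribute translation parts that do not cancel), so it is not a pure lattice element at all. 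Relatedly, the braid relations of $\varTheta_{03}$ with the $\T_i$ do not ``match verbatim'' relations such as \eqref{eq5-1}, \eqref{dc1v2} governing a lattice element: no $X_\b$ satisfies a $1$-braid relation with any $T_i$; it is the mixed element $X_{\th^\vee}\Theta^{-1}$ that satisfies braid relations.

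The paper's proof is a short direct computation that uses exactly the ingredient you avoid: since $\ell_0=1$, $\varTheta_{02}$ and $\T_{i_\th}$ satisfy the $1$-braid relation, rewritten as $\varTheta_{02}=\T_{i_\th}\varTheta_{02}\T_{i_\th}\varTheta_{02}^{-1}\T_{i_\th}^{-1}$; inserting $\C^{-1}$ and $\C$ by centrality and substituting $\varTheta_{02}\C^{-1}=\varTheta_{01}^{-1}\varTheta^{-1}\varTheta_{03}^{-1}$ and $\varTheta_{02}^{-1}\C=\varTheta_{03}\varTheta\varTheta_{01}$ (both immediate consequences of \eqref{centralrel-1} and the centrality of $\C$) eliminates $\varTheta_{02}$ from the right-hand side, and one application of the braid relation between $\varTheta_{01}$ and $\T_{i_\th}$ yields the explicit expression \eqref{magic1}. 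To salvage your strategy you would still have to exhibit a concrete word in $H$ and verify $w=\C$ by a manipulation of exactly this kind, at which point the paper's argument is both shorter and supplies the formula that is actually needed later.
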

\begin{proof}
Let us start with an immediate consequence of \eqref{centralrel-1}.
Since we can use the braid relations to write
$$
\varTheta_{02}=\T_{i_\th}\varTheta_{02}\T_{i_\th}\varTheta_{02}^{-1}\T_{i_\th}^{-1},
$$
using \eqref{centralrel-1} we obtain
$$
\varTheta_{02}=\T_{i_\th}\varTheta_{02}\C^{-1}\T_{i_\th}\varTheta_{02}^{-1}\C\T_{i_\th}^{-1},
$$
which becomes
$$
\varTheta_{02}=\T_{i_\th}\varTheta_{01}^{-1}\varTheta^{-1}\varTheta_{03}^{-1}\T_{i_\th}\varTheta_{03}\varTheta\varTheta_{01}\T_{i_\th}^{-1}.
$$
By using the braid relation between $\varTheta_{01}$ and $\T_{i_\th}$ we obtain the equality 
\begin{equation}\label{magic1}
\varTheta_{02}=\varTheta_{01}^{-1}\T_{i_\th}^{-1}\varTheta_{01}\T_{i_\th} \varTheta^{-1}\varTheta_{03}^{-1}\T_{i_\th}\varTheta_{03}\varTheta\varTheta_{01}\T_{i_\th}^{-1},
\end{equation}
which we record for later use.
\end{proof}
\begin{Lm}\label{reducel=2}
In Definition \ref{def: untwisted}, if $\ell_0=2$, only one of the relations
\eqref{ellbraid} should be imposed.
\end{Lm}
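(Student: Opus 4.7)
The plan is to show that, given the centrality of $\C$, the $(1,2)$ instance of \eqref{ellbraid} implies the other two; the symmetric nature of the argument makes the choice of which instance to assume irrelevant. The key device is that centrality of $\C=\varTheta_{01}\varTheta_{02}\varTheta_{03}\varTheta$ lets us solve for the ``third'' affine generator in terms of the other two and $\varTheta$, reducing the target instance to an identity that, after using the assumed instance, lives inside the Coxeter braid group of the underlying diagram.

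Concretely, to derive the $(1,3)$ instance, I would write $\varTheta_{03}=\varTheta_{02}^{-1}\varTheta_{01}^{-1}\C\varTheta^{-1}$ and substitute into
$\varTheta_{01}\T_{i_\th}^{-1}\varTheta_{03}\T_{i_\th}=\T_{i_\th}^{-1}\varTheta_{03}\T_{i_\th}\varTheta_{01}$. Conjugating both sides by $\T_{i_\th}$, the identity becomes a statement about $\varTheta_{01}$ commuting with $\T_{i_\th}^{-1}\varTheta_{02}^{-1}\varTheta_{01}^{-1}\C\varTheta^{-1}\T_{i_\th}$. Using that $\C$ is central and commutes freely past every element, and then using the assumed $(1,2)$ relation $[\varTheta_{01},\T_{i_\th}^{-1}\varTheta_{02}\T_{i_\th}]=1$ to eliminate the $\varTheta_{02}$-factor, the equation reduces to the residual commutation
\[
[\T_{i_\th}\varTheta_{01}\T_{i_\th}^{-1},\;\varTheta\varTheta_{01}]=1.
\]
This residual identity is then verified directly in $B(X_n)$: since $\varTheta=T_{s_\th}$ is the braid lift of the highest-root reflection, for $\dddot{C}_n$ one has the explicit expression $\varTheta=\T_1\T_2\cdots\T_n\cdots\T_2\T_1$ with $i_\th=1$. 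Because $\varTheta_{01}$ commutes with each of $\T_2,\dots,\T_n$, the interior factors of $\varTheta$ pass freely through $\varTheta_{01}$, and the identity collapses to $(\varTheta_{01}\T_{i_\th})^2=(\T_{i_\th}\varTheta_{01})^2$, which is precisely the $2$-braid relation of the diagram and thus holds in $B(\dddot{C}_n)$.

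The derivation of the $(2,3)$ instance is entirely parallel: substituting the same expression for $\varTheta_{03}$, pulling $\C$ past the relevant generators, and invoking the $(1,2)$ relation in the equivalent form $\varTheta_{02}\T_{i_\th}\varTheta_{01}=\T_{i_\th}\varTheta_{01}\T_{i_\th}^{-1}\varTheta_{02}\T_{i_\th}$, the target reduces, after cancellation, to the $2$-braid relation between $\varTheta_{02}$ and $\T_{i_\th}$. The main obstacle is the fourth step above: one must both recognize that the centrality of $\C$ absorbs cleanly when moved past the $\T_{i_\th}$-conjugate and identify that the remaining $B(X_n)$-identity is literally the defining $2$-braid relation once the explicit form of $\varTheta$ is used. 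Once these structural observations are in place, the computation is short and requires no new relations beyond those already built into $B(\dddot{C}_n)$.
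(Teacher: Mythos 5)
Your argument is correct and is essentially the paper's own: both proofs use the centrality of $\C$ to eliminate $\varTheta_{03}$ and reduce the remaining instances of \eqref{ellbraid} to the assumed $(1,2)$ instance together with relations already available in the braid group of the diagram (the $2$-braid relation of the affine generators with $\T_{i_\th}$ and their commutation with the "interior" of $\varTheta$). The only differences are cosmetic: where you use the explicit reduced word $\varTheta=\T_1\T_2\cdots\T_n\cdots\T_2\T_1$ — legitimate, since $\ell_0=2$ occurs only for $\dddot{C}_n$, $n\geq 2$, among the $\dddot{X}_n$ diagrams — the paper instead invokes Lemma \ref{lemma-extrabraid-1} and Lemma \ref{lemma-simple-commuting} to write $\varTheta=\T_{i_\th}T_{s_{i_\th}s_\th s_{i_\th}}\T_{i_\th}$ with middle factor commuting with the affine generators, and you spell out the $(2,3)$ case that the paper dismisses as analogous (note only that your residual identities live in the affine sub-braid group generated by $\varTheta_{01},\T_1,\dots,\T_n$, not literally in $B(X_n)$).
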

\begin{proof}
We will prove that if we impose only one relation in
\eqref{ellbraid}, say
\begin{equation}\label{c}
\varTheta_{01}\T_{i_\th}^{-1}\varTheta_{02}\T_{i_\th}=\T_{i_\th}^{-1}\varTheta_{02}\T_{i_\th}\varTheta_{01}
\end{equation}
the other two easily follow from this one and  the fact that $\C$ is central. We will illustrate this briefly.

The above relation says that $\varTheta_{01}$ and $\T_{i_\th}^{-1}\varTheta_{02}^{-1}\T_{i_\th}$ commute. Since
 $\varTheta_{01}$ also commutes with $\C$, with $\T_{i_\th}^{-1}\varTheta_{01}^{-1}\T_{i_\th}^{-1}$ (this
is just the braid relation between $\varTheta_{01}$ and $\T_{i_\th}$), and with $\T_{i_\th}\varTheta^{-1}\T_{i_\th}$ (from Lemma \ref{lemma-extrabraid-1} and Lemma \ref{lemma-simple-commuting}), it follows that
$\varTheta_{01}$ commutes with their product, which is
$$
\T_{i_\th}^{-1}\varTheta_{02}^{-1}\T_{i_\th}\T_{i_\th}^{-1}\varTheta_{01}^{-1}\T_{i_\th}^{-1}\T_{i_\th}
\varTheta^{-1}\T_{i_\th}\C=\T_{i_\th}^{-1}\varTheta_{03}\T_{i_\th}.
$$
We proved that
$$
\varTheta_{01}\T_{s_\a}^{-1}\varTheta_{03}\T_{s_\a}=\T_{s_\a}^{-1}\varTheta_{03}\T_{s_\a}\varTheta_{01}.
$$
The argument is the same if we choose to keep any other relation.
\end{proof}

\begin{Lm}\label{reducel=2bis}
In Definition \ref{def: untwisted}, if $\ell_0=2$, the braid relations involving $\varTheta_{02}$ are superfluous.
\end{Lm}
\begin{proof}
We can write
\begin{equation}
\varTheta_{02}=\C\varTheta_{01}^{-1}\varTheta^{-1}\varTheta_{03}^{-1}.
\end{equation}
If $i\neq i_\th$, then $\T_i$ and $\varTheta^{-1}\varTheta_{03}^{-1}$ commute by Remark \ref{rem: double braid} and  $\T_i$ and $\varTheta_{01}^{-1}$ commute (the braid relation between $\T_i$ and $\varTheta_{01}$). Keeping in mind \eqref{centralrel-1} we obtain that $\T_i$ and $\varTheta_{02}$ satisfy the 0-braid relation.

Checking that $\T_{i_\th}$ and $\varTheta_{02}$ satisfy the 2-braid relations is equivalent to checking that $\varTheta_{02}$ and $\C\T_{i_\th}\varTheta_{01}^{-1}\varTheta^{-1}\varTheta_{03}^{-1}\T_{i_\th}$ commute. Since
 $\varTheta_{02}$  commutes with $\C$, with $\T_{i_\th}\varTheta_{01}^{-1}\T_{i_\th}^{-1}$ and $\T_{i_\th}^{-1}\varTheta_{03}^{-1}\T_{i_\th}$ (by the relations \eqref{ellbraid}), and with $\T_{i_\th}\varTheta^{-1}\T_{i_\th}$ (from Lemma \ref{lemma-extrabraid-1} and Lemma \ref{lemma-simple-commuting}), it follows that
$\varTheta_{02}$ commutes with their product, which is precisely our claim.
\end{proof}

Let us record the following equivalent presentation for $\B(\DDDot{C}_n)$, $n\geq 1$, which is a direct consequence of Lemma \ref{reducel=2} and Lemma \ref{reducel=2bis}.
\begin{Prop}\label{prop: Cequivpres}
The group $\B(\DDDot{C}_n)$, $n\geq 1$, is the group generated by the elements $\varTheta_{01}$, $\varTheta_{03}$, $\T_i$, $1\leq i\leq n$, and $\C$, with the following relations:
\begin{enumerate}[leftmargin=40pt, label={\alph*)}]
\item The elements $\varTheta_{01}$, $\varTheta_{03}$, $\T_i$, $1\leq i\leq n$, generate the braid group $B(\DDot{C}_n)$ (with $\varTheta_{01}$, $\varTheta_{03}$ associated to the affine nodes).
\item The element $\C$ is central.
\item If $n\geq 2$,  we have
$$
\varTheta_{01}\T_{i_\th}^{-1}\varTheta_{03}\T_{i_\th}=\T_{i_\th}^{-1}\varTheta_{03}\T_{i_\th} \varTheta_{01}.
$$
\end{enumerate}
\end{Prop}

\subsection{}\label{sec: def-ddot} We will now discuss the double affine Coxeter diagrams of type $\fH{X}_n$.

\begin{Def}\label{def: twisted} The group $\B(\fH{X}_n)$ is defined as the quotient of $B(\fH{X}_n)$ by the normal subgroup generated by the following relations:

The element
\begin{equation}\label{centralrel}
\C:=\Tiii\varPhi\Ti\varPsi\Tiii\varTheta\Ti
\end{equation} 
is central.
\end{Def}

\begin{Rem}\label{rem: labeling}
A priori, for each diagram of type $\fH{X}_n$ one can associate two groups, depending on labeling of  the affine nodes by $\varPhi_0$ and $\varTheta_0$. Equivalently, once a  labeling of  the affine nodes by $\varPhi_0$ and $\varTheta_0$ is fixed one can associate two quotients of $B(\fH{X}_n)$ as in Definition \ref{def: twisted}, one corresponding to
\begin{equation}
\C:=\Tiii\varPhi\Ti\varPsi\Tiii\varTheta\Ti,
\end{equation} 
and the other correspoding to 
\begin{equation}
\C^\prime:=  \Ti \varTheta \Tiii   \varisP \Ti   \varPhi   \Tiii.
\end{equation}
We temporarily denote these groups by $\B(\fH{X}_n)$ and respectively $\B(\fH{X}_n)^\prime$.

The double Coxeter diagrams of type $\fH{B}_2/\fH{C}_2$, $\fH{F}_4$, and $\fH{G}_2$ admit an automorphism that exchanges the two affine nodes and this diagram automorphism induces an isomorphism between $\B(\fH{X}_n)$ and $\B(\fH{X}_n)^\prime$. For the double Coxeter diagram of type $\fH{B}_n/\fH{C}_n$, $n\geq 3$, such a diagram automorphism does not exist since the two affine nodes have different degree. In this case, the involution of $B(\fH{B}_n/\fH{C}_n)$ that inverts all the generators sends $\C^{-1}$ to $\C^\prime$ and therefore induces an isomorphism between $\B(\fH{B}_n/\fH{C}_n)$ and $\B(\fH{B}_n/\fH{C}_n)^\prime$. In conclusion, the two different labellings of the affine nodes produce isomorphic groups. 

This fact mirrors the isomorphism between the double affine Artin groups corresponding to the affine root systems of type $D_{n+1}^{(2)}$ and $A_{2n-1}^{(2)}$, $n\geq 3$.  Just as before, despite the fact that we introduce some redundancy, for the benefit on notational uniformity, we will consider both labellings of the affine nodes for the double Coxeter diagram of type $\fH{B}_n/\fH{C}_n$, $n\geq 3$, which we denote as in Figure \ref{fig: BClabels}.

\begin{figure}[ht]
\caption{Affine node labels for $\fH{B}_n/\fH{C}_n$}\label{fig: BClabels}
\begin{center}
  \begin{tikzpicture}[scale=.4]
    \draw (-6,0) node[anchor=east]  {$\fH{B}_n,$};
    \draw (-3,0) node[anchor=east]  {$n\geq 3$};
    \draw[thick] (45: 14.15 mm) circle [radius=3mm];
    \draw[thick] (-45: 14.15 mm) circle [radius=3mm];
    \draw (-0.25,0) node[anchor=east]  {$\varTheta_0$};
    \draw (1,-2.75) node[anchor=south]  {$\varPhi_0$};
    \draw[thick, fill] (0,0) circle [radius=.8mm];
    \draw[thick, fill] (-45: 14.15 mm) circle [radius=.8mm];
    \foreach \x in {0,...,5}
    \draw[xshift=\x cm,thick] (\x cm,0) circle [radius=3mm];
         \foreach \x in {6,7}
    \draw[white,xshift=\x cm] (\x cm,0) circle [radius=3mm];
    \foreach \y in {1.15,3.15}
    \draw[xshift=\y cm,thick] (\y cm,0) -- +(1.4 cm,0);
    \draw[xshift=2 cm,thick] (135: 3mm) -- +(135: .815 cm);
    \draw[xshift=2 cm,thick] (-135: 3mm) -- +(-135: .815 cm);
    \draw[thick] (65: 3mm) -- +(45: .85 cm);
    \draw[thick] (25: 3mm) -- +(45: .85 cm);
    \draw[thick] (-25: 3mm) -- +(-45: .85 cm);
    \draw[thick] (-65: 3mm) -- +(-45: .85 cm);
    \draw[dotted,thick] (4.3 cm,0) -- +(1.4 cm,0);
    \draw[thick] (8.3 cm, .1 cm) -- +(1.4 cm,0);
    \draw[thick] (8.3 cm, -.1 cm) -- +(1.4 cm,0);
\end{tikzpicture}
\end{center}

\begin{center}
  \begin{tikzpicture}[scale=.4]
    \draw (-6,0) node[anchor=east]  {$\fH{C}_n,$};
    \draw (-3,0) node[anchor=east]  {$n\geq 3$};
    \draw[thick] (45: 14.15 mm) circle [radius=3mm];
    \draw[thick] (-45: 14.15 mm) circle [radius=3mm];
    \draw[thick, fill] (0,0) circle [radius=.8mm];
    \draw[thick, fill] (-45: 14.15 mm) circle [radius=.8mm];
        \draw (-0.25,0) node[anchor=east]  {$\varPhi_0$};
    \draw (1,-2.75) node[anchor=south]  {$\varTheta_0$};
    \foreach \x in {0,...,5}
    \draw[xshift=\x cm,thick] (\x cm,0) circle [radius=3mm];
         \foreach \x in {6,7}
    \draw[white,xshift=\x cm] (\x cm,0) circle [radius=3mm];
    \foreach \y in {1.15,3.15}
    \draw[xshift=\y cm,thick] (\y cm,0) -- +(1.4 cm,0);
    \draw[xshift=2 cm,thick] (135: 3mm) -- +(135: .815 cm);
    \draw[xshift=2 cm,thick] (-135: 3mm) -- +(-135: .815 cm);
    \draw[thick] (65: 3mm) -- +(45: .85 cm);
    \draw[thick] (25: 3mm) -- +(45: .85 cm);
    \draw[thick] (-25: 3mm) -- +(-45: .85 cm);
    \draw[thick] (-65: 3mm) -- +(-45: .85 cm);
    \draw[dotted,thick] (4.3 cm,0) -- +(1.4 cm,0);
    \draw[thick] (8.3 cm, .1 cm) -- +(1.4 cm,0);
    \draw[thick] (8.3 cm, -.1 cm) -- +(1.4 cm,0);
\end{tikzpicture}
\end{center}
\end{figure}
\end{Rem}


\subsection{}\label{sec: def-dddot*} We will now discuss the double affine Coxeter diagrams of type $\DDot{C}_n$ which share features with both the diagrams of type $\fH{C}_n$ (because there are only two affine nodes) and those type $\DDDot{C}_n$ (because the affine nodes attach in the same way to the finite Coxeter diagram). This will reflect accordingly in the definition of $\B(\DDot{C}_n)$.

\begin{Def}\label{def: twodot} The group $\B(\DDot{C}_n)$ is defined as the quotient of $B(\DDot{C}_n)$ by the normal subgroup generated by the following relations:
\begin{enumerate}[leftmargin=40pt, label={\alph*)}]
\item The element
\begin{equation}\label{centralrelbis}
\C=(\Tiii\varTheta\Ti)^2
\end{equation} 
is central.
\item If $\ell_0=2$,  we have
\begin{equation}\label{ellbraidbis}
\Ti\T_{i_\th}^{-1}\Tiii\T_{i_\th}=\T_{i_\th}^{-1}\Tiii\T_{i_\th} \Ti.
\end{equation}
\end{enumerate}
We also define the group $\B(\DDot{C}_n)^c$ as the group generated by $\B(\DDot{C}_n)$ and a central element $\C^{1/2}$ such that $(\C^{1/2})^2=\C$.
\end{Def}

\begin{Rem} The relation \eqref{ellbraidbis} is the same as the relation \eqref{ellbraid}  for the diagrams of type $\DDDot{C}_n$. We can see that  the relation \eqref{centralrelbis} is also the same as the relation \eqref{centralrel} for the the diagrams of type $\fH{X}_n$. Indeed, taking into account \eqref{eq: twodotcollapsing}, we can write
\begin{equation}\label{centralrel-2bis}
(\Tiii\varTheta\Ti)^2=\Tiii\varPhi\Ti\varPsi\Tiii\varTheta\Ti,
\end{equation}
which is precisely the relevant element that appears in \eqref{centralrel}.
\end{Rem}

\begin{Rem}
As in the case of diagrams of type $\fH{X}_n$, a priori, Definition \ref{def: twodot} associates two possible groups to a diagram of type $\DDot{C}_n$, depending of the labeling of the affine nodes by $\Ti$ and $\Tiii$. The discussion in Remark \ref{rem: labeling} carries over to this case to show that the two different labelings of affine nodes produce isomorphic groups.
\end{Rem}

Proposition \ref{prop: Cequivpres} allows us to directly relate the groups $\B(\DDDot{C}_n)$, $\B(\DDot{C}_n)$, and $\B(\DDot{C}_n)^c$.

\begin{Prop} \label{comparisonbis}
Let $n\geq 1$.
\begin{enumerate}[label={\roman*)}]
\item The map that sends $\varTheta_{01}$ to $\Ti$, $\varTheta_{03}$ to $\Tiii$, $\T_i$ to $\T_i$, $1\leq i\leq n$,  and $\C$ to $\C$, extends to a surjective group morphism
 $$\B(\DDDot{C}_n)\to \B(\DDot{C}_n)$$
 whose kernel is the normal subgroup generated by $\C^{-1}\varTheta_{02}^2$.
\item The map that sends $\varTheta_{01}$ to $\Ti$, $\varTheta_{03}$ to $\Tiii$, $\T_i$ to $\T_i$, $1\leq i\leq n$,  and $\C$ to $\C^{1/2}$, extends to a surjective group morphism
 $$\B(\DDDot{C}_n)\to \B(\DDot{C}_n)^c$$
 whose kernel is the normal subgroup generated by $\varTheta_{02}^2$.
 \end{enumerate}
\end{Prop}
\begin{proof}
Straightforward from Proposition \ref{prop: Cequivpres} and Definition \ref{def: twodot}.
\end{proof}


\subsection{}\label{sec: labeling}  We will show that the groups defined in \S \ref{sec: def-dddot} and \S\ref{sec: def-ddot} are (up to isomorphism) double affine Artin groups. The precise correspondence is the one specified in \eqref{DCvsDA} with the caveat that the correspondence  pairs $\fH{B}_n$  and  $D_{n+1}^{(2)}$, $n\geq 2$, and $\fH{C}_n$ and  $A_{2n-1}^{(2)}$,  $n\geq 2$. We will use the symbol $\iota$ to refer to this correspondence (in either direction). Remark that through this correspondence the Coxeter diagrams consisting of finite nodes coincide and therefore can be labeled in the same fashion so that the two Coxeter braid groups can be readily identified. Furthermore, we arrange that the node labelled $\varTheta_0, \varTheta_{01}, \varTheta_{02}$, or $\varTheta_{03}$ in a double affine Coxeter diagram is precisely the affine node in the corresponding affine Dynkin diagram. In this fashion, the elements denoted by  $\varTheta$, $\varPhi$, $\T_{i_\th}$, and $\T_{i_\ph}$ correspond respectively to the elements denoted by  $\Theta$, $\Phi$, $T_{i_\th}$, and $T_{i_\ph}$. We note that the labelling of the affine nodes in Figure \ref{fig: BClabels} respects this convention. Henceforth, we adopt the labelling described above.
 
\begin{Rem}
If $X_n$ is double-laced then the central element $\C$ can be written as
\begin{equation}\label{centralrel-2}
\C=(\Tiii\varThetap\varPhip\Ti)^2.
\end{equation}
Indeed, 
\begin{align*}
\Tiii\varPhi\Ti\varPsi\Tiii\varTheta\Ti&= \Tiii\varPhi\Ti\varThetap^{-1}\varPhip^{-1}\Tiii\varTheta\Ti && \text{by Lemma \ref{lemma-explicit-2}iv)}\\
&= \Tiii\varPhi\varThetap^{-1}\Ti\Tiii\varPhip^{-1}\varTheta\Ti &&  \text{by Lemma \ref{B2-rels}i),iii)}\\
&= \Tiii\varThetap\varPhip\Ti \Tiii\varThetap\varPhip\Ti &&  \text{by Lemma \ref{lemma-explicit-2}i),ii)}
\end{align*}
\end{Rem}
\begin{Rem}
For $\fH{G}_2$ the central element $\C$ can be written as
\begin{equation}\label{centralrel-3}
\C=(\Tiii\T_{i_\ph}\T_{i_\th}\T_{i_\ph}\T_{i_\th}\Ti)^2.
\end{equation}
Indeed, 
\begin{align*}
\Tiii\varPhi\Ti\varPsi\Tiii\varTheta\Ti&= \Tiii\varPhi\Ti\T_{i_\ph}^{-1}\T_{i_\th}^{-1}\Tiii\varTheta\Ti && \text{by Lemma \ref{lemma-wy-artin}vii)}\\
&= \Tiii\varPhi \T_{i_\ph}^{-1} \Ti\Tiii  \T_{i_\th}^{-1} \varTheta\Ti && \text{by the braid relations}\\
&= \Tiii(\T_{i_\ph}\T_{i_\th})^2\Ti\Tiii(\T_{i_\ph}\T_{i_\th})^2\Ti&& 
\end{align*}
\end{Rem}

\begin{Prop}\label{superfluous}
In Definition \ref{def: twisted}, the braid relation between $\varPhi_0$ and $\varTheta_0$ is superfluous.
\end{Prop}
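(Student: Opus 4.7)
The proposition asserts that in $B(\ddot{X}_n)$ modulo centrality of $\C$, the braid relation between the two affine generators $\Tiii = \varPhi_0$ and $\Ti = \varTheta_0$ encoded in the diagram $\ddot{X}_n$ is already implied by the other relations. The relation to be derived is the $2$-braid relation in the doubly-laced cases $\ddot{B}_n/\ddot{C}_n$ ($n \geq 3$), $\ddot{B}_2$, and $\ddot{F}_4$, and the $3$-braid relation in $\ddot{G}_2$. Let $\B'(\ddot{X}_n)$ denote the quotient of the Coxeter braid group with this single braid relation removed, modulo centrality of $\C$; the task is to derive the missing braid relation inside $\B'(\ddot{X}_n)$.

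The first step is to observe that the alternative expressions for $\C$ given by equations \eqref{centralrel-2} and \eqref{centralrel-3}, namely $\C = (\Tiii\varThetap\varPhip\Ti)^2$ in the doubly-laced cases and $\C = (\Tiii\T_{i_\ph}\T_{i_\th}\T_{i_\ph}\T_{i_\th}\Ti)^2$ in $\ddot{G}_2$, remain valid in $\B'(\ddot{X}_n)$. Indeed, the derivations of these forms, as displayed just before Proposition \ref{superfluous}, invoke only Proposition \ref{B2-rels} and braid relations of each affine generator with the finite generators, together with braid relations among the finite generators; they never use the $\Tiii$-$\Ti$ braid relation. Moreover, the proof of Proposition \ref{B2-rels} itself splits into claims about $\Tiii$ and about $\Ti$ which are handled independently via the corresponding untwisted affine Coxeter braid subgroups, so it too holds verbatim in $\B'(\ddot{X}_n)$.

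Given these forms, the main step is to exploit centrality by conjugation. Writing $\C = (\Tiii M \Ti)^2$, the identity $\Ti\, \C\, \Ti^{-1} = \C$ collapses (after cancellation of the outer $\Ti$) to $(\Ti \Tiii M)^2 = (\Tiii M \Ti)^2$, and similarly $\Tiii\,\C\,\Tiii^{-1} = \C$ yields a symmetric identity. Pushing the outer $\Tiii$ and $\Ti$ across the two copies of $M$ using the commutation and $2$-braid relations of Proposition \ref{B2-rels} (doubly-laced case) or the single-edge braid relations together with the $3$-braid relation between $\T_{i_\ph}$ and $\T_{i_\th}$ (for $\ddot{G}_2$), and cancelling the finite factors that remain on both sides, one is left with precisely the target braid identity between $\Tiii$ and $\Ti$. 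For the symmetric diagrams $\ddot{B}_2$, $\ddot{F}_4$, $\ddot{G}_2$, the automorphism exchanging $\Tiii \leftrightarrow \Ti$ together with $\varTheta \leftrightarrow \varPhi$ gives a parallel presentation of $\C$ that symmetrizes the cancellation process.

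The main obstacle will be the $\ddot{G}_2$ case, where the target is a six-factor braid identity and the finite factor $M = \T_{i_\ph}\T_{i_\th}\T_{i_\ph}\T_{i_\th}$ has length four; this forces a longer commutator calculation in which the $3$-braid relation between $\T_{i_\ph}$ and $\T_{i_\th}$ must be invoked repeatedly, interleaved with the single-edge braid relations of $\Tiii$ and $\Ti$ with each of these finite generators. A secondary subtlety arises in the $\ddot{B}_n/\ddot{C}_n$ case for $n \geq 3$: the two affine nodes have different valences, so no diagram automorphism exchanges them, and the cancellations must be carried out directly from the asymmetric expression $(\Tiii\varThetap\varPhip\Ti)^2$ without the benefit of a parallel symmetric presentation.
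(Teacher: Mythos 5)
Your opening observation is correct and important: \eqref{centralrel-2}, \eqref{centralrel-3} and Proposition \ref{B2-rels} are established without ever using the braid relation between $\Ti$ and $\Tiii$, so they remain available in the group in which that relation has been dropped; the paper's proof relies on exactly this. The gap is in the main step, which you assert rather than carry out. In the doubly-laced case, conjugation-invariance of $\C=(\Tiii\varThetap\varPhip\Ti)^2$ does give $(\Ti\Tiii\varThetap\varPhip)^2=(\Tiii\varThetap\varPhip\Ti)^2$, but from there ``pushing the outer $\Tiii$ and $\Ti$ across the two copies of $\varThetap\varPhip$ and cancelling'' does not finish: $\Ti$ does not commute with $\varPhip$, nor $\Tiii$ with $\varThetap$ (only $2$-braid relations hold), so naive cancellation leaves uneliminated conjugates such as $\varPhip\Ti\varPhip^{-1}$. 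What actually closes the argument in the paper is repeated re-substitution of the square-root expression for $\C$: starting from $\Tiii\Ti\Tiii\Ti$, one inserts $\C\C^{-1}$, rewrites via \eqref{centralrel-2}, inserts $\varThetap^{-1}\varThetap\,\varPhip\varPhip^{-1}$ in the middle, and applies \eqref{centralrel-2} twice more, interleaved with Proposition \ref{B2-rels}, to reverse the order of the affine letters and arrive at $\Ti\Tiii\Ti\Tiii$. Your sketch has the right ingredients but omits this mechanism, and as described the cancellation step does not suffice.

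The more serious gap is $\ddot{G}_2$, which you yourself defer to an unperformed ``longer commutator calculation''. The paper needs no such computation: it shows that a single element, $\T_{i_\ph}\T_{i_\th}\Ti\Tiii\varPhi$, conjugates $\T_{i_\th}$ to $\Ti$ and $\T_{i_\ph}$ to $\Tiii$ --- the first verification uses only commutations and the braid relations of the affine generators with the finite ones, the second uses centrality of $\C$ in the form \eqref{centralrel-3} --- and then the $3$-braid relation between $\T_{i_\th}$ and $\T_{i_\ph}$ transports by conjugation to $\Ti$ and $\Tiii$. This conjugation-transport idea is what is missing from your proposal; without it (or an actually executed direct computation) the $\ddot{G}_2$ case, and hence the proposition, is not proved. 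Finally, the appeal to the diagram symmetry exchanging the two affine nodes of $\ddot{B}_2$, $\ddot{F}_4$, $\ddot{G}_2$ buys nothing here, since the identity to be proved is already symmetric under that exchange.
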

\begin{proof} If $X_n$ is double-laced, then
\begin{align*}
\Tiii \Ti \Tiii \Ti &=\C \Tiii \cdot  \C^{-1}\Ti \Tiii  \cdot \Ti && \text{by \eqref{centralrel}}\\
&= \C \Tiii \varPhip^{-1} \varThetap^{-1} \Tiii^{-1} \Ti^{-1} \varPhip^{-1} \varThetap^{-1} \Ti && \text{by \eqref{centralrel-2}}\\
&= \C \Tiii \varPhip^{-1} \varThetap^{-1} \Tiii^{-1}\cdot \varThetap^{-1}\varThetap \cdot \varPhip \varPhip^{-1}\cdot  \Ti^{-1} \varPhip^{-1} \varThetap^{-1} \Ti &&\\
&= \C \varPhip^{-1} \varThetap^{-1} \Tiii^{-1} \varThetap^{-1}\cdot  \Tiii \varThetap \varPhip \Ti \cdot  \varPhip^{-1}  \Ti^{-1} \varPhip^{-1} \varThetap^{-1} && \text{by Lemma \ref{B2-rels}}\\
&= \C \varPhip^{-1} \varThetap^{-1} \Tiii^{-1} \varThetap^{-1}\cdot  \C \Ti^{-1} \varPhip^{-1}\varThetap^{-1}\Tiii^{-1} \cdot  \varPhip^{-1}  \Ti^{-1} \varPhip^{-1} \varThetap^{-1} && \text{by \eqref{centralrel-2}}\\
&= \C \varPhip^{-1} \varThetap^{-1} \Tiii^{-1} \Ti^{-1} \varThetap^{-1} \varPhip^{-1}  \cdot  \C  \varThetap^{-1}\varPhip^{-1}  \Tiii^{-1}     \Ti^{-1} \varPhip^{-1} \varThetap^{-1} && \text{by Lemma \ref{B2-rels}i)iii)}\\
&= \Ti\Tiii \Ti\Tiii &&  \text{by \eqref{centralrel-2}}
\end{align*}
Therefore, $\Ti$ and $\Tiii$ satisfy the $2$-braid relation.

For $\fH{G}_2$ we have
\begin{align*}
\T_{i_\ph} \T_{i_\th} \Ti \Tiii \varPhi \cdot \T_{i_\th} \cdot \varPhi^{-1} \Tiii^{-1}\Ti^{-1} \T_{i_\ph}^{-1} \T_{i_\th}^{-1}&= \T_{i_\ph}\T_{i_\th} \Ti \cdot \T_{i_\th} \cdot \Ti^{-1}  \T_{i_\th}^{-1}\T_{i_\ph}^{-1}\\
&= \T_{i_\ph} \Ti \T_{i_\ph}^{-1}\\
&=\Ti.
\end{align*}
and
\begin{align*}
\T_{i_\ph} \T_{i_\th} \Ti \Tiii \varPhi \cdot \T_{i_\ph} \cdot \varPhi^{-1} \Tiii^{-1}\Ti^{-1} \T_{i_\ph}^{-1} \T_{i_\th}^{-1}&=  \C\T_{i_\th}^{-1} \T_{i_\ph}^{-1}\Tiii^{-1}\Ti^{-1} \cdot \T_{i_\ph} \cdot \Ti \Tiii \T_{i_\ph}\T_{i_\th}\C^{-1}\\
&= \T_{i_\th}^{-1} \T_{i_\ph}^{-1}\Tiii^{-1} \cdot \T_{i_\ph} \cdot \Tiii \T_{i_\ph}\T_{i_\th}\\
&= \T_{i_\th}^{-1}  \Tiii \T_{i_\th}\\
&=\Tiii.
\end{align*}
Since $\T_{i_\th}$ and $\T_{i_\ph}$ satisfy the $3$-braid relation we obtain that $\Ti$ and $\Tiii$ satisfy the $3$-braid relation.
\end{proof}
\subsection{}  One immediate consequence of the above discussion is that it shows that $\B(\DDDot{A}_1)$,  $\B(\DDot{A}_1)$,  and $\B(\fH{B}_2)$ embed in $\B(\DDDot{X}_n)$, $\B(\DDot{C}_n)$, and $\B(\fH{X}_n)$ (for $X_n\neq G_2$), respectively. The appropriate statement for $\fH{G}_2$ is, of course, trivial.

\begin{Prop}\label{thm: rank 1 embedding}
There exists canonical embeddings that preserve the affine generators of $\B(\DDDot{A}_1)$  into $\B(\DDDot{X}_n)$,  of   $\B(\DDot{A}_1)$ into $\B(\DDot{C}_n)$, and of  $\B(\fH{B}_2)$ into $\B(\fH{X}_n)$ for $X_n$ double laced.
\end{Prop}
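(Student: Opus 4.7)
The plan is in three steps: define each map explicitly on generators, verify well-definedness from the presentations of Definitions \ref{def: untwisted} and \ref{def: twisted}, and establish injectivity via the topological/Bernstein-type description.

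For the first embedding, define $\iota: \B(\dddot{A}_1)\to \B(\dddot{X}_n)$ by $\varTheta_{0i}\mapsto \varTheta_{0i}$ for $i=1,2,3$ and $\T_1\mapsto \varTheta$. Note that in $\B(\dddot{A}_1)$ the generator $\T_1$ coincides with $\varTheta$, since the unique reflection of the $A_1$ Coxeter group is $s_1=s_\theta$, so the map genuinely preserves all affine generators and the canonical element $\varTheta$. Because $\dddot{A}_1$ has $\ell_0=4$, every pair of generators of $B(\dddot{A}_1)$ satisfies the empty braid relation; in particular all Coxeter braid relations are trivially preserved. The only further defining relation of $\B(\dddot{A}_1)$ is the centrality of $\varTheta_{01}\varTheta_{02}\varTheta_{03}\T_1$, whose image is $\C\in\B(\dddot{X}_n)$, and this is central by Definition \ref{def: untwisted}(a). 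Hence $\iota$ extends to a well-defined group homomorphism.

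For the second embedding I use the canonical inclusion $B(B_2)\hookrightarrow B(X_n)$ coming from the rank-two sub-root-system of $\Rring$ generated by the highest short root $\theta$ and the highest long root $\phi$ (which exists since $X_n$ is double-laced). Under this inclusion the elements $\varTheta,\varPhi,\varThetap,\varPhip\in B(B_2)$ go to the like-named elements of $B(X_n)$. Define $\iota:\B(\ddot{B}_2)\to\B(\ddot{X}_n)$ by $\Ti\mapsto\Ti$, $\Tiii\mapsto\Tiii$, extended by this inclusion on the two finite generators. Proposition \ref{B2-rels} then shows that $\Ti,\Tiii,\varThetap,\varPhip\in B(\ddot{X}_n)$ satisfy the $\ddot{B}_2$ braid relations encoded in Figure \ref{fig: labelledB2}, so all Coxeter braid relations of $B(\ddot{B}_2)$ are preserved. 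The defining central element of $\B(\ddot{B}_2)$, which by \eqref{centralrel-2} equals $(\Tiii\varThetap\varPhip\Ti)^2$, is sent to the central element $\C$ of $\B(\ddot{X}_n)$, so $\iota$ is well-defined.

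The main obstacle is injectivity. I would argue this via the topological description of van der Lek, invoked through Theorem \ref{state2}, which realises every double affine Artin group as the fundamental group of the quotient space $\widetilde{X}=\widetilde{Y}/\Wdaff$ of a complex hyperplane complement. The real affine sub-root-system $\{k\delta\pm\theta:k\in\Z\}\subset R^{re}$ of the affine root system of type $X_n^{(1)}$ is itself of type $A_1^{(1)}$, and its double affine Weyl group embeds naturally inside $\Wdaff$, acting on the complex three-dimensional subspace of $\Haff^*$ spanned by $\theta^\vee$, $\delta$, and $\L_0$. Matching topological generators to the Coxeter generators as in \S\ref{sec: labeling} identifies $\iota$ with the map on $\pi_1$ induced by the inclusion of orbit spaces; injectivity then follows either from a standard transversality argument for fundamental groups of complexified hyperplane arrangement complements (in the spirit of van der Lek), or, more algebraically, by exhibiting a normal form for elements of the image within the Bernstein-type presentation of Proposition \ref{defcherednik}. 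The $\ddot{B}_2$ case is entirely analogous, replacing the rank-one $A_1^{(1)}$ sub-root-system with the rank-two sub-root-system generated by $\theta$, $\phi$, and $\delta-\theta$ inside the affine root system of type $X_n^{(1)}$.
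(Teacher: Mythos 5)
Your construction of the two homomorphisms is exactly the paper's: the affine generators are preserved, the single finite node of $\dddot{A}_1$ is sent to $\varTheta$, and the two finite nodes of $\ddot{B}_2$ are sent to $\varThetap$ and $\varPhip$; moreover your well-definedness check uses precisely the ingredients the paper cites, namely Definition \ref{def: untwisted}, Definition \ref{def: twisted}, Proposition \ref{B2-rels}, and \eqref{centralrel-2}. So for the part of the argument that the paper actually writes out, you are on the same route, and that part of your proposal is correct.

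The one place where you go beyond the paper --- injectivity --- is also the one place where your text is a plan rather than a proof. The paper disposes of the whole statement as straightforward from the cited results and supplies no separate injectivity argument; you, reasonably, flag injectivity as the main obstacle, but then only gesture at two strategies (a ``standard transversality argument'' for complements of hyperplane arrangements, or a normal form in the Bernstein presentation) without carrying out either. Be careful with the first: an inclusion of a sub-arrangement, or of the orbit space attached to a sub-root-system, does not in general induce an injection on fundamental groups --- van der Lek's embedding statements, such as $\A(W)\hookrightarrow \A(\Wdaff)$, are genuine theorems rather than transversality formalities --- and similarly the injectivity of the map $B(B_2)\to B(X_n)$ sending the simple generators to $\varThetap,\varPhip$ is itself unproven in your sketch, since $\{\th,\ph\}$ is not a parabolic subsystem. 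Because the paper leaves injectivity at the same level of assertion, this does not put you behind its proof, but as written your final paragraph should be viewed as an outline of possible arguments, not an argument.
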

\begin{proof}
For $\DDDot{X}_n$ the generator associated to the finite node of $\DDDot{A}_1$ is mapped to $\varTheta$. The same holds for $\DDot{C}_n$ and  $\DDot{A}_1$. For $\fH{X}_n$ the generators associated to the finite nodes of $\fH{B}_2$ are mapped to $\varThetap$ and $\varPhip$.  Our claim is straightforward from Definition \ref{def: untwisted},  Definition \ref{def: twodot} and \eqref{centralrel-2bis}, and Definition \ref{def: twisted}, Proposition \ref{B2-rels}, and \eqref{centralrel-2}.
\end{proof}

\subsection{}  We are now in position to show that the groups $\B(\DDDot{X}_n)$ and $\Atilde(X_n^{{(1)}})$ are isomorphic. Let us define the candidates for isomorphism between the two groups. 

Let
$$
\phi: \B(\DDDot{X}_n) \to \Atilde(X_n^{{(1)}})
$$
be defined as the extension to a group morphism of the map
$$
 \phi(\varTheta_{01})=T_0,\quad \phi(\varTheta_{02})=T_0^{-1}X_{\a_0^\vee},\quad \phi(\varTheta_{03})=X_{\th^\vee}\Theta^{-1}, \quad \phi(\T_i)=T_i,\quad 1\leq i\leq n.
$$
\begin{Prop}\label{lemma1}
The map $\phi: \B(\DDDot{X}_n) \to \Atilde(X_n^{{(1)}})$ is well defined.
\end{Prop}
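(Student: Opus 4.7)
The plan is to verify that the images under $\phi$ of the generators of $\B(\dddot{X}_n)$ satisfy each relation in Definition \ref{def: untwisted}: the braid relations encoded in the Coxeter diagram $\dddot{X}_n$, the centrality of $\C$, and (when $\ell_0=2$) the elliptic braid relation \eqref{ellbraid}. The verification relies on the Bernstein-type presentation of $\Atilde(X_n^{(1)})$ in Proposition \ref{defcherednik} together with the refinement obtained in Proposition \ref{reduction2}.

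I would first dispatch the central relation by direct telescoping. In the untwisted case the simple coroot $\a_0^\vee$, regarded as an element of $Q^\vee=\Qring^\vee\oplus\Z\d$, equals $\d-\th^\vee$, so that in the extended notation $X_{\a_0^\vee}=X_\d X_{-\th^\vee}$ with $X_\d$ central. Consequently
\[
\phi(\C) = T_0\cdot T_0^{-1}X_{\a_0^\vee}\cdot X_{\th^\vee}\Theta^{-1}\cdot \Theta = X_{\a_0^\vee+\th^\vee} = X_\d,
\]
which is central in $\Atilde(X_n^{(1)})$ by Proposition \ref{defcherednik}(c).

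The braid relations split into several families. The relations among $\phi(\T_1),\ldots,\phi(\T_n)$ are those of the finite Artin subgroup $\A(\W)\subset \Atilde(X_n^{(1)})$ and hold automatically. The relations between $\phi(\varTheta_{01})=T_0$ and the finite generators are the affine Coxeter relations of $\A(W)$. For $\phi(\varTheta_{03})=X_{\th^\vee}\Theta^{-1}$, Remark \ref{X-artin} identifies this element as the distinguished affine generator of the affine Artin subgroup of $\A(\Wdaff)$ generated by $\A(\W)$ and $\Q_X$, which in the untwisted case has Coxeter type $X_n^{(1)}$; the required braid relations with the $T_i$ therefore hold. Since the three affine nodes of the triple node are pairwise connected by four edges, no braid relations among $\varTheta_{01}$, $\varTheta_{02}$, $\varTheta_{03}$ are imposed. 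The main work is for $\phi(\varTheta_{02})=T_0^{-1}X_{\a_0^\vee}$: commutation with each $T_j$ for a node $j$ not adjacent to the affine node is immediate from Proposition \ref{defcherednik}(a) together with the centrality of $X_\d$, while the $\ell_0$-braid relation with $T_{i_\th}$ (and, in type $\dddot{A}_n$, $n\geq 2$, with the second neighbor of the affine node) is obtained by a direct computation combining Proposition \ref{defcherednik}(a)--(b), the $\ell_0$-braid relation between $T_0$ and $T_{i_\th}$, and the identities \eqref{eq5-1} and \eqref{eq5-2} established in Proposition \ref{reduction2}.

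Finally, when $\ell_0=2$ (which for $\dddot{X}_n$ occurs only for $\dddot{C}_n$, $n\geq 2$) the elliptic braid relation \eqref{ellbraid} must be checked. By Lemma \ref{reducel=2} it suffices to verify a single instance; taking $(i,j)=(1,2)$ this becomes
\[
T_0\cdot T_{i_\th}^{-1}\cdot T_0^{-1}X_{\a_0^\vee}\cdot T_{i_\th} = T_{i_\th}^{-1}\cdot T_0^{-1}X_{\a_0^\vee}\cdot T_{i_\th}\cdot T_0
\]
in $\Atilde(X_n^{(1)})$, and follows by direct manipulation from the commutation relation \eqref{eq5-2} together with the $2$-braid relation between $T_0$ and $T_{i_\th}$. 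I expect the main obstacle to be the careful bookkeeping required for the $\ell_0$-braid relation between $\varTheta_{02}$ and $T_{i_\th}$ when $\ell_0=2$, as it is precisely in this case that the interaction between the factor $T_0$ and the lattice element $X_{\a_0^\vee}$ is most intricate and the refined relations from Proposition \ref{reduction2} play an essential role.
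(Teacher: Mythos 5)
Your proposal is correct and follows essentially the same route as the paper: dispatch $\phi(\C)=X_\d$ by telescoping, get the braid relations for $\phi(\varTheta_{03})$ from Remark \ref{X-artin} and for $\phi(\varTheta_{02})=T_0^{-1}X_{\a_0^\vee}$ by direct computation with the relations of Proposition \ref{defcherednik}, and check a single instance of \eqref{ellbraid} via Lemma \ref{reducel=2}. The only cosmetic difference is your appeal to Proposition \ref{reduction2}: since $\phi$ maps \emph{into} $\Atilde(X_n^{(1)})$, the identities \eqref{eq5-1}, \eqref{eq5-2} you invoke are simply instances of \eqref{dc1}--\eqref{dc2} (after writing $X_{\a_0^\vee}=X_\d X_{-\th^\vee}$), which is exactly what the paper uses; the refinement itself is only needed for the inverse map $\psi$.
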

\begin{proof}
 From Remark \ref{X-artin} we know that $X_{\th^\vee}\Theta^{-1}$ and $T_i$, $1\leq i\leq n$ satisfy the desired braid relations. Let us verify that $T_0^{-1}X_{\a_0^\vee}$ and $T_i$, $1\leq i\leq n$ satisfy the $a_{0i}a_{i0}$-braid relations.
 
The claim is obvious if the $0$ node and the $i$ node are not connected in the Dynkin diagram. Therefore, we only need to verify the $\ell_0$-braid relation for $T_0^{-1}X_{\a_0^\vee}$ and $T_{i_\th}$. If $A=A_n^{(1)}$, $n \geq 2$, then the argument we present works also for the second simple root whose node is connected to the $0$ node. Remark that if $\ell_0=4$ there is nothing to prove and the same is true for $\ell_0=2$ by Lemma \ref{reducel=2bis}. 

If  $\ell_0=1$ then,
\begin{align*}
T_0^{-1}X_{\a_0^\vee}T_{i_\th} T_0^{-1}X_{\a_0^\vee}&= T_0^{-1}T_{i_\th}^{-1}X_{\a_0^\vee+\a_{i_\th}^\vee}T_0^{-1}X_{\a_0} & & \text{by \eqref{dc2}}\\
&= T_0^{-1}T_{i_\th}^{-1}T_0 X_{\a_{i_\th}^\vee+\a_0^\vee} & & \text{by \eqref{dc2}}\\
&= T_{i_\th} T_0^{-1}T_{i_\th}^{-1}X_{\a_{i_\th}^\vee+\a_0^\vee} & & \text{by the braid relation for $T_0$ and $T_{i_\th}$}\\
&= T_{i_\th} T_0^{-1}X_{\a_0^\vee}T_{i_\th}, & & \text{by \eqref{dc2}}
\end{align*}
which is the $1$-braid relation for $T_0^{-1}X_{\a_0^\vee}$ and $T_{i_\th}$.

 
 Also,
\begin{align*}
\phi(\C)&=\phi(\varTheta_{01}\varTheta_{02}\varTheta_{03}\varTheta)\\
&=T_0T_0^{-1}X_{\a_0^\vee}X_{\th^\vee}\Phi^{-1}\Phi\\
&=X_{\d}
\end{align*}
which is central in the double affine Artin group. The only thing which needs explanation is the fact that the image of relation \eqref{ellbraid} holds if $\ell_0=2$. In fact, as it follows from Lemma \ref{reducel=2}, we need to do this only for one relation, say
\begin{equation}
\varTheta_{01}\T_{i_\th}^{-1}\varTheta_{02}\T_{i_\th}=\T_{i_\th}^{-1}\varTheta_{02}\T_{i_\th}\varTheta_{01}.
\end{equation}

Indeed,
\begin{align*}
\phi(\varTheta_{01}\T_{i_\th}^{-1}\varTheta_{02}\T_{i_\th})&= T_{0}T_{i_\th}^{-1}T_{0}^{-1}X_{\a_0^\vee}T_{i_\th} & & \\
&= T_{0}T_{i_\th}^{-1}T_{0}^{-1}T_{i_\th}^{-1}X_{\a_0^\vee+\a_{i_\th}^\vee} & & \text{by \eqref{dc2v2} and \eqref{dc3v2}}\\
&= T_{i_\th}^{-1}T_{0}^{-1}T_{i_\th}^{-1}T_{0}X_{\a_0^\vee+\a_{i_\th}^\vee} & & \text{by the braid relation between $T_0$ and $T_{i_\th}$} \\
&= T_{i_\th}^{-1}T_{0}^{-1}T_{i_\th}^{-1}X_{\a_0+\a_{i_\th}}T_{0} & &\text{by \eqref{eq1v2}} \\
&= T_{i_\th}^{-1}T_{0}^{-1}X_{\a_0}T_{i_\th} T_{0} & & \text{by \eqref{dc2v2} and \eqref{dc3v2}} \\
&= \phi(\T_{i_\th}^{-1}\varTheta_{02}\T_{i_\th}\varTheta_{01}). & &
\end{align*}
The proof is completed.
\end{proof}
Let 
$$
\psi:  \Atilde(X_n^{{(1)}})\to \B(\DDDot{X}_n)
$$
be defined as the extension to a group morphism of the map
\begin{subequations}
\begin{alignat}{3}
&\psi(T_{0})=\varTheta_{01}, &\quad &\psi(T_i)=\T_i, \quad1\leq i\leq n\\
&\psi(X_{\th^\vee})=\varTheta_{03}\varTheta, &&\phi(X_\d)=\C.
\end{alignat}
\end{subequations}

\begin{Prop}\label{lemma2}
The map $\psi:  \Atilde(X_n^{{(1)}})\to \B(\DDDot{X}_n)$ is well defined.
\end{Prop}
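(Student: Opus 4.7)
The plan is to use the refined presentation of $\Atilde(X_n^{(1)})$ given in Proposition \ref{reduction2} and verify that, after first extending $\psi$ to all of $\Q_X$, every defining relation is satisfied in $\B(\dddot{X}_n)$ under the $\psi$-assignments.

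First I would extend $\psi$ to each $X_\b$, $\b\in\Qring^\vee$, as follows. Erasing the affine nodes $\varTheta_{01}$ and $\varTheta_{02}$ from $\dddot{X}_n$ yields precisely the affine Coxeter diagram of type $X_n^{(1)}$, so the subgroup of $\B(\dddot{X}_n)$ generated by $\varTheta_{03},\T_1,\ldots,\T_n$ is a homomorphic image of the affine Coxeter braid group $B(X_n^{(1)})$. By Remark \ref{X-artin} together with Proposition \ref{bernstein-presentation}, inside $\Atilde(X_n^{(1)})$ every $X_\b$ can be written as a specific word in $T_1,\ldots,T_n$, $X_{\th^\vee}\Theta^{-1}$, and $X_\d$, coming from the Bernstein presentation of the affine Artin subgroup $\langle \A(\W),\Q_X,X_\d\rangle$ whose affine Coxeter generator is $X_{\th^\vee}\Theta^{-1}$. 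I would define $\psi(X_\b)$ by evaluating this same word at $\T_1,\ldots,\T_n$, $\varTheta_{03}$, and $\C$, observing that $\psi(X_{\th^\vee}\Theta^{-1})=\varTheta_{03}$. Well-definedness of this extension follows because the required Coxeter braid relations for $B(X_n^{(1)})$ are available in the relevant subgroup of $\B(\dddot{X}_n)$.

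With this extension in hand, the verification of the defining relations breaks into four parts: (a) the affine Coxeter braid relations among $T_0,T_1,\ldots,T_n$, which transfer because $\varTheta_{01},\T_1,\ldots,\T_n$ also form an $X_n^{(1)}$-subdiagram of $\dddot{X}_n$; (b) the Bernstein relations \eqref{dc1v2} and \eqref{dc2v2}, which are built into the extension of $\psi$ by design; (c) centrality of $X_\d$, which transfers to centrality of $\psi(X_\d)=\C$ via Definition \ref{def: untwisted}(a); and (d) the refined relation \eqref{eq5-1} (if $\ell_0=1$) or \eqref{eq5-2} (if $\ell_0=2$), which must be checked explicitly.

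Part (d) will be the main obstacle. The key preliminary computation is that $\a_0^\vee=\d-\th^\vee$ in the untwisted case, which gives
\begin{equation*}
\psi(X_{\a_0^\vee})=\C\cdot(\varTheta_{03}\varTheta)^{-1}=\C\varTheta^{-1}\varTheta_{03}^{-1}=\varTheta_{01}\varTheta_{02}
\end{equation*}
by the central relation \eqref{centralrel-1}. Expressing $\psi(X_{\a_{i_\th}^\vee+\a_0^\vee})$ in terms of $\varTheta_{01},\varTheta_{02},\varTheta_{03},\varTheta$ and $\T_{i_\th}$ reduces \eqref{eq5-1} or \eqref{eq5-2} to an identity that can be verified by direct manipulation of the braid relations available in $B(\dddot{X}_n)$. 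For $\ell_0=1$ the central relation \eqref{centralrel-1} alone suffices; for $\ell_0=2$ (essentially the $\dddot{C}_n$ case) the elliptic braid relation \eqref{ellbraid} is indispensable, and this is precisely where that relation is used. Once (d) is established, $\psi$ respects all defining relations of $\Atilde(X_n^{(1)})$ and hence extends to a group morphism, as required.
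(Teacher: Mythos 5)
Your proposal follows the paper's own proof in essentially every respect: reduce, via Proposition \ref{second-presentation} together with Proposition \ref{reduction2}, to verifying the images of \eqref{eq5-1} and \eqref{eq5-2}, with the pivot computation $\psi(X_{\a_0^\vee})=\C(\varTheta_{03}\varTheta)^{-1}=\varTheta_{01}\varTheta_{02}$, the central relation \eqref{centralrel-1} doing the work when $\ell_0=1$ and the elliptic braid relation \eqref{ellbraid} entering exactly when $\ell_0=2$, while the remaining relations transfer through the affine Coxeter subdiagrams and Remark \ref{X-artin}. The only difference is that you assert rather than execute the final manipulations; they do go through just as you predict (the paper handles $\ell_0=1$ by substituting the expression \eqref{magic1} for $\varTheta_{02}$ into \eqref{centralrel-1}, and $\ell_0=2$ by a short chain using \eqref{centralrel-1}, the braid relation between $\varTheta_{01}$ and $\T_{i_\th}$, and \eqref{ellbraid}).
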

\begin{proof}
As it clear from Remark \ref{X-artin}, the elements for which we defined $\psi$  generate $\Atilde(X_n^{{(1)}})$. The only fact that requires justification is that the image of the relations \eqref{eq5-1} and \eqref{eq5-2} holds in $\B(\DDDot{X}_n)$.

Let us consider first the case $\ell_0=1$. We have to prove that
$$
\varTheta_{01}\psi(X_{\a_{i_\th}^\vee})\varTheta_{01}=\psi(X_{\a_0^\vee+a_{i_\th}^\vee}).
$$
Since $T_{i_\th} X_{-\th^\vee}T_{i_\th}=X_{\a_{i_\th}^\vee-\th^\vee}$ we know that
$$
X_{\a_{i_\th}^\vee}= T_{i_\th} X_{-\th^\vee}T_{i_\th} X_{\th^\vee} .
$$
Therefore we want that
$$
\varTheta_{01} \T_{i_\th} \varTheta^{-1} \varTheta_{03}^{-1} \T_{i_\th}\varTheta_{03}\varTheta\varTheta_{01}= \C \T_{i_\th} \varTheta^{-1} \varTheta_{03}^{-1}\T_{i_\th}
$$
or equivalently
$$
\varTheta\T_{i_\th}^{-1} \varTheta_{01} \T_{i_\th} \varTheta^{-1} \varTheta_{03}^{-1}\T_{i_\th} \varTheta_{03}\varTheta\varTheta_{01} \T_{i_\th}^{-1}\varTheta_{03}=\C.
$$
This immediately follows by replacing the formula \eqref{magic1} for $\varTheta_{02}$ into the equation \eqref{centralrel-1}.

In the case $\ell_0=2$ we have to prove that
$$
\varTheta_{01} \psi(X_{\a_{i_\th}^\vee-\th^\vee})= \psi(X_{\a_{i_\th}^\vee-\th^\vee})\varTheta_{01}.
$$
As before,
$$
X_{\a_{i_\th}^\vee-\th^\vee}= T_{i_\th} X_{-\th^\vee}T_{i_\th} ,
$$
hence our statement is proved as follows
\begin{align*}
\varTheta_{01} \psi(X_{\a_{i_\th}^\vee-\th^\vee})&=\varTheta_{01} \T_{i_\th} \varTheta^{-1}\varTheta_{03}^{-1} \T_{i_\th} & & \\
&=\C^{-1}\varTheta_{01} \T_{i_\th}\varTheta_{01}\varTheta_{02}\T_{i_\th} & & \text{by \eqref{centralrel-1}} \\
&=\C^{-1}\varTheta_{01} \T_{i_\th}\varTheta_{01}\T_{i_\th}\T_{i_\th}^{-1}\varTheta_{02}\T_{i_\th} & & \\
&=\C^{-1}\T_{i_\th}\varTheta_{01}\T_{i_\th}\varTheta_{01}\T_{i_\th}^{-1}\varTheta_{02}\T_{i_\th} & & \text{by the braid relation between $\varTheta_{01}$ and $\T_{i_\th}$}\\
&=\C^{-1}\T_{i_\th}\varTheta_{01}\T_{i_\th}\T_{i_\th}^{-1}\varTheta_{02}\T_{i_\th}\varTheta_{01} & & \text{by \eqref{ellbraid}}\\
&=\C^{-1}\T_{i_\th}\varTheta_{01}\varTheta_{02}\T_{i_\th}\varTheta_{01} & & \\
&=\T_{i_\th}\varTheta^{-1}\varTheta_{03}^{-1} \T_{i_\th}\varTheta_{01} & & \text{by \eqref{centralrel-1}}\\
&=\psi(X_{\a_{i_\th}^\vee-\th^\vee})\varTheta_{01}. & &
\end{align*}
The proof  is now complete.
\end{proof}

\begin{Thm}\label{main}
The groups $\Atilde(X_n^{{(1)}})$ and $\B(\DDDot{X}_n)$ are isomorphic.
\end{Thm}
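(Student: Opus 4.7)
The plan is straightforward: Propositions \ref{lemma1} and \ref{lemma2} already produce two well-defined group morphisms $\phi\colon \B(\dddot{X}_n)\to\Atilde(X_n^{(1)})$ and $\psi\colon\Atilde(X_n^{(1)})\to\B(\dddot{X}_n)$, and all that remains is to verify that they are mutually inverse. Since both are group morphisms, it suffices to evaluate the compositions on a generating set of each group. For $\Atilde(X_n^{(1)})$ we use the generators $T_0,T_1,\dots,T_n, X_{\th^\vee}, X_\d$ (sufficiency follows from Remark \ref{X-artin}, as conjugation by the $T_i$ reconstructs the full lattice $\Q_X$ from $X_{\th^\vee}$), and for $\B(\dddot{X}_n)$ we use $\T_1,\dots,\T_n, \varTheta_{01}, \varTheta_{02}, \varTheta_{03}$. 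Observe also that $\psi(\Theta)=\varTheta$, since $\Theta=T_{s_\th}$ and $\varTheta=\T_{s_\th}$ are the \emph{same} word in the corresponding finite generators.

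First I would check $\phi\circ\psi = \mathrm{id}$. The identities $\phi\psi(T_0)=T_0$ and $\phi\psi(T_i)=T_i$ are immediate. For $X_{\th^\vee}$ we compute
\[
\phi\psi(X_{\th^\vee})=\phi(\varTheta_{03}\varTheta)=\bigl(X_{\th^\vee}\Theta^{-1}\bigr)\Theta=X_{\th^\vee}.
\]
For $X_\d$ we use exactly the computation already performed at the end of the proof of Proposition \ref{lemma1}, namely $\phi(\C)=X_\d$, so $\phi\psi(X_\d)=\phi(\C)=X_\d$.

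Next I would check $\psi\circ\phi = \mathrm{id}$. The identities on $\T_i$ and on $\varTheta_{01}$ are immediate. For $\varTheta_{03}$,
\[
\psi\phi(\varTheta_{03})=\psi(X_{\th^\vee}\Theta^{-1})=\varTheta_{03}\varTheta\cdot\varTheta^{-1}=\varTheta_{03}.
\]
The only computation requiring a bit of work is on $\varTheta_{02}$. Here I would use that in the untwisted case $\a_0^\vee+\th^\vee=\d$, so $\psi(X_{\a_0^\vee})=\psi(X_\d)\psi(X_{\th^\vee})^{-1}=\C\,\varTheta^{-1}\varTheta_{03}^{-1}$, and therefore
\[
\psi\phi(\varTheta_{02})=\psi(T_0^{-1}X_{\a_0^\vee})=\varTheta_{01}^{-1}\C\,\varTheta^{-1}\varTheta_{03}^{-1}=\varTheta_{02},
\]
where the last equality comes directly from the defining relation $\C=\varTheta_{01}\varTheta_{02}\varTheta_{03}\varTheta$.

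There is no real obstacle at this stage: all the substantive content — showing that the images of the relations hold on either side, in particular verifying the type $(0,j)_{\ell_0}$ relations for $\phi$ and the elliptic/braid relations for $\psi$ — has already been absorbed into Propositions \ref{lemma1} and \ref{lemma2}. The remaining bookkeeping is purely formal, confirming that the two maps restrict to mutually inverse bijections on generators and hence extend to mutually inverse group isomorphisms.
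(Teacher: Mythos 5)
Your proposal is correct and is essentially the paper's own proof: the paper likewise observes that the morphisms of Propositions \ref{lemma1} and \ref{lemma2} are mutually inverse, to be checked on generators, and merely leaves implicit the short computations (notably $\psi\phi(\varTheta_{02})=\varTheta_{02}$ via $X_{\a_0^\vee}X_{\th^\vee}=X_\d$ and \eqref{centralrel-1}) that you spell out. No gaps.
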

\begin{proof}
The morphisms constructed in Proposition \ref{lemma1} and Proposition \ref{lemma2} are inverse for each other, as it can be easily checked  on generators.
\end{proof}


\subsection{}
From Proposition \ref{comparison} we also obtain a similar description for $\Atilde(A_{2n}^{(2)})$ and $\Atilde^c(A_{2n}^{(2)})$.

\begin{Thm}\label{mainA2n2}
The double affine Artin group $\Atilde(A_{2n}^{(2)})$ is isomorphic to  $\B(\DDot{C}_n)$ and the group  $\Atilde^c(A_{2n}^{(2)})$ is isomorphic to $\B(\DDot{C}_n)^c$.
\end{Thm}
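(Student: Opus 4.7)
The plan is to deduce Theorem~\ref{mainA2n2} by transporting the two surjections of Proposition~\ref{comparison} across the isomorphism $\phi:\B(\dddot{C}_n)\xrightarrow{\sim}\Atilde(C_n^{(1)})$ furnished by Theorem~\ref{main}, and then identifying the kernels directly in terms of the generators of $\B(\dddot{C}_n)$. Because $\phi$ is \emph{explicitly} given on generators in Proposition~\ref{lemma1}, this should reduce to a short normal-closure computation, with no additional relations to impose.

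First I would record the relevant values of $\phi$, namely $\phi(\varTheta_{02})=T_0^{-1}X_{\a_0^\vee}$ and $\phi(\C)=X_\d$. In the realization of $C_n^{(1)}$ fixed just before Proposition~\ref{comparison}, the simple affine root $\a_0=\d-\sqrt{2}\eps_1$ is long, so via the identification $\nu$ the corresponding coroot is $\a_0^\vee=\d-\sqrt{2}\eps_1$. Using centrality of $X_\d$ this yields
\[
(T_0^{-1}X_{\a_0^\vee})^2 \;=\; X_\d^{\,2}\,(T_0^{-1}X_{-\sqrt{2}\eps_1})^2,
\]
from which I would read off the two identities
\[
X_\d\,(T_0^{-1}X_{-\sqrt{2}\eps_1})^2 \;=\; \phi\bigl(\C^{-1}\varTheta_{02}^{\,2}\bigr),
\qquad
(T_0^{-1}X_{\a_0^\vee})^2 \;=\; \phi\bigl(\varTheta_{02}^{\,2}\bigr).
\]
The left-hand sides are, by Proposition~\ref{comparison}(i) and (ii), the generators of the kernels of the two surjections onto $\Atilde(A_{2n}^{(2)})$ and $\Atilde^c(A_{2n}^{(2)})$, whereas the right-hand sides normally generate the same subgroups of $\B(\dddot{C}_n)$ as $\C\varTheta_{02}^{-2}$ and $\varTheta_{02}^{\,2}$, respectively.

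Composing $\phi$ with the two surjections of Proposition~\ref{comparison} and passing to quotients then produces the desired isomorphisms $\B(\dddot{C}_n)^\prime\cong\Atilde(A_{2n}^{(2)})$ and $\B(\dddot{C}_n)^{\prime,c}\cong\Atilde^c(A_{2n}^{(2)})$, with relations~\eqref{eq24} and~\eqref{eq26} of Definition~\ref{def: A2n2Artin} matching exactly the two kernel descriptions.

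The main obstacle is purely bookkeeping: I would need to carefully verify that the element $X_{\a_0^\vee}$ appearing in the definition of $\phi$ is interpreted, in the fixed realization of $C_n^{(1)}$, as $X_{\d-\sqrt{2}\eps_1}=X_\d X_{-\sqrt{2}\eps_1}$, so that the factor $X_\d^{\pm 1}$ in the computation above is tracked correctly. This factor is precisely what accounts for the asymmetric shape of the two kernels in Proposition~\ref{comparison} and, correspondingly, for the discrepancy between relations~\eqref{eq24} and~\eqref{eq26} in Definition~\ref{def: A2n2Artin}.
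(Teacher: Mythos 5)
Your proposal is correct and is essentially the paper's own argument: the paper proves Theorem \ref{mainA2n2} precisely by combining Proposition \ref{comparison} with the isomorphism of Theorem \ref{main}, and your computation $\phi(\varTheta_{02}^2)=(T_0^{-1}X_{\a_0^\vee})^2=X_\d^2(T_0^{-1}X_{-\sqrt{2}\eps_1})^2$, hence $\phi(\C^{-1}\varTheta_{02}^{2})=X_\d(T_0^{-1}X_{-\sqrt{2}\eps_1})^2$, is exactly the kernel identification implicit in the paper's ``straightforward'' step, with the normal closures of $\C^{-1}\varTheta_{02}^{2}$ and $\C\varTheta_{02}^{-2}$ coinciding by centrality of $\C$.
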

\begin{proof}
Straightforward from Proposition \ref{comparison}, Proposition \ref{comparisonbis}, and Theorem \ref{main}.
\end{proof}
For completeness, let us specify below the inverse isomorphisms between $\Atilde^c(A_{2n}^{(2)})$ and $\B(\DDot{C}_n)^c$. The relevant isomorphisms between $\Atilde(A_{2n}^{(2)})$ and $\B(\DDot{C}_n)$ is simply obtained by restriction.
Let
$$
\phi: \B(\DDot{C}_n)^c \to \Atilde^c(A_{2n}^{{(2)}})
$$
be defined as the extension to a group morphism of the map
$$
 \phi(\Ti)=T_0,\quad \phi(\Tiii)=X_{\th^\vee}\Theta^{-1}, \quad \phi(\T_i)=T_i,\quad 1\leq i\leq n, \quad \phi(\C^{1/2})=X_{\frac{1}{2}\d}.
$$
Its inverse is
$$
\psi:  \Atilde^c(A_{2n}^{{(2)}})\to \B(\DDot{C}_n)^c
$$
defined as the extension to a group morphism of the map
\begin{subequations}
\begin{alignat}{3}
&\psi(T_{0})=\Ti, &\quad &\psi(T_i)=\T_i, \quad1\leq i\leq n\\
&\psi(X_{\th^\vee})=\Tiii\varTheta, &&\phi(X_{\frac{1}{2}\d})=\C^{1/2}.
\end{alignat}
\end{subequations}


\subsection{} We will show that the groups $\B(\fH{X}_n)$ and $\Atilde(\fH{X}_n^\iota)$ are also isomorphic. Let us define the candidates for isomorphism between the two groups.

Let
$$
\phi: \B(\fH{X}_n)\to \Atilde(\fH{X}_n^\iota)
$$
be defined as the extension to a group morphism of the map
\begin{equation}
\phi(\Ti)=T_0, \quad \phi(\Tiii)=X_{\ph^\vee}\Phi^{-1}, \quad \phi(\T_i)=T_i, \quad1\leq i\leq n.
\end{equation}
\begin{Prop}\label{prop1}
The map $\phi: \B(\fH{X}_n)\to \Atilde(\fH{X}_n^\iota)$ is well defined.
\end{Prop}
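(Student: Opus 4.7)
The plan is twofold: first, verify that the images of $\T_1,\dots,\T_n,\Ti,\Tiii$ under $\phi$ satisfy every braid relation of the Coxeter diagram $\ddot{X}_n$, so that $\phi$ extends to a well-defined morphism from $B(\ddot{X}_n)$; second, verify that $\phi(\C)$ is central in $\Atilde(\ddot{X}_n^\iota)$, so that the morphism descends to $\B(\ddot{X}_n)$.

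For the first step I would partition the braid relations by generator type. Relations among $\T_1,\dots,\T_n$ are immediate from $\phi(\T_i)=T_i$. Relations between $\Ti$ and the $\T_i$ hold because the subdiagram of $\ddot{X}_n$ obtained by deleting $\varPhi_0$ is the affine Coxeter diagram underlying $\ddot{X}_n^\iota$, so the braid relations for $\phi(\Ti)=T_0$ with the $T_i$ are those of the affine Artin subgroup $\A(W)\subset \Atilde(\ddot{X}_n^\iota)$. Relations between $\Tiii$ and the $\T_i$ hold by Remark \ref{X-artin}: the subgroup of $\Atilde$ generated by $T_1,\dots,T_n$ and $X_{\ph^\vee}\Phi^{-1}$ is isomorphic to the affine Artin group of ${}^t\!A$, whose Coxeter diagram matches the subdiagram of $\ddot{X}_n$ obtained by deleting $\varTheta_0$. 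Case-by-case inspection of the four diagrams of type $\ddot{X}_n$ in Figure \ref{diagrams} confirms both identifications. The only remaining braid relation is the one between $\Ti$ and $\Tiii$, which I will deduce from the second step rather than verify head-on.

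For the second step I would compute $\phi(\C)$ explicitly. Using the identity $\varPsi=\varPhi^{-1}\varThetap=\varTheta^{-1}\varPhip$ in $B(\ddot{X}_n)$, substitution into $\C=\Tiii\varPhi\Ti\varPsi\Tiii\varTheta\Ti$ yields, after the immediate cancellation $\Phi^{-1}\Phi=1$,
\[
\phi(\C) \;=\; X_{\ph^\vee}\,T_0\,\Theta^{-1}\Phip\,X_{\ph^\vee}\Phi^{-1}\,\Theta\,T_0 .
\]
I would then manipulate this expression using the Bernstein-type relations \eqref{dc1v2}, \eqref{dc2v2} together with the single affine relation \eqref{eq5} furnished by the twisted refinement of Proposition \ref{reduction-2}, with the goal of collapsing the right-hand side to $X_{\d}$, which is central by \eqref{dc3v2}. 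Once $\phi(\C)=X_{\d}$ is established, the argument of Proposition \ref{superfluous}, transported to the images in $\Atilde$ (since all its inputs---centrality of $\C$, the identities of Lemma \ref{B2-rels}, and the internal braid relations of each affine Artin subdiagram---have $\Atilde$-analogues already verified), yields the outstanding $\Ti$-$\Tiii$ braid relation automatically.

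The hardest part will be the explicit identification $\phi(\C)=X_{\d}$: it is the twisted analogue of the calculation carried out for Proposition \ref{lemma1} in the untwisted case, and its technical content is precisely that the single generator $\Tiii$ together with the central relation encodes what, in the untwisted setting, required the three generators $\varTheta_{01},\varTheta_{02},\varTheta_{03}$. Once this identification is in hand, both parts of the plan close cleanly.
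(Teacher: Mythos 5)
Your plan follows the same route as the paper's proof: the braid relations for $\phi(\Tiii)=X_{\ph^\vee}\Phi^{-1}$ with the $T_i$ come from Remark \ref{X-artin}, those for $\phi(\Ti)=T_0$ hold inside $\A(W)$, the $\Ti$--$\Tiii$ braid relation need not be checked at all because Proposition \ref{superfluous} makes it redundant in the presentation of $\B(\ddot{X}_n)$, and everything reduces to verifying the image of \eqref{centralrel}. The genuine gap is that this last verification --- the only step with real content --- is left as a stated goal rather than carried out. Concretely, conjugating by $T_0$ (using centrality of $X_\d$) and applying \eqref{eq5} (which gives $T_0X_{\ph^\vee}T_0=X_{\ph^\vee+\a_0^\vee}=X_{\d}X_{-\php^\vee}$, since $\a_0^\vee=\d-\th^\vee$ and $\php^\vee=\th^\vee-\ph^\vee$), the identity $\phi(\C)=X_{\d}$ is equivalent to
\[
X_{\php^\vee}\;=\;\Psi\,X_{\ph^\vee}\Phi^{-1}\Theta
\]
inside the affine Artin subgroup generated by $X_{\ph^\vee}\Phi^{-1}$ and $T_1,\dots,T_n$. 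This is not obtained by mechanically pushing \eqref{dc1v2}--\eqref{dc2v2} around: it is precisely Lemma \ref{lemma-affine-commutation}i) applied to that subgroup (via Remark \ref{X-artin}), and its proof rests on the length computations of Lemma \ref{lemma-wy} (in particular part v), which yields $T_yY_{-\thp}T_y=Y_{-\th}$). Without identifying this ingredient, or reproving it, the ``collapse to $X_\d$'' does not close, and you acknowledge as much by calling it the hardest part.

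There is also an algebraic slip in your displayed computation: in $B(\ddot{X}_n)$ one has $\varPsi=\varPhip\varTheta^{-1}=\varPhi^{-1}\varThetap$, whereas $\varTheta^{-1}\varPhip=\varisP$, which differs from $\varPsi$ in general ($\varThetap$ and $\varPhip$ satisfy the $2$-braid relation, not commutation, by Lemma \ref{lemma-explicit-2}iii), and in the $\ddot{G}_2$ case $\Psi=T_{i_\ph}^{-1}T_{i_\th}^{-1}\neq T_{i_\th}^{-1}T_{i_\ph}^{-1}=\isP$). Consequently the expression $X_{\ph^\vee}T_0\Theta^{-1}\Phip X_{\ph^\vee}\Phi^{-1}\Theta T_0$ you display is not $\phi(\C)$; the correct starting point is $X_{\ph^\vee}T_0\Psi X_{\ph^\vee}\Phi^{-1}\Theta T_0$, as in \eqref{eq6}--\eqref{eq7}. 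The slip is repairable, but combined with the missing identity above it means that the centrality of $\phi(\C)$ --- and hence the well-definedness of $\phi$ --- has not actually been established in your proposal.
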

\begin{proof}  As noted in Remark \ref{X-artin}, the element $\phi(\Tiii)$ satisfies the predicted braid relations with $T_i$, $1\leq i\leq n$. Furthermore, from Proposition \ref{superfluous} we know that we can exclude from the definition $\B(\fH{X}_n)$ the braid relations between $\Ti$ and $\Tiii$. Therefore, we only need to check that the image of relation \eqref{centralrel} holds in $\Atilde(\fH{X}_n^\iota)$. More precisely, we need to verify that
\begin{equation}\label{eq6}
X_\delta=X_{\ph^\vee}\Phi^{-1}\Phi T_{0} \Psi X_{\ph^\vee}\Phi^{-1}\Theta T_{0}
\end{equation} 
or, equivalently, that
\begin{equation}\label{eq7}
X_\delta=T_{0}X_{\ph^\vee}T_{0}\Psi X_{\ph^\vee}\Phi^{-1}\Theta.
\end{equation} 
This equality follows from \eqref{eq5} and Proposition \ref{lemma-affine-commutation}i) applied to the affine Coxeter group generated by $X_{\ph^\vee}\Phi^{-1}$ and $T_i$, $1\leq i\leq n$.
\end{proof} 

Let
$$
\psi: \Atilde(\fH{X}_n^\iota)\to \B(\fH{X}_n)
$$
be defined as the extension to a group morphism of the map
\begin{subequations}
\begin{alignat}{3}
&\psi(T_{0})=\Ti, &\quad &\psi(T_i)=\T_i, \quad1\leq i\leq n\\
&\psi(X_{\ph^\vee})=\Tiii\varPhi, &&\phi(X_\d)=\C.
\end{alignat}
\end{subequations}
\begin{Prop}\label{prop2}
The map $\psi: \Atilde(\fH{X}_n^\iota)\to \B(\fH{X}_n)$ is well defined.
\end{Prop}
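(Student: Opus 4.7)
The proof strategy mirrors that of Proposition \ref{lemma2}. By Proposition \ref{reduction-2}, the group $\Atilde(\ddot{X}_n^\iota)$ is generated by the affine Artin group $\A(W)$ and the lattice $\Q_X$, subject to (i) the Bernstein-type relations of Proposition \ref{second-presentation}(a), (ii) centrality of $X_\d$, and (iii) the single key relation \eqref{eq5}: $T_0 X_{\ph^\vee} T_0 = X_{\ph^\vee+\a_0^\vee}$. I shall verify that $\psi$ carries each of these to a valid identity in $\B(\ddot{X}_n)$.

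For (i): the elements $\T_1,\ldots,\T_n,\Tiii$ satisfy the braid relations of the affine Coxeter diagram obtained from $\ddot{X}_n$ by deleting the node $\Ti$, which is the untwisted affine Coxeter diagram dual to $\ddot{X}_n^\iota$. Hence in $B(\ddot{X}_n)$ they generate an affine Artin group in which, by Remark \ref{X-artin}, the element $\Tiii\varPhi = \psi(X_{\ph^\vee})$ plays the role of the Bernstein element $X_{\ph^\vee}$ for the highest coroot. Bernstein-type relations are preserved under the quotient map $B(\ddot{X}_n) \to \B(\ddot{X}_n)$, so they hold for the images under $\psi$. For (ii), $\psi(X_\d) = \C$ is central by Definition \ref{def: twisted}.

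The main verification is (iii). Since $a_0 = 1$ in the twisted case, $\a_0 = \d - \th$; dually, in $Q^\vee = \Qring^\vee \oplus \Z\d$, we get $\a_0^\vee = \d - \th^\vee$. Combined with $\php^\vee = \th^\vee - \ph^\vee$, this yields $\ph^\vee + \a_0^\vee = \d - \php^\vee$. Hence the image of \eqref{eq5} reads
\[
\Ti \cdot \Tiii\varPhi \cdot \Ti = \C \cdot \psi(X_{-\php^\vee}).
\]
Rearranging the central relation \eqref{centralrel} as $\Tiii\varPhi\Ti = \C \cdot \Ti^{-1}\varTheta^{-1}\Tiii^{-1}\varPsi^{-1}$, premultiplying by $\Ti$, and using centrality of $\C$, one obtains
\[
\Ti\cdot\Tiii\varPhi\cdot\Ti = \C \cdot \varTheta^{-1}\Tiii^{-1}\varPsi^{-1}.
\]

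It remains to identify $\varTheta^{-1}\Tiii^{-1}\varPsi^{-1}$ with $\psi(X_{-\php^\vee})$. Within the sub-Artin group of (i), $\Tiii$ corresponds to the untwisted affine generator, and using Proposition \ref{bernstein-presentation} to conjugate $X_{-\ph^\vee}$ by (a product expressing) $T_{s_\th}$ produces $X_{\php^\vee}$; passing to the image under $\psi$ and applying the relations $\varPsi = \varPhip\varTheta^{-1} = \varPhi^{-1}\varThetap$ together with Proposition \ref{B2-rels} (for the double-laced cases) yields the required identification. For $\ddot{G}_2$, one instead invokes the alternative expression \eqref{centralrel-3} and the $G_2$-analogue of $\varPsi$. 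The principal obstacle is this final identification, which requires careful bookkeeping between the twisted/untwisted dualities and the subtle 2-braid relations of Proposition \ref{B2-rels} involving the tilde-elements.
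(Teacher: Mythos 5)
Your reduction follows the paper's own route: the only relation whose image needs checking is \eqref{eq5}, and after writing $\ph^\vee+\a_0^\vee=\d-\php^\vee$ and invoking the centrality of $\C$ together with \eqref{centralrel} you correctly boil everything down to the single identity $\psi(X_{\php^\vee})=\varPsi\Tiii\varTheta$ in $\B(\ddot{X}_n)$. The gap is that you never prove this identity: you sketch it (``conjugate $X_{-\ph^\vee}$ by a product expressing $T_{s_\th}$, then use Proposition~\ref{B2-rels} and $\varPsi=\varPhip\varTheta^{-1}$'') and yourself label it the principal obstacle. The sketch does not go through as stated, because in a double affine Artin group the relation $T_wX_\b T_w^{-1}=X_{w(\b)}$ is false in general; the Bernstein-type relations of Propositions~\ref{bernstein-presentation} and~\ref{defcherednik} may only be applied one simple reflection at a time, and only when the relevant pairing is $0$ or $\pm1$. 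To transport $X_{\ph^\vee}$ to $X_{\php^\vee}$ one must use the specific minimal-length element of Lemma~\ref{lemma-wy} (the element $y$ with $y(\thp)=\th$, or its $X$-side counterpart) together with the combinatorial fact of Lemma~\ref{lemma-wy}v)--vi) that every positive root it inverts pairs to $-1$ with $\thp$ (resp.\ $\php^\vee$), which is exactly what allows the Bernstein relation to be applied letter by letter along a reduced word. This is the content of Lemma~\ref{lemma-affine-commutation}i), which the paper applies to the affine Artin subgroup generated by $X_{\ph^\vee}\Phi^{-1}$ and $T_1,\dots,T_n$ to obtain $\psi(X_{\php^\vee})=\varPsi\Tiii\varTheta$; once that is in hand, the required relation is literally \eqref{centralrel}, as you observed.

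Two smaller corrections. Proposition~\ref{B2-rels} and the identity $\varPsi=\varPhip\varTheta^{-1}$ are not the missing ingredients: they are braid-group facts that do not encode the lattice bookkeeping above, and they play no role in the paper's verification of this direction. Likewise, the $\ddot{G}_2$ case requires neither \eqref{centralrel-3} nor a separate ``analogue of $\varPsi$'': Lemma~\ref{lemma-affine-commutation}i) is uniform over all twisted types (with $\varPsi=\T_{i_\ph}^{-1}\T_{i_\th}^{-1}$ in the triply-laced case, Lemma~\ref{lemma-wy-artin}vii)), so a single argument closes all cases. Your treatment of the Bernstein relations among the $\T_i$ and $\Tiii$ and of the centrality of $X_\d$ is fine.
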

\begin{proof} The only fact that requires justification is that the image of relation \eqref{eq5} holds in $\B(\fH{X}_n)$. Using Proposition \ref{lemma-affine-commutation}i) applied to the affine Coxeter group generated by $X_{\ph^\vee}\Phi^{-1}$ and $T_i$, $1\leq i\leq n$,  we obtain that
$$
\psi(X_{\php^\vee})=\varPsi\Tiii\varTheta.
$$
Therefore, we need to verify that
\begin{equation}
\Ti\Tiii\varPhi\Ti=\C \varTheta^{-1}\Tiii^{-1}\varPsi^{-1}
\end{equation}
or, equivalently, that
\begin{equation}
\Tiii\varPhi\Ti\varPsi\Tiii \varTheta \Ti=\C,
\end{equation}
which is precisely \eqref{centralrel}.
\end{proof}
\begin{Thm}\label{thm1}
The groups $\Atilde(\fH{X}_n^\iota)$ and $\B(\fH{X}_n)$ are isomorphic.
\end{Thm}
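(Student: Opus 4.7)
The plan is to show that the morphisms $\phi$ and $\psi$ constructed in Proposition \ref{prop1} and Proposition \ref{prop2} are mutually inverse, following exactly the strategy used to conclude Theorem \ref{main}. Since both $\phi$ and $\psi$ have already been established to be well-defined group homomorphisms, it suffices to verify that $\phi\circ\psi$ and $\psi\circ\phi$ act as the identity on a generating set for the source group in each case. The generators to be tested are $\T_i$ ($1\leq i\leq n$), $\Ti$, $\Tiii$ for $\B(\ddot{X}_n)$, and $T_0,T_1,\dots,T_n,X_{\ph^\vee},X_\d$ for $\Atilde(\ddot{X}_n^\iota)$ (the latter generate by the Bernstein-type presentation of Proposition \ref{defcherednik} combined with the fact that $W$-translates of $X_{\ph^\vee}$, generated via conjugation by the $T_i$, span $\Q_X$ modulo $X_\d$).

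First I would handle $\psi\circ\phi$. The identities $\psi(\phi(\T_i))=\T_i$ and $\psi(\phi(\Ti))=\Ti$ are immediate from the definitions. For the remaining generator, using that $\phi$ sends each $\T_i$ to $T_i$ and hence sends $\varPhi\in B(X_n)$ to $\Phi$, we compute
\begin{equation}
\psi(\phi(\Tiii))=\psi(X_{\ph^\vee}\Phi^{-1})=\Tiii\varPhi\cdot\varPhi^{-1}=\Tiii.
\end{equation}
For $\phi\circ\psi$, the values on $T_0$ and the $T_i$ are again immediate, while
\begin{equation}
\phi(\psi(X_{\ph^\vee}))=\phi(\Tiii\varPhi)=X_{\ph^\vee}\Phi^{-1}\cdot\Phi=X_{\ph^\vee}.
\end{equation}
It remains to check that $\phi(\psi(X_\d))=\phi(\C)=X_\d$, but this is precisely the identity verified during the proof of Proposition \ref{prop1} in equations \eqref{eq6}--\eqref{eq7}.

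The only step that required genuine work is the matching of the central elements $\C\in\B(\ddot{X}_n)$ and $X_\d\in\Atilde(\ddot{X}_n^\iota)$, and this obstacle has already been surmounted in Propositions \ref{prop1} and \ref{prop2}: the well-definedness of $\phi$ amounts to the equality $\phi(\C)=X_\d$, and the well-definedness of $\psi$ relies on $\psi(X_\d)=\C$ being central, which holds by definition of $\B(\ddot{X}_n)$. With those identifications in hand, the proof of Theorem \ref{thm1} reduces to the routine generator-level verification carried out above, and concludes that $\phi$ and $\psi$ are mutually inverse isomorphisms.
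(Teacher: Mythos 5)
Your proposal is correct and follows the same route as the paper: the paper's proof of Theorem \ref{thm1} is exactly the observation that $\phi$ and $\psi$ from Propositions \ref{prop1} and \ref{prop2} are mutually inverse, checked on generators. Your explicit generator-level verification (including the identification $\phi(\C)=X_\d$ already established in Proposition \ref{prop1}) is just a fleshed-out version of that same argument.
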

\begin{proof}
The group morphisms constructed in Proposition \ref{prop1} and Proposition \ref{prop2} are inverse to each other, as it can be easily checked on generators.
\end{proof}
\subsection{} We record here Coxeter group versions of the results proved in this section.
\begin{Def}
Let $\Cox(\DDDot{X}_n)$, $\Cox(\DDot{C}_n)$, and $\Cox(\fH{X}_n)$ denote the quotient of $\B(\DDDot{X}_n)$,  $\B(\DDot{C}_n)$, and, respectively,  $\B(\fH{X}_n)$, by the normal subgroup generated by the squares of the generators.
\end{Def}
\begin{Rem}\label{rem: Cox}
Alternatively, $\Cox(\DDDot{X}_n)$, $\Cox(\DDot{C}_n)$, and $\Cox(\fH{X}_n)$ can be defined as the quotient of $C(\DDDot{X}_n)$, $C(\DDot{C}_n)$, and, respectively,  $C(\fH{X}_n)$ by the relations \eqref{centralrel-1}-\eqref{ellbraid}, \eqref{centralrelbis}-\eqref{ellbraidbis}, and, respectively, \eqref{centralrel}.
\end{Rem}
\begin{Cor}\label{cor: Cox}
The group $\Cox(\DDDot{X}_n)$ is isomorphic to the double affine Weyl group associated to $X_n^{(1)}$ and the $\Cox(\fH{X}_n)$ is isomorphic to the double affine Weyl group associated to $\fH{X}_n^\iota$. Furthermore, the double affine Weyl group associated to $A_{2n}^{(2)}$ is isomorphic to $\Cox(\DDot{C}_n)$, and  $\Cox(\DDDot{C}_n)$ is isomorphic to the trivial central extension of the double affine Weyl group associated to $A_{2n}^{(2)}$ by the group generated by an element $\tau_{\frac{1}{2}\d}$ such that $\tau_{\frac{1}{2}\d}^2=\tau_{\d}$.
\end{Cor}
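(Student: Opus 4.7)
The strategy is to pass from the Artin-level isomorphisms $\B(\dddot{X}_n)\cong \Atilde(X_n^{(1)})$ (Theorem \ref{main}), $\B(\ddot{X}_n)\cong \Atilde(\ddot{X}_n^\iota)$ (Theorem \ref{thm1}), and $\B(\dddot{C}_n^*)\cong \Atilde(A_{2n}^{(2)})$ (Theorem \ref{mainA2n2}) to Weyl-level isomorphisms by forming the appropriate quotients, using Remark \ref{rem: artin-to-weyl} to control the kernel of the canonical morphism $\Atilde\to \Wdaff$. The first step is to check that, under these isomorphisms, every generator of $\B$ whose square is trivialised in $\Cox$ maps to an involution of $\Wdaff$. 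Under $\phi$ of Proposition \ref{lemma1} the three affine generators of $\B(\dddot{X}_n)$ map to $s_0$, $s_0\tau_{\a_0^\vee}$, and $\tau_{\th^\vee}s_\th$ respectively, and a direct computation (e.g.\ $(\tau_{\th^\vee}s_\th)^2 = \tau_{\th^\vee+s_\th(\th^\vee)}=1$) confirms these are involutions; an identical argument handles $\Ti\mapsto s_0$ and $\Tiii\mapsto \tau_{\ph^\vee}s_\ph$ via Proposition \ref{prop1}. Hence in each case there is a well-defined surjection $\Cox\twoheadrightarrow \Wdaff$ (or a central extension thereof for $\dddot{C}_n$).

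To prove injectivity I would appeal to Remark \ref{rem: artin-to-weyl}: outside the exceptional list $\{A_1^{(1)}, C_n^{(1)}, A_{2n}^{(2)}\}$, the kernel of $\Atilde\to\Wdaff$ is normally generated by $\{T_i^2\}_{0\leq i\leq n}$, which under $\phi$ correspond to squares of Coxeter generators that are already killed in $\Cox$; the additional imposed relations $\varTheta_{02}^2=1$, $\varTheta_{03}^2=1$ are then redundant in $\Wdaff$ by the involution calculation above. This disposes of $\Cox(\dddot{X}_n)$ for $X_n\neq A_1, C_n$ and of all $\Cox(\ddot{X}_n)$ (whose $\iota$-types $A_3^{(2)}, D_{n+1}^{(2)}, A_{2n-1}^{(2)}, E_6^{(2)}, D_4^{(3)}$ are outside the exceptional list). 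For the exceptional cases $\dddot{A}_1$ and $\dddot{C}_n$, one must instead exhibit the extra normal generators of $\ker(\Atilde\to\Wdaff)$ and identify them, up to normal closure modulo $\{T_i^2\}$, with the images of $\varTheta_{02}^2$ and $\varTheta_{03}^2$.

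For $\Cox(\dddot{C}_n^*)$, the element $\varTheta_{02}^2$ corresponds under Theorem \ref{mainA2n2} to $\C=X_\d\mapsto \tau_\d$, which is central of infinite order in $\Wdaff(A_{2n}^{(2)})$; this is precisely why the definition of $\Cox(\dddot{C}_n^*)$ retains $\varTheta_{02}$ unsquared, and imposing only $\varTheta_{01}^2=\varTheta_{03}^2=\T_i^2=1$ should, by the same identification of kernels, yield $\Wdaff(A_{2n}^{(2)})$. For $\Cox(\dddot{C}_n)$ I would combine $\B(\dddot{C}_n)\cong \Atilde(C_n^{(1)})$ with Proposition \ref{comparison}(ii): the relation $\varTheta_{02}^2=1$ (which maps to $(T_0^{-1}X_{\a_0^\vee})^2$) generates exactly the kernel of the projection $\Atilde(C_n^{(1)})\to \Atilde^c(A_{2n}^{(2)})$ in that proposition, so the Coxeter quotient factors through $\Atilde^c(A_{2n}^{(2)})$. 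After killing the remaining squares by the $\dddot{C}_n^*$ analysis, the central element $X_{\frac{1}{2}\d}$ survives and provides the factor $\langle \tau_{\frac{1}{2}\d}\mid \tau_{\frac{1}{2}\d}^2=\tau_\d\rangle$ described in the statement.

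The main obstacle is the exceptional-case kernel computation. Remark \ref{rem: artin-to-weyl} asserts only that $\ker(\Atilde\to\Wdaff)$ is ``strictly larger'' than $\langle\!\langle T_i^2\rangle\!\rangle$ for $A_1^{(1)}, C_n^{(1)}, A_{2n}^{(2)}$, without exhibiting the extra generators explicitly. To finish the argument one must, via the Bernstein-type presentation of Proposition \ref{second-presentation} and the reductions of \S\ref{untwisted}, verify that these extra generators are, up to normal closure, precisely $(s_0\tau_{\a_0^\vee})^2$ and $(\tau_{\th^\vee}s_\th)^2$ --- i.e.\ the images of $\varTheta_{02}^2$ and $\varTheta_{03}^2$ --- whereupon the Coxeter-level identification becomes formal.
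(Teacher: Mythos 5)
Your proposal is correct and takes essentially the same route as the paper, whose entire proof of Corollary \ref{cor: Cox} is the one line ``Straightforward from Remark \ref{rem: artin-to-weyl}, Theorem \ref{main}, Theorem \ref{thm1}, and Theorem \ref{mainA2n2}'': you transport the normal closure of the squared generators through the isomorphisms $\phi$, check that the affine generators land on involutions $s_0$, $s_0\tau_{\a_0^\vee}$, $\tau_{\th^\vee}s_\th$, $\tau_{\ph^\vee}s_\ph$ of $\Wdaff$, and match kernels via Remark \ref{rem: artin-to-weyl}, using Proposition \ref{comparison}(ii)/Theorem \ref{mainA2n2} exactly as intended to route $\dddot{C}_n$ and $\dddot{C}_n^*$ through $\Atilde^c(A_{2n}^{(2)})$ and produce the central $\tau_{\frac{1}{2}\d}$. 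The one step you flag as open---identifying $\ker(\Atilde\to\Wdaff)$ beyond the $T_i^2$ in the exceptional types $A_1^{(1)}$, $C_n^{(1)}$, $A_{2n}^{(2)}$---is not supplied by the paper's citation of Remark \ref{rem: artin-to-weyl} either, so your write-up is, if anything, more explicit than the paper's about where that verification is needed and which tools (the Bernstein presentation and the reductions of \S\ref{refinement}) would close it.
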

\begin{proof}
Straightforward from Remark \ref{rem: artin-to-weyl}, Theorem \ref{main}, Theorem \ref{thm1}, and Theorem \ref{mainA2n2}. Alternatively, it can be verified that the proofs of Theorem \ref{main}, Theorem \ref{thm1}, and Theorem \ref{mainA2n2} carry over at the level of Weyl double affine groups.
\end{proof}

\section{Automorphisms}\label{sec: auto}

\subsection{}
As an application of the Coxeter-type presentation of the double affine Artin group we show that the rank two Coxeter braid groups (see \S\ref{sec: rank 2 braid}) faithfully act by automorphisms on the appropriate double affine Artin group and they induce actions of the appropriate congruence groups by outer automorphisms as in \cite{ISEALA}. For the double affine Artin groups associated to untwisted affine Dynkin diagrams this action was constructed by Cherednik \cite{CheMac}*{Theorem 4.3} and a construction of the action based on the Coxeter-type presentation as well as the faithfulness statement can be found in \cite{ISTri}*{\S4.2}. For  the double affine Artin groups associated to twisted affine Dynkin diagrams the results are new. 


\subsection{}  Let us describe in detail the braid groups and congruence groups that will turn out to act as automorphisms. For an integer $N\geq 2$, let $\Gamma(N)$ denote the level $N$ principal congruence subgroup of $\SL(2,\Z)$, which is defined as the kernel of the canonical morphism $\SL(2,\Z)\to \SL(2,\Z/N\Z)$. Explicitly, we have
$$
\Gamma(N)=\left\{ \begin{bmatrix}a&b\\ c&d \end{bmatrix}\in \SL(2,\Z)~\Bigg\vert~  \begin{bmatrix}a&b\\ c&d \end{bmatrix}= \begin{bmatrix}1& 0 \\ 0&1 \end{bmatrix} (\text{mod}~N)\right\}.
$$

Denote $\Gamma_1(1)=\rm{SL}(2,\Z)$  and, for $r=2,3$,
$$
\Gamma_1(r)=\left\{ \begin{bmatrix}a&b\\ c&d \end{bmatrix}\in \SL(2,\Z)~\Bigg\vert~  \begin{bmatrix}a&b\\ c&d \end{bmatrix}= \begin{bmatrix}1& * \\ 0&1 \end{bmatrix} (\text{mod}~r)\right\}.
$$
The index of $\Gamma_1(2)$ and $\Gamma_1(3)$ inside $\Gamma_1(1)$ is 3 and 8, respectively.
Another congruence group that will appear in our context is
$$
\Gamma_1(2)^\prime=\left\{ \begin{bmatrix}a&b\\ c&d \end{bmatrix}\in \SL(2,\Z)~\Bigg\vert~  a+d\equiv b+c \equiv 0~(\text{mod}~2)\right\}.
$$
Denote by $I_2$ the two-by-two identity matrix and let
$$
u_{12}=\begin{bmatrix} 1& -1\\ 0&1\end{bmatrix},\quad u_{21}=\begin{bmatrix} 1& 0\\ 1&1\end{bmatrix}, \quad\text{and}\quad e(r)=\begin{bmatrix} 0& r^{-\frac{1}{2}}\\  r^{\frac{1}{2}}&0\end{bmatrix}\in{\GL}(2,\Re), ~1\leq r\leq 3.
$$
As it can be directly verified, 
\begin{equation}
\begin{aligned}
&u_{12}u_{21}u_{12}=u_{21}u_{12}u_{21}=\begin{bmatrix} 0& -1\\1& 0\end{bmatrix},& & (u_{12}u_{21})^3=-I_2, \\ 
& (u_{12}u_{21}^2)^2=-I_2, & & (u_{12}u_{21}^3)^3=I_2,&\\
&e(r)u_{12}e(r)=u_{21}^{-r},& &e(r)^2=I_2, ~ 1\leq r\leq 3.&
\end{aligned}
\end{equation}
It is well-known that $\<u_{12}, u_{21}\>$ is precisely $\Gamma_1(1)$. Furthermore, for $r=2,3$,
$$
\<u_{12}, u_{21}^r\>\leq \Gamma_1(r),
$$
and the index of $\<u_{12}, u_{21}^r\>$ inside $\<u_{12}, u_{21}\>$ is 3 and, respectively, 8 (a set of coset representatives being, for example, the classes of $I_2$, $u_{21}$, $u_{12}u_{21}$ and, respectively, $I_2$, $u_{21}$, $u_{12}u_{21}$, $u_{12}^2u_{21}$, $u_{21}^2$, $u_{12}u_{21}^2$, $u_{12}^2u_{21}^2$). Therefore, for $r=1,2,3$, $u_{12}$ and $u_{21}^r$ satisfy the $r$-braid relation and
$$
\<u_{12}, u_{21}^r\>= \Gamma_1(r).
$$
As can be directly checked, the groups $\Gamma_1(2)$ and $\Gamma_1(2)^\prime$ are conjugate inside $\Gamma_1(1)$, more precisely
$$
u_{21}\Gamma_1(2)u_{21}^{-1}=\Gamma_1(2)^\prime,
$$
and therefore, $u_{21}u_{12}u_{21}^{-1}$ and $u_{21}^2$ satisfy the $2$-braid relation and
$$
\<u_{21}u_{12}u_{21}^{-1}, u_{21}^2\>= \Gamma_1(2)^\prime.
$$
For $1\leq r\leq 3$, let us also denote by $\Xi_1(r)$ the subgroup of $\GL(2,\Re)$ generated by $e(r)$ and $\Gamma_1(r)$, and by $\Xi_1(2)^\prime$ the subgroup of $\GL(2,\Re)$ generated by $e(1)$ and $\Gamma_1(2)^\prime$. We remark that $\Xi_1(1)=\GL(2,\Z)$ and in general $\Gamma_1(r)$, $\Gamma_1(2)^\prime$ is normal of index two in $\Xi_1(r)$, $\Xi_1(2)^\prime$, respectively.
\subsection{}\label{sec: rank 2 braid} Let us consider the Coxeter braid group of rank two (associated to the Coxeter diagrams of type $A_2$, $B_2$, and $G_2$). In order to emphasize their relationship to the congruence groups discussed above we use the following notation. We denote by $\wGamma(1)$ the braid group generated by two elements $\mfu_1, \mfu_2$ that satisfy the $1$-braid relation (the Coxeter braid group of type $A_2$), and set 
\begin{equation}
\mfc=(\mfu_1\mfu_2)^3.
\end{equation}
 The  center of $\wGamma(1)$ is the subgroup generated by $\mfc$. By Lemma \ref{basic-braid-rels2}i), for $r=2,3$, the elements $\mfu_1,\mfu_2^r$ satisfy the $r$-braid relation. In fact, it is known \cite{FMPri}*{\S3.5.2} that these braid relations are the defining relations for the groups generated by the above pairs of elements. Therefore, the group $\wGamma(2)=\<\mfu_1,\mfu_2^2\>$ is the Coxeter braid group of type $B_2$, and $\wGamma(3)=\<\mfu_1,\mfu_2^3\>$ is the Coxeter braid group of type $G_2$. The group $\wGamma(2)$ can be interpreted as the subgroup of the braid group on three strands that is fixing the third strand. Furthermore,
\begin{equation}
(\mfu_1\mfu_2^2)^2=(\mfu_1\mfu_2)^3=\mfc\quad\text{and}\quad (\mfu_1\mfu_2^3)^3=(\mfu_1\mfu_2)^6=\mfc^2,
\end{equation}
and these elements generate the center of $\wGamma(2)$ and $\wGamma(3)$, respectively. Define also $$\wGamma(2)^\prime=\mfu_{2}\wGamma(2)\mfu_{2}^{-1}=\<\mfu_{2}\mfu_{1}\mfu_{2}^{-1}, \mfu_{2}^2\>.$$ Remark that $\mfc\in \wGamma(2)^\prime$ and this element generates the center of $\wGamma(2)^\prime$. The group $\wGamma(2)^\prime$ can be interpreted as the subgroup of the braid group on three strands that is fixing the middle strand.

There exists a surjective group morphism
$$\pi: \wGamma(1) \to \Gamma_1(1)$$ defined by $\pi(\mfu_1)=u_{12}$, $\pi(\mfu_2)=u_{21}$, that restricts to corresponding surjective morphisms 
$$\pi: \wGamma(r) \to \Gamma_1(r), ~~r=2,3,\quad\text{and}\quad \pi: \wGamma(2)^\prime \to \Gamma(2)^\prime.$$
All these maps are central extensions. However, since the groups $\Gamma_1(r)$, $r=1,2,3$, are not perfect groups (and nor is, for example, $\wGamma(1)$ \cite{GLAlg}*{Theorem 2.1}) they do not have universal central extensions \cite{SteLec}*{\S7(ii),(iv)} as abstract groups. What can be said in turn, is that $\wGamma(r)$ is the preimage of $\Gamma_1(r)\subset \SL(2,\Re)$ inside its universal cover $\widetilde{\SL(2,\Re)}$.

For $1\leq r\leq 3$, we also consider the group $\wXi(r)$ defined as the semi-direct product of $\wGamma(r)$ and the cyclic group of order two generated by an element $\mfv(r)$ such that
$$\mfv(r)\mfu_{1}\mfv(r)=\mfu_{2}^{-r}.$$
Remark that $\mfv(1)$ normalizes $\wGamma(2)^\prime$ inside $\wGamma(1)$ and define $\wXi(2)^\prime$ as the subgroup generated by $\mfv(1)$ and $\wGamma(2)^\prime$. By sending $\mfv(r)$ to $e(r)$, we can extend the group morphism $\pi$ defined above to 
$$\pi: \wXi(r) \to \Xi_1(r), ~~1\leq r\leq 3,\quad\text{and}\quad \pi: \wXi(2)^\prime \to \Xi(2)^\prime,$$
which are also surjective group morphisms.


\subsection{}
We begin by describing a certain anti-involution which itself plays a major role in the theory of double affine Hecke algebras and in Macdonald theory. Its existence was announced in \cite{CheDou-2}*{Theorem 2.2, Theorem 2.4} for the double affine Artin groups associated to untwisted affine Dynkin diagrams and proofs treating all double affine Artin groups later appeared in \cite{IonInv} (topological) and\cite{MacAff}*{\S 3.5-3.7} (algebraic). The argument  for existence of this anti-involution based on the Coxeter presentation of the double affine Artin groups first appeared in \cite{ISTri} for what corresponds here to the groups associated to untwisted Dynkin diagrams. We stress that the existence of this anti-involution is an immediate consequence of the Coxeter type presentation for double affine Artin groups.

\subsection{} The action of the map we will now define is present at several levels. To avoid cumbersome notation we will denote all these maps by $\mfe$ as it will be clear from the context to which one we refer to.  At the level of double affine Coxeter diagrams in Figure \ref{dddot-diagrams}, Figure \ref{ddot-diagrams}, and Figure \ref{fH-diagrams} the involution $\mfe$ acts as identity.  At the level of labeled double affine Coxeter diagrams $\mfe$ still acts as identity except for the diagrams of type $\fH{B}_n$ and $\fH{C}_n$ (see Figure \ref{fig: BClabels}) which are interchanged.

For each pair of diagrams that correspond via $\mfe$ there is an associated involution between their nodes. For labelled diagrams of type $\DDDot{X}_n$ the map 
\begin{equation}
\mfe: \DDDot{X}_n\to \DDDot{X}_n
\end{equation}
is again the identity map except for the nodes labelled by $\varTheta_{01}$ and $\varTheta_{03}$ that are interchanged.

For labelled diagrams of type $\DDot{C}_n$ the map 
\begin{equation}
\mfe: \DDot{C}_n\to \DDot{C}_n
\end{equation}
is again the identity map except for the two affine nodes which are interchanged.

For labeled diagrams of type $\fH{X}_n$, the map 
\begin{equation}
\mfe: \fH{X}_n\to \fH{X}_n^\mfe
\end{equation}
is the unique diagram isomorphism that interchanges the two labelled affine nodes. Note that for the diagrams of type $\fH{B}_2/\fH{C}_2$, $\fH{F}_4$, $\fH{G}_2$, $\mfe$ is forced to act non-trivially on the set of finite nodes. 

Being an isomorphism between Coxeter diagrams, $\mfe$ induces between the corresponding Coxeter braid groups both an isomorphism and an anti-isomorphism, in either case, its inverse being still  the map induced by $\mfe$ in the other direction. In other words, $\mfe$ at the level of Coxeter braid groups has always order two, except for $\mfe: B(\fH{B}_n)\to B(\fH{C}_n)$, $n\geq 3$ whose inverse is 
$\mfe: B(\fH{C}_n)\to B(\fH{B}_n)$. In what follows we will not be making this distinction and we will say that $\mfe$ has order two. Only the induced anti-isomorphisms descend to maps between the corresponding double affine Artin groups and whenever used in this context $\mfe$ always denotes an \emph{anti-involution}. 

\begin{Thm} The map $\mfe$ induces anti-involutions
\begin{align}
&\mfe: \B(\DDDot{X}_n)\to \B(\DDDot{X}_n),&  \mfe:& \B(\fH{X}_n)\to \B(\fH{X}^\mfe_n)&\\
&\mfe: \B(\DDot{C}_n)\to \B(\DDot{C}_n),& \mfe:& \B(\DDot{C}_n)^c\to \B(\DDot{C}_n)^c. &
\end{align}

\end{Thm}
\begin{proof}
The fact that $\mfe$ preserves the relations in Definition \ref{def: untwisted} and Definition \ref{def: twodot} is trivially verified. For $\mfe: \B(\fH{X}_n)\to \B(\fH{X}^\mfe_n)$, we have
$$
\mfe(\varTheta)=\varPhi, \quad \mfe(\varPhi)=\varTheta, \quad \mfe(\varThetap)=\varPhip, \quad \mfe(\varPhip)=\varThetap, \quad \mfe(\varPsi)=\varPsi.
$$
with the help of which the fact that $\mfe$ preserves the relations in Definition \ref{def: twisted} immediately follows.
\end{proof}
\begin{Rem}
The anti-involution $\mfe$ is compatible with the surjective morphisms in Proposition \ref{comparisonbis}. 
\end{Rem}
\begin{Rem}
In the Bernstein presentation of the double affine Artin group, the anti-involution $\mfe$ is precisely the anti-involution in \cite{MacAff}*{(3.5.1)} and its composition  with the map taking inverses is precisely the involution in \cite{IonInv}*{Theorem 2.2}.
\end{Rem}

\subsection{}

In what follows we will be particularly interested in automorphisms of double affine Artin groups that act as identity when restricted to the corresponding finite Artin groups. We denote by  $\Aut(\B(\DDDot{X}_n);X_n)$, $\Aut(\B(\DDot{C}_n);C_n)$, and $\Aut(\B(\fH{X}_n);X_n)$ the group of such automorphisms and by  $\Out(\B(\DDDot{X}_n);X_n)$, $\Out(\B(\DDot{C}_n);C_n)$, and $\Out(\B(\fH{X}_n);X_n)$ the corresponding group of outer automorphisms. 

Let us consider first the case of a Coxeter diagram of type $\DDDot{X}_n$. We define two maps on the set of generators of $\B(\DDDot{X}_n)$ by letting them fix the generators corresponding to the finite nodes and by
 \begin{align*}
\mfa(\varTheta_{01})&=\varTheta_{02}, & \mfa(\varTheta_{02})&=\varTheta_{02}^{-1}\varTheta_{01}\varTheta_{02}, &\mfa(\varTheta_{03})&=\varTheta_{03}, \\ 
\mfb(\varTheta_{01})&=\varTheta_{01}, & \mfb(\varTheta_{02})&=\varTheta_{03}, & \mfb(\varTheta_{03})&=\varTheta_{03}^{-1}\varTheta_{02}\varTheta_{03}. 
\end{align*}
\begin{Thm}\label{thm: auto1}
The above maps extend to elements of $\Aut(\B(\DDDot{X}_n);X_n)$. Furthermore, 
\begin{enumerate}[label={\roman*)}]
\item $\mfa^{-1}=\mfe\mfb\mfe, \quad\mfb^{-1}=\mfe\mfa\mfe,\quad \text{and}\quad \mfa\mfb\mfa=\mfb\mfa\mfb$;
\item $(\mfa\mfb)^3$ acts on the affine generators by conjugation with $\T_{w_\circ}$;
\item If $X_n$ is of type $B_n$, $n\geq 3$, $C_n$, $n\geq 1$, $D_{2n+1}$, $n\geq 2$, $E_7$, $E_8$, $F_4$, $G_2$, then $(\mfa\mfb)^3$ acts on $\B(\DDDot{X}_n)$ by conjugation by $\T_{w_\circ}$;
\item If $X_n$ is of type $A_n$, $n\geq 2$, $D_{2n}$, $n\geq 2$, $E_6$, then $(\mfa\mfb)^6$ acts on $\B(\DDDot{X}_n)$ by conjugation by $\T^2_{w_\circ}$.
\end{enumerate}
\end{Thm}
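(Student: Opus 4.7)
The plan proceeds in four steps, one for each part of the statement. Throughout, it suffices to analyze $\mfa, \mfb$ on the three affine generators $\varTheta_{01}, \varTheta_{02}, \varTheta_{03}$, since they fix the finite generators by definition.

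First, to establish that $\mfa, \mfb$ extend to automorphisms of $\B(\dddot{X}_n)$, I would verify the defining relations of Definition \ref{def: untwisted}. The braid relations with finite generators $\T_j$ for $j \neq i_\th$ are trivial because affine generators commute with such $\T_j$. For the $\ell_0$-braid relation with $\T_{i_\th}$, the only non-trivial case is the image of $\varTheta_{02}$: the element $\varTheta_{02}^{-1}\varTheta_{01}\varTheta_{02}$ satisfies the same braid relation with $\T_{i_\th}$ as $\varTheta_{01}$ does, because both $\varTheta_{01}$ and $\varTheta_{02}$ already satisfy this relation with $\T_{i_\th}$ (a routine consequence of Lemma \ref{basic-braid-rels2}). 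For the central relation, a direct telescoping computation gives
$$
\mfa(\C) = \varTheta_{02}\cdot(\varTheta_{02}^{-1}\varTheta_{01}\varTheta_{02})\cdot\varTheta_{03}\cdot\varTheta = \C,
$$
and symmetrically $\mfb(\C) = \C$. For the $\ell_0=2$ case, Lemma \ref{reducel=2} reduces the preservation of the elliptic braid relation \eqref{ellbraid} to a single instance, which I would dispatch by a direct manipulation using the braid relations and the commutations already noted.

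For part (i), the identities $\mfa^{-1} = \mfe\mfb\mfe$ and $\mfb^{-1} = \mfe\mfa\mfe$ follow by evaluating both sides on each $\varTheta_{0k}$, using that $\mfe$ fixes $\varTheta_{02}$ and swaps $\varTheta_{01}\leftrightarrow\varTheta_{03}$. The braid relation $\mfa\mfb\mfa = \mfb\mfa\mfb$ likewise reduces to checking three equalities on affine generators; the key observation is that if we set $x_k = \varTheta_{0k}$, then $\mfa, \mfb$ reproduce exactly the classical Artin action of the generators $\sigma_1, \sigma_2$ of $B_3$ on the free group $\langle x_1, x_2, x_3\rangle$ (in the convention $\sigma_i(x_i) = x_{i+1}$, $\sigma_i(x_{i+1}) = x_{i+1}^{-1}x_i x_{i+1}$), under which the braid relation is standard.

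For (ii), I would again leverage the $B_3$-action on the free group: the full twist $(\sigma_1\sigma_2)^3$ acts by conjugation by $(x_1 x_2 x_3)^{-1}$. Translating, $(\mfa\mfb)^3$ conjugates each affine generator by $(\varTheta_{01}\varTheta_{02}\varTheta_{03})^{-1}$. Since $\varTheta_{01}\varTheta_{02}\varTheta_{03} = \C\varTheta^{-1}$ and $\C$ is central, this collapses to conjugation by $\varTheta = \T_{s_\th}$. The remaining task is to show that conjugation by $\varTheta$ and by $\T_{w_\circ}$ induce the \emph{same} action on the three affine generators. I would establish this by transferring the question through the isomorphism $\phi : \B(\dddot{X}_n) \to \Atilde(X_n^{(1)})$ of Theorem \ref{main}: it is enough to verify that $\T_{w_\circ}\varTheta^{-1}$ commutes with each of $\phi(\varTheta_{01}) = T_0$, $\phi(\varTheta_{02}) = T_0^{-1}X_{\a_0^\vee}$, and $\phi(\varTheta_{03}) = X_{\th^\vee}\Theta^{-1}$, which reduces to identities inside the parabolic stabilizer of $\th$ in $\A(\W)$ together with the Bernstein relations \eqref{dc1}--\eqref{dc3}. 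This identification of the two conjugations is the main technical obstacle.

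Finally, for (iii) and (iv), since $\mfa,\mfb$ act trivially on finite generators, extending (ii) from affine to all generators reduces to the question of when $\T_{w_\circ}$ or $\T_{w_\circ}^{2}$ is central in $B(X_n)$. The classical Brieskorn--Saito--Deligne theorem tells us that $\T_{w_\circ}^{2}$ is the Garside fundamental element of $B(X_n)$ and is always central, while $\T_{w_\circ}$ is central precisely when the opposition involution $\sigma = -w_\circ$ acts trivially on the Dynkin diagram. In the types listed in (iii), $\T_{w_\circ}$ itself is central, so conjugation by it fixes every $\T_i$, matching $(\mfa\mfb)^3(\T_i) = \T_i$, and (ii) upgrades to an equality of automorphisms on all of $\B(\dddot{X}_n)$. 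In the remaining types (iv), only $\T_{w_\circ}^{2}$ is central, and squaring gives $(\mfa\mfb)^6$ acting by conjugation with $\T_{w_\circ}^{2}$ on affine generators and trivially on finite generators, yielding the stated identity. The case distinction itself is a standard root-theoretic enumeration and requires no new ingredient.
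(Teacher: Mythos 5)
The gap is in your very first step, the well-definedness of $\mfa$ (and $\mfb$), which is in fact the bulk of the paper's proof. You claim that $\varTheta_{02}^{-1}\varTheta_{01}\varTheta_{02}$ satisfies the $\ell_0$-braid relation with $\T_{i_\th}$ ``because both $\varTheta_{01}$ and $\varTheta_{02}$ already satisfy this relation with $\T_{i_\th}$,'' citing Lemma \ref{basic-braid-rels2}. That principle is false: a conjugate $c^{-1}bc$ need not braid with $a$ just because $b$ and $c$ each do, and Lemma \ref{basic-braid-rels2} says nothing of the sort. Indeed, if your argument were valid it would already make $\mfa$ an endomorphism of the Coxeter braid group $B(\dddot{X}_n)$ itself, whereas the verification genuinely uses the defining relations of the quotient $\B(\dddot{X}_n)$: for $\ell_0=1$ one applies Lemma \ref{basic-braid-rels1} to reduce to the $2$-braid relation between $\T_{i_\th}$ and $\varTheta_{01}\varTheta_{02}$, and this is obtained only after rewriting $\varTheta_{01}\varTheta_{02}=\C\,\varTheta^{-1}\varTheta_{03}^{-1}$ via the central relation \eqref{centralrel-1} and invoking Remark \ref{rem: double braid} inside the affine Artin subgroup generated by $\varTheta_{03},\T_1,\dots,\T_n$; for $\ell_0=2$ the computation hinges on the elliptic braid relation \eqref{ellbraid} (the commutation of $\varTheta_{01}$ with $\T_{i_\th}^{-1}\varTheta_{02}\T_{i_\th}$), which is precisely why that relation appears in Definition \ref{def: untwisted}. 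Without these inputs your step (1) does not go through.

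The remainder of your plan is essentially the paper's argument, with two harmless variations. For (i)--(ii) you package the computation as the standard Artin action of $B_3$ on the free group on $\varTheta_{01},\varTheta_{02},\varTheta_{03}$; since the relevant identities are equalities of words in the generators, this is equivalent to the paper's direct verification of \eqref{eq21} and of the full-twist conjugation by $(\varTheta_{01}\varTheta_{02}\varTheta_{03})^{-1}=\C^{-1}\varTheta$. For the identification of conjugation by $\varTheta$ with conjugation by $\T_{w_\circ}$ on the affine generators, you propose to check on the Bernstein side that $\T_{w_\circ}\varTheta^{-1}$ commutes with $\phi(\varTheta_{0k})$; the paper's shortcut is that $w_\circ=s_\th x$ with $x$ in the standard parabolic stabilizer of $\th$ and $\ell(w_\circ)=\ell(s_\th)+\ell(x)$, so $\T_{w_\circ}\varTheta^{-1}$ is a word in the $\T_i$ with $(\a_i,\th)=0$, which commute with all three affine generators by the diagram; your route works but is longer. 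Parts (iii)--(iv) via centrality of $\T_{w_\circ}$ or $\T_{w_\circ}^2$ and the list of types with $-w_\circ=1$ coincide with the paper.
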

\begin{proof}
We need to verify that the two maps preserve the relations in Definition \ref{def: untwisted}. We will perform the verifications for $\mfa$, the verifications for $\mfb$ are entirely similar.

The relation \eqref{centralrel-1} is clearly preserved by $\mfa$. Let us check that $\mfa(\varTheta_{02})$ satisfies the braid relations with the generators corresponding to the simple nodes. For a node $i\neq i_\th$ the braid relations follow from the fact that $\varTheta_{01}$ and $\Ti$ commute, and $\varTheta_{02}$ and $\T_i$ commute. For $i_\th$, we have to argue that $\T_{i_\th}$ and $\varTheta_{02}^{-1}\varTheta_{01}\varTheta_{02}$ satisfy the same braid relation as $\ell_0$-braid relation. If $\ell_0=4$ there is nothing to prove. 

If $\ell_0=1$, then by applying Lemma \ref{basic-braid-rels1} for $a=\T_{i_\th}$, $b=\varTheta_{01}$, and $c=\varTheta_{02}$ we see that we can equivalently show that $T_{i_\th}$ and $\varTheta_{01}\varTheta_{02}$ satisfy the $2$-braid relation, or, after taking \eqref{centralrel-1} into account,  that $T_{i_\th}$ and $\Theta^{-1}\varTheta_{03}^{-1}$ satisfy the $2$-braid relation. But this is precisely Remark \ref{rem: double braid} for the affine Coxeter group generated by $\varTheta_{03}$, $\T_i$, $1\leq i\leq n$.

If $\ell_0=2$, then $\varTheta_{01}$ commutes with $\T_{i_\th}\varTheta_{01}\T_{i_\th}$ (by braid relations) and with $\T_{i_\th}^{-1}\varTheta_{02}\T_{i_\th}$. Therefore,  $\varTheta_{01}$ commutes with $\T_{i_\th}\varTheta_{01}\varTheta_{02}\T_{i_\th}$. Similarly, $\varTheta_{02}$ commutes with $\T_{i_\th}\varTheta_{01}\varTheta_{02}\T_{i_\th}$. Now, using these facts we obtain
\begin{align*}
\T_{i_\th} \varTheta_{02}^{-1}\varTheta_{01}\varTheta_{02}\T_{i_\th} \varTheta_{02}^{-1}\varTheta_{01}\varTheta_{02}&=
\T_{i_\th} \varTheta_{02}^{-1}\T_{i_\th}^{-1}\T_{i_\th} \varTheta_{01}\varTheta_{02}\T_{i_\th} \varTheta_{02}^{-1}\varTheta_{01}\varTheta_{02}\\
&= \T_{i_\th} \varTheta_{02}^{-1}\T_{i_\th}^{-1}\varTheta_{02}^{-1}\varTheta_{01}\varTheta_{02}\T_{i_\th}\varTheta_{01}\varTheta_{02}\T_{i_\th}\\
&= \varTheta_{02}^{-1}\T_{i_\th}^{-1}\varTheta_{02}^{-1}\T_{i_\th} \varTheta_{01}\varTheta_{02}\T_{i_\th}\varTheta_{01}\varTheta_{02}\T_{i_\th}\\
&= \varTheta_{02}^{-1}\varTheta_{01}\varTheta_{02}\T_{i_\th}\varTheta_{02}^{-1}\varTheta_{01}\varTheta_{02}\T_{i_\th},
\end{align*}
which is exactly our claim.

If $\ell_0=2$ we also need to verify that the image of the relations \eqref{ellbraid} through $\mfa$ are preserved. As explained in Proposition \ref{reducel=2} only one of the relations \eqref{ellbraid} is necessary in the definition. In our case it is clear that the image of the relation \eqref{ellbraid} for the pair $(1,3)$ is mapped to the relation \eqref{ellbraid} for the pair $(2,3)$. In conclusion, $\mfa$ extends indeed to an endomorphism of $\B(\DDDot{X}_n)$.

The fact that $\mfa$ and $\mfb$ are indeed isomorphisms can be seen by verifying that $\mfe\mfa\mfe$ is the inverse of $\mfb$ and that $\mfe\mfb\mfe$ is the inverse of $\mfa$. As for the  equality $\mfa\mfb\mfa=\mfb\mfa\mfb$,  it can be directly verified that
\begin{equation}\label{eq21}
\begin{aligned}
\mfa\mfb\mfa(\varTheta_{01})&=\varTheta_{03}=\mfb\mfa\mfb(\varTheta_{01})\\
\mfa\mfb\mfa(\varTheta_{02})&=\varTheta_{03}^{-1}\varTheta_{02}\varTheta_{03}=\mfb\mfa\mfb(\varTheta_{02})\\
\mfa\mfb\mfa(\varTheta_{03})&=\varTheta_{03}^{-1}\varTheta_{02}^{-1}\varTheta_{01}\varTheta_{02}\varTheta_{03}=\mfb\mfa\mfb(\varTheta_{03}).
\end{aligned}
\end{equation}

By using \eqref{eq21} we see that $(\mfa\mfb\mfa)^2$ acts on the affine generators by conjugation with $$(\varTheta_{01}\varTheta_{02}\varTheta_{03})^{-1}.$$ After taking into account \eqref{centralrel-1} this is the same as conjugation with $\varTheta$. But since in $C(X_n)$ we have $w_\circ=s_\th x$, $x\in \stab{\th}$, $\ell(w_\circ)=\ell(s_\th)+\ell(x)$, we obtain that $(\mfa\mfb\mfa)^2$ acts on the affine generators by conjugation with $\T_{w_\circ}$. If $\T_{w_\circ}$ is central in $B(X_n)$ then we can say that $(\mfa\mfb\mfa)^2$ acts on $\B(\DDDot{X}_n)$ by conjugation by $\T_{w_\circ}$. Otherwise, $\T^2_{w_\circ}$ is central in $B(X_n)$ and therefore $(\mfa\mfb\mfa)^4$ acts on $\B(\DDDot{X}_n)$ by conjugation by $\T^2_{w_\circ}$.
\end{proof}

\begin{Thm}\label{thm: auto1main}
The map sending $\mfu_1$, $\mfu_2$ to $\mfa$, $\mfb$, respectively, defines an injective group morphism  
$$\wGamma(1)\to \Aut(\B(\DDDot{X}_n); X_n).$$
The induced morphism
$$\Gamma_1(1)\to \Out(\B(\DDDot{X}_n); X_n)$$
is injective if $X_n$ is of type $A_n$, $n\geq 2$, $D_{2n}$, $n\geq 2$, $E_6$, and has kernel $\pm I_2$ otherwise. 
\end{Thm}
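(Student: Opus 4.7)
The strategy has three steps: obtain the morphism from the braid relation, reduce injectivity to the rank-one case $\B(\dddot{A}_{1})$ and identify the action there with Artin's classical representation, and finally compute the outer kernel using the behaviour of $\mfc=(\mfu_{1}\mfu_{2})^{3}$. The relation $\mfa\mfb\mfa=\mfb\mfa\mfb$ from Theorem \ref{thm: auto1}i), combined with the presentation of $\wGamma(1)$ as the Coxeter braid group of type $A_{2}$, immediately produces a morphism $\wGamma(1)\to\Aut(\B(\dddot{X}_{n});X_{n})$. Since this action is trivial on $B(X_{n})$, it is entirely determined by its values on $(\varTheta_{01},\varTheta_{02},\varTheta_{03})$, on which the formulas defining $\mfa,\mfb$ coincide (under $x_{i}\leftrightarrow\varTheta_{0i}$) with the classical Artin action of $B_{3}$ on a free group $F_{3}$ of rank three. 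The embedding $\B(\dddot{A}_{1})\hookrightarrow\B(\dddot{X}_{n})$ of Proposition \ref{thm: rank 1 embedding} preserves the affine generators and is $\wGamma(1)$-equivariant, so faithfulness reduces to the case of $\B(\dddot{A}_{1})$.

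The key point in this case is that the subgroup $H:=\langle\varTheta_{01},\varTheta_{02},\varTheta_{03}\rangle\subset\B(\dddot{A}_{1})$ is free of rank three. Indeed, the assignment $\varTheta_{0i}\mapsto x_{i}$, $\varTheta\mapsto(x_{1}x_{2}x_{3})^{-1}$ sends $\C=\varTheta_{01}\varTheta_{02}\varTheta_{03}\varTheta$ to $1$, trivializes the centrality relations, and thus extends to a quotient $\B(\dddot{A}_{1})\twoheadrightarrow F_{3}$. The composition $F_{3}\to\B(\dddot{A}_{1})\twoheadrightarrow F_{3}$ given by $x_{i}\mapsto\varTheta_{0i}$ is the identity, so $F_{3}\hookrightarrow H$ injects surjectively, giving $H\cong F_{3}$. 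Artin's faithfulness theorem $B_{3}\hookrightarrow\Aut(F_{3})$ then guarantees that the $\wGamma(1)$-action on $H$, and hence on $\B(\dddot{A}_{1})$ and on $\B(\dddot{X}_{n})$, is faithful.

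For the outer kernel, $Z(\wGamma(1))=\langle\mfc\rangle$ with $\pi(\mfc)=-I_{2}$, so $\Gamma_{1}(1)=\wGamma(1)/\langle\mfc^{2}\rangle$. In the cases iii) of Theorem \ref{thm: auto1}, $\T_{w_{\circ}}$ is central in $B(X_{n})$, so conjugation by it is an inner automorphism of $\B$ trivial on $B(X_{n})$ matching $(\mfa\mfb)^{3}$ on the affine generators; hence $\mfc$'s image is inner and $-I_{2}\in\ker(\Gamma_{1}(1)\to\Out)$. In the cases iv), $\T_{w_{\circ}}^{2}$ is central but $\T_{w_{\circ}}$ itself is not; any inner realization of $\mfc$'s action would require conjugation by an element of the centraliser $Z_{\B}(B(X_{n}))$ restricting to the non-central action of $\T_{w_{\circ}}$ on the affine generators, which is impossible, so $\mfc$ is not inner while $\mfc^{2}$ is. For the matching reverse inclusions, one invokes the classical result that $B_{3}\to\Out(F_{3})$ has kernel exactly $\langle\mfc\rangle$: any $g\in\wGamma(1)$ acting by an inner automorphism of $\B(\dddot{X}_{n})$ must, upon restriction to $H\cong F_{3}$, induce an inner automorphism of $H$, forcing $g\in\langle\mfc\rangle$.

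The main technical obstacle lies in this last restriction step, which requires showing that conjugations of $\B$ by elements of $Z_{\B}(B(X_{n}))$ that preserve $H$ induce only inner automorphisms of $H$, equivalently that $N_{\B}(H)\cap Z_{\B}(B(X_{n}))\subset H\cdot Z_{\B}(H)$. This will follow from a direct analysis of the centralizer and normalizer structure using the explicit retraction $\B(\dddot{A}_{1})\twoheadrightarrow F_{3}$, but it requires careful bookkeeping. By contrast, the freeness of $H$ and the resulting injectivity of $\wGamma(1)\to\Aut$ are strikingly clean consequences of the same retraction.
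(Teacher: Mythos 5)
Your first half is correct and genuinely different from the paper. For injectivity of $\wGamma(1)\to \Aut(\B(\dddot{X}_n);X_n)$ the paper descends to the double affine Weyl group: it checks that $\mfa,\mfb$ induce the canonical action of $u_{12},u_{21}$ on $\Cox(\dddot{X}_n)$, imports from \cite{ISEALA} that $\Gamma_1(1)\to\Out(\Cox(\dddot{X}_n))$ is faithful (resp.\ has kernel $\pm I_2$), and then kills the remaining possible kernel $\<\mfc\>$ using Theorem \ref{thm: auto1}ii). You instead stay inside the Artin group: the retraction $\B(\dddot{A}_1)\twoheadrightarrow F_3$ does show $\<\varTheta_{01},\varTheta_{02},\varTheta_{03}\>\cong F_3$ (indeed $\B(\dddot{A}_1)\cong F_3\times\Z$), the embedding of Proposition \ref{thm: rank 1 embedding} is $\wGamma(1)$-equivariant, and Artin faithfulness of $B_3$ on $F_3$ applies (your $\mfa,\mfb$ are the inverses of the standard Artin generators, which is harmless). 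Passing through the rank-one subgroup is genuinely needed here: a direct retraction of $\B(\dddot{X}_n)$ onto $F_3$ killing the finite generators is obstructed by \eqref{centralrel-1} and, when $\ell_0=2$, by \eqref{ellbraid}. So this is a clean, self-contained alternative for the $\Aut$ statement.

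The $\Out$ statement, however, is not established, and this is a genuine gap rather than bookkeeping. Two steps are missing. First, for $X_n$ of type $A_n$, $D_{2n}$, $E_6$ you must prove that $(\mfa\mfb)^3$ is \emph{not} conjugation by any element of $\B(\dddot{X}_n)$; you assert impossibility, but the obvious attempt --- writing a hypothetical conjugator as $\T_{w_\circ}h$ with $h$ centralizing the affine generators --- yields no contradiction without structural control of that centralizer, which you do not have. Second, your reverse inclusion needs that any element of $\wGamma(1)$ acting by an inner automorphism of $\B(\dddot{X}_n)$ restricts to an \emph{inner} automorphism of $H\cong F_3$, i.e.\ the inclusion $N_{\B(\dddot{X}_n)}(H)\cap Z_{\B(\dddot{X}_n)}(B(X_n))\subseteq H\cdot Z_{\B(\dddot{X}_n)}(H)$ that you yourself flag as the main obstacle. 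Your proposed tool, the retraction $\B(\dddot{A}_1)\twoheadrightarrow F_3$, cannot close it: the conjugating element lives in all of $\B(\dddot{X}_n)$, outside the domain of that retraction, and no normal form for $Z_{\B(\dddot{X}_n)}(B(X_n))$ is available in your argument. This is precisely where the paper changes level: inner automorphisms of $\B$ descend to inner automorphisms of $\Cox(\dddot{X}_n)$, the kernel of the canonical outer action of $\Gamma_1(1)$ on $\Cox(\dddot{X}_n)$ is known from \cite{ISEALA} (trivial in types $A_n$, $D_{2n}$, $E_6$, and $\pm I_2$ otherwise), and Theorem \ref{thm: auto1}iii) supplies the inclusion $-I_2\in\ker$ in the remaining types. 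Either import that Weyl-group result as the paper does, or supply an actual computation of the relevant centralizer/normalizer in $\B(\dddot{X}_n)$; as written, the kernel computation for $\Gamma_1(1)\to\Out(\B(\dddot{X}_n);X_n)$ remains unproved.
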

\begin{proof}
The fact that the maps define indeed group morphisms as specified  follows straight from Theorem \ref{thm: auto1}. Remark that the action of $\wGamma(1)$ descends to an action 
$$\Gamma_1(1)\to \Out(\Cox(\DDDot{X}_n)).$$
Through the isomorphism in the proof of Theorem \ref{main}, we can transfer this action to an action on the double affine Weyl group associated to the affine root system of type $X_n^{(1)}$.   A straightforward verification shows that $\mfa$ and $\mfb$ descend to the canonical action from \cite{ISEALA} of $u_{12}$ and $u_{21}$ on $\Cox(\DDDot{X}_n)$. The fact that the canonical action is faithful if $X_n$ is of type $A_n$, $n\geq 2$, $D_{2n}$, $n\geq 2$, $E_6$, and has kernel $\pm I_2$ otherwise implies that the same is true for the morphism $\Gamma_1(1)\to \Out(\B(\DDDot{X}_n); X_n)$.
From the commutative diagram
$$
\begin{CD}
\wGamma(1)     @>>>  \Aut(\B(\DDDot{X}_n);X_n)\\
@VVV        @VVV\\
\Gamma_1(1)    @>>>  \Out(\Cox(\DDDot{X_n}))
\end{CD}
$$
we obtain that the kernel of $\wGamma(1)\to \Aut(\B(\DDDot{X}_n); X_n)$ has to be a subgroup of the group generated by $\mfc$. But Theorem \ref{thm: auto1}ii) assures that this group acts faithfully on $\B(\DDDot{X}_n)$ implying that the action of $\wGamma(1)$ is faithful.
\end{proof}
The corresponding result for $\B(\DDot{C}_n)$ and  $\B(\DDot{C}_n)^{c}$ is the following.
\begin{Thm}\label{thm: auto2n2}
The action of $\wGamma(1)$ on $\B(\DDDot{C}_n)$ descends to an injective group morphisms
$$\wGamma(2)^\prime\to \Aut(\B(\DDot{C}_n); C_n),\quad\text{and}\quad \wGamma(2)^\prime\to \Aut(\B(\DDot{C}_n)^{c}; C_n).$$ 
The induced morphisms
$$\Gamma_1(2)^\prime\to \Out(\B(\DDot{C}_n); C_n),\quad\text{and}\quad \Gamma_1(2)^\prime\to \Out(\B(\DDot{C}_n)^{c}; C_n)$$
have kernel $\pm I_2$. 
\end{Thm}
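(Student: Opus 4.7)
The plan is to restrict the action of $\wGamma(1)$ on $\B(\dddot{C}_n)$ from Theorem \ref{thm: auto1main} to the subgroup $\wGamma(2)^\prime = \langle \mfb\mfa\mfb^{-1}, \mfb^2 \rangle$, verify this restricted action descends along the quotient maps $\B(\dddot{C}_n) \twoheadrightarrow \B(\dddot{C}_n^*)$ (which imposes $\C = \varTheta_{02}^2$) and $\B(\dddot{C}_n) \twoheadrightarrow \B(\dddot{C}_n^*)^c$ (which imposes $\varTheta_{02}^2 = 1$), and then separately analyze the $\Aut$-kernel and the $\Out$-kernel.

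For descent I would compute directly from the explicit formulas preceding Theorem \ref{thm: auto1}, obtaining
\[ \mfb^2(\varTheta_{02}) = \varTheta_{03}^{-1}\varTheta_{02}\varTheta_{03}, \qquad (\mfb\mfa\mfb^{-1})(\varTheta_{02}) = \varTheta_{03}^{-1}\varTheta_{01}\varTheta_{02}\varTheta_{01}^{-1}\varTheta_{03}. \]
Squaring these and using that both $\mfa$ and $\mfb$ fix the central element $\C$, each of the two generators sends $\varTheta_{02}^2$ (and hence also $\C\varTheta_{02}^{-2}$) to a conjugate of itself. The normal subgroups $\langle\langle \C\varTheta_{02}^{-2}\rangle\rangle$ and $\langle\langle \varTheta_{02}^2\rangle\rangle$ are therefore stable, and the restricted action descends to automorphisms of $\B(\dddot{C}_n^*)$ and $\B(\dddot{C}_n^*)^c$ that are trivial on $B(C_n)$.

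For injectivity I would compose with the Coxeter-quotient map and invoke Corollary \ref{cor: Cox} to identify $\Cox(\dddot{C}_n^*)$ with the double affine Weyl group of type $A_{2n}^{(2)}$. The induced morphism should coincide with the canonical outer action of $\Gamma_1(2)^\prime$ constructed in \cite{ISEALA}, which is faithful modulo $\pm I_2$. Consequently any element of the kernel of $\wGamma(2)^\prime \to \Aut(\B(\dddot{C}_n^*); C_n)$ must map to $\pm I_2$ in $\Gamma_1(2)^\prime$, so it lies in the central subgroup $\langle \mfc \rangle$. But by Theorem \ref{thm: auto1}iii), $\mfc^k$ acts on $\B(\dddot{C}_n^*)$ by conjugation with $\T_{w_\circ}^k$; since $w_\circ = -1$ on the root system of type $C_n$ inverts translation generators and $\T_{w_\circ}$ therefore fails to commute with the image of $\varTheta_{02}$, the action is non-trivial for every $k \neq 0$. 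The same argument works for $\B(\dddot{C}_n^*)^c$.

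For the outer morphism, the containment $-I_2 \in \ker(\Gamma_1(2)^\prime \to \Out(\B(\dddot{C}_n^*); C_n))$ follows because $\mfc$ acts by an inner automorphism (conjugation with $\T_{w_\circ}$); the reverse containment is immediate from the faithfulness-modulo-$\pm I_2$ invoked above, since any kernel element induces the identity outer automorphism on $\Cox(\dddot{C}_n^*)$. The principal obstacle is the identification of the restricted outer action on $\Cox(\dddot{C}_n^*)$ with the canonical $\Gamma_1(2)^\prime$-action of \cite{ISEALA}; this requires tracking the rank-two generators through the isomorphism of Theorem \ref{mainA2n2} and is what ultimately supplies both the faithfulness and the outer-kernel calculation for the twisted affine root system $A_{2n}^{(2)}$.
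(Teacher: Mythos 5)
Your proposal is correct and follows essentially the same route as the paper: the paper's proof is precisely "Theorem \ref{thm: auto1main} plus the fact that $\mfb\mfa\mfb^{-1}$ and $\mfb^2$ preserve \eqref{eq24} and \eqref{eq26}," and you verify exactly that preservation (your formulas for $\mfb^2(\varTheta_{02})$ and $\mfb\mfa\mfb^{-1}(\varTheta_{02})$ are correct, so the normal closures are stable and the action descends) and then rerun the kernel argument of Theorem \ref{thm: auto1main} on the quotient via $\Cox(\dddot{C}_n^*)$, Corollary \ref{cor: Cox}, the canonical action of \cite{ISEALA}, and the conjugation action of $\langle\mfc\rangle$ by $\T_{w_\circ}$. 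The only soft spot is your justification that conjugation by $\T_{w_\circ}^k$ is nontrivial for \emph{all} $k\neq 0$ (the "$w_\circ=-1$ inverts translations" reasoning covers odd $k$ but not even powers, where $\T_{w_\circ}^{2}$ is central in $B(C_n)$), but the paper's own citation of Theorem \ref{thm: auto1} at this point is equally terse, so this is not a divergence in approach.
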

\begin{proof}
Straightforward from Proposition \ref{comparisonbis}, Theorem \ref{thm: auto1main} and the fact that $\mfb\mfa\mfb^{-1}$ and $\mfb^2$ preserve  $\C$ and the normal subgroup generated by $\varTheta_{02}^2$.
\end{proof}

Let us record the explicit action of the generators of $\wGamma(2)^\prime$:
 \begin{align*}
\mfb\mfa\mfb^{-1}(\varTheta_0)&=\varPhi_0,& \mfb\mfa\mfb^{-1}(\varPhi_0)&=\varPhi_0^{-1}\varTheta_{0}\varPhi_0, \\ 
\mfb^2(\varTheta_0)&=\varTheta_0, & \mfb^2(\varPhi_0)&=\varTheta\varTheta_0\varPhi_0\varTheta_0^{-1}\varTheta^{-1}. 
\end{align*}

\subsection{}
Let us now consider the case of a Coxeter diagram of type $\fH{X}_n$.  We define two maps on the set of generators of $\B(\fH{X}_n)$ by letting them fix the generators corresponding to the finite nodes and by
 \begin{align*}
\mfa(\varTheta_0)&=\varPhi_0\varPhi\varTheta_0\varPhi^{-1}\varPhi_0^{-1},& \mfa(\varPhi_0)&=\varPhi_0, \\ 
\mfb(\varTheta_0)&=\varTheta_0, & \mfb(\varPhi_0)&=\varTheta\varTheta_0\varPhi_0\varTheta_0^{-1}\varTheta^{-1}. 
\end{align*}

\begin{Thm}\label{thm: auto2}
The above maps extend to elements of $\Aut(\B(\fH{X}_n);X_n)$. Moreover, $$\mfa^{-1}=\mfe\mfb\mfe\quad  \text{and}\quad \mfb^{-1}=\mfe\mfa\mfe.$$
\end{Thm}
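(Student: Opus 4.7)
The plan is to verify that $\mfa$ and $\mfb$ preserve every defining relation of $\B(\ddot{X}_n)$ from Definition \ref{def: twisted}. By Proposition \ref{superfluous}, the braid relation between $\varTheta_0$ and $\varPhi_0$ is a consequence of the remaining relations, so only two things need to be checked: (i) the braid relations between each affine generator and the finite generators $\T_1,\ldots,\T_n$, and (ii) centrality of the images of $\C$. The definitions of $\mfa$ and $\mfb$ are symmetric under the interchange $\varTheta_0\leftrightarrow\varPhi_0$, $\varTheta\leftrightarrow\varPhi$, so it suffices to treat $\mfa$; the relations $\mfa^{-1}=\mfe\mfb\mfe$ and $\mfb^{-1}=\mfe\mfa\mfe$ can then be checked by direct evaluation on the two affine generators, keeping in mind that $\mfe$ is an anti-involution and therefore reverses products.

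For (i), since $\mfa$ fixes $\varPhi_0$ and the finite generators, the only nontrivial relations to verify are those of $\mfa(\varTheta_0)=\varPhi_0\varPhi\varTheta_0\varPhi^{-1}\varPhi_0^{-1}$ with each $\T_j$. When $j$ is not connected in $\ddot{X}_n$ to the node labelled $\varTheta_0$, one knows $\T_j$ commutes with $\varTheta_0$, and the interaction of $\T_j$ with $\varPhi$ and $\varPhi_0$ is controlled by the braid relations in the affine Coxeter braid subgroup generated by $\varPhi_0,\T_1,\ldots,\T_n$; a short conjugation argument then yields $[\T_j,\mfa(\varTheta_0)]=1$. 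The delicate case is $j=i_\th$: here I would apply Lemma \ref{basic-braid-rels1} to convert the desired braid identity between $\mfa(\varTheta_0)$ and $\T_{i_\th}$ into a braid identity among the original generators that can be read off from Proposition \ref{B2-rels} together with the diagram braid relations, in parallel with the argument used in the proof of Theorem \ref{thm: auto1}.

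For (ii), I would substitute $\mfa(\varTheta_0)$ and $\mfa(\varPhi_0)=\varPhi_0$ into the expression $\C=\Tiii\varPhi\Ti\varPsi\Tiii\varTheta\Ti$ from \eqref{centralrel}, and simplify using the identities $\varThetap\varTheta=\varPhi\varPhip$, $\varPhip\varPhi=\varTheta\varThetap$, $\varPsi=\varPhip\varTheta^{-1}$, and Proposition \ref{B2-rels}, to exhibit $\mfa(\C)$ as a conjugate of $\C$ by an element of the finite Artin subgroup $B(X_n)\subset\B(\ddot{X}_n)$; since $\C$ is already central by hypothesis, this conjugate equals $\C$, proving centrality. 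A parallel computation handles $\mfb$. The identity $\mfa^{-1}=\mfe\mfb\mfe$ is then checked directly on $\varTheta_0$ and $\varPhi_0$: since $\mfe(\varTheta_0)=\varPhi_0$, $\mfe(\varTheta)=\varPhi$, and $\mfe$ reverses products, one computes
\[
(\mfe\mfb\mfe)(\varTheta_0)=\varPhi^{-1}\varPhi_0^{-1}\varTheta_0\varPhi_0\varPhi,\qquad (\mfe\mfb\mfe)(\varPhi_0)=\varPhi_0,
\]
and a direct substitution confirms $\mfa\bigl((\mfe\mfb\mfe)(\varTheta_0)\bigr)=\varTheta_0$, so $\mfa\circ\mfe\mfb\mfe$ acts as the identity on generators; the other identity follows by symmetry.

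The main obstacle is the centrality of $\mfa(\C)$. The element $\C$ entangles all four affine generators through the auxiliary elements $\varPsi$, $\varTheta$, $\varPhi$, so collapsing $\mfa(\C)$ into a form whose centrality is visible requires nontrivial rewriting. This is likely cleanest carried out case by case over the types $\ddot{B}_n/\ddot{C}_n$ with $n\geq 3$, $\ddot{B}_2$, $\ddot{F}_4$, and $\ddot{G}_2$, making use of the alternative expressions \eqref{centralrel-2} and \eqref{centralrel-3} for $\C$ when the underlying finite diagram $X_n$ is double-laced or of type $G_2$.
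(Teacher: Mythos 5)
Your overall plan (verify relation preservation, discard the affine--affine braid relation via Proposition \ref{superfluous}, check $\mfa^{-1}=\mfe\mfb\mfe$ and $\mfb^{-1}=\mfe\mfa\mfe$ on generators) is the paper's plan, and your computation of $(\mfe\mfb\mfe)(\varTheta_0)$ is correct. The gap is in step (i): you have the easy and delicate nodes interchanged, and the genuinely delicate node is left with an argument that cannot work. Since $\mfa(\varTheta_0)=(\varPhi_0\varPhi)\varTheta_0(\varPhi_0\varPhi)^{-1}$, conjugation disposes of every finite node $j$ whose generator commutes with the conjugator $\varPhi_0\varPhi$, i.e.\ every $j\neq i_\ph$; in particular $j=i_\th$ is \emph{easy}, because $i_\th$ is not adjacent to the $\varPhi_0$-node and $(\a_{i_\th},\ph)=0$, so $\T_{i_\th}$ commutes with both $\varPhi_0$ and $\varPhi$ (Lemma \ref{lemma-simple-commuting-artin}) and the $\ell_0$-braid relation with $\varTheta_0$ is transported verbatim. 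The real problem is $j=i_\ph$, which you fold into the easy bucket: $\T_{i_\ph}$ commutes with neither $\varPhi_0$ nor $\varPhi$, so no short conjugation argument applies, and the needed identity $[\T_{i_\ph},\mfa(\varTheta_0)]=1$ involves $\varTheta_0$ and hence is not a relation internal to the affine subgroup generated by $\varPhi_0,\T_1,\dots,\T_n$. The paper (which writes out the symmetric case of $\mfb$, where the delicate node is $i_\th$) resolves it by first using the central relation \eqref{centralrel} to rewrite $\varPhi_0\cdot\mfb(\varPhi_0)$ as $\C$ times the element $\varPsi^{-1}\varTheta_0^{-1}\varPhi^{-1}\varTheta_0^{-1}\varTheta^{-1}$ of the affine Artin subgroup generated by $\varTheta_0,\T_1,\dots,\T_n$, and then invoking Lemma \ref{lemma-affine-commutation}ii), the lattice commutation asserting that $\T_{i_\th}$ commutes with $\varTheta\varTheta_0\varPhi\varTheta_0\varPsi=\varPhi\varTheta_0\varPsi\,\varTheta\varTheta_0$ (in Bernstein terms, with $Y_{-\ph}$). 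Nothing in the toolkit you propose (Lemma \ref{basic-braid-rels1}, Proposition \ref{B2-rels}) supplies this; note also that Lemma \ref{basic-braid-rels1} has $1$-braid hypotheses and would not even apply uniformly at $i_\th$, where $\ell_0$ can be $2$.

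The same missing ingredient undermines your step (ii). The paper's centrality check is a short uniform computation whose only nontrivial move is again Lemma \ref{lemma-affine-commutation}ii): it shows that $\mfb$ applied to the word $\varPhi_0\varPhi\varTheta_0\varPsi\varPhi_0\varTheta\varTheta_0$ equals $\C$ on the nose, so no conjugation by an element of $B(X_n)$ and no case analysis over $\ddot{B}_n/\ddot{C}_n$, $\ddot{B}_2$, $\ddot{F}_4$, $\ddot{G}_2$ is needed. The identities you list (Lemma \ref{Psi}, Proposition \ref{B2-rels}, \eqref{centralrel-2}, \eqref{centralrel-3}) all hold already in the Coxeter braid group $B(\ddot{X}_n)$ before the central relation is imposed, and they do not yield the commutation of $\varTheta\varTheta_0$ with $\varPhi\varTheta_0\varPsi$ (equivalently, commutativity of the translations $Y_{-\th}$ and $Y_{-\thp}$), which is the crux. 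Once you add Lemma \ref{lemma-affine-commutation} (used together with \eqref{centralrel}) to your toolkit and swap your designation of the easy and delicate nodes, your outline becomes the paper's proof.
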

\begin{proof}
We need to verify that the two maps preserve the relations in Definition \ref{def: twisted}. By Proposition \ref{superfluous} the braid relation between $\Ti$ and $\Tiii$ is a consequence of the other relations and does not need to be verified. For each of the two maps the verifications eventually boil down to relations in an affine Coxeter group, which is different for each map. The relevant affine Coxeter group is the one associated to an affine Coxeter diagram obtained as follows: for $\mfa$ remove the short affine node from $\fH{X}_n$, for  $\mfb$ remove the long affine node from $\fH{X}_n$. We will perform the verifications for $\mfb$. This has the advantage that we can use the results of Section \ref{affine-rels} without any notation adjustment. The verifications for $\mfa$ are entirely similar.

Let us first check that $\mfb(\Tiii)$ satisfies the braid relations with the generators corresponding to the simple nodes. For a node $i\neq i_\th$ the braid relations follow from the fact that $\T_i$ and $\Ti$ commute, and that $\T_i$ and $\varTheta$ commute (by Lemma \ref{lemma-simple-commuting-artin} for $\gamma=\th$). For $i_\th$, we have to argue that $\T_{i_\th}$ commutes with $\mfb(\Tiii)$ or, equivalently, that $\T_{i_\th}$ commutes with $(\Tiii\mfb(\Tiii))^{-1}$. The latter element, by \eqref{centralrel}, equals $\C^{-1}\varTheta\Ti\varPhi\Ti\Psi$ and the claim follows from \eqref{centralrel} and Lemma \ref{lemma-affine-commutation}ii).

Let us now check that the image of the relation \eqref{centralrel} is satisfied. Indeed,
\begin{align*}
\C&=\Tiii\varPhi\Ti\varPsi\Tiii\varTheta\Ti &&\\
&= \varTheta\Ti \Tiii\varPhi\Ti\varPsi\Tiii &&\\
&= \varTheta\Ti \Tiii\cdot \varPhi\Ti\varPsi\cdot \Tiii \cdot \Ti^{-1}\varTheta^{-1}\varTheta\Ti&&\\
&= \varTheta\Ti \Tiii \Ti^{-1}\varTheta^{-1} \cdot  \varPhi\Ti\varPsi \cdot \varTheta\Ti  \Tiii  \Ti^{-1}\varTheta^{-1}\cdot \varTheta\Ti && \text{by Lemma \ref{lemma-affine-commutation}ii)}\\
&=\mfb(\Tiii)\varPhi\mfb(\Ti)\varPsi\mfb({\Tiii})\varTheta\mfb(\Ti).
\end{align*}
The fact that they are indeed isomorphisms can be seen by verifying that $\mfe\mfa\mfe$ is the inverse of $\mfb$ and that $\mfe\mfb\mfe$ is the inverse of $\mfa$.
\end{proof}
\subsection{} We will now investigate the braid  relations satisfied by $\mfa$ and $\mfb$. We proceed with the case of a double laced double Coxeter diagram. 

\begin{Thm}\label{P1}
If $\fH{X}_n$ is double-laced, then $\mfa\mfb\mfa(\Ti)=\T_{w_\circ}\Ti\T_{w_\circ}^{-1}$ and $\mfb\mfa\mfb(\Tiii)=\T_{w_\circ}\Tiii\T_{w_\circ}^{-1}$. Furthermore, $(\mfa\mfb)^2=(\mfb\mfa)^2$ acts on $\B(\fH{X}_n)$ by conjugation by $\T_{w_\circ}$.
\end{Thm}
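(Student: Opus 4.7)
The plan is to reduce the three claims of the theorem to a single core identity, $\mfa\mfb\mfa(\Ti)=\T_{w_\circ}\Ti\T_{w_\circ}^{-1}$, and then verify that identity by direct computation.

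First, every asserted equality of automorphisms is automatic on the subgroup $B(X_n)\subset\B(\ddot{X}_n)$: the left-hand sides fix $B(X_n)$ pointwise because $\mfa$ and $\mfb$ do, and conjugation by $\T_{w_\circ}$ fixes $B(X_n)$ pointwise because $\T_{w_\circ}$ is central in $B(X_n)$ (a double-laced $X_n$ is of type $B_n$, $C_n$, or $F_4$, each of which has $w_\circ=-1$). So everything reduces to checking the action on $\Ti,\Tiii$. From $\mfb(\Ti)=\Ti$ one gets $(\mfa\mfb)^2(\Ti)=\mfa\mfb\mfa(\Ti)$; using the tautology $(\mfa\mfb)^2=\mfa(\mfb\mfa)^2\mfa^{-1}$ together with $\mfa(\Tiii)=\Tiii$ and the fact that $\mfa$ fixes $\T_{w_\circ}$, one gets $(\mfa\mfb)^2(\Tiii)=\mfa(\mfb\mfa\mfb(\Tiii))$. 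Thus, if the first two claims are known, then $(\mfa\mfb)^2$ acts by conjugation with $\T_{w_\circ}$, and the symmetric argument yields $(\mfb\mfa)^2=(\mfa\mfb)^2$.

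Next I reduce the second claim to the first using the anti-involution $\mfe$. The relations $\mfe\mfa\mfe=\mfb^{-1}$ and $\mfe\mfb\mfe=\mfa^{-1}$ give $\mfe(\mfa\mfb\mfa)\mfe=(\mfb\mfa\mfb)^{-1}$; applying this identity to $\Tiii$ together with $\mfe(\Tiii)=\Ti$, $\mfe(\Ti)=\Tiii$, and $\mfe(\T_{w_\circ})=\T_{w_\circ}$ (since $w_\circ$ is an involution and $\mfe$ acts on finite generators through a diagram symmetry that preserves $w_\circ$), the first claim transforms into $(\mfb\mfa\mfb)^{-1}(\Tiii)=\T_{w_\circ}^{-1}\Tiii\T_{w_\circ}$. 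Since $\mfa$ and $\mfb$ both fix $\T_{w_\circ}$, applying $\mfb\mfa\mfb$ to both sides and solving gives $\mfb\mfa\mfb(\Tiii)=\T_{w_\circ}\Tiii\T_{w_\circ}^{-1}$.

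What remains is the direct verification of $\mfa\mfb\mfa(\Ti)=\T_{w_\circ}\Ti\T_{w_\circ}^{-1}$. Starting from $\mfa(\Ti)=\Tiii\varPhi\Ti\varPhi^{-1}\Tiii^{-1}$, applying $\mfb$ (which fixes $\Ti,\varTheta,\varPhi$ and sends $\Tiii$ to $\varTheta\Ti\Tiii\Ti^{-1}\varTheta^{-1}$), and then $\mfa$ once more, one obtains an explicit word $W$ in $\Ti,\Tiii,\varTheta,\varPhi$. By Proposition \ref{thm: rank 1 embedding}, $W$ sits inside the canonically embedded subgroup $\B(\ddot{B}_2)\subset\B(\ddot{X}_n)$, so the simplification can be carried out entirely in $\B(\ddot{B}_2)$, where the longest braid element is $\T_{w_\circ(B_2)}=\varThetap\varPhip\varThetap\varPhip$ and the central relation reads $\C=(\Tiii\varThetap\varPhip\Ti)^2$ by \eqref{centralrel-2}. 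Using Proposition \ref{B2-rels}, the identities $\varTheta\varThetap=\varPhi\varPhip$ and $\varisP=\varThetap\varPhi^{-1}=\varTheta^{-1}\varPhip$, and repeated applications of the central relation, one systematically commutes $\Ti,\Tiii$ past $\varTheta,\varPhi$ until $W$ collapses to $\T_{w_\circ(B_2)}\Ti\T_{w_\circ(B_2)}^{-1}$. To promote this to conjugation by $\T_{w_\circ(X_n)}$, write $w_\circ(X_n)=w_\circ(B_2)\cdot x$ with $x$ the longest element of the pointwise stabilizer of $\{\th,\ph\}$ in $W(X_n)$; the corresponding braid $\T_x$ is a word in the $\T_i$ with $i\notin\{i_\th,i_\ph\}$, each of which commutes with $\Ti$ and $\Tiii$ by inspection of the diagram $\ddot{X}_n$. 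Thus $\T_x$ centralizes both affine generators, and conjugation by $\T_{w_\circ(X_n)}$ and $\T_{w_\circ(B_2)}$ induce the same map on $\Ti$. The main obstacle I anticipate is the bookkeeping in the collapse of $W$: it is a word of roughly a dozen factors, and organizing the braid moves and invocations of the central relation to reduce it cleanly requires care.
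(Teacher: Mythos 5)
Your surrounding reductions agree with the paper's own proof: the anti-involution argument converting $\mfa\mfb\mfa(\Ti)=\T_{w_\circ}\Ti\T_{w_\circ}^{-1}$ into $\mfb\mfa\mfb(\Tiii)=\T_{w_\circ}\Tiii\T_{w_\circ}^{-1}$ via $\mfe\mfa\mfe=\mfb^{-1}$, $\mfe\mfb\mfe=\mfa^{-1}$, the deduction that $(\mfa\mfb)^2=(\mfb\mfa)^2$ is conjugation by $\T_{w_\circ}$, and the remark that centrality of $\T_{w_\circ}$ in $B(X_n)$ settles everything on the finite generators, are exactly the published argument (with the harmless caveat, since your argument is uniform in the double-laced diagram, that the $\mfe$-reduction uses the first identity for $\ddot{X}_n^\mfe$ as well as for $\ddot{X}_n$). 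Your intended endpoint inside the embedded $\B(\ddot{B}_2)$ of Proposition \ref{thm: rank 1 embedding} is also the correct one: by \eqref{eq8}, Lemma \ref{lemma-wy-artin} and Lemma \ref{lemma-explicit-2} one has $\T_{w_\circ}=\T_{v_\circ}\varTheta\varThetap=\T_{v_\circ}\varThetap\varPhip\varThetap\varPhip$, where $v_\circ$ is the longest element of the parabolic on the nodes other than $i_\th,i_\ph$ and $\T_{v_\circ}$ commutes with $\Ti$, so conjugation by $\varThetap\varPhip\varThetap\varPhip$ and by $\T_{w_\circ}$ agree on $\Ti$; but this identification needs those lemmas, not just the statement that $\T_x$ commutes with the affine generators.

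The genuine gap is the central identity itself. You expand $\mfa\mfb\mfa(\Ti)$ into a long word $W$ and assert that it ``collapses'' under braid moves and the central relation, explicitly deferring the bookkeeping; that collapse is the entire content of the theorem and is never exhibited. Note also that $\B(\ddot{B}_2)$ is a proper quotient of a Coxeter braid group for which you have no normal form or rewriting procedure, so ``systematically commuting $\Ti,\Tiii$ past $\varTheta,\varPhi$'' is not an algorithm whose termination at the desired word can be taken for granted. The paper's computation is short precisely because of two devices absent from your plan: first, since $\mfa$ fixes $\C$, relation \eqref{centralrel} forces $\mfa(\Ti\Tiii\varPhi\Ti)=\Ti\Tiii\varPhi\Ti$, so the outer $\mfa$ can be applied to the conjugating word as a single block rather than to each occurrence of $\Ti$ after full expansion; second, the identity $\Ti^{-1}\varTheta^{-1}\varPhi\Ti\varPhi^{-1}\varTheta\Ti=\varPhi\Ti\varPhi^{-1}$ (proved from Proposition \ref{B2-rels} and Lemma \ref{lemma-explicit-2}) simplifies $\mfb\mfa(\Ti)$ to $\varTheta\,\Ti\Tiii\varPhi\Ti\,\varPhi^{-1}\Tiii^{-1}\Ti^{-1}\varTheta^{-1}$ \emph{before} the outer $\mfa$ is applied, after which everything telescopes to $\varTheta\Ti\varTheta^{-1}=\varTheta\varThetap\Ti\varThetap^{-1}\varTheta^{-1}=\T_{w_\circ}\Ti\T_{w_\circ}^{-1}$. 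Until you either carry out your collapse explicitly or import intermediate identities of this kind, the proposal is a plan for a proof rather than a proof.
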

\begin{proof}
Remark that it is enough to show that $\mfa\mfb\mfa(\Ti)=\T_{w_\circ}\Ti\T_{w_\circ}^{-1}$ for any double-laced diagram $\fH{X}_n$. Indeed,
\begin{align*}
\mfb\mfa\mfb(\Tiii)&=\mfe\cdot\mfe\mfb\mfe\cdot\mfe\mfa\mfe\cdot\mfe\mfb\mfe(\Ti)\\
&=\mfe\mfa^{-1}\mfb^{-1}\mfa^{-1}(\Ti)\\
&=\mfe(\T_{w_\circ}^{-1}\Ti\T_{w_\circ})\\
&=\T_{w_\circ}\Tiii\T_{w_\circ}^{-1}.
\end{align*}
In preparation for computing $\mfa\mfb\mfa(\Ti)$ let us note that since $\mfa$ fixes $\C$ we obtain from \eqref{centralrel} that 
\begin{equation}\label{eq14}
\mfa(\Ti\Tiii\varPhi\Ti)=\Ti\Tiii\varPhi\Ti.
\end{equation}
Also, 
\begin{equation}\label{eq13}
\begin{aligned}
\Ti^{-1}\varTheta^{-1}\varPhi\Ti\varPhi^{-1}\varTheta\Ti&=\Ti^{-1}\varThetap\varPhip^{-1}\Ti\varPhip\varThetap^{-1}\Ti && \text{by Lemma \ref{lemma-wy-artin}i),iv)} \\
&= \varThetap\Ti^{-1}\varPhip^{-1}\Ti\varPhip\Ti\varThetap^{-1}  && \text{by Lemma \ref{B2-rels}iii)}\\
&= \varThetap\varPhip\Ti\varPhip^{-1}\varThetap^{-1} && \text{by Lemma \ref{B2-rels}iv)}\\
&= \varThetap\varPhip\varThetap\Ti\varThetap^{-1}\varPhip^{-1}\varThetap^{-1} && \text{by Lemma \ref{B2-rels}iii)}\\
&= \varPhi\Ti\varPhi^{-1} &&\text{by Lemma \ref{lemma-explicit-2}ii)}
\end{aligned}
\end{equation}
Now, 
\begin{align*}
\mfa\mfb\mfa(\Ti)&=\mfa(\varTheta\Ti\Tiii\cdot \Ti^{-1}\varTheta^{-1}\varPhi\Ti\varPhi^{-1}\varTheta\Ti\cdot \Tiii^{-1}\Ti^{-1}\varTheta^{-1})  &&  \\
&=\mfa(\varTheta\Ti\Tiii\cdot \varPhi\Ti\varPhi^{-1} \cdot \Tiii^{-1}\Ti^{-1}\varTheta^{-1}) && \text{by \eqref{eq13}}\\
&= \varTheta\Ti\Tiii\varPhi\Ti\varPhi^{-1}\Tiii^{-1}\cdot \Tiii\varPhi\Ti^{-1}\varPhi^{-1}\Tiii^{-1}\cdot \varTheta^{-1}  && \text{by \eqref{eq14}} \\
&= \varTheta\Ti\varTheta^{-1} &&  \\
&= \varTheta\varThetap\Ti\varThetap^{-1}\varTheta^{-1} &&  \text{by Lemma \ref{B2-rels}iii)}  \\
&= \T_{w_\circ}\Ti\T_{w_\circ}^{-1} &&  \text{by \eqref{eq8}}
\end{align*}
which is precisely our claim.

Furthermore, we have
\begin{align*}
(\mfa\mfb)^2(\Ti)&=\mfa\mfb\mfa(\Ti)\\
&= \T_{w_\circ}\Ti\T_{w_\circ}^{-1}\\
&=\mfb(\T_{w_\circ}\Ti\T_{w_\circ}^{-1})\\
&=\mfb(\mfa\mfb\mfa(\Ti))\\
&=(\mfb\mfa)^2(\Ti)
\end{align*}
and 
\begin{align*}
(\mfa\mfb)^2(\Tiii)&=\mfa(\mfb\mfa\mfb(\Tiii))\\
&= \mfa(\T_{w_\circ}\Tiii\T_{w_\circ}^{-1})\\
&=\T_{w_\circ}\Tiii\T_{w_\circ}^{-1}\\
&=\mfb\mfa\mfb(\Tiii)\\
&=(\mfb\mfa)^2(\Ti).
\end{align*}
The generators corresponding to the simple nodes are fixed by $\mfa$ and $\mfb$, but also by conjugation by $\T_{w_\circ}$ which is a central element in the subgroup they generate. 
\end{proof}
\begin{Thm}\label{thm: auto2main}
Assume that $X_n$ is double laced. The map sending $\mfu_1$, $\mfu_2^2$ to $\mfa$, $\mfb$, respectively, defines an injective group morphism  
$$\wGamma(2)\to \Aut(\B(\fH{X}_n); X_n).$$
The induced morphism
$$\Gamma_1(2)\to \Out(\B(\fH{X}_n); X_n)$$
is injective.
\end{Thm}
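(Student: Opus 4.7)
The plan is to mirror the proof of Theorem~\ref{thm: auto1main} for the rank-two Coxeter braid group $\wGamma(2)$ of type $B_2$, proceeding in three steps that lean on results already established.

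First, I would verify that $\mfu_1 \mapsto \mfa$, $\mfu_2^2 \mapsto \mfb$ extends to a well-defined group morphism from $\wGamma(2)$. Since $\wGamma(2)$ is presented on $\mfu_1,\mfu_2^2$ subject only to the $2$-braid relation, what is needed is $(\mfa\mfb)^2 = (\mfb\mfa)^2$ as elements of $\Aut(\B(\ddot{X}_n))$; this is precisely the content of Theorem~\ref{P1}, where both sides are shown to equal inner conjugation by $\T_{w_\circ}$. Theorem~\ref{thm: auto2} already guarantees that $\mfa,\mfb$ are genuine automorphisms fixing $B(X_n)$ pointwise, so the morphism lands in $\Aut(\B(\ddot{X}_n); X_n)$.

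Second, to control the kernels I would form the commutative diagram
\[
\begin{CD}
\wGamma(2)  @>>>  \Aut(\B(\ddot{X}_n);X_n) \\
@VVV        @VVV \\
\Gamma_1(2) @>>>  \Out(\Cox(\ddot{X}_n)),
\end{CD}
\]
in which the right vertical arrow composes $\Aut(\B) \to \Out(\B)$ with the map induced by the quotient $\B(\ddot{X}_n) \twoheadrightarrow \Cox(\ddot{X}_n)$. Corollary~\ref{cor: Cox} identifies $\Cox(\ddot{X}_n)$ with the double affine Weyl group of $\ddot{X}_n^\iota$, and a direct verification on generators shows that the bottom arrow agrees with the canonical outer action of $\Gamma_1(2) = \<u_{12},u_{21}^2\>$ on this Weyl group constructed in \cite{ISEALA}. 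The faithfulness proved there then gives injectivity of $\Gamma_1(2) \to \Out(\B(\ddot{X}_n); X_n)$, which is the second assertion of the theorem.

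Finally, the first assertion will follow by a diagram chase. The kernel of $\wGamma(2) \to \Aut(\B(\ddot{X}_n); X_n)$ must lie in $\ker(\wGamma(2) \to \Gamma_1(2)) = Z(\wGamma(2)) = \<\mfc\>$, with $\mfc = (\mfu_1\mfu_2^2)^2$, and by Theorem~\ref{P1} each $\mfc^k$ acts as conjugation by $\T_{w_\circ}^k$. The main obstacle, and the step where I expect to have to work hardest, is to check that no nontrivial power of $\T_{w_\circ}$ is central in $\B(\ddot{X}_n)$. Using the Bernstein presentation (Proposition~\ref{second-presentation}), conjugation by $\T_{w_\circ}$ acts on the lattice generators via $X_\beta \mapsto X_{w_\circ(\beta)}$, and combined with the noncommutation of $\T_{w_\circ}^k$ with the affine generator $\Ti = T_0$ for $k \neq 0$ (which can be checked explicitly through the relations of type $(0,j)_0$ and $(0,j)_1$), this rules out any nontrivial element of $\<\mfc\>$ in the kernel and completes the proof.
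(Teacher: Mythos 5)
Your proposal is essentially the paper's own argument: well-definedness comes from Theorem \ref{thm: auto2} together with the relation $(\mfa\mfb)^2=(\mfb\mfa)^2$ of Theorem \ref{P1}, and injectivity is obtained exactly as in the proof of Theorem \ref{thm: auto1main} -- descend to $\Cox(\ddot{X}_n)$, identify the induced action with the canonical faithful outer action of \cite{ISEALA}, then trap the kernel upstairs in the center of $\wGamma(2)$, whose image acts by conjugation by powers of $\T_{w_\circ}$. The only slips are minor and harmless: $\ker\bigl(\wGamma(2)\to\Gamma_1(2)\bigr)$ is $\<\mfc^2\>$ rather than all of $\<\mfc\>$ (since $\mfc\mapsto -I_2$), and conjugation by $\T_{w_\circ}$ does not literally send $X_\b$ to $X_{w_\circ(\b)}$ in the Artin group (that formula holds only at the Weyl-group level), but neither point affects your final reduction to the fact that no nontrivial power of $\T_{w_\circ}$ is central in $\B(\ddot{X}_n)$, which is the same fact the paper's argument relies on.
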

\begin{proof}
Straightforward from Theorem \ref{thm: auto2} and Theorem \ref{P1}.  The injectivity claim follows as in the proof of Theorem \ref{thm: auto1main}.
\end{proof}
\subsection{} Similar facts hold for the remaining double Coxeter diagram.

\begin{Thm}\label{P2}
For $\fH{G}_2$ we have $\mfa\mfb\mfa\mfb\mfa(\Ti)=\T^2_{w_\circ}\Ti\T_{w_\circ}^{-2}$ and $\mfb\mfa\mfb\mfa\mfb(\Tiii)=\T^2_{w_\circ}\Tiii\T_{w_\circ}^{-2}$. Furthermore, $(\mfa\mfb)^3=(\mfb\mfa)^3$ acts on $\B(\fH{G}_2)$ by conjugation by $\T^2_{w_\circ}$.
\end{Thm}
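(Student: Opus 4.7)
The plan is to mirror the argument of Theorem \ref{P1}, with one additional iteration reflecting the passage from the 2-braid structure of $\ddot{B}_2/\ddot{C}_n$ to the 3-braid structure of $\ddot{G}_2$. Using $\mfe\mfa\mfe=\mfb^{-1}$, $\mfe\mfb\mfe=\mfa^{-1}$, and $\mfe(\Ti)=\Tiii$, it will suffice to establish the identity $\mfa\mfb\mfa\mfb\mfa(\Ti)=\T^2_{w_\circ}\Ti\T^{-2}_{w_\circ}$; applying $\mfe$ and inverting then yields the companion identity for $\Tiii$, exactly as at the start of the proof of \ref{P1}.

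A preparatory step is to show $\mfa(\C)=\C$ (whence $\mfb(\C)=\C$ by $\mfe$-symmetry). Here one would use the $G_2$-specific factorization $\C = (\Tiii\T_{i_\ph}\T_{i_\th}\T_{i_\ph}\T_{i_\th}\Ti)^2$ from \eqref{centralrel-3}: since $\mfa$ fixes every factor except $\Ti$, comparing this with the definition \eqref{centralrel} of $\C$ yields an identity of the form $\mfa(\Tiii\T_{i_\ph}\T_{i_\th}\T_{i_\ph}\T_{i_\th}\Ti) = \Tiii\T_{i_\ph}\T_{i_\th}\T_{i_\ph}\T_{i_\th}\Ti$, the $G_2$-analog of \eqref{eq14} and the main lever for simplifying iterated applications of $\mfa$ and $\mfb$.

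The central computation is a $G_2$-analog of \eqref{eq13}. In the double-laced case, the 2-braid relation between $\T_{i_\th}$ and $\T_{i_\ph}$ collapsed $\Ti^{-1}\varTheta^{-1}\varPhi\Ti\varPhi^{-1}\varTheta\Ti$ to the single conjugate $\varPhi\Ti\varPhi^{-1}$, and the final step used $\varTheta\varThetap = \T_{w_\circ}$. For $G_2$, the 3-braid relation between $\T_{i_\th}$ and $\T_{i_\ph}$ forces the analogous collapse to take two chained substitutions, and the final identification will use the length-6 equality $(\T_{i_\th}\T_{i_\ph})^3 = \T_{w_\circ}$ in $B(G_2)$. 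Propagating this through the five layers of $\mfa\mfb\mfa\mfb\mfa$ then produces $\T^2_{w_\circ}\Ti\T^{-2}_{w_\circ}$, where the square arises naturally from the extra iteration forced by the 3-braid.

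Once these two identities are in hand, applying $\mfb$ to the first gives $(\mfb\mfa)^3(\Ti) = \mfb(\T^2_{w_\circ}\Ti\T^{-2}_{w_\circ}) = \T^2_{w_\circ}\Ti\T^{-2}_{w_\circ} = (\mfa\mfb)^3(\Ti)$, since $\mfb$ fixes $B(G_2)$ and $\Ti$; the dual argument handles $\Tiii$, and both $\mfa,\mfb$ fix $B(G_2)$ pointwise. Since $\T_{w_\circ}$ is central in $B(G_2)$ (because $w_\circ=-1$ in $G_2$), conjugation by $\T^2_{w_\circ}$ acts trivially on $B(G_2)$, so $(\mfa\mfb)^3 = (\mfb\mfa)^3$ agrees globally with conjugation by $\T^2_{w_\circ}$. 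The main obstacle will be the explicit combinatorial verification of the $G_2$-analog of \eqref{eq13}: chains of six alternating generators of $B(G_2)$ must be moved past $\Ti$ with careful bookkeeping, and the factor of two in the exponent emerges only after propagating the substitution through all five layers while systematically exploiting the 3-braid relation and the preparatory identity $\mfa(\C)=\C$.
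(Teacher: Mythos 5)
Your overall skeleton does match the paper's proof of Theorem \ref{P2}: reduce via $\mfe\mfa\mfe=\mfb^{-1}$, $\mfe\mfb\mfe=\mfa^{-1}$ to the single identity $\mfa\mfb\mfa\mfb\mfa(\Ti)=\T^2_{w_\circ}\Ti\T_{w_\circ}^{-2}$; exploit that $\mfa$ and $\mfb$ fix $\C$ to get a ``lever'' identity; do an explicit computation in $B(G_2)$; identify the result using an expression for $\T_{w_\circ}$; and finish the $(\mfa\mfb)^3=(\mfb\mfa)^3$ and global-conjugation statements exactly as you indicate (the paper uses $\T_{w_\circ}=\varTheta\T_{i_\ph}$ from \eqref{eq8} rather than $(\T_{i_\th}\T_{i_\ph})^3=\T_{w_\circ}$, a cosmetic difference). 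However, the substance of the theorem is precisely the explicit computation you defer as ``the main obstacle.'' The paper carries it out through the auxiliary identities \eqref{eq16}, \eqref{eq17} (resting on $\varPsi=\T_{i_\ph}^{-1}\T_{i_\th}^{-1}$, Lemma \ref{lemma-wy-artin}vii)), the explicit formulas \eqref{eq18}--\eqref{eq20} for $\mfb\mfa(\Ti)$, $\mfb\mfa(\Tiii)$, $\mfb\mfa(\Ti\Tiii)$, and then one application of $\mfa\mfb$ using the lever; asserting that ``two chained substitutions will produce the factor $\T^2_{w_\circ}$'' is a prediction, not a proof, so as it stands the argument has a genuine gap.

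Moreover, the one concrete new identity you do commit to is false. Since $\mfa$ fixes $\Tiii$, $\T_{i_\ph}$, $\T_{i_\th}$, your proposed lever $\mfa(\Tiii\T_{i_\ph}\T_{i_\th}\T_{i_\ph}\T_{i_\th}\Ti)=\Tiii\T_{i_\ph}\T_{i_\th}\T_{i_\ph}\T_{i_\th}\Ti$ is equivalent, after cancelling the common left factor, to $\mfa(\Ti)=\Ti$, i.e.\ to $\Tiii\varPhi$ commuting with $\Ti$; under the isomorphism of Theorem \ref{thm1} this would say $X_{\ph^\vee}$ commutes with $T_0$, contradicting \eqref{eq5}. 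Your derivation is also invalid: from \eqref{centralrel-3} and $\mfa(\C)=\C$ you only obtain $\mfa(Q)^2=Q^2$ for $Q=\Tiii\T_{i_\ph}\T_{i_\th}\T_{i_\ph}\T_{i_\th}\Ti$, and one cannot extract square roots uniquely here. The correct lever is the paper's \eqref{eq15}, $\mfa(\Ti\Tiii\varPhi\Ti)=\Ti\Tiii\varPhi\Ti$ (same shape as \eqref{eq14}); it follows from \eqref{centralrel} after using centrality of $\C$ to write $\C=\Ti\Tiii\varPhi\Ti\cdot\varPsi\Tiii\varTheta$ and the fact that $\mfa$ fixes $\varPsi$, $\Tiii$, $\varTheta$ --- note the extra initial $\Ti$, which is exactly what your version drops. (Also, $\mfa(\C)=\C$ need not be re-derived from \eqref{centralrel-3}: it is already part of the verification that $\mfa$ is well defined in Theorem \ref{thm: auto2}.)
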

\begin{proof}
As in Proposition \ref{P1} it is enough to show that $\mfa\mfb\mfa\mfb\mfa(\Ti)=\T^2_{w_\circ}\Ti\T_{w_\circ}^{-2}$. We record a few equalities that will be used in the computation.
First, since $\mfa$ fixes $\C$ we obtain from \eqref{centralrel} that 
\begin{equation}\label{eq15}
\mfa(\Ti\Tiii\varPhi\Ti)=\Ti\Tiii\varPhi\Ti.
\end{equation}
Second, 
\begin{equation}\label{eq16}
\begin{aligned}
\Ti^{-1}\varTheta^{-1}\varPhi\Ti\varPhi^{-1}\varTheta\Ti&=\Ti^{-1}\T_{i_\ph}\T_{i_\th}^{-1}\Ti\T_{i_\th}\T_{i_\ph}^{-1}\Ti && \text{by Lemma \ref{lemma-wy-artin}i),vii)} \\
&= \T_{i_\ph}\Ti^{-1}\T_{i_\th}^{-1}\Ti\T_{i_\th}\Ti \T_{i_\ph}^{-1}  && \text{by braid relations}\\
&= \T_{i_\ph}\T_{i_\th} \T_{i_\ph}^{-1} && \text{by braid relations}
\end{aligned}
\end{equation}
Third, by using braid relations we obtain
\begin{equation}\label{eq17}
\begin{aligned}
 \T_{i_\ph}\T_{i_\th} \T_{i_\ph}^{-1}\Ti^{-1}\varTheta^{-1}\cdot  \T_{i_\ph}\T_{i_\th} \T_{i_\ph}^{-1}\cdot \varTheta\Ti  \T_{i_\ph}\T_{i_\th}^{-1} \T_{i_\ph}^{-1}
&= \T_{i_\ph}\T_{i_\th} \T_{i_\ph}^{-1}\Ti^{-1}    \T_{i_\ph}^2\T_{i_\th} \T_{i_\ph}^{-2}   \Ti  \T_{i_\ph}\T_{i_\th}^{-1} \T_{i_\ph}^{-1}\\
&= \T_{i_\ph}\T_{i_\th} \T_{i_\ph}\cdot \Ti^{-1}\T_{i_\th}\Ti\cdot \T_{i_\ph}^{-1}\T_{i_\th}^{-1} \T_{i_\ph}^{-1}  \\
&=  \T_{i_\ph}\T_{i_\th} \T_{i_\ph}\T_{i_\th}\Ti \T_{i_\th}^{-1} \T_{i_\ph}^{-1}\T_{i_\th}^{-1} \T_{i_\ph}^{-1}\\
&= \varPhi\Ti\varPhi^{-1}.
\end{aligned}
\end{equation}
Now, 
\begin{equation}\label{eq18}
\begin{aligned}
\mfb\mfa(\Ti)&= \varTheta\Ti\Tiii\cdot \Ti^{-1}\varTheta^{-1}\varPhi\Ti\varPhi^{-1}\varTheta\Ti\cdot \Tiii^{-1}\Ti^{-1}\varTheta^{-1}&&\\
&=\varTheta\Ti\Tiii\cdot \T_{i_\ph}\T_{i_\th} \T_{i_\ph}^{-1} \cdot \Tiii^{-1}\Ti^{-1}\varTheta^{-1}&& \text{by \eqref{eq16}}\\
\end{aligned}
\end{equation}
and 
\begin{equation}\label{eq19}
\mfb\mfa(\Tiii)= \varTheta\Ti\Tiii\Ti^{-1}\varTheta^{-1}.
\end{equation}
In consequence,
\begin{equation}\label{eq20}
\mfb\mfa(\Ti\Tiii)=\varTheta\Ti\Tiii\T_{i_\ph}\T_{i_\th} \T_{i_\ph}^{-1}\Ti^{-1}\varTheta^{-1}.
\end{equation}
From \eqref{eq18}, \eqref{eq20}, and \eqref{eq17} we obtain
\begin{align*}
\mfa\mfb\mfa\mfb\mfa(\Ti)&=\mfa(\varTheta^2\Ti\Tiii\cdot \varPhi\Ti\varPhi^{-1} \cdot \Tiii^{-1}\Ti^{-1}\varTheta^{-2}) && \\
&= \varTheta^2\Ti\Tiii\varPhi\Ti\varPhi^{-1}\Tiii^{-1}\cdot \Tiii\varPhi\Ti^{-1}\varPhi^{-1}\Tiii^{-1}\cdot \varTheta^{-2}  && \text{by \eqref{eq15}} \\
&= \varTheta^2\Ti\varTheta^{-2} &&  \\
&= \varTheta\T_{i_\ph}^2\Ti\T_{i_\ph}^{-2}\varTheta^{-1} &&  \text{by braid relations}  \\
&= \T_{w_\circ}^2\Ti\T_{w_\circ}^{-2} &&  \text{by \eqref{eq8}}
\end{align*}
Furthermore, we have
\begin{align*}
(\mfa\mfb)^3(\Ti)&=\mfa\mfb\mfa\mfb\mfa(\Ti)\\
&= \T^2_{w_\circ}\Ti\T_{w_\circ}^{-2}\\
&=\mfb(\T^2_{w_\circ}\Ti\T_{w_\circ}^{-2})\\
&=\mfb(\mfa\mfb\mfa\mfb\mfa(\Ti))\\
&=(\mfb\mfa)^3(\Ti)
\end{align*}
and 
\begin{align*}
(\mfa\mfb)^3(\Tiii)&=\mfa(\mfb\mfa\mfb\mfa\mfb(\Tiii))\\
&= \mfa(\T^2_{w_\circ}\Tiii\T_{w_\circ}^{-2})\\
&=\T^2_{w_\circ}\Tiii\T_{w_\circ}^{-2}\\
&=\mfb\mfa\mfb\mfa\mfb(\Tiii)\\
&=(\mfb\mfa)^3(\Ti).
\end{align*}
The generators corresponding to the simple nodes are fixed by $\mfa$ and $\mfb$, but also by conjugation by $\T_{w_\circ}$ which is a central element in the subgroup they generate. 
\end{proof}
\begin{Thm}\label{thm: auto3main}
The map sending $\mfu_1$, $\mfu_2^3$ to $\mfa$, $\mfb$, respectively, defines an injective group morphism  
$$\wGamma(3)\to \Aut(\B(\fH{G}_2); G_2).$$
The induced morphism
$$\Gamma_1(3)\to \Out(\B(\fH{G}_2); G_2)$$
is injective. 
\end{Thm}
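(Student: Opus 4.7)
The plan is to mirror the arguments of Theorems \ref{thm: auto1main} and \ref{thm: auto2main}. First I would combine Theorem \ref{thm: auto2}, which ensures $\mfa,\mfb\in\Aut(\B(\ddot{G}_2);G_2)$, with Theorem \ref{P2}, which establishes the equality $(\mfa\mfb)^3=(\mfb\mfa)^3$, i.e.\ the $3$-braid relation. Since \S\ref{sec: rank 2 braid} (citing \cite{FMPri}) presents $\wGamma(3)$ as the group generated by $\mfu_1$ and $\mfu_2^3$ subject only to this $3$-braid relation, the assignment $\mfu_1\mapsto\mfa$, $\mfu_2^3\mapsto\mfb$ extends uniquely to a well-defined group morphism $\wGamma(3)\to \Aut(\B(\ddot{G}_2);G_2)$.

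For the injectivity of the induced map $\Gamma_1(3)\to \Out(\B(\ddot{G}_2);G_2)$, I would invoke the commutative diagram
\[
\begin{CD}
\wGamma(3) @>>> \Aut(\B(\ddot{G}_2);G_2) \\
@VVV @VVV \\
\Gamma_1(3) @>>> \Out(\Cox(\ddot{G}_2))
\end{CD}
\]
where the right vertical arrow is induced by passage to the quotient of $\B(\ddot{G}_2)$ by the squares of the generators. Using the isomorphism of Corollary \ref{cor: Cox} between $\Cox(\ddot{G}_2)$ and the double affine Weyl group of type $D_4^{(3)}$, I would verify that the bottom morphism matches the canonical action constructed in \cite{ISEALA}, and then appeal to the faithfulness of that action in the $r=3$ case. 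Note that unlike the $r=1,2$ cases, $-I_2\notin \Gamma_1(3)$, so no residual $\pm I_2$-ambiguity appears.

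Lifting back to $\wGamma(3)\to \Aut$, the kernel must be contained in the preimage of $\{I_2\}\subset \Gamma_1(3)$, which is the center of $\wGamma(3)$ generated by $\mfc^2=(\mfa\mfb)^3$. By Theorem \ref{P2}, $\mfc^2$ acts on $\B(\ddot{G}_2)$ by conjugation with $\T^2_{w_\circ}$, and the explicit formula $\mfa\mfb\mfa\mfb\mfa(\Ti)=\T^2_{w_\circ}\Ti\T_{w_\circ}^{-2}$ rules out triviality provided $\T^2_{w_\circ}$ fails to commute with $\Ti$ inside $\B(\ddot{G}_2)$. I expect this non-triviality verification to be the main obstacle: although $\T^2_{w_\circ}$ is central in the finite subgroup $B(G_2)$, one must argue via the Coxeter-type presentation that it does not centralize the affine generators $\Ti,\Tiii$. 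The cleanest route is probably to exploit the embedding of Proposition \ref{thm: rank 1 embedding} and compare with the completely analogous (already established) non-triviality statement from Theorem \ref{thm: auto1}iii) applied to $\dddot{G}_2$, which guarantees that conjugation by $\T_{w_\circ}$ is a nontrivial automorphism of that related group.
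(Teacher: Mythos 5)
Your proposal follows the paper's own proof in its main structure: well-definedness of $\wGamma(3)\to\Aut(\B(\ddot{G}_2);G_2)$ from Theorem \ref{thm: auto2} together with the relation $(\mfa\mfb)^3=(\mfb\mfa)^3$ of Theorem \ref{P2} (using the presentation of $\wGamma(3)$ from \S\ref{sec: rank 2 braid}), and injectivity via the commutative square over $\Out(\Cox(\ddot{G}_2))$, the identification of the bottom arrow with the canonical action of \cite{ISEALA} and its faithfulness, which confines the kernel of $\wGamma(3)\to\Aut(\B(\ddot{G}_2);G_2)$ to the center of $\wGamma(3)$. Up to that point you are doing exactly what the paper does, since its proof of this theorem simply refers back to the proof of Theorem \ref{thm: auto1main}.

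The step that would fail is your proposed way of finishing, i.e.\ of checking that the center acts faithfully. Proposition \ref{thm: rank 1 embedding} embeds $\B(\ddot{B}_2)$ only into $\B(\ddot{X}_n)$ for $X_n$ \emph{double laced} and is explicitly declared trivial for $\ddot{G}_2$; it yields no map relating $\B(\ddot{G}_2)$ to any group of type $\dddot{X}_n$. Likewise, Theorem \ref{thm: auto1}iii) concerns $\B(\dddot{G}_2)\cong\Atilde(G_2^{(1)})$, which is a different group from $\B(\ddot{G}_2)\cong\Atilde(D_4^{(3)})$, and the paper supplies no homomorphism through which non-centrality of $\T_{w_\circ}$ in the former could be transported to the latter, so that comparison proves nothing here. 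There is also a quantifier issue: the kernel is a subgroup of an infinite cyclic center, so you must exclude that some \emph{proper power} of $(\mfa\mfb)^3$ acts trivially, i.e.\ you need conjugation by $\T_{w_\circ}^{2k}$ to be a nontrivial automorphism for every $k\neq 0$, not just $k=1$. The intended argument is internal to $\B(\ddot{G}_2)$ and mirrors the role of Theorem \ref{thm: auto1}ii) in the proof of Theorem \ref{thm: auto1main}: by Theorem \ref{P2} the center acts by conjugation by powers of $\T_{w_\circ}^{2}$, and no nontrivial such power is central — e.g.\ one checks that $\T_{w_\circ}^{2k}$ does not commute with $\Ti$, a verification that takes place inside the affine Artin subgroup generated by $\Ti$, $\T_{i_\th}$, $\T_{i_\ph}$ (of Coxeter type $G_2^{(1)}$) — with no detour through $\dddot{G}_2$ needed.
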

\begin{proof}
Straightforward from Theorem \ref{thm: auto2} and Theorem \ref{P2}.  The injectivity claim follows as in the proof of Theorem \ref{thm: auto1main}.
\end{proof}
\subsection{} As an immediate consequence of Theorem \ref{thm: auto1}i), Theorem \ref{thm: auto1main}, Theorem \ref{thm: auto2n2}, Theorem \ref{thm: auto2}, Theorem \ref{thm: auto2main}, and Theorem \ref{thm: auto3main} we can state the following result.
\begin{Thm}\label{thm: autoall}
The group $\wXi(r)$ acts faithfully by morphisms or anti-morphisms on $\B(\DDDot{X}_n)$ (for $r=1$), $\B(\fH{X}_n)$ (for $r=2$, $X_n$ double laced), and $\B(\fH{G}_2)$ (for $r=3$).
Similarly, the group $\wXi(2)^\prime$ acts faithfully by morphisms or anti-morphisms on $\B(\DDot{C}_n)$ and $\B(\DDot{C}_n)^{c}$.
\end{Thm}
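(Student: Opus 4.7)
\medskip

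The strategy is to extend the faithful actions of $\wGamma(r)$ constructed in Theorems \ref{thm: auto1main}, \ref{thm: auto2main}, \ref{thm: auto3main} (respectively Theorem \ref{thm: auto2n2}) by sending the additional generator $\mfv(r)$ (respectively $\mfv(1)$) to the anti-involution $\mfe$ built earlier in this section. Since $\mfe$ is an anti-morphism while the images of elements of $\wGamma(r)$ are honest morphisms, the combined assignment lands in the group $\Aut^\pm(\B)$ of bijections $\B\to\B$ which are either morphisms or anti-morphisms; this group fits into a short exact sequence
\begin{equation*}
1\to \Aut(\B)\to \Aut^\pm(\B)\to \Z/2\to 1,
\end{equation*}
with $\mfe$ mapping to the nontrivial class.

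To check well-definedness, I need to verify the defining relations of the semi-direct product $\wXi(r)=\wGamma(r)\rtimes\langle\mfv(r)\rangle$, namely $\mfv(r)^2=1$ and $\mfv(r)\mfu_1\mfv(r)=\mfu_2^{-r}$. The first relation is immediate: $\mfe$ is defined at the level of diagrams as an order-two map, so $\mfe^2=\mathrm{id}$ on each $\B(\dddot{X}_n)$ and $\B(\ddot{X}_n)$. The second relation translates to $\mfe\mfa\mfe=\mfb^{-1}$ (after applying $\mfe$ to both sides of the identity in $\wXi(r)$); for $r=1$ this is precisely the content of Theorem \ref{thm: auto1}i), while for $r=2,3$ it is Theorem \ref{thm: auto2}. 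The $r$-braid relation between $\mfa$ and $\mfb$ is already incorporated in the $\wGamma(r)$-action.

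Faithfulness is essentially automatic. Composing the extended homomorphism with the projection $\Aut^\pm(\B)\twoheadrightarrow\Z/2$ yields the natural surjection $\wXi(r)\twoheadrightarrow\Z/2$ whose kernel is $\wGamma(r)$, because $\mfe$ is genuinely an anti-morphism (i.e., not a morphism). Hence any element of the kernel of $\wXi(r)\to\Aut^\pm(\B)$ lies in $\wGamma(r)$, and on that subgroup the action is already known to be faithful.

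For the $\wXi(2)'$ case, the extension proceeds identically: first restrict the $\wGamma(1)$-action on $\B(\dddot{C}_n)$ to the normalizing subgroup $\wGamma(2)'$ (which descends to $\B(\dddot{C}_n^*)$ and $\B(\dddot{C}_n^*)^c$ by Theorem \ref{thm: auto2n2}), then extend by sending $\mfv(1)\mapsto\mfe$. The only additional verification is that $\mfe$ preserves the extra relations \eqref{eq24} and \eqref{eq26}; but $\mfe$ fixes the central element $\C$ and fixes $\varTheta_{02}$ (the nontrivial action of $\mfe$ on generators only swaps $\varTheta_{01}\leftrightarrow\varTheta_{03}$), so both relations are preserved, and $\mfe$ descends to an anti-involution of $\B(\dddot{C}_n^*)$ and of $\B(\dddot{C}_n^*)^c$. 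The main (mild) obstacle I anticipate is simply bookkeeping: making sure that in the $\wXi(2)'$ case the conjugation action of $\mfv(1)$ actually normalizes $\wGamma(2)'$ on the nose, which reduces to the matrix check that the involution $\begin{pmatrix}a&b\\c&d\end{pmatrix}\mapsto\begin{pmatrix}d&c\\b&a\end{pmatrix}$ preserves the congruence conditions $a+d\equiv b+c\equiv 0\pmod 2$ defining $\Gamma_1(2)'$.
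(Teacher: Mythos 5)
Your proposal is correct and is essentially the paper's own proof, which simply extends the $\wGamma(r)$-actions of Theorems \ref{thm: auto1main}, \ref{thm: auto2n2}, \ref{thm: auto2main}, \ref{thm: auto3main} by sending $\mfv(r)$ (resp.\ $\mfv(1)$) to the anti-involution $\mfe$, the compatibility being exactly the identities $\mfe\mfa\mfe=\mfb^{-1}$, $\mfe\mfb\mfe=\mfa^{-1}$ of Theorems \ref{thm: auto1}i) and \ref{thm: auto2}. Your additional checks (faithfulness via the morphism/anti-morphism dichotomy on the non-abelian target, and $\mfe$ preserving \eqref{eq24}, \eqref{eq26} since it fixes $\C$ and $\varTheta_{02}$) are precisely the details the paper leaves implicit.
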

\begin{proof}
In each situation, extend the action of $\wGamma(r)$ by mapping $\mfv(r)$ to $\mfe$.
\end{proof}

\begin{Rem}
The action of the group $\wGamma(r)$ descends to the canonical action (see \cite{ISEALA}) of $\Gamma_1(r)$ on $\Cox(\DDDot{X}_n)$ (for $r=1$), $\Cox(\fH{X}_n)$ (for $r=2$, $X_n$ double laced), and $\Cox(\fH{G}_2)$ (for $r=3$). This can be directly verified by showing that the action of $\mfa$ and $\mfb$ coincides with the canonical action of $u_{12}$ and $u_{21}$ (for $r=1$), $u_{12}$ and $u_{21}^2$ (for $r=2$, $X_n$ double laced), and $u_{12}$ and $u_{21}^3$ (for $r=3$).  Similarly, the action of $\wGamma(2)^\prime$ descends to the canonical action of $\Gamma_1(2)^\prime$ on $\Cox(\DDot{C}_n)$.
\end{Rem}

\section{(Anti)Involutions}\label{sec: involutions}
\subsection{} Given the important role played by (anti)involutions in the theory of Hecke algebras (e.g. the Kazhdan-Lusztig involution) and their close relationship with the construction of invariant Hermitian forms in this context, it is perhaps useful to give an account of the (anti)involutions that originate from the constructions in \S\ref{sec: auto}.

We will be interested in involutions $\B(\DDDot{X}_n)\to \B(\DDDot{X}_n)$ and $\B(\fH{X}_n)\to \B(\fH{X}_n^\mfe)$ which map $\T_i$ , the generators corresponding to finite nodes, to $\T_{\mfe(i)}^{-1}$. By composing such an involution with the inverse map, we can equivalently describe the anti-involutions that map $\T_i$ to $\T_{\mfe(i)}$. The basic anti-involutions (defined below) are precisely the anti-involutions with this property.
\begin{Def}
Let $\gamma$ be an element of $\Aut(\B(\DDDot{X}_n);X_n)$ or $\Aut(\B(\fH{X}_n);X_n)$ with the property that
\begin{equation}\label{eq27}
\mfe\gamma\mfe=\gamma^{-1}.
\end{equation}
The map $\mfe\gamma: \B(\DDDot{X}_n)\to \B(\DDDot{X}_n)$ and respectively $\mfe\gamma: \B(\fH{X}_n)\to \B(\fH{X}_n^\mfe)$, which is a group anti-morphism whose inverse is 
$\gamma^{-1}\mfe: \B(\DDDot{X}_n)\to \B(\DDDot{X}_n)$ and respectively $\gamma^{-1}\mfe: \B(\fH{X}_n^\mfe)\to \B(\fH{X}_n)$,  will be called a basic anti-involution of $\B(\DDDot{X}_n)$ and respectively $\B(\fH{X}_n)$. The basic anti-involutions for $\B(\DDot{C}_n)$ and $\B(\DDot{C}_n)^{c}$ are defined in the same manner.
\end{Def}

\subsection{} For $1\leq r\leq 3$, let 
$$
\Upsilon_1(r)=\{A\in \Gamma_1(r)~|~e(r)Ae(r)=A^{-1}\}.
$$
A simple computation shows that 
$$
\Upsilon_1(r)=\left\{ \begin{bmatrix}a & b\\ -rb & d\end{bmatrix}~\Bigg\vert~a,b,d,\in \Z\right\}.
$$
We denote by $\wUpsilon(r)$  the inverse image of $\Upsilon_1(r)$  inside $\wGamma(r)$. Let  $\Upsilon_1(2)^\prime=\Upsilon_1(1)\cap \Gamma_1(2)^\prime$ and consider $\wUpsilon(2)^\prime=\wUpsilon(1)\cap\wGamma(2)^\prime$, its pre-image inside $\wGamma(2)^\prime$.
 \begin{Prop}\label{P3}
The set $\wUpsilon(r)$ is the subset of $\wGamma(r)$ consisting of elements $\widetilde{\gamma}$ with the property that
 \begin{equation}\label{eq28}
\mfv\widetilde{\gamma}\mfv=\widetilde{\gamma}^{-1},
\end{equation}
inside $\wXi(r)$. Consequently, $\wUpsilon(2)^\prime$ is the subset of $\wGamma(2)^\prime$  consisting of elements satisfying the same property inside $\wXi(2)^\prime$.
 \end{Prop}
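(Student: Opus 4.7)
The plan is to prove both inclusions of the proposition, with the trivial direction being immediate and the substantive work concentrated in the other. Applying $\pi: \wXi(r) \to \Xi_1(r)$ to the identity $\mfv\widetilde{\gamma}\mfv = \widetilde{\gamma}^{-1}$ yields $e(r)\pi(\widetilde{\gamma})e(r) = \pi(\widetilde{\gamma})^{-1}$, so $\pi(\widetilde{\gamma}) \in \Upsilon_1(r)$ and hence $\widetilde{\gamma} \in \wUpsilon(r)$. Conversely, given $\widetilde{\gamma} \in \wUpsilon(r)$, set $z := \mfv\widetilde{\gamma}\mfv \cdot \widetilde{\gamma}$; then $\pi(z) = e(r)\pi(\widetilde{\gamma})e(r)\pi(\widetilde{\gamma}) = I_2$, so $z$ lies in $\ker\pi$, and the goal becomes to force $z = 1$. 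First I would identify the kernel: being the kernel of a central extension, $\ker\pi$ is central in $\wGamma(r)$; combining $Z(\wGamma(r)) = \<\mfc\>$ for $r = 1,2$ or $\<\mfc^2\>$ for $r = 3$ with the known identities $\pi(\mfc) = (u_{12}u_{21})^3 = (u_{12}u_{21}^2)^2 = -I_2$ (for $r = 1,2$) and $\pi(\mfc^2) = (u_{12}u_{21}^3)^3 = I_2$ shows that $\ker\pi = \<\mfc^2\>$ in every case.

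The crux is then a pair of distinct computations of $\mfv z \mfv$. Extrinsically, the defining relation $\mfv\mfu_1\mfv = \mfu_2^{-r}$ (hence $\mfv\mfu_2^r\mfv = \mfu_1^{-1}$) produces $\mfv(\mfu_1\mfu_2^r)\mfv = (\mfu_1\mfu_2^r)^{-1}$; raising to the power that builds $\mfc^2$ yields $\mfv\mfc^2\mfv = \mfc^{-2}$ in each of $r = 1,2,3$, so $\mfv$ inverts the generator of $\ker\pi$ and in particular $\mfv z \mfv = z^{-1}$. Intrinsically, using $\mfv^2 = 1$ in $\wXi(r)$ and that $z$ is central in $\wGamma(r)$,
$$\mfv z \mfv = \mfv(\mfv\widetilde{\gamma}\mfv\cdot\widetilde{\gamma})\mfv = \widetilde{\gamma}\cdot\mfv\widetilde{\gamma}\mfv = \widetilde{\gamma}\cdot z\widetilde{\gamma}^{-1} = z.$$
Comparing the two, $z = z^{-1}$, whence $z^2 = 1$; since $\<\mfc^2\> \simeq \Z$ is torsion-free, $z = 1$, yielding $\mfv\widetilde{\gamma}\mfv = \widetilde{\gamma}^{-1}$ as required.

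For the statement about $\wUpsilon(2)^\prime$, note that $\wXi(2)^\prime$ is by construction a subgroup of $\wXi(1)$ and $\wUpsilon(2)^\prime = \wUpsilon(1) \cap \wGamma(2)^\prime$, so the claim reduces verbatim to the case $r = 1$ already established. The main obstacle lies in securing the extrinsic computation that $\mfv$ inverts $\mfc^2$ in each of the three rank-two braid groups; once this is in hand, the remainder is formal, but that single identity is what converts the central ambiguity $z$ into a torsion equation that the torsion-free kernel cannot accommodate.
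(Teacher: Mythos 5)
Your proposal is correct, but it proves the hard inclusion by a genuinely different mechanism than the paper. The paper first observes that central elements satisfy \eqref{eq28}, so it suffices to exhibit \emph{one} element satisfying \eqref{eq28} in each fiber $\pi^{-1}(\gamma)$, $\gamma\in\Upsilon_1(r)$, and then runs an induction on the off-diagonal entry $|b|$ of $\gamma$: the base cases are handled by $\mfc$, $\mfc^2$ and (for $r=1$) the explicit words $\mfu_1\mfu_2\mfu_1^N$, $\mfu_1^N\mfu_2\mfu_1$, and the inductive step multiplies on the two sides by $\mfu_1^{\pm1}$, $\mfu_2^{\pm r}$ to decrease $|b|$. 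You instead isolate the defect $z=\mfv\widetilde{\gamma}\mfv\cdot\widetilde{\gamma}\in\ker\pi$ and kill it by playing two computations of $\mfv z\mfv$ against each other: the formal one (using $\mfv^2=1$ and centrality of the kernel) gives $\mfv z\mfv=z$, while the identification $\ker\pi=\<\mfc^2\>$ together with $\mfv(\mfu_1\mfu_2^r)\mfv=(\mfu_1\mfu_2^r)^{-1}$ gives $\mfv z\mfv=z^{-1}$, whence $z^2=1$ and $z=1$ by torsion-freeness; all inputs (the maps being central extensions, the centers of $\wGamma(r)$, the matrix identities $\pi(\mfc)=-I_2$, $\pi(\mfc^2)=I_2$, and $\mfc$ having infinite order, which follows from the identification of $\wGamma(r)$ with a subgroup of $\widetilde{\SL(2,\Re)}$) are available from \S 2. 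Your route is shorter, uniform in $r$, and avoids all matrix case analysis and induction; the paper's route is more constructive, producing explicit braid words in each fiber that satisfy \eqref{eq28}, which is convenient when one wants concrete representatives of the basic (anti)involutions. Your reduction of the $\wUpsilon(2)^\prime$ statement to the case $r=1$ via $\wXi(2)^\prime\le\wXi(1)$ and $\wUpsilon(2)^\prime=\wUpsilon(1)\cap\wGamma(2)^\prime$ is also correct and matches the paper's ``consequently''.
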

 \begin{proof}
Note that the set of elements satisfying \eqref{eq28} is a subset of $\wUpsilon(r)$. We are left to showing the reverse inclusion.  Remark that any central element of $\wGamma(r)$  satisfies \eqref{eq28}, so it is enough to show that the pre-image of any
$$
\gamma=\begin{bmatrix}a & b\\ -rb & d\end{bmatrix}\in \Upsilon_1(r)
$$ 
contains one element that satisfies \eqref{eq28}. We show this by induction on $|b|\geq 0$.

For the initial check, if $b=0$ then $\mfc, \mfc^2$ are the desired elements of $\wGamma(r)$. If $r=1$ we also check the case when $|b|=1$. In this case, note that for all $N\in \Z$ we have $\mfu_1\mfu_2\mfu_1^N=\mfu_2^N\mfu_1\mfu_2$, $\mfu_1^N\mfu_2\mfu_1=\mfu_2\mfu_1\mfu_2^N$ and 
$$
\pi(\mfu_1\mfu_2\mfu_1^N)=\begin{bmatrix}0 & -1\\ 1 & -N+1\end{bmatrix} \quad \text{and}\quad \pi(\mfu_1^N\mfu_2\mfu_1)=\begin{bmatrix}-N+1 & -1\\ 1 & 0\end{bmatrix}.
$$
Therefore, the elements $\mfu_1\mfu_2\mfu_1^N$, $\mfu_1^N\mfu_2\mfu_1$ and their inverses satisfy \eqref{eq28} and at least one of them is in the pre-image of any possible matrix $\gamma$ with $|b|=1$.

For the induction step, let $|b|\geq 2$ if $r=1$  and $|b|\geq 1$ if $r=2,3$. Then, $a,d$ are necessarily non-zero. Remark that 
\begin{align*}
u_{12}\gamma u_{21}^r&=\begin{bmatrix}a+r(2b-d) & b-d\\ -r(b-d) & d\end{bmatrix},\quad & u_{21}^r\gamma u_{12}&=\begin{bmatrix}a & b-a\\ -r(b-a) & d+r(2b-a)\end{bmatrix},\\
u^{-1}_{12}\gamma u_{21}^{-r}&=\begin{bmatrix}a-r(2b+d) & b+d\\ -r(b+d) & d\end{bmatrix},\quad &u^{-r}_{21}\gamma u_{12}^{-1}&=\begin{bmatrix}a & b+a\\ -r(b+a) & d-r(2b+a)\end{bmatrix}.
\end{align*}
If $b>0$ then either $|b-|a||<|b|$ or $|b-|d||<|b|$ (or both). If $b<0$ then either $|b+|a||<|b|$ or $|b+|d||<|b|$ (or both). By the induction hypothesis, there exists $\widetilde{\eta}\in \wGamma(r)$ that satisfies property \eqref{eq28} such that at least one of the elements, all of which satisfy \eqref{eq28},
$$
\mfu_{1}\widetilde{\eta} \mfu_{2}^r, \quad \mfu_{2}^r\widetilde{\eta} \mfu_{1},\quad \mfu^{-1}_{1}\widetilde{\eta} \mfu_{2}^{-r},\quad \mfu^{-r}_{2}\widetilde{\eta} \mfu_{1}^{-1},
$$
are in the pre-image of $\gamma$. 
 \end{proof}
\subsection{} We are now ready to describe the basic anti-involutions 
 
 \begin{Thm}
 The elements of $ \mfe\wUpsilon(r)\subset\wXi(r)$ act as (distinct) basic anti-involutions of $\B(\DDDot{X}_n)$ (for $r=1$), $\B(\fH{X}_n)$ (for $r=2$, $X_n$ double laced), and $\B(\fH{G}_2)$ (for $r=3$).
Similarly, the elements of $ \mfe\wUpsilon(2)^\prime\subset\wXi(2)^\prime$ act as (distinct) basic anti-involutions of $\B(\DDot{C}_n)$ and $\B(\DDot{C}_n)^{c}$.
 \end{Thm}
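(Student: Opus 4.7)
The plan is to reduce the statement directly to Theorem \ref{thm: autoall} and Proposition \ref{P3}. By Theorem \ref{thm: autoall}, in each of the cases listed, there is a faithful action of $\wXi(r)$ (respectively $\wXi(2)^\prime$) on the relevant double affine Artin group $\B$ in which the subgroup $\wGamma(r)$ (resp.\ $\wGamma(2)^\prime$) acts through automorphisms lying in $\Aut(\B;X_n)$ and the distinguished element $\mfv(r)$ acts through the anti-involution $\mfe$. In particular, if we let $\pi:\wXi(r)\to \Aut(\B)\cup(\text{anti-automorphisms of }\B)$ denote this action, then for any $\widetilde\gamma\in\wGamma(r)$ we may write $\gamma:=\pi(\widetilde\gamma)\in\Aut(\B;X_n)$, and $\pi(\mfe\widetilde\gamma)=\mfe\gamma$ is an anti-morphism of $\B$.

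Now suppose $\widetilde\gamma\in \wUpsilon(r)$. By Proposition \ref{P3}, $\mfv(r)\widetilde\gamma\mfv(r)=\widetilde\gamma^{-1}$ inside $\wXi(r)$. Applying the group morphism $\pi$, we obtain the corresponding relation
\[
\mfe\,\gamma\,\mfe=\gamma^{-1}
\]
in the group of (anti)automorphisms of $\B$. This is precisely the hypothesis in the definition of a basic anti-involution, and the composite $\mfe\gamma$ is the associated anti-involution of $\B$ (one checks immediately that $(\mfe\gamma)^2=\mfe\gamma\mfe\gamma=\gamma^{-1}\gamma=\mathrm{id}$). Hence every element of $\mfe\wUpsilon(r)$ acts as a basic anti-involution of $\B$.

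For the distinctness claim, suppose $\widetilde\gamma_1,\widetilde\gamma_2\in\wUpsilon(r)$ are such that $\mfe\widetilde\gamma_1$ and $\mfe\widetilde\gamma_2$ induce the same anti-involution of $\B$. Then $\pi(\mfe\widetilde\gamma_1)=\pi(\mfe\widetilde\gamma_2)$, hence $\pi(\widetilde\gamma_1\widetilde\gamma_2^{-1})=\mathrm{id}$; the faithfulness of $\pi$ asserted in Theorem \ref{thm: autoall} forces $\widetilde\gamma_1=\widetilde\gamma_2$. The arguments for $\B(\dddot{C}_n^*)$ and $\B(\dddot{C}_n^*)^c$ are identical after replacing $\wXi(r)$ and $\wUpsilon(r)$ by $\wXi(2)^\prime$ and $\wUpsilon(2)^\prime$ and invoking the restriction of the action established in Theorem \ref{thm: auto2n2}. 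There is essentially no obstacle here beyond this formal bookkeeping: all the real content is carried by the earlier faithfulness and conjugation statements, and the present theorem is their clean translation into the language of (anti)involutions.
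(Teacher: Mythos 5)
Your proposal is correct and follows exactly the route the paper takes: the paper's proof is the one-line "Straightforward from Theorem \ref{thm: autoall} and Proposition \ref{P3}," and you have simply spelled out the translation of the conjugation relation through the faithful $\wXi(r)$-action and used faithfulness for distinctness. Nothing is missing; this is the same argument with the bookkeeping made explicit.
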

\begin{proof}
Straightforward from Theorem \ref{thm: autoall} and Proposition \ref{P3}.
\end{proof}
\section{Other automorphisms}\label{sec: other}
\subsection{} In this section we give a brief account of some other automorphisms of $\B(\DDDot{X}_n)$ and $\B(\fH{X}_n)$ that are revealed by the Coxeter type presentation, being induced from  diagram automorphisms of the double affine Coxeter diagrams (not necessarily preserving the set of affine nodes).
\subsection{} 
Let us consider first the case of a Coxeter diagram of type $\DDDot{X}_n$. We define two maps on the set of generators of $\B(\DDDot{X}_n)$ by sending each generator corresponding a finite node to its inverse and by
 \begin{align*}
\mfe_{1}(\varTheta_{01})&=\varTheta_{02}^{-1}, & \mfe_1(\varTheta_{02})&=\varTheta_{01}^{-1}, &\mfe_1(\varTheta_{03})&=\T_{w_\circ}^{-1}\varTheta_{03}^{-1}\T_{w_\circ}, \\ 
\mfe_2(\varTheta_{01})&=\T_{w_\circ}\varTheta_{01}^{-1}\T_{w_\circ}^{-1}, & \mfe_2(\varTheta_{02})&=\varTheta_{03}^{-1}, & \mfe_2(\varTheta_{03})&=\varTheta_{02}^{-1}. 
\end{align*}
\begin{Prop}
The elements $\mfe_1,\mfe_2$ are order two automorphisms of $\B(\DDDot{X}_n)$. The group $\<\mfe_1,\mfe_2\>$  is isomorphic to the group freely generated by two order two elements. The automorphism $(\mfe_2\mfe_1)^3$ is inner and it generates a normal subgroup inside $\<\mfe_1,\mfe_2\>$. The quotient $\<\mfe_1,\mfe_2\>/\<(\mfe_2\mfe_1)^3\>$ is a subgroup of outer automorphisms of $\B(\DDDot{X}_n)$ isomorphic to the symmetric group $S_3$.
\end{Prop}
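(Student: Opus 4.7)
The plan is to verify the four assertions in sequence, using Theorem \ref{thm: auto1} and Theorem \ref{thm: auto1main}.

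First, to show that $\mfe_1$ and $\mfe_2$ extend to well-defined automorphisms of $\B(\dddot{X}_n)$, I would verify they preserve the defining relations of Definition \ref{def: untwisted}. A morphism inverting each generator preserves all braid relations, since $aba\cdots = bab\cdots$ is equivalent upon inversion to $a^{-1}b^{-1}a^{-1}\cdots = b^{-1}a^{-1}b^{-1}\cdots$. The nontrivial checks are that $\mfe_i(\C)$ is central and, when $\ell_0 = 2$, that the image of the elliptic braid relation \eqref{ellbraid} holds; both are direct computations exploiting the centrality of $\C$ and the identity $\T_{w_\circ}\varTheta\T_{w_\circ}^{-1} = \varTheta$, which follows from $w_\circ s_\th w_\circ^{-1} = s_\th$ applied to reduced expressions. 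Involutivity $\mfe_i^2 = \mathrm{id}$ is then immediate from the definitions on generators.

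Second, $(\mfe_2\mfe_1)^3$ being inner is shown by explicit computation on $\varTheta_{01}, \varTheta_{02}, \varTheta_{03}$ (the finite generators being fixed throughout), along the lines of the $(\mfa\mfb)^3$ computation in Theorem \ref{thm: auto1}ii)--iv); one identifies $(\mfe_2\mfe_1)^3$ with conjugation by an explicit element $g$ built from $\T_{w_\circ}$. The normality of $\<(\mfe_2\mfe_1)^3\>$ in $G := \<\mfe_1, \mfe_2\>$ is then automatic from
\begin{equation*}
\mfe_1(\mfe_2\mfe_1)\mfe_1^{-1} = \mfe_1\mfe_2 = (\mfe_2\mfe_1)^{-1}
\end{equation*}
(and the symmetric identity for $\mfe_2$), so conjugation by either involution inverts $\mfe_2\mfe_1$ and hence preserves $\<(\mfe_2\mfe_1)^3\>$.

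Third, to show $G \cong D_\infty$, it suffices to see that $\mfe_2\mfe_1$ has infinite order in $\Aut(\B(\dddot{X}_n))$. From the previous step, $(\mfe_2\mfe_1)^3 = \mathrm{Inn}_g$, so the order of $\mfe_2\mfe_1$ in $\Aut$ equals $3$ times the order of $g$ modulo $Z(\B(\dddot{X}_n))$. I would then verify that no nonzero power of $g$ is central by projecting to $\Cox(\dddot{X}_n)$ via Corollary \ref{cor: Cox}: the image of $g^n$ is easily shown to be a non-central element of the double affine Weyl group for every $n \neq 0$. Combined with $\mfe_1^2 = \mfe_2^2 = 1$, this gives $G \cong \Z/2 \ast \Z/2 = D_\infty$.

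Finally, the quotient $G/\<(\mfe_2\mfe_1)^3\>$ admits the presentation $\langle \bar\mfe_1, \bar\mfe_2 \mid \bar\mfe_1^2 = \bar\mfe_2^2 = (\bar\mfe_2\bar\mfe_1)^3 = 1\rangle$ of $S_3$. Since $(\mfe_2\mfe_1)^3$ is inner, the induced map $G/\<(\mfe_2\mfe_1)^3\> \to \Out(\B(\dddot{X}_n))$ is well-defined; its kernel is a normal subgroup of $S_3$. To show it is trivial, it suffices to rule out $\mfe_1\mfe_2$ being inner (the only other nontrivial normal subgroups being the alternating $A_3$ generated by $\overline{\mfe_1\mfe_2}$ and $S_3$ itself). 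This is done by descending to $\Aut(\Cox(\dddot{X}_n))$ and showing that the image of $\mfe_1\mfe_2$ in $\Out(\Cox(\dddot{X}_n))$ is nontrivial, via the faithful map $\Gamma_1(1)/\{\pm I_2\} \hookrightarrow \Out(\Cox(\dddot{X}_n))$ from Theorem \ref{thm: auto1main}. The main obstacle throughout is the explicit identification of $g$ realizing $(\mfe_2\mfe_1)^3$ as an inner automorphism and the attendant non-centrality and non-innerness verifications.
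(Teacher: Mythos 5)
Your overall architecture (well-definedness on generators, involutivity, identification of $(\mfe_2\mfe_1)^3$ as an inner automorphism, dihedral structure, and the $S_3$ quotient mapping to $\Out$) is sound, and since the paper's own proof consists of the single sentence ``All verifications are straightforward,'' filling in details along these lines is exactly what is intended. However, one concrete step in your outline would fail. The conjugating element you denote $g$ is $\T_{w_\circ}^{2}$: a direct computation gives $(\mfe_2\mfe_1)(\varTheta_{01})=\varTheta_{03}$, $(\mfe_2\mfe_1)(\varTheta_{02})=\T_{w_\circ}\varTheta_{01}\T_{w_\circ}^{-1}$, $(\mfe_2\mfe_1)(\varTheta_{03})=\T_{w_\circ}\varTheta_{02}\T_{w_\circ}^{-1}$, whence $(\mfe_2\mfe_1)^3$ is conjugation by $\T_{w_\circ}^{2}$, a central element of $B(X_n)$. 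Since $w_\circ^2=1$, the image of $g^{n}=\T_{w_\circ}^{2n}$ in $\Cox(\dddot{X}_n)$ is the identity for every $n$, so projecting to the double affine Weyl group via Corollary \ref{cor: Cox} cannot possibly show that $g^{n}$ is non-central; your proposed verification of the infinite order of $\mfe_2\mfe_1$ (hence of $\<\mfe_1,\mfe_2\>\cong\Z/2\ast\Z/2$) is vacuous at exactly this point.

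The fact itself is true and can be obtained from results already in the paper: by Theorem \ref{thm: auto1}ii) the element $\mfc^{2n}=(\mfa\mfb)^{6n}$ of $\wGamma(1)$ acts on the affine generators by conjugation by $\T_{w_\circ}^{2n}$ and trivially on $B(X_n)$, so if some $\T_{w_\circ}^{2n}$ ($n\neq 0$) were central in $\B(\dddot{X}_n)$ then $\mfc^{2n}$ would act trivially, contradicting the faithfulness of $\wGamma(1)\to\Aut(\B(\dddot{X}_n);X_n)$ in Theorem \ref{thm: auto1main} (the center of the type $A_2$ braid group is infinite cyclic). Alternatively one can check non-centrality directly in $\Atilde(X_n^{(1)})$, e.g.\ in rank one $\B(\dddot{A}_1)/\<\C\>$ is free of rank three and the image of $\varTheta^{2n}$ is visibly non-central. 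A smaller remark on your last step: to conclude via Theorem \ref{thm: auto1main} that $\mfe_1\mfe_2$ is not inner, you still have to identify its class modulo inner automorphisms with the image of a specific element of $\Gamma_1(1)$ distinct from $\pm I_2$ (or, alternatively, compute its induced automorphism of the double affine Weyl group and check non-innerness there directly, using that the centralizer of $\W$ in $\Wdaff$ is only $Z(\W)\times\<\tau_\d\>$); this is a finite computation but it is not yet contained in your sketch.
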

\begin{proof}
All verifications are straightforward.
\end{proof}
\subsection{} The double Coxeter diagram of type $\DDDot{A}_1$ has nontrivial automorphisms that do not preserve the set of affine nodes and these also induce automorphisms of $\B(\DDDot{A}_1)$. The full group of automorphisms of $\B(\DDDot{A}_1)$ is very large. It can be described as the subgroup of the group of automorphisms of the free group of rank $4$ (generated by $\varTheta_{01}$, $\varTheta_{02}$, $\varTheta_{03}$, $\varTheta$) that stabilizes $\{ \C, \C^{-1} \}$. It is easy to construct subgroups of automorphisms of $\B(\DDDot{A}_1)$ that are isomorphic to the cyclic group of order four, or to the Klein four-group.

\subsection{} The only nontrivial automorphism induced from automorphisms of double Coxeter diagrams of type $\DDot{C}_n$, $n\geq 2$, $\fH{F}_4$ and $\fH{G}_2$ is the anti-involution $\mfe$ composed with the map the inverts each generator. This automorphism corresponds to the unique automorphism of the double Coxeter diagram that swaps the two affine nodes.

The only nontrivial automorphism of the double Coxeter diagrams of type $\fH{B}_n/\fH{C}_n$, $n\geq 3$ fixes all nodes except the affine node of degree three and the finite node connected to the affine node of degree four, which are swapped. The induced map on generators, composed with the map that acts by inverting the generators is, as it can be directly verified, an automorphism of $\B(\fH{B}_n/\fH{C}_n)$.

The automorphism group of the double Coxeter diagram of type $\fH{B}_2/\fH{C}_2$ is $D_8$, the dihedral group of order eight.  Note that in this case $\varThetap$ and $\varPhip$ are precisely the generators associated to the finite nodes. More precisely, $\varThetap$ labels the finite node connected to $\Tiii$ and $\varPhip$ labels the finite node connected to $\Ti$. We define two maps on the set of generators of $\B(\fH{B}_2/\fH{C}_2)$ by
 \begin{align*}
\mfs(\Ti)&=\varPhip^{-1}, & \mfs(\Tiii)&=\varThetap^{-1}, &\mfs(\varThetap)&=\Tiii^{-1}, &\mfs(\varPhip)&=\Ti^{-1},\\ 
\mfr(\Ti)&=\varPhip, & \mfr(\Tiii)&=\Ti, &\mfr(\varThetap)&=\Tiii, &\mfr(\varPhip)&=\varThetap. 
\end{align*}
\begin{Prop}
The elements $\mfs,\mfr$ are  automorphisms of $\B(\fH{B}_2/\fH{C}_2)$ of order two and, respectively, four, and satisfy the relation $\mfs\mfr\mfs=\mfr^{-1}$. The group $\<\mfs,\mfr\>$  is isomorphic to $D_8$ and consists of non-inner automorphisms.
\end{Prop}
\begin{proof}
All verifications are straightforward.
\end{proof}

\subsection{} 

The automorphism group of the double Coxeter diagram of type $\DDot{A}_1$ is the permutation group $S_3$ (or $D_6$, the dihedral group of order six).  Note that in this case $\varTheta$ is precisely the generator associated to the finite node. We define two maps on the set of generators of $\B(\DDot{A}_1)$ by
 \begin{align*}
\mfs(\Ti)&=\varTheta^{-1}, & \mfs(\Tiii)&=\Tiii^{-1}, &\mfs(\varTheta)&=\Ti^{-1}, \\ 
\mfr(\Ti)&=\Tiii, & \mfr(\Tiii)&=\varTheta, &\mfr(\varTheta)&=\Ti. 
\end{align*}
\begin{Prop}
The elements $\mfs,\mfr$ are  automorphisms of $\B(\DDot{A}_1)$ of order two and, respectively, three, and satisfy the relation $\mfs\mfr\mfs=\mfr^{-1}$. The group $\<\mfs,\mfr\>$  is isomorphic to $S_3$ and consists of non-inner automorphisms.
\end{Prop}
\begin{proof}
All verifications are straightforward.
\end{proof}
The full group of automorphisms of $\B(\DDot{A}_1)$ is, again, very large. It can be described as the subgroup of the group of automorphisms of the free group of rank 3 (generatd by $\Ti$, $\Tiii$, and $\varTheta$) that stabilizes $\{ \C, \C^{-1} \}$.

 \section{Reductive group data}\label{sec: reductive-DABG}

\subsection{} One construction in the literature \cite{HaiChe} attaches slightly more general double affine Artin groups to data typically used to describe reductive groups. We summarize this construction (following \cite{HaiChe}, up to duality) emphasising the relationship with the double affine Artin groups.

\subsection{}
A reductive datum is a triple $(Y,R_0,R_0^\vee)$  such that $Y$ is a free $\Z$-module of $\rank(Y)\geq n$, $R_0\subset Y$ is an irreducible finite root system of rank $n$, and $R_0^\vee\subset Y^\vee=\Hom_\Z(Y,\Z)$ is the dual root system. It is understood that we are provided with a bijection between $R_0$ and $R_0^\vee$, so that $R_0^\vee$ can be considered as the set of coroots associated to $R_0$. If $R_0$ spans the vector space $Y\otimes_\Z \Rat$ we say that the root datum  $(Y,R_0,R_0^\vee)$ is semisimple.

Let $Q_0\subset Y$, $Q_0^\vee\subset Y^\vee$, and $W_0$ be the root lattice, coroot lattice, and the Weyl group of $R_0$, respectively. As before, we fix a basis $\{\a_i\}_{1\leq i\leq n}$ of $R_0$ and denote by $\{\a_i^\vee\}_{1\leq i\leq n}$ the corresponding coroots. Note that  $Y$ and $Y^\vee$ are stable under the usual action of $W_0$ by reflections. The affine Weyl group associated to $(Y,R_0,R_0^\vee)$ is defined as $W_0\ltimes Y^\vee$. This contains  $W_0\ltimes Q_0^\vee$ as a normal subgroup. If $\mathbf{\Pi}$ denotes their quotient (which is isomorphic to $Y^\vee/Q_0^\vee$) then
$$
W_0\ltimes Y^\vee\cong \mathbf{\Pi}\ltimes (W_0\ltimes Q_0^\vee).
$$

The Smith Normal Form for the inclusion $Q_0\subset Y$ of free $\Z$-modules produces a basis $$\{e_1,\dots,e_{\rank(Y)}\}$$ of $Y$ and integers $k_1,\dots,k_n$ such that $\{k_1e_1,\dots,k_ne_n\}$ is a basis of $Q_0$. Furthermore, since the pairing between $Q_0$ and $Q_0^\vee$ is non-degenerate it can be arranged that the elements of $Q_0^\vee$ vanish on $\{e_{n+1},\dots,e_{\rank(Y)}\}$. Therefore, if  $Y_s$ is the $\Z$-span of $\{e_1, \dots, e_n\}$ and $Y_c$ is the $\Z$-span of $\{e_{n+1}, \dots, e_{\rank(Y)}\}$, and $Y^\vee_s$, $Y^\vee_c$ the corresponding dual $  \Z$-modules,  we have the following splittings 
\begin{equation*}
Y= Y_s\oplus Y_c, \quad \text{ and }\quad Y^\vee= Y^\vee_s\oplus Y^\vee_c.
\end{equation*}
Remark that  $(Y_s,R_0,R_0^\vee)$ is a semisimple root datum. We write
$$
 (Y,R_0,R_0^\vee)= (Y_s,R_0,R_0^\vee)\oplus  (Y_c,\emptyset,\emptyset)
$$
to refer to these direct sum decompositions of $Y$ and $Y^\vee$. The notation $Y_s$ and $Y_c$ is suggestive of the fact that, when the reductive datum arises from a reductive group, they correspond to the characters of the semisimple and, respectively, central part of a maximal torus. This splitting induces a corresponding splitting 
$$
\mathbf{\Pi}=\mathbf{\Pi}_s\times Y^\vee_c,
$$
of $\mathbf{\Pi}$ into its torsion and free components. 

Since $W_0$ acts trivially on $Y^\vee_c$ the affine Weyl group associated to $(Y,R_0,R_0^\vee)$ is the direct product between the Weyl groups associated to $(Y_s,R_0,R_0^\vee)$ and $(Y_c, \emptyset, \emptyset)$. In other words,
$$W_0\ltimes Y^\vee\cong (W_0\ltimes Y^\vee_s)\times Y_c^\vee \cong(\mathbf{\Pi}_s\ltimes (W_0\ltimes Q_0^\vee))\times Y^\vee_c.$$
The lattice $Y^\vee_s$, being a $W_0$-stable lattice that takes integral values on $Q_0$ must lie between $Q_0^\vee$ and $P_0^\vee$, the coweight lattice of $R_0$. In consequence, 
$$
W_0\ltimes Q_0^\vee\leq W_0\ltimes Y^\vee_s\leq W_0\ltimes P_0^\vee.
$$

The affine Artin group $\A(Y, W_0)$ associated to $(Y,R_0,R_0^\vee)$ is defined as the group generated by the finite Artin group $\A(W_0)$ and the lattice $Y^\vee$  such that the following relations are satisfied for all $1\leq i\leq n$ and $\mu\in Y^\vee$
\begin{subequations} 
 \begin{alignat}{2}
& T_iY_\mu=Y_\mu T_i   \text{ if } \mu(\a_i)=0, \\
& T_iY_\mu T_i=s_i(Y_{\mu}) \text{ if } \mu(\a_i)=1.
\end{alignat}
\end{subequations}
Above, for $\mu\in Y^\vee$, the corresponding element of $\A(Y,W_0)$ is denoted by $Y_\mu$, and the group operation is denoted multiplicatively (i.e. $Y_{\mu}Y_{\nu}=Y_{\mu+\nu}$).

In this case also, the affine Artin group associated to $(Y,R_0,R_0^\vee)$ is the direct product between the Artin groups associated to $(Y_s,R_0,R_0^\vee)$ and $(Y_c, \emptyset, \emptyset)$. In other words,
$$\A(Y, W_0)\cong \A(Y_s,W_0)\times Y_c^\vee.$$

\subsection{} A double reductive datum consists of two reductive data $(X,R_1,R_1^\vee)$ and $(Y,R_2,R_2^\vee)$ together with
\begin{enumerate}[label={\alph*)}]
\item an isomorphism $(W_1, R_1)\cong (W_2,R_2)$ of Coxeter systems;
\item a $W_1$-invariant pairing $(X^\vee,Y^\vee)\to \Rat$;
\item an integer $m$ such that $(X^\vee,Y^\vee)\subseteq \frac{1}{m}\Z$.
\end{enumerate}
The double reductive datum is said to be semisimple if both $(X,R_1,R_1^\vee)$ and $(Y,R_2,R_2^\vee)$ are semisimple reductive data.

An isomorphism of Coxeter systems is always constructed from a bijection between fixed bases of $R_1$ and $R_2$. Therefore, bases of $R_1$ and $R_2$ are fixed and the corresponding simple roots are set in correspondence. We will use $W_0$ to refer to $W_1$ and $W_2$ identified through the isomorphism. The pairing between $X$ and $Y$, when restricted to a paring between $Q_1$ and $Q_2$,  is unique up to scaling and can be normalized so that for simple roots it matches (through the above mentioned bijection) the canonical pairing between $Q_1$ and $Q_1^\vee$. We assume that the pairing is normalized in this fashion and we also consider the induced pairing between $X^\vee$ and $Y^\vee$. Finally, note that, being $W_0$-invariant the pairing must satisfy
$$
(X^\vee_s,Y^\vee_c)=(X^\vee_c, Y^\vee_s)=0.
$$

Let $\widetilde{X}^\vee=X^\vee\oplus \frac{1}{m}\Z\d$, and consider the corresponding notation for $X_s$ and $X_c$. The action of $W_0$ on $X^\vee$ can be extended to $\widetilde{X}$ by letting $W_0$ act trivially on $\d$. We have an action of $Y^\vee$ on $\widetilde{X}^\vee$ defined by 
$$
Y_\mu(X_\l+a\d)=X_\l+a\d-(X_\l,Y_\mu)\d,
$$
for any $Y_\mu\in Y^\vee$ and $X_\l\in X^\vee$.

The double affine Weyl group associated to 
$\left((X,R_1,R_1^\vee)~ ; ~ (Y,R_2,R_2^\vee)\right)$  is defined as $(W_0\ltimes Y^\vee)\ltimes \widetilde{X}^\vee$. In particular, if $W_0\ltimes Y^\vee$ is presented as $\mathbf{\Pi}_2\ltimes (W_0\ltimes Q_2^\vee)$ we have an action of $\mathbf{\Pi}_2$ on $\widetilde{X}^\vee$ and the double affine Weyl group is isomorphic to 
$$
\mathbf{\Pi}_2\ltimes \left( (W_0\ltimes Q_2^\vee)\ltimes \widetilde{X}\right).
$$

The splittings on $X$ and $Y$ into their semisimple and central components induce an isomorphism between the double affine Weyl group and the almost-direct product of  $(W_0\ltimes Y_s^\vee)\ltimes \widetilde{X}_s^\vee$ and $Y_c\ltimes \widetilde{X}_c^\vee$  (i.e. direct product modulo the identification of the two copies of  $\frac{1}{m}\Z\d$ in $\widetilde{X}_s^\vee$ and $\widetilde{X}_c^\vee$). In other words, double affine Weyl group associated to double reductive datum $\left((X,R_1,R_1^\vee)~ ; ~ (Y,R_2,R_2^\vee)\right)$ is the almost-direct product of the double affine Weyl groups associated to  $\left((X_s,R_1,R_1^\vee)~ ; ~ (Y_s,R_2,R_2^\vee)\right)$ and $\left((X_c, 0, 0)~ ; ~ (Y_c,0, 0)\right)$.

The double affine Artin group $\A(X, Q_2, W_0)$ associated to $\left((X,R_1,R_1^\vee)~ ; ~ (Q_2,R_2,R_2^\vee)\right)$ is defined as the group generated by the affine Artin group $\A(Q_2, W_0)$ and the lattice $\widetilde{X}^\vee$ (with the group operation denoted now multiplicatively) such that the following relations are satisfied for all $0\leq i\leq n$ and $X_\b\in \widetilde{X}^\vee$
\begin{subequations} 
 \begin{alignat}{2} 
& T_iX_\b=X_\b T_i \text{ if }  (\b,\a_i)=0,\\ 
& T_iX_\b T_i=s_i(X_{\b})   \text{ if }  (\b,\a_i)=-1.
\end{alignat}
\end{subequations}
Above, $s_0$ refers to the element $(s_{\th}, -\th^\vee)\in W_0\ltimes Q_2^\vee$, where $\th\in R_2$ is the highest root, and $T_0$ to the corresponding element of $\A(Q_2, W_0)$. It is clear from the definition that $\A(X, Q_2, W_0)$ is isomorphic to the direct product $$\A(X_s, Q_2, W_0)\times X_c^\vee.$$

The double affine Artin group $\A(X, Y, W_0)$ associated to $\left((X,R_1,R_1^\vee)~ ; ~ (Y,R_2,R_2^\vee)\right)$ is defined as
$$
\mathbf{\Pi}_2\ltimes \A(X, Q_2, W_0)\cong (\mathbf{\Pi}_{2,s}\times Y^\vee_c)\ltimes (\A(X_s, Q_2, W_0)\times X^\vee_c).
$$

The  almost-direct product 
$$
\A(X_s, Y_s, W_0)\overline{\times} \A(X_c, Y_c, \{1\})
$$
is defined as the quotient of the direct product of two double affine Artin groups modulo the identification of  the two copies of $\frac{1}{m}\Z\d$ in $\widetilde{X}_s^\vee$ and $\widetilde{X}_c^\vee$. Remark that $\A(X_c, Y_c, \{1\})$ is isomorphic to  (possibly a trivial central extension of) the Heisenberg group associated to the pairing of $X_c^\vee$ and $Y_c^\vee$. 
\begin{Prop}\label{prop: reductive}
The double affine Artin group $\A(X, Y, W_0)$ is isomorphic to 
$$
\A(X_s, Y_s, W_0)\overline{\times} \A(X_c, Y_c, \{1\})
$$
\end{Prop}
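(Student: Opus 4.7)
The plan is to construct mutually inverse group homomorphisms using the decompositions $X^\vee=X_s^\vee\oplus X_c^\vee$, $Y^\vee=Y_s^\vee\oplus Y_c^\vee$, and $\mathbf{\Pi}_2=\mathbf{\Pi}_{2,s}\times Y_c^\vee$, together with the presentation $\A(X,Y,W_0)\cong\mathbf{\Pi}_2\ltimes\A(X,Q_2,W_0)$ already recorded in the paper. The key structural observation is that the generators coming from the semisimple side and from the central side commute inside $\A(X,Y,W_0)$, whence the semidirect product collapses along the central factor into a direct product, up to the shared central element $X_\delta$.

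First I would establish the relevant cross-commutations. For $0\leq i\leq n$ and $\mu\in X_c^\vee$, the relation $T_iX_\mu=X_\mu T_i$ holds because $(\mu,\alpha_i)=0$: the simple roots $\alpha_1,\dots,\alpha_n$ and $\alpha_0=\delta-\theta$ all lie in $Q_2\subset Y_s$, while $(X_c,Y_s)=0$ by the $W_0$-invariance normalization of the pairing. Next, for any $Y_\nu\in Y_c^\vee\subset\mathbf{\Pi}_2$, the action on $\A(X,Q_2,W_0)$ is trivial on the $T_i$'s (because $W_0$ acts trivially on $Y_c^\vee$) and on the $X_\lambda$ for $\lambda\in X_s^\vee$ (because the twist $Y_\nu(X_\lambda)=X_\lambda-(X_\lambda,Y_\nu)\delta$ vanishes), while for $\mu\in X_c^\vee$ the action is $Y_\nu X_\mu Y_\nu^{-1}=X_\mu X_\delta^{-(X_\mu,Y_\nu)}$, which is exactly the defining relation of the Heisenberg group $\A(X_c,Y_c,\{1\})$. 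Symmetrically, any $Y_\eta\in Y_s^\vee\subset\mathbf{\Pi}_{2,s}$ acts trivially on $X_\mu$ for $\mu\in X_c^\vee$.

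With these commutations established, the inclusions $X_s^\vee\hookrightarrow X^\vee$, $Y_s^\vee\hookrightarrow Y^\vee$, together with the common copy of $W_0$, induce a homomorphism $\A(X_s,Y_s,W_0)\to\A(X,Y,W_0)$; likewise the inclusions of $X_c^\vee$ and $Y_c^\vee$ induce a homomorphism $\A(X_c,Y_c,\{1\})\to\A(X,Y,W_0)$ whose image lies in the centralizer of the previous one. Since both maps send the generator of $\tfrac{1}{m}\Z\delta$ to the same central element $X_\delta$, they factor through a homomorphism
$$
\Phi:\A(X_s,Y_s,W_0)\,\overline{\times}\,\A(X_c,Y_c,\{1\})\longrightarrow\A(X,Y,W_0).
$$
In the opposite direction I would define $\Psi$ on generators by routing $T_i$ and the $X_s^\vee$, $Y_s^\vee$, $\mathbf{\Pi}_{2,s}$ generators into the first factor and the $X_c^\vee$, $Y_c^\vee$ generators into the second, and then verify that the defining relations of $\A(X,Y,W_0)$ split, under this splitting, into relations internal to each factor, using precisely the orthogonality of the pairing and the $W_0$-triviality on central parts. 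The maps $\Phi$ and $\Psi$ are mutually inverse on generators.

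The main obstacle is the careful handling of $T_0$ and of the group $\mathbf{\Pi}_2$, both of which are tied to the second root datum $(Y,R_2,R_2^\vee)$: one must confirm that the affine generator $T_0$ (attached to $\theta\in R_2\subset Y_s$, and to $\alpha_0^\vee=-\theta^\vee$) lives entirely on the semisimple side, and that the splitting $\mathbf{\Pi}_2=\mathbf{\Pi}_{2,s}\times Y_c^\vee$ is compatible with the semisimple--central splitting of $\A(X,Q_2,W_0)$ on which $\mathbf{\Pi}_2$ acts. Once these bookkeeping points are in place, no further surprises arise and the isomorphism follows by direct checking on generators.
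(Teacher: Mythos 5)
Your argument is correct and is essentially the paper's own (the paper simply declares the proposition ``straightforward from the definition,'' relying on the already-recorded splittings $\A(X,Q_2,W_0)\cong\A(X_s,Q_2,W_0)\times X_c^\vee$ and $\mathbf{\Pi}_2=\mathbf{\Pi}_{2,s}\times Y_c^\vee$, which is exactly the regrouping you carry out). Your write-up just makes the cross-commutation checks and the identification of the shared copy of $\tfrac{1}{m}\Z\d$ explicit, modulo the harmless slip of calling the generator of $\tfrac{1}{m}\Z\d$ ``$X_\d$'' rather than $X_{\frac{1}{m}\d}$.
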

\begin{proof}
Straightforward from the definition of $\A(X, Y, W_0), \A(X_s, Y_s, W_0)$ and $\A(X_c, Y_c, \{1\})$.
\end{proof}
Therefore, the structure of the double affine Artin groups associated to double reductive data is fully captured by the double affine Artin groups associated to underlying semisimple double data which will be studied in the following section.  

\begin{Rem}
For fixed root systems  $R_1$ and $R_2$, the double affine Artin groups associated to semisimple data we have those attached to the smallest possible $X$ and $Y$, that is for $X=Q_1$ and $Y=Q_2$, and those attached to  the largest possible $X$ and $Y$, that is to $P_1$ and $P_2$, the weight lattices of $R_1$ and $R_2$. We call the former type of data, adjoint data and the latter, simply connected semisimple data. By definition, we have
\begin{equation}\label{eq: intermediate}
\A(Q_1, Q_2, W_0)\leq \A(X_s, Y_s, W_0)\leq \A(P_1, P_2, W_0).
\end{equation}

The double affine Artin groups associated to adjoint semisimple data are precisely the double affine Artin groups discussed in \S\ref{sec: DAAGs}-\ref{sec: other}. Indeed, there are two possibilities for $R_1$ and $R_2$: they are either of the same Dynkin type $X_n$ or of dual (but not equal) Dynkin type, say $X_n$ and $X_n^\vee$, respectively. In both cases we can and will choose $m=1$, choosing a larger $m$  having the effect of producing a trivial central extension of the double affine Artin group for $m=1$. In the first situation, we obtain the double Artin group associated to the affine root system $X_n^{(1)}$; in the second situation we obtain  the double Artin group associated to the affine root system $D_{n+1}^{(2)}$, $A_{2n-1}^{(2)}$, $E_6^{(2)}$, $D_4^{(3)}$ if $X_n$ is of type $B_n$, $C_n$, $F_4$, $G_2$, respectively. 

The double affine Artin groups associated to simply connected semisimple data are known in the literature under the name  \emph{extended} double affine Artin groups. We discuss them in some detail in the following section. 

\end{Rem}

 \section{Extended double affine Artin groups}\label{sec: ext-DABG}
 
 \subsection{} Following the original work of Cherednik \cites{CheDou, CheMac, CheDou-2, MacAff} in this area, some  objects that are frequently considered in the literature are the so-called extended double affine Artin groups (and Hecke algebras). 
It might be of interest to discuss the relationship between double affine Artin groups and extended double affine Artin groups and  give a description of the latter that highlights this relationship. For the details of the constructions that follow we refer to \cites{CheDou, CheMac, CheDou-2, MacAff}. 
 

 \subsection{}\label{sec: extended-cherednik}  Let $A$ be an indecomposable affine Cartan matrix of rank $n$ and recall the notation set-up in \S\ref{sec: DAAGs}. In the case of $A_{2n}^{(2)}$, $n\geq 1$, the analysis that follows does not produce a new object, so we also adhere to the convention in \S\ref{bigdaddy}. In addition, let $N\subset \Hring^*_\Re$ be equal to $\nu(\Pring^\vee)$ if $r=1$ and equal to $\Pring$ if $r=2,3$. We will also need the following notation
 \begin{equation}
\P_Y:=\{Y_\mu;\mu\in N\}\quad \text{and} \quad\P_X:=\{X_\b;\b\in \Pring^\vee\}.
\end{equation}
The fundamental weigths and coweights with respect to the basis $\{\a_i\}_{1\leq i\leq n}$ are denoted by $\{\l_i\}$  and, respectively, $\{\l_i^\vee\}$. 
 
 Let $\mho$ denote the (finite) group of outer diagram automorphisms of the Dynkin diagram $D(A)$. An element of $\mho$ is uniquely determined by its action on the affine node. The action of $\mho$ on the nodes of $D(A)$ induces an action of $\mho$ on $\A(R)$ as group automorphisms as well as a linear action on $Q^\vee$ which fixes $\d$. The semidirect product $\mho\ltimes \A(R)$ is referred to as the extended affine Artin group. Unless $\mho$ is trivial, the extended affine Artin group is not a Coxeter braid group and therefore it does not admit a Coxeter presentation. However, it does admit the following Bernstein-type presentation (see e.g. \cite{MacAff}*{\S 3.1-3.3}).
 
 \begin{Prop}\label{bernstein-presentation-extended}
The extended affine Artin group $\mho\ltimes \A(R)$ is generated by the finite Artin group $\A(\Rring)$ and the lattice $\P_Y$ such that the following relations are satisfied for all $1\leq i\leq n$ and $\mu\in N$
\begin{subequations} 
 \begin{alignat}{2}
& T_iY_\mu=Y_\mu T_i   \text{ if } (\mu, A_i^\vee)=0, \\
& T_iY_\mu T_i=Y_{s_i(\mu)} \text{ if } (\mu,A_i^\vee)=1.
\end{alignat}
\end{subequations}
\end{Prop}


 \subsection{} \label{rem: daffine-auto}  

The actions of $\mho$ on $\A(R)$ and $Q^\vee$ are compatible with the relations \eqref{dc1}, \eqref{dc2}, \eqref{dc3}. Therefore, there is canonical action of $\mho$ on $\Atilde(R)$. The semidirect product $\Atilde(R)_Y=\mho\ltimes\Atilde(R)$ will be called the $Y$-extended double affine Artin group.


 \subsection{} 
Following Cherednik \cite{CheDou}, we define a group, which shall be referred to as the $X$-extended double affine Artin group, as the group defined as in Proposition \ref{defcherednik}  but allowing $\b\in \Pring^\vee$. 
\begin{Def}\label{defcherednik-X}
The $X$-extended double affine Artin group $\Atilde(R)_X$ is generated by the affine Artin group $\A(R)$, the lattice $\P_X$, and the element $X_{\d}$  such that the following relations are satisfied for all $0\leq i\leq n$ and $\b\in \Pring^\vee$
\begin{subequations} 
 \begin{alignat}{2} \label{dc1e}
& T_iX_\b=X_\b T_i \text{ if }  (\b,\a_i)=0,\\ \label{dc2e}
& T_iX_\b T_i=X_{s_i(\b)}   \text{ if }  (\b,\a_i)=-1,\\ \label{dc3e}
& X_{\d} \text{ is central}.
\end{alignat}
\end{subequations}
For a positive integer $m$, define  $\Atilde(R)_{X,\frac{1}{m}}$ as the trivial central extension of $\Atilde(R)_X$, with center generated by an element  $X_{\frac{1}{m}\d}$ such that
$$X_{\frac{1}{m}\d}^m=X_{\d}.$$
\end{Def}
For convenience, we denote by $\Atilde(R)_{\frac{1}{m}}$ the corresponding trivial central extension of $\Atilde(R)$ and we adopt a similar notation of the groups $\B(\DDDot{X}_n)$ and $\B(\fH{X}_n)$.

The linear action of $\mho$ on $Q^\vee$ extends to a linear action on $\Pring^\vee\oplus \frac{1}{m}\Z\d$, where $m$ is the smallest positive integer with the property that
$$
(N,\Pring^\vee)\subseteq \frac{1}{m}\Z.
$$
As a result, the action of $\mho$ extends to $\Atilde(R)_{X,\frac{1}{m}}$. We denote the corresponding semidirect product $$\Atilde(R)_{X,Y}=\mho\ltimes\Atilde(R)_{X,\frac{1}{m}}$$ is called the $X,Y$-extended double affine Artin group. By definition
\begin{equation}\label{eq: omega-X-rels}
\omega X_{\b}\omega^{-1}=X_{\omega(\b)}, \quad \text{ for all} \quad \omega\in \mho, \b\in\Pring^\vee.
\end{equation}

The $X,Y$-extended double affine Artin group is precisely $\Atilde(R)$ if $\mho$ is trivial, which is the case for $A=E_8^{(1)}, F_4^{(1)}, G_2^{(1)}, E_6^{(2)}, D_4^{(3)}$. For the other cases, we note that  $m$ is equal to $n+1$ for $A_n^{(1)}$, 4 for $D_{2n+1}^{(1)}$, 3 for $E_6^{(1)}$,  2 for $C_{2n+1}^{(1)}, D_{2n}^{(1)}, E_7^{(1)}, A_{2n-1}^{(2)}, D_{n+1}^{(2)}$, and 1 for $B_n^{(1)}, C_{2n}^{(1)}$. If $\mho$ is trivial or if $A=C_{4n}^{(1)}, D_{4n}^{(1)}, E_6^{(1)}, A_{4n-1}^{(2)}, D_{2n+1}^{(2)}$, the action of $\wGamma(r)$ on $\Atilde(R)$ extends to $\Atilde(R)_{X,Y}$. Otherwise, to be able to extended the action of $\wGamma(r)$ on $\Atilde(R)$ we need to consider the group
$$
\mho\ltimes\Atilde(R)_{X,\frac{1}{2m}}.
$$
  The group on which $\wGamma(r)$ acts, as indicated above, is denoted by $\Atilde(R)^e$ and called the (fully) extended double affine Artin group.

\subsection{} 
We will now give a succinct translation of the above constructions in terms of the Coxeter type presentation of the double affine Artin group. We note that the actions induced from the group of outer automorphisms of an affine Dynkin diagram, although explicit, are considerably less intuitive in the Coxeter type presentation than in the Bernstein type presentation.

To fix notation, we use the labeling set up in \S\ref{sec: labeling}. Denote by  $\dot{X}_n$  the affine sub-diagram of  $\DDDot{X}_n$ or $\fH{X}_n$ consisting of the finite nodes together with $\varTheta_{01}$ or, $\Ti$, respectively. We will describe the action of the automorphisms induced from the group $\mho$ of outer automorphisms of $\dot{X}_n$, which under the present conventions correspond precisely to the automorphisms described in \S\ref{rem: daffine-auto}.  Since the elements of $\mho$ are determined by the action on the affine node we denote by $\omega_i$ the element that sends the affine node to the finite node labelled by $i$. We denote by $i^*$ the (finite) node for which $\omega_{i^*}=\omega_i^{-1}$. 

We denote by $\dot{X}_n^\mfe$ the indicated affine sub-diagram of $\DDDot{X}_n^\mfe$ or $\fH{X}_n^\mfe$ and by $\mho(\dot{X}_n^\mfe)$ the corresponding group. Then, $\mho^\mfe=\mfe \mho(\dot{X}_n^\mfe)\mfe$ is the group of outer automorphisms of $\mfe(\dot{X}_n^\mfe)$ which is a affine sub-diagram of $\DDDot{X}_n$ or $\fH{X}_n$ that consists of the finite nodes together with an affine node. The elements of $\mho^\mfe$ are determined by the action on this affine node and we denote by $\varpi_j=\mfe\omega_j(\dot{X}_n^\mfe)\mfe$ the element that sends the affine node to the finite node labelled by $j$. As abstract groups, $\mho$ and $\mho^\mfe$ are always isomorphic. The non-trivial elements of $\mho$ and $\mho^\mfe$ are labelled by the same set of finite nodes for diagrams of type $\DDDot{X}_n$.

\subsection{}\label{sec: wi-notation} For a double Coxeter diagram of type $\DDDot{X}_n$ and $\omega_i\in\mho$ non-trivial, let $w_i\in C(X_n)$ be the minimal length element for which $\varTheta=\T_{w_i^{-1}}\T_i\T_{w_i}$.  The following properties can be directly verified
\begin{subequations}\label{eq29}
\begin{alignat}{3}
\varTheta\T_j\varTheta^{-1}&=\T_{w_i^{-1}} \T_{\omega_i(j)} \T_{w_i}, &\quad  & \text{for } 1\leq j\leq n, ~j\neq i,i^*,\\
\varTheta\T_i\varTheta^{-1} &= \T_{w_{i^*}^{-1}} \T_{w_{i^*}}^{-1} \T_{i^*} \T_{w_{i^*}} \T^{-1}_{w^{-1}_{i^*}}.& &
\end{alignat}
\end{subequations}
The induced automorphism of $\B(\DDDot{X}_n)$ is still denoted by $\omega_i$. We have $\omega_i(\T_{w^{-1}_{i^*}})=\T_{w_i}$. The automorphisms $\omega_i$ and $\varpi_i$  act on the affine generators and $\varTheta$ as follows
 \begin{align*}
\omega_i(\varTheta_{01})&=\T_i, & \omega_i(\varTheta_{02})&=\T_{w_i}\varTheta^{-1}\varTheta_{03}\varTheta\T_{w_i}^{-1}, \\ \omega_i(\varTheta_{03})&=\T_{w_i}\varTheta_{01}\varTheta_{02}\varTheta_{01}^{-1}\T_{w_i}^{-1}, & \omega_i(\varTheta)&=T_{w_i}\varTheta_{01}\T_{w_i^{-1}},\\
\varpi_{i}(\varTheta_{03})&=\T_i, & \varpi_{i}(\varTheta_{02})&=\T^{-1}_{w_i^{-1}}\varTheta\varTheta_{01}\varTheta^{-1}\T_{w_i^{-1}}, \\ \varpi_{i}(\varTheta_{01})&=\T^{-1}_{w^{-1}_i}\varTheta_{03}^{-1}\varTheta_{02}\varTheta_{03}\T_{w_i^{-1}}  & \varpi_i(\varTheta)&=T_{w_i}\varTheta_{03}\T_{w_i^{-1}}. 
\end{align*}

\subsection{} For a double Coxeter diagram of type $\fH{X}_n$, we have a nontrivial group $\mho$ only if $X_n=B_n, C_n$, in which case $\mho$ has order 2. In this situation $\mho$ contains a unique nontrivial element, denoted by $\omega_i$, and $\mho^\mfe$ contains a unique nontrivial element, denoted by $\varpi_{i^\mfe}$.

 Let $\gamma_1\in C(X_n)$ be the minimal length element for which $\varThetap=T_{\gamma_1}T_iT_{\gamma_1^{-1}}$ and let $\gamma_2\in C(X_n)$ be the element such that $T_{\gamma_2}=\omega_i(T_{\gamma_1})$.  Let $\eta_1\in C(X_n)$ be the minimal length element for which $\varPhip=T_{\eta_1}T_{i^\mfe}T_{\eta_1^{-1}}$ and let $\eta_2\in C(X_n)$ be the element such that $T_{\eta_2}=\varpi_{i^\mfe}(T_{\eta_1})$.  We note that $\gamma_1$ and $\gamma_2$ are both trivial if $X_n=C_n$ and that $\eta_1$ and $\eta_2$ are both trivial if $X_n=B_n$. The induced automorphisms of $\B(\fH{X}_n)$ are still denoted by $\omega_i$ and $\varpi_i^\mfe$. The automorphisms $\omega_i$ and $\varpi_{i^\mfe}$  act on the affine generators and $\varThetap$ as follows
 \begin{align*}
\omega_i(\Ti)&=\T_i, & \omega_i(\Tiii)&=\T_{\gamma_2\gamma_1^{-1}}\varThetap^{-1}\Ti\Tiii\Ti^{-1}\varThetap\T_{\gamma_2\gamma_1^{-1}}^{-1},  \\
\varpi_{i^\mfe}(\Tiii)&=\T_{i^\mfe}, & \varpi_{i^\mfe}(\Ti)&=\T_{\eta_1\eta_2^{-1}}^{-1}\varPhip\Tiii^{-1}\Ti\Tiii\varPhip^{-1}\T_{\eta_1\eta_2^{-1}}. 
\end{align*}
Furthermore, $\omega_i(\varThetap)=T_{\gamma_2}\Ti\T_{\gamma_2^{-1}}$ and $\varpi_{i^\mfe}(\varPhip)=T_{\eta_2}\Tiii\T_{\eta_2^{-1}}$.
\subsection{}\label{sec: mho-action} We note that the above formulas show that $\mho$ acts on the set of conjugacy classes of the generators associated to the nodes in a double affine Coxeter diagram. Therefore, we obtain a natural action of $\mho$ on the set of connected components of the diagram obtained from the corresponding double affine Coxeter diagram by erasing all multiple edges. This action will play a role in specifying the number possible independent parameters for the double affine Hecke algebras attached to arbitrary double reductive data.
\subsection{} The semidirect products $\mho\ltimes \B(\DDDot{X}_n)$ and  $\mho\ltimes \B(\fH{X}_n)$ are isomorphic to the corresponding $Y$-extended double affine Artin groups while the semidirect products $\mho^\mfe\ltimes \B(\DDDot{X}_n)$, $\mho^\mfe\ltimes \B(\fH{X}_n)$ and $\mho\ltimes(\mho^\mfe\ltimes \B(\DDDot{X}_n)_{\frac{1}{m}})$, $\mho\ltimes(\mho^\mfe\ltimes \B(\fH{X}_n)_{\frac{1}{m}})$ are isomorphic to the corresponding $X$-extended, and $X,Y$-extended double affine Artin groups, respectively. The groups $\mho$ and $\mho^\mfe$ generate inside the automorphism groups of $\B(\DDDot{X}_n)$ and $\B(\fH{X}_n)$ a copy of the group freely generated by $\mho$ and $\mho^\mfe$. 
\begin{Rem}\label{rem: omega-to-X}
We note that for $\varpi_i\in \mho^\mfe$, the elements $X_{\lambda_i^\vee}(T_{i^*}T_{w_{i^*}})^{-1}\in \Atilde(R)_X$ and $\varpi_i$ correspond through the isomorphism, while 
for $\omega_i\in \mho$, the elements $(T_{w_{i^*}^{-1}}T_{i^*})^{-1}Y_{-\lambda_i^\vee}\in \Atilde(R)_Y$ (if $r=1$) or $(T_{w_{i^*}^{-1}}T_{i^*})^{-1}Y_{-\lambda_i}\in \Atilde(R)_Y$ (if $r=2$) and $\omega_{i^*}$ are in correspondence.
\end{Rem}

For double affine Coxeter diagrams of type $\DDDot{X}_n$, we have   $\mfb\omega_i\mfb^{-1}=\omega_i$ and  $\mfa\omega_i\mfa^{-1}$ coincides with conjugation by $\varpi_i\T_{i^*}\T_{w_{i^*}}\omega_i$ inside $\mho\ltimes(\mho^\mfe\ltimes \B(\DDDot{X}_n)_{\frac{1}{m}})$. Recall that the group $\wXi(1)$ acts on $\B(\DDDot{X}_n)$, the action of its generators being given by  $\mfa$, $\mfb$, and $\mfe$. This action can be extended to an action (by conjugation) of $\wXi(1)$  on the group of inner automorphisms of $\mho\ltimes(\mho^\mfe\ltimes \B(\DDDot{X}_n)_{\frac{1}{m}})$, or equivalently on  $\mho\ltimes(\mho^\mfe\ltimes \B(\DDDot{X}_n)_{\frac{1}{m}})$ modulo its center, as follows
\begin{align*}
\mfe(\omega_i)&=\varpi_{i^*}, &\mfa(\omega_i)&=\varpi_i\T_{i^*}\T_{w_{i^*}}\omega_i, &\mfb(\omega_i)&=\omega_i,\\
\mfe(\varpi_i)&=\omega_{i^*},& \mfa(\varpi_i)&=\varpi_i,& \mfb(\varpi_i)&=\varpi_i\omega_{i^*}\T_i^{-1}\T_{w_i^{-1}}^{-1}.
\end{align*}

To describe the corresponding actions for the double affine Coxeter diagram of type $\fH{B}_n$ and $\fH{C}_n$ we will use the standard labelling of the nodes. For double affine Coxeter diagrams of type $\fH{B}_n$, $n\geq 2$ we have  $\mfb\omega_n\mfb^{-1}=\omega_n$ and $\mfa\omega_n\mfa^{-1}$ coincides with conjugation by $(\prod_{k=1}^n \T_{k-1}^{-1}\cdots\T_1^{-1}\varpi_1\varTheta\T_1^{-1}\cdots\T_{k-1}^{-1})\omega_n$ inside $\mho\ltimes(\mho^\mfe\ltimes \B(\fH{B}_n)_{\frac{1}{m}})$.  For double affine Coxeter diagrams of type $\fH{C}_n$, $n\geq 3$, we have  $\mfb\omega_1\mfb^{-1}=\omega_1$ and $\mfa\omega_1\mfa^{-1}$ coincides with conjugation by $\Tiii\varPhi\omega_1$ inside $\mho\ltimes(\mho^\mfe\ltimes \B(\fH{C}_n)_{\frac{1}{m}})$. 

In this case, recall that the group $\wGamma(2)$ acts on $\B(\fH{B}_n)$ and $\B(\fH{C}_n)$, the action of its generators being given by  $\mfa$ and $\mfb$. This action can be extended to an action (by conjugation) of $\wGamma(2)$  on the group of inner automorphisms of $\mho\ltimes(\mho^\mfe\ltimes \B(\fH{B}_n)_{\frac{1}{m}})$, or equivalently on  $\mho\ltimes(\mho^\mfe\ltimes \B(\fH{B}_n)_{\frac{1}{m}})$ modulo its center, as
\begin{align*}
\mfa(\omega_n)&=(\prod_{k=1}^n \T_{k-1}^{-1}\cdots\T_1^{-1}\varpi_1\varTheta\T_1^{-1}\cdots\T_{k-1}^{-1})\omega_n, &\mfb(\omega_n)&=\omega_n,\\
 \mfa(\varpi_1)&=\varpi_1,& \mfb(\varpi_1)&=\varTheta\Ti\varpi_1,
\end{align*}
and on the group of inner automorphisms of $\mho\ltimes(\mho^\mfe\ltimes \B(\fH{C}_n)_{\frac{1}{m}})$, or equivalently on  $\mho\ltimes(\mho^\mfe\ltimes \B(\fH{C}_n)_{\frac{1}{m}})$ modulo its center, as
\begin{align*}
 \mfa(\omega_1)&=\Tiii\varPhi\omega_1,& \mfb(\omega_1)&=\omega_1,\\
\mfa(\varpi_n)&=\varpi_n, &\mfb(\varpi_n)&=(\prod_{k=1}^n \T_{k-1}\cdots\T_1\varPhi\omega_1\T_1\cdots\T_{k-1})\varpi_n.
\end{align*}

Denote by $\overline{\mfa(\omega_i)}$ the expressions for $\mfa(\omega_i)$ but seen now as elements of $\mho\ltimes(\mho^\mfe\ltimes \B(\DDDot{X}_n)_{\frac{1}{m}})$ and $\mho\ltimes(\mho^\mfe\ltimes \B(\fH{X}_n)_{\frac{1}{m}})$ (which in the Bernstein type presentations are expressed as $X_{\l_i^\vee}\omega_i$). It can be computed that 
$$
\overline{\mfa(\omega_i)}^{\ord(\omega_i)}=\C^{{\ord(\omega_i)(\l_i^\vee,\l_i^\vee)}/{2}},
$$
where $\ord(\omega_i)$ is the order of $\omega_i$. Similarly facts are true for $\mfb(\varpi_i)$. 

Define $\B(\DDDot{X}_n)^e$ and $\B(\fH{X}_n)^e$ as the smallest trivial central extension of $\mho\ltimes(\mho^\mfe\ltimes \B(\DDDot{X}_n)_{\frac{1}{m}})$ and $\mho\ltimes(\mho^\mfe\ltimes \B(\fH{X}_n)_{\frac{1}{m}})$ that contains the elements $\C^{(\l_i^\vee,\l_i^\vee)/{2}}$ for all $\omega_i\in\mho$. These groups are isomorphic to the corresponding fully extended double affine Artin groups. We can now prove the following the analogue of Theorem \ref{thm: autoall} for $\B(\DDDot{X}_n)^e$ and $\B(\fH{X}_n)^e$.
\begin{Thm}\label{thm: autoall-ext}
The group $\wXi(r)$ acts faithfully by (anti)morphisms on $\B(\DDDot{X}_n)^e$ (for $r=1$), $\B(\fH{X}_n)^e$ (for $r=2$, $X_n$ double laced), and $\B(\fH{G}_2)^e$ (for $r=3$).
\end{Thm}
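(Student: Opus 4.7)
The plan is to extend the action of $\wXi(r)$ on the adjoint group $\B$ constructed in Theorem \ref{thm: autoall} to the fully extended group $\B^e$, and then derive faithfulness from the canonical inclusion $\B\leq \B^e$. The formulas needed on the new generators $\omega_i\in\mho$ and $\varpi_j\in\mho^\mfe$ have already been written down explicitly in the discussion preceding the theorem (for instance $\mfa(\omega_i)=\varpi_i\T_{i^*}\T_{w_{i^*}}\omega_i$, $\mfb(\omega_i)=\omega_i$, $\mfe(\omega_i)=\varpi_{i^*}$ in the $\dddot{X}_n$ case, together with the dual formulas for $\varpi_j$ and the analogues for the $\ddot{B}_n$, $\ddot{C}_n$ diagrams); the adjoined central elements $\C^{(\l_i^\vee,\l_i^\vee)/2}$ are to be fixed by $\mfa,\mfb$ and permuted in the obvious way by $\mfe$.

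The main task is to verify that these assignments extend to well-defined group (anti)morphisms of $\B^e$. Four families of relations require checking: (a) the semidirect-product relations $\omega_i\T_j\omega_i^{-1}=\omega_i(\T_j)$ and the conjugation rules on the affine generators given by \eqref{eq29}, together with the analogues for $\mho^\mfe$; (b) the cross relations between $\mho$ and $\mho^\mfe$ coming from \eqref{eq: omega-X-rels} and the definition of the $X,Y$-extended group; (c) the centrality of the adjoined fractional powers of $\C$; and, most importantly, (d) the compatibility with $\omega_i^{\ord(\omega_i)}=\C^{\ord(\omega_i)(\l_i^\vee,\l_i^\vee)/2}$ inside $\B^e$. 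Checks (a)--(c) are routine from the definitions and from the formulas for $\mfa,\mfb,\mfe$ on $\B$ already verified in the adjoint case, while (d) is the critical compatibility that forces the particular definition of $\B^e$. Indeed, using Remark \ref{rem: omega-to-X} to translate into the Bernstein presentation, the identity reduces to $(X_{\l_i^\vee}(T_{i^*}T_{w_{i^*}})^{-1})^{\ord(\omega_i)}=X_\d^{\ord(\omega_i)(\l_i^\vee,\l_i^\vee)/2}$, which follows directly from \eqref{dc1e}--\eqref{dc3e}; applying $\mfa$ then yields $\mfa(\omega_i)^{\ord(\omega_i)}=\C^{\ord(\omega_i)(\l_i^\vee,\l_i^\vee)/2}=\mfa(\omega_i^{\ord(\omega_i)})$ since $\mfa$ fixes $\C$, so $\mfa$ extends consistently.

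Once $\mfa,\mfb,\mfe$ are established on $\B^e$, I would verify the defining relations of $\wXi(r)$: the $r$-braid relation between $\mfa$ and $\mfb$, the equalities $\mfe\mfa\mfe=\mfb^{-1}$, $\mfe\mfb\mfe=\mfa^{-1}$, and $\mfe^2=1$. On the $\B$-generators these reduce to Theorems \ref{thm: auto1}, \ref{thm: auto2}, \ref{P1}, \ref{P2}. On the generators $\omega_i,\varpi_j$ they reduce to direct bookkeeping with the explicit formulas, exploiting that certain compositions act on $\omega_i,\varpi_j$ by conjugation by elements of $\B^e$ that coincide on both sides modulo the centre (precisely the computations encoded in the formulas for $\mfa(\omega_i)$, $\mfb(\varpi_j)$, etc.\ listed before the theorem statement). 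Faithfulness is then immediate: the action restricts on $\B\leq\B^e$ to the original faithful action of Theorem \ref{thm: autoall}, so any $\gamma\in\wXi(r)$ in the kernel of $\wXi(r)\to\Aut(\B^e)$ must be trivial.

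The principal obstacle is the bookkeeping in verification (d), which is precisely the reason for the somewhat unorthodox definition of $\B^e$ as the smallest trivial central extension of $\mho\ltimes(\mho^\mfe\ltimes\B_{\frac{1}{m}})$ containing the fractional central elements $\C^{(\l_i^\vee,\l_i^\vee)/2}$. Without this normalization, the element $\mfa(\omega_i)^{\ord(\omega_i)}$ would be a central element not lying in the group, and the map $\mfa$ could not extend; the entire design of $\B^e$ is tailored to make this compatibility hold, after which the remaining verifications are formal consequences of the already-established Coxeter-type presentation and the adjoint case of Theorem \ref{thm: autoall}.
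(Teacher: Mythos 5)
There is a genuine gap, and it sits exactly at your step (d). In $\B(\dddot{X}_n)^e$ the elements $\omega_i\in\mho$ still have finite order: $\B(\dddot{X}_n)^e$ is by definition a \emph{trivial} central extension of $\mho\ltimes(\mho^\mfe\ltimes \B(\dddot{X}_n)_{\frac{1}{m}})$, in which $\mho$ sits as a finite subgroup of a semidirect product, so $\omega_i^{\ord(\omega_i)}=1$. The relation you assert in (d), $\omega_i^{\ord(\omega_i)}=\C^{\ord(\omega_i)(\l_i^\vee,\l_i^\vee)/2}$, is false; you have conflated two different elements. The element of Remark \ref{rem: omega-to-X}, $X_{\l_i^\vee}(T_{i^*}T_{w_{i^*}})^{-1}$, \emph{is} $\varpi_i$ and has finite order, so your displayed identity $(X_{\l_i^\vee}(T_{i^*}T_{w_{i^*}})^{-1})^{\ord(\omega_i)}=X_\d^{\ord(\omega_i)(\l_i^\vee,\l_i^\vee)/2}$ is also false. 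The element whose $\ord(\omega_i)$-th power equals the central element is the \emph{candidate image} $\overline{\mfa(\omega_i)}=\varpi_i\T_{i^*}\T_{w_{i^*}}\omega_i$, written as $X_{\l_i^\vee}\omega_i$ in the Bernstein presentation. Consequently, with the uncorrected formulas the assignment does not extend to a homomorphism at all: it would send the valid relation $\omega_i^{\ord(\omega_i)}=1$ to $\C^{\ord(\omega_i)(\l_i^\vee,\l_i^\vee)/2}\neq 1$. The rank-one example in the paper makes this concrete: there $\omega^2=\varpi^2=1$ while $(\varpi\varTheta\omega)^2=\C^{\frac{1}{2}}$.

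The missing idea — which is essentially the whole content of the paper's proof — is that the listed formulas only determine the extension modulo the center (they were derived as an action by conjugation, i.e.\ on $\B^e$ modulo its center), so one is free, and in fact forced, to correct them by central factors: set $\mfa(\omega_i)=\C^{-(\l_i^\vee,\l_i^\vee)/2}\,\varpi_i\T_{i^*}\T_{w_{i^*}}\omega_i$ and correspondingly twist $\mfb(\varpi_i)$ (in the $\dddot{A}_1$ example, $\mfa(\omega)=\C^{-\frac{1}{4}}\varpi\varTheta\omega$ and $\mfb(\varpi)=\C^{\frac{1}{4}}\varpi\omega\varTheta^{-1}$). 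This is exactly why the fractional central powers $\C^{(\l_i^\vee,\l_i^\vee)/2}$ are adjoined in the definition of $\B^e$: they make the correcting factor available inside the group, not (as your proposal suggests) to change the relation satisfied by $\omega_i$. After this correction, $\mfa(\omega_i)^{\ord(\omega_i)}=1=\mfa(\omega_i^{\ord(\omega_i)})$, and the remainder of your plan — the relation checks on the $\B$-part via Theorems \ref{thm: auto1}, \ref{thm: auto2}, \ref{P1}, \ref{P2}, and faithfulness by restricting the action to $\B\leq\B^e$ and invoking Theorem \ref{thm: autoall} — does go through and agrees with the paper's argument.
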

\begin{proof}
The above formulas for $\mfa(\omega_i)$ corrected by a factor of $\C^{-(\l_i^\vee,\l_i^\vee)/{2}}$ (and similarly for $\mfb(\varpi_i)$) give a well-defined action of $\wXi(r)$ as automorphisms of $\B(\DDDot{X}_n)^e$ and $\B(\fH{X}_n)^e$. 
\end{proof}
For the extended double affine Artin groups associated to untwisted affine Dynkin diagrams this result was first proved by Cherednik \cite{CheMac}*{Theorem 4.3}. For  the extended double affine Artin groups associated to twisted affine Dynkin diagrams (the only case not already part of Theorem  \ref{thm: autoall} being $\fH{X}_n=\fH{B}_n/\fH{C}_n$) the result is new.

\subsection{} By \eqref{eq: intermediate} the double affine Artin groups attached to semisimple data are precisely the intermediate subgroups between double affine Artin groups and the corresponding extended double affine Artin groups. We are know able to show that in fact, all double affine Artin groups attached to semisimple data have a rich group of automorphisms. 
\begin{Thm}\label{thm: autoall-ss}
There exist a group $\widetilde{K}(r)$ of finite index in $\wGamma(r)$ that stabilizes any intermediate subgroup $\B(\DDDot{X}_n)\leq \B(\DDDot{X}_n)^{s} \leq \B(\DDDot{X}_n)^e$ (for $r=1$), $\B(\fH{X}_n)\leq \B(\fH{X}_n)^{s} \leq \B(\fH{X}_n)^e$ (for $r=2$), and $\B(\fH{G}_2)\leq \B(\fH{G}_2)^{s} \leq \B(\fH{G}_2)^e$ (for $r=3$).  Furthermore, the action of $\widetilde{K}(r)$ descends to an outer action of a congruence subgroup $K(r)$ of $\Gamma_1(r)$ of level at most $|\mho|$.
\end{Thm}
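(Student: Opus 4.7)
The plan is to leverage the $\wGamma(r)$-action on the fully extended group $\B^e$ from Theorem~\ref{thm: autoall-ext}: this action preserves $\B$, so it descends to an action on the finite quotient $Q := \B^e/\B$, and the desired $\widetilde{K}(r)$ will be the kernel of this induced action on $Q$.

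First I would observe that $\B$ is normal in $\B^e$ with finite quotient. By construction, $\B^e$ is obtained from $\B$ by a trivial central extension (adjoining $X_{\frac{1}{2m}\d}$ and the appropriate roots of $\C$) followed by semidirect products with $\mho^\mfe$ and $\mho$, so $\B$ is normal in $\B^e$ and $Q$ is a finite extension of $\mho\times\mho^\mfe$ by a finite cyclic group. Next, the $\wGamma(r)$-action on $\B^e$ restricts on the $\B$-generators to the action of Theorem~\ref{thm: autoall}, and hence preserves $\B$ and descends to a homomorphism $\wGamma(r)\to\Aut(Q)$. Set $\widetilde{K}(r)$ to be its kernel; since $Q$ is finite, $\widetilde{K}(r)$ has finite index in $\wGamma(r)$. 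For any $g\in\widetilde{K}(r)$ and any intermediate $\B\leq\B^s\leq\B^e$, writing $\B^s$ as the preimage in $\B^e$ of its image $\overline{\B^s}\leq Q$ and using that $g$ acts trivially on $Q$, we get $g(\B^s)=\B^s$.

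For the outer-action claim, set $K(r):=\pi(\widetilde{K}(r))\leq\Gamma_1(r)$, where $\pi:\wGamma(r)\to\Gamma_1(r)$ is the canonical surjection. The intersection $\widetilde{K}(r)\cap\ker\pi=\langle\mfc^N\rangle$ for the smallest $N\geq 1$ such that $\mfc^N$ acts trivially on $Q$. By Theorems~\ref{thm: auto1}(ii)--(iv),~\ref{P1}, and~\ref{P2}, $\mfc^N$ acts on $\B$ by conjugation with $\T_{w_\circ}^N$ (or a fixed small power thereof); its action on all of $\B^e$ must coincide with this same inner conjugation, since any discrepancy would be a $\B$-valued crossed homomorphism vanishing on $\B$, and the only ambiguity is the central element $X_{\frac{1}{m}\d}$ which acts trivially by conjugation. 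Consequently $\mfc^N$ acts on each $\B^s$ by conjugation with $\T_{w_\circ}^N\in B(X_n)\subseteq\B\subseteq\B^s$, an inner automorphism of $\B^s$; hence $\widetilde{K}(r)\to\Aut(\B^s)\to\Out(\B^s)$ factors through $K(r)$.

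Finally, to identify $K(r)$ as a congruence subgroup of level at most $|\mho|$, I would translate via Remark~\ref{rem: omega-to-X}: the images of $\omega_i$ and $\varpi_i$ in $Q$ correspond to the classes $[Y_{-\l_i^\vee}]$ and $[X_{\l_i^\vee}]$ modulo the $Q^\vee$-lattices, so the semisimple abelian quotient of $Q$ embeds into $(P^\vee/Q^\vee)\oplus(P^\vee/Q^\vee)$. A direct check using \S\ref{sec: mho-action} shows that the correction factors $\T_{i^*}\T_{w_{i^*}}$ appearing in the formulas for $\mfa(\omega_i)$ and $\mfb(\varpi_i)$ lie in $\B$ and vanish in $Q$, so the induced action of $u_{12}$ and $u_{21}^r$ on $(P^\vee/Q^\vee)^{\oplus 2}$ is the standard linear $\SL(2,\Z)$-action, which factors through $\SL(2,\Z/|\mho|\Z)$. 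Hence $\Gamma_1(r)\cap\Gamma(|\mho|)\leq K(r)$, proving $K(r)$ is a congruence subgroup of level dividing $|\mho|$. The main obstacle will be this last verification: explicitly checking that the $\wGamma(r)$-action on $Q$ really matches the standard $\SL(2,\Z)$-action on $(P^\vee/Q^\vee)^{\oplus 2}$, modulo the finite cyclic central ambiguity arising from the central extension producing $\B^e$.
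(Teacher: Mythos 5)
Your overall strategy is the same as the paper's (pass to a finite quotient of $\B^e$, define $\widetilde{K}(r)$ as the kernel of the induced $\wGamma(r)$-action, and identify that induced action with a standard linear $\SL(2,\Z)$-action that kills $\Gamma(|\mho|)$), but your choice of quotient creates a genuine gap at the final step. You take $Q=\B^e/\B$, which indeed makes the stabilization of intermediate subgroups immediate; the price is that your level claim requires elements lying over $\Gamma(|\mho|)$ to act trivially on \emph{all} of $Q$, not merely on its semisimple quotient $\mho\times\mho^\mfe$, and this stronger statement is false. The corrected extended action of Theorem \ref{thm: autoall-ext} carries the central factors $\C^{\pm(\l_i^\vee,\l_i^\vee)/2}$, which are nontrivial in $\B^e/\B$. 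Concretely, in the $\dddot{A}_1$ example of \S\ref{sec: ext-DABG} one has $\mfa(\omega)=\C^{-\frac14}\varpi\varTheta\omega$ with $\mfa(\varpi)=\varpi$ and $\mfa(\varTheta)=\varTheta$, hence
\begin{equation*}
\mfa^{2}(\omega)=\C^{-\frac12}(\varpi\varTheta)^{2}\omega=\C^{-\frac12}X^{2}\omega,\qquad \mfa^{2}(\omega)\,\omega^{-1}=\C^{-\frac12}X^{2}\notin\B,
\end{equation*}
although $\pi(\mfa^{2})=u_{12}^{2}=\left[\begin{smallmatrix}1&-2\\0&1\end{smallmatrix}\right]\in\Gamma(2)=\Gamma(|\mho|)$. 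So with your $Q$ the matrix $u_{12}^{2}$ does not land in $K(1)$ (and replacing $\mfa^{2}$ by $\mfa^{2}\mfc^{k}$ does not help, since powers of $\mfc$ act trivially on $Q$ if they are inner by elements of $\B$, as you yourself assert). The ``finite cyclic central ambiguity'' you defer to the end is therefore not a routine check but exactly where the argument breaks: the induced action on $(P^\vee/Q^\vee)^{\oplus 2}$ controls only the quotient of $Q$ by its central cyclic part.

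The paper avoids this by working with the coarser quotient of $\B^e$ by the normal subgroup generated by its \emph{center} together with the double affine Artin group: that quotient is exactly $\mho\times\mho^\mfe$ (the commutators $[\omega_i,\varpi_j]$ die there by \eqref{eq: omega-X-rels} and Remark \ref{rem: omega-to-X}), the induced $\Gamma_1(r)$-action is computed explicitly in \eqref{eq: cong-1} and \eqref{eq: cong-2}, $\Gamma(|\mho|)$ visibly acts trivially, and $\widetilde{K}(r)$ is \emph{defined} as the kernel of the action on this quotient, so the central ambiguity is removed before the level computation rather than after. If you keep $Q=\B^e/\B$ you must either settle for a larger level (keeping track of the fractional powers of $\C$, i.e.\ of $m$) or redo the computation on the paper's quotient and then argue separately about how the central factors sit inside the intermediate subgroups. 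Separately, your outer-action step is not yet a proof: asserting that any discrepancy between the action of $\mfc^{N}$ on $\B^e$ and conjugation by $\T_{w_\circ}^{N}$ ``would be a $\B$-valued crossed homomorphism vanishing on $\B$'' does not rule it out, since an automorphism that is inner on a normal subgroup and induces the identity on the quotient need not be inner; what is needed is a direct verification of the effect of $\mfc^{N}$ on the generators $\omega_i,\varpi_j$ (the analogue, for the extended groups, of Theorem \ref{thm: auto1}ii)--iv), Theorem \ref{P1} and Theorem \ref{P2}).
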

\begin{proof}
The subgroup of $\B(\DDDot{X}_n)^e$ or $\B(\fH{X}_n)^e$ generated by its center and by $\B(\DDDot{X}_n)$ or $\B(\fH{X}_n)$ is normal. The corresponding quotient group is a group generated by $\mho$ and $\mho^\mfe$. To fully determine the group structure of the quotient let $\omega_i\in \mho$ and $\varpi_j\in \mho^\mfe$. From \eqref{eq: omega-X-rels} (for $\omega=\omega_i$ and  $\b=\lambda^j$) and Remark \ref{rem: omega-to-X} it follows that the commutator of $\omega_i$ and $\varpi_j$ is trivial in the quotient group. Therefore, the quotient group is a finite abelian group isomorphic to the direct product $\mho\times \mho^\mfe$. The action of $\wGamma(r)$ stabilizes the normal subgroup and therefore descends to an action by automorphism on the quotient group $\mho\times\mho^\mfe$.

For the Coxeter diagrams of type $\DDDot{X}_n$, the action of $\wGamma(1)$ on $\mho\times \mho^\mfe$ is described as follows 
\begin{align*}
\mfa(\omega_i)&=\varpi_i\omega_i, &\mfb(\omega_i)&=\omega_i,\\
\mfa(\varpi_i)&=\varpi_i,& \mfb(\varpi_i)&=\varpi_i\omega_{i^*},
\end{align*}
for $\omega_i\in \mho$, $\varpi_i\in\mho^\mfe$.
The element $(\mfa\mfb)^6$ acts as identity on $\mho\times \mho^\mfe$ and therefore the action of $\wGamma(1)$ factors through $$\Gamma_1(1)\cong\wGamma(1)/\<(\mfa\mfb)^6\>.$$ 
This action of $\Gamma_1(1)$ on  $\mho\times \mho^\mfe$ as group automorphisms is described as follows
\begin{equation}\label{eq: cong-1}
 \begin{bmatrix}a&b\\ c&d \end{bmatrix}(\omega_i\varpi_j)=\omega_i^d\omega_j^{-c}\varpi_i^{-b}\varpi_j^a.
\end{equation}
Denote by $K(1)$ the kernel of the $\Gamma_1(1)$ action which, since $\mho\times\mho^\mfe$ is a finite set, has finite index in $\Gamma_1(1)$. As it is clear from \eqref{eq: cong-1} the principal congruence subgroup $\Gamma(|\mho|)$ acts trivially on $\mho\times\mho^\mfe$ and therefore $K(1)$ is a congruence subgroup of $\SL(2,\Z)$. The kernel $\widetilde{K}(1)$ of the $\wGamma(1)$ action is, of course, the inverse image of $K(1)$. The group $\widetilde{K}(1)$ thus stabilizes each equivalence class modulo $\B(\DDDot{X}_n)$ and in consequence any intermediate subgroup $\B(\DDDot{X}_n)\leq \B(\DDDot{X}_n)^{s} \leq \B(\DDDot{X}_n)^e$.

For the  Coxeter diagram of type $\fH{B}_n$, the action of $\wGamma(2)$ on $\mho\times \mho^\mfe$ is described as follows 
\begin{align*}
\mfa(\omega_n)&=\varpi_1^n\omega_n, &\mfb(\omega_n)&=\omega_n,\\
 \mfa(\varpi_1)&=\varpi_1,& \mfb(\varpi_1)&=\varpi_1.
\end{align*}
The element $(\mfa\mfb)^2$ acts as identity on $\mho\times \mho^\mfe$ and therefore the action of $\wGamma(2)$ factors through $$\Gamma_1(2)\cong\wGamma(2)/\<(\mfa\mfb)^4\>.$$ The action of $\Gamma_1(2)$ on  $\mho\times \mho^\mfe$ as group automorphisms is described as follows
\begin{equation}\label{eq: cong-2}
 \begin{bmatrix}a&b\\ c&d \end{bmatrix}(\omega_n^i\varpi_1^j)=\omega_n^i\varpi_1^{-ibn}\varpi_1^j.
\end{equation}
Denote by $K(2)$ the kernel of the $\Gamma_1(2)$ action. This is a subgroup of finite index in $\Gamma_1(2)$. As it is clear from \eqref{eq: cong-2} the principal congruence subgroup $\Gamma(|\mho|)$ acts trivially on $\mho\times\mho^\mfe$ and therefore $K(2)$ is a congruence subgroup of $\SL(2,\Z)$. The kernel $\widetilde{K}(2)$ of the $\wGamma(2)$ action is, of course, the inverse image of $K(2)$. The group $\widetilde{K}(2)$ thus stabilizes each equivalence class modulo $\B(\fH{B}_n)$ and in consequence any intermediate subgroup $\B(\fH{B}_n)\leq \B(\fH{B}_n)^{s} \leq \B(\fH{B}_n)^e$. The treatment of the  Coxeter diagram of type $\fH{C}_n$ is completely analogous.
\end{proof}
\begin{Rem}
If for every $i$ the elements $\omega_i$ and $\varpi_i$ are either both in or both not in $\B(\DDDot{X}_n)^{s}$ then the formulas for the action of $\mfa$ and $\mfb$ show that the entire group $\wGamma(r)$ stabilizes $\B(\DDDot{X}_n)^{s}$. Also, for even $n$, the entire group $\wGamma(2)$ stabilizes $\B(\fH{X}_n)^{s}$. For each proper intermediate subgroup between the double affine Artin group and the extended affine Artin group,  it would be interesting to find the maximal subgroup of $\wGamma(r)$ that is stabilizing it. 
\end{Rem}

\subsection{} We illustrate the previous constructions for the case of the double affine Coxeter diagram of type $\DDDot{A}_1$. This should facilitate the comparison with the existing constructions of the corresponding fully extended double affine Artin group as in \cite{CheDou-1}*{\S0.4.3, \S2.5.3, \S2.7.1}, \cite{MacAff}*{\S6.1,\S6.4}.
\begin{Exp}
The group $\B(\DDDot{A}_1)$ is generated by four elements $\varTheta_{01}, \varTheta_{02}, \varTheta_{03}, \varTheta$ that are only required to satisfy the relation \eqref{centralrel-1}. The group $\mho$ has order two and we denote its non-trivial element by $\omega$. The actions of $\omega$ and $\varpi=\mfe\omega\mfe$ are as follows
 \begin{align*}
\omega(\varTheta)&=\varTheta_{01}, & \omega(\varTheta_{01})&=\varTheta, & \omega(\varTheta_{02})&=\varTheta^{-1}\varTheta_{03}\varTheta, & \omega(\varTheta_{03})&=\varTheta_{01}\varTheta_{02}\varTheta_{01}^{-1},\\ 
\varpi(\varTheta)&=\varTheta_{03}, & \varpi(\varTheta_{01})&=\varTheta_{03}^{-1}\varTheta_{02}\varTheta_{03}, & \varpi(\varTheta_{02})&=\varTheta\varTheta_{01}\varTheta^{-1}, & \varpi(\varTheta_{03})&=\varTheta.
\end{align*}
We denote $Y^{-2}=\varTheta\varTheta_{01}$ and $X^2=\varTheta_{03}\varTheta$. The group $\B(\DDDot{A}_1)$ is generated by $\varTheta, Y^{-2}, X^2$ and we have
\begin{equation*}
\omega X^2 \omega=\C X^{-2}\quad\text{and}\quad \varpi Y^{-2} \varpi=\C Y^{2}.
\end{equation*}
The elements $Y^{-1}=\varTheta\omega\in \mho\ltimes \B(\DDDot{A}_1)$ and $X=\varpi\varTheta\in \mfe\mho\mfe\ltimes \B(\DDDot{A}_1)$ square to $Y^{-2}$ and, respectively, $X^2$. Therefore, $\mho\ltimes \B(\DDDot{A}_1)$ (the $Y$-extended $\B(\DDDot{A}_1)$)  is generated by $\varTheta, Y^{-1}, X^2$, and $\mho^\mfe\ltimes \B(\DDDot{A}_1)$ (the $X$-extended $\B(\DDDot{A}_1)$)  is generated by $\varTheta, Y^{-2}, X$. Furthermore,
\begin{align*}
\varTheta Y\varTheta &=Y^{-1}, & \varTheta_{03} Y^{-1}\varTheta_{03} &=\C^{-1}Y,\\
\varTheta X^{-1}\varTheta &=X, & \varTheta_{01} X\varTheta_{01} &=\C^{-1}X^{-1}.
\end{align*}
The group $\mho\ltimes(\mho^\mfe\ltimes \B(\DDDot{A}_1)_{\frac{1}{2}})$ is generated by $\varTheta, Y^{-1}, X$ or, equivalently, by $\varTheta, \omega, \varpi$. Short computations show that
\begin{equation*}
\omega X\omega X=\C^{\frac{1}{2}}=\varpi Y^{-1}\varpi Y^{-1}.
\end{equation*}

The actions of $\mfa\omega\mfa^{-1}$  and $\mfb\omega\mfb^{-1}$ on $\B(\DDDot{A}_1)$ are as follows
 \begin{align*}
\mfa\omega\mfa^{-1}(\varTheta)&=\varTheta_{02}, & \mfa\omega\mfa^{-1}(\varTheta_{01})&=\varTheta_{03}, \\ \mfa\omega\mfa^{-1}(\varTheta_{02})&=\varTheta, & \mfa\omega\mfa^{-1}(\varTheta_{03})&=\varTheta_{01},\\
\mfb\omega\mfb^{-1}(\varTheta)&=\varTheta_{01}, & \mfb\omega\mfb^{-1}(\varTheta_{01})&=\varTheta, \\ \mfb\omega\mfb^{-1}(\varTheta_{02})&=\varTheta^{-1}\varTheta_{03}\varTheta, & \mfb\omega\mfb^{-1}(\varTheta_{03})&=\varTheta_{01}\varTheta_{02}\varTheta_{01}^{-1},
\end{align*}
and they can be also achieved by conjugation with $\varpi\varTheta\omega$ and respectively $\omega$ inside $\mho\ltimes(\mho^\mfe\ltimes \B(\DDDot{A}_1)_{\frac{1}{2}})$. Therefore, the group $\wXi(1)$ acts on  $\mho\ltimes(\mho^\mfe\ltimes \B(\DDDot{A}_1)_{\frac{1}{2}})$ modulo its center as follows
\begin{align*}
\mfe(\omega)&=\varpi, &\mfa(\omega)&=\varpi\varTheta\omega, &\mfb(\omega)&=\omega,\\
\mfe(\varpi)&=\omega,& \mfa(\varpi)&=\varpi,& \mfb(\varpi)&=\varpi\omega\varTheta^{-1}.
\end{align*}
Now, inside $\mho\ltimes(\mho^\mfe\ltimes \B(\DDDot{A}_1)_{\frac{1}{2}})$ we have $(\varpi\varTheta\omega)^2=\C^{\frac{1}{2}}$ and $(\varpi\omega\Theta^{-1})^2=\C^{-\frac{1}{2}}$. If we consider $\B(\DDDot{A}_1)^e$ defined as  $\mho\ltimes(\mho^\mfe\ltimes \B(\DDDot{A}_1)_{\frac{1}{4}})$ (the fully extended $\B(\DDDot{A}_1)$) we can set
\begin{align*}
\mfe(\omega)&=\varpi, &\mfa(\omega)&=\C^{-\frac{1}{4}}\varpi\varTheta\omega, &\mfb(\omega)&=\omega,\\
\mfe(\varpi)&=\omega,& \mfa(\varpi)&=\varpi,& \mfb(\varpi)&=\C^{\frac{1}{4}}\varpi\omega\varTheta^{-1}.
\end{align*}
which is an action of $\wXi(1)$ on $\B(\DDDot{A}_1)^e$.
\end{Exp}
\subsection{} We close this section by pointing out that, in fact, the groups $\B(\DDDot{X}_n)^e$ and $\B(\fH{X}_n)^e$ should allow a presentation by generators and relations that is akin to the Coxeter type presentation of $\B(\DDDot{X}_n)$ and $\B(\fH{X}_n)$. 

Of course, we only need to specify the presentations for the non-trivial extensions. Therefore, among the diagrams of type $\DDDot{X}_n$ we don't need to consider $\DDDot{E}_8$, $\DDDot{F}_4$ and $\DDDot{G}_2$, and among the diagram of type $\fH{X}_n$ we only need to consider the diagrams of type $\fH{B}_n$ and $\fH{C}_n$ (for which the groups will be isomorphic). In what follows we adhere to this convention without further warning.

Again, we will make reference to the double affine diagrams in Figure \ref{dddot-diagrams} and Figure \ref{ddot-diagrams}. The generators associated to the finite nodes are denoted by $\T_1,\dots,\T_n$, the finite nodes being labelled in the conventional fashion (see \S\ref{sec: cl}). For each affine subdiagram of a double affine Coxeter diagram we consider its group of outer automorphisms. Each non-identity element in the group of outer automorphisms will be labeled by the relevant affine node and the (finite) label of its image. 

More precisely, for $\DDDot{X}_n$, the non-identity elements of the outer automorphism groups will be denoted by $\omega_{01,i}$, $\omega_{02,i}$, $\omega_{03,i}$, for the affine nodes labelled by $\varTheta_{01}$, $\varTheta_{02}$, $\varTheta_{03}$, respectively.  The corresponding finite groups will be denoted by $\mho_{01}$, $\mho_{02}$, $\mho_{03}$, respectively.  In this case, the set of finite labels for the image of the affine nodes is the same. We will denote this set by $I$. Furthermore, we denote by $i^*$ the (finite) node for which $\omega_{01, i^*}=\omega_{01,i}^{-1}$. Also recall the definition and notation in \S\ref{sec: wi-notation} for the elements $\T_{w_i}$ of $B(X_n)$.

For $\fH{B}_n$ and $\fH{C}_n$, the unique non-trivial elements of the outer automorphism groups will be denoted by $\omega$ (corresponding to the affine node labeled by $\Ti$) and $\varpi$ (corresponding to the affine node labeled by $\Tiii$). For both these diagrams, $\mho$ denotes the group of outer automorphisms associated to the affine node labelled by $\Ti$ and  $\bar{\mho}$  denotes the group of outer automorphisms associated to affine node labelled by $\Tiii$.

We are now ready to indicate an alternative conjectural presentation of  the extended double affine Artin group associated to each double affine Coxeter diagram. We start with the double affine Coxeter diagrams of type $\DDDot{X}_n$.

\begin{Cnj}
The group $\B(\DDDot{X}_n)^e$ is the group generated by $B(X_n)$ and the finite groups $\mho_{01}$, $\mho_{02}$, $\mho_{03}$ subject to the the following relations:
\begin{enumerate}[label={\alph*)}]
\item For $1\leq j\leq 3$, $i\in I$, $1\leq k\leq n$, $k\neq i^*$ we have
$$\omega_{0j, i}\T_k\omega_{0j, i}^{-1}=\T_{\omega_{0j, i}(k)}.$$
\item For $i\in I$ the elements
$$
\mathcal{D}_i=\omega_{01,i}\omega_{02,i^*}\omega_{03,i}\T_{i^*}\T_{w_{i^*}}
$$
are central.
\end{enumerate}
\end{Cnj}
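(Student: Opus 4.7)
The plan is to prove the conjecture by constructing mutually inverse group homomorphisms between the group $G$ defined by the presentation and $\B(\dddot{X}_n)^e$. The forward map $\Phi\colon G \to \B(\dddot{X}_n)^e$ sends $\T_k$ to $\T_k$ canonically. For each $j \in \{1,2,3\}$, removing from $\dddot{X}_n$ the two affine nodes other than $\varTheta_{0j}$ produces an affine Coxeter diagram of type $\widetilde{X}_n$ whose outer diagram automorphism group is $\mho_{0j}$; this group acts on the affine Artin subgroup $\langle B(X_n), \varTheta_{0j}\rangle$ of $\B(\dddot{X}_n)$ and, after incorporating the central extension, embeds into $\B(\dddot{X}_n)^e$. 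I would define $\Phi(\omega_{0j,i})$ to be this image; explicit formulas come from Remark \ref{rem: omega-to-X} and \S\ref{sec: wi-notation}. Relations (a) then reduce to the standard normalization of $B(X_n)$ by diagram automorphism groups, which is built into the semi-direct product structure of each extended affine subgroup.

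The crucial check is relation (b). Using the dictionary between the $\omega$'s and the Bernstein lattice elements, $\Phi(\omega_{01,i})$ is essentially $Y_{-\l_i^\vee}$ dressed by a product of $\T$'s, $\Phi(\omega_{03,i})$ is $X_{\l_i^\vee}$ similarly dressed, and $\Phi(\omega_{02,i^*})$ is the analogous quantity attached to the third affine node. The product $\mathcal{D}_i = \omega_{01,i}\omega_{02,i^*}\omega_{03,i}\T_{i^*}\T_{w_{i^*}}$ should then collapse, after repeated use of the Bernstein-type commutation relations and the central relation $\C = \varTheta_{01}\varTheta_{02}\varTheta_{03}\varTheta$, to the half-integer power $\C^{(\l_i^\vee,\l_i^\vee)/2}$ introduced in the definition of $\B(\dddot{X}_n)^e$ precisely to close the $\wGamma(1)$-action; in particular it is central. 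For the reverse map $\Psi\colon \B(\dddot{X}_n)^e \to G$, fix $i_0 \in I$ and define $\varTheta_{01}, \varTheta_{02}, \varTheta_{03}$ as explicit products involving $\omega_{01,i_0}, \omega_{02,i_0^*}, \omega_{03,i_0}$ and elements of $B(X_n)$, using relation (b) to eliminate the middle generator. One then verifies on generators that $\Psi$ respects the central relation \eqref{centralrel-1} and, when $\ell_0 = 2$, the elliptic braid relation \eqref{ellbraid}, and checks that $\Phi$ and $\Psi$ are mutually inverse.

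The main obstacle will be a uniform verification of the centrality of $\mathcal{D}_i$. This requires keeping track of three $\C$-type central factors simultaneously: the one native to $\B(\dddot{X}_n)$, the trivial central extension introducing $\C^{1/m}$, and the additional $\C^{(\l_i^\vee,\l_i^\vee)/2}$-factors used to close the $\wGamma(1)$-action. A case-by-case approach---beginning with $\dddot{A}_1$, where the explicit presentation at the end of \S\ref{sec: ext-DABG} makes the computation immediate, and then propagating via the embedding of Proposition \ref{thm: rank 1 embedding}---is likely more tractable than a case-free argument, though an interpretation in terms of the Artin groups of $\eala{2}$-EALAs (as in \cite{ISEALA}) may eventually yield a conceptual proof.
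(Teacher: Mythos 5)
The statement you are proving is not proved in the paper at all: it appears there as a \emph{Conjecture}, introduced only with the phrase that the extended groups ``should allow'' such a presentation, so there is no proof of record to compare your attempt against. What you have written is a strategy outline rather than a proof, and the steps it defers are exactly the ones that carry all the difficulty. On the inverse map $\Psi$: relation (a) deliberately excludes $k=i^*$, so inside the presented group $G$ the element $\omega_{0j,i}\T_{i^*}\omega_{0j,i}^{-1}$ is not pinned down by any relabelling relation; after you reconstruct candidates for $\varTheta_{01},\varTheta_{02},\varTheta_{03}$ from the $\omega$'s and $B(X_n)$, you must verify, using only (a), (b) and the internal multiplication of the finite groups $\mho_{0j}$, that these candidates satisfy the braid relations with \emph{every} $\T_k$ (including $\T_{i^*}$), the central relation \eqref{centralrel-1}, and, when $\ell_0=2$, the elliptic braid relation \eqref{ellbraid}. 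Your sketch names these checks but gives no indication of how they follow from so few relations, and that is the heart of the conjecture.

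On the forward map $\Phi$ there are two further gaps. First, $\mho_{02}$ is not a visible subgroup of $\B(\dddot{X}_n)^e$ as constructed ($\mho\ltimes(\mho^\mfe\ltimes \B(\dddot{X}_n)_{\frac{1}{m}})$ enlarged by the elements $\C^{(\l_i^\vee,\l_i^\vee)/2}$); you must exhibit images of the $\omega_{02,i}$ (presumably the corrected elements $\C^{-(\l_i^\vee,\l_i^\vee)/2}\,\varpi_i\T_{i^*}\T_{w_{i^*}}\omega_i$ of \S\ref{sec: ext-DABG}) and prove that they close into a finite group isomorphic to $\mho$ --- this is precisely where the relation $\overline{\mfa(\omega_i)}^{\ord(\omega_i)}=\C^{\ord(\omega_i)(\l_i^\vee,\l_i^\vee)/2}$ intervenes, and your assertion that $\mathcal{D}_i$ ``collapses'' to $\C^{(\l_i^\vee,\l_i^\vee)/2}$ is exactly what needs to be computed, not assumed. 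Second, relation (b) only asserts that the $\mathcal{D}_i$ are central, so you must also show that the central subgroup they generate in $G$ matches the center of $\B(\dddot{X}_n)^e$ (no missing and no extra relations), or injectivity of one of your two maps fails. Finally, the proposed propagation from $\dddot{A}_1$ via Proposition \ref{thm: rank 1 embedding} does not work as stated: that embedding sends the finite generator of $\dddot{A}_1$ to $\varTheta$ and is blind to the groups $\mho$, $\mho^\mfe$, which vary with the type (cyclic of order $n+1$ for $A_n$, of order $2$ or $4$ for $D_n$, etc.), so a presentation of $\B(\dddot{A}_1)^e$ transfers no information about the relations needed in higher rank; a genuine case-by-case or uniform argument would have to be supplied from scratch.
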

For the double affine Coxeter diagram of type $\fH{X}_n$  we use the standard labelling of the finite nodes.
\begin{Cnj}
The groups and  $\B(\fH{X}_n)^e$  is the group generated by $B(X_n)$ and the finite groups $\mho$, $\bar\mho$ subject to the the following relations:
\begin{enumerate}[label={\alph*)}]
\item For $1\leq i, j\leq n$ such that $\omega(i), \varpi(j)$ are finite nodes, we have
$$\omega\T_i\omega=\T_{\omega(i)}, \quad \varpi\T_j\varpi=\T_{\varpi(j)}.$$
\item The element
$$
\mathcal{D}=\varpi\T_1\cdots\T_n\omega\varpi\T_n\cdots\T_1\omega
$$
is central.
\end{enumerate}
\end{Cnj}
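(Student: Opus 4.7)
The plan is to construct mutually inverse homomorphisms between the group $G$ defined by the conjectured presentation and $\B(\ddot{X}_n)^e$, which by Section~\ref{sec: ext-DABG} is isomorphic to (an extension of) $\mho \ltimes (\mho^{\mfe} \ltimes \B(\ddot{X}_n)_{\frac{1}{m}})$. The forward map $\phi: G \to \B(\ddot{X}_n)^e$ sends $\T_i \mapsto \T_i$ and sends $\omega, \varpi$ to the outer automorphism generators inside $\mho, \mho^{\mfe}$ respectively. Relations (a) are then automatic, since by construction $\mho$ and $\mho^{\mfe}$ act on $B(X_n) \hookrightarrow \B(\ddot{X}_n)^e$ via the prescribed diagram automorphisms.

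The substantive content in the forward direction is the verification that $\mathcal{D}$ is central in $\B(\ddot{X}_n)^e$. I would translate $\mathcal{D}$ into the Bernstein-type presentation using Remark~\ref{rem: omega-to-X}: namely $\omega = (\T_{w_{i^{*}}^{-1}}\T_{i^{*}})^{-1} Y_{-\lambda_i^{\vee}}$ and $\varpi = X_{\lambda_{j}^\vee}(\T_{j^{*}}\T_{w_{j^{*}}})^{-1}$ for the appropriate minuscule coweights attached to $\ddot{B}_n$ or $\ddot{C}_n$. The sequences $\T_1\cdots\T_n$ and $\T_n\cdots\T_1$ appearing in $\mathcal{D}$ are reduced expressions for the finite Weyl elements that conjugate these translation parts through each other via the relations \eqref{dc1e}, \eqref{dc2e}, so that a direct computation should collapse $\mathcal{D}$ to a (positive) power of the central element $X_\delta$ times a power of $\C$ from \eqref{centralrel}.

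For the inverse map $\psi: \B(\ddot{X}_n)^e \to G$, I would set $\T_i \mapsto \T_i$, $\omega \mapsto \omega$, $\varpi \mapsto \varpi$, and recover the Coxeter-type affine generators of the source as the conjugates
\[
\Ti \;:=\; \omega^{-1}\, \T_{i_0} \,\omega, \qquad \Tiii \;:=\; \varpi^{-1}\, \T_{j_0} \,\varpi,
\]
where $i_0, j_0$ are the finite nodes paired with the two affine nodes under the outer diagram automorphisms. The braid relations between $\Ti$ (resp.\ $\Tiii$) and any $\T_k$ with $k\neq i_0$ (resp.\ $k\neq j_0$) then follow by conjugating the corresponding braid relations inside $B(X_n)$ using (a). The central relation \eqref{centralrel}, once $\Ti$ and $\Tiii$ are expanded in this form, reduces exactly to the centrality of $\mathcal{D}$, while the braid relation between $\Ti$ and $\Tiii$ itself is redundant by Proposition~\ref{superfluous}. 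The two compositions $\phi\circ\psi$ and $\psi\circ\phi$ then act as the identity on the generating sets, giving the desired isomorphism.

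The main obstacle will be the explicit identification of $\mathcal{D}$ with the correct central element. The specific ordering $\T_1\cdots\T_n$ versus $\T_n\cdots\T_1$ is crucial: it encodes the reversal of a distinguished reduced word by the translation part of $\omega$, and the whole product must absorb all finite-Artin factors from both $\omega$ and $\varpi$, leaving only a central lattice element. A related subtlety is fixing the correct order of the central extension $m$ so that $\mathcal{D}$ lands inside the chosen version of the extended group rather than a larger cover --- this dictates whether $\mathcal{D}$ equals $X_\delta$, $X_\delta^2$, or a fractional power; once this bookkeeping is correctly carried out, the argument reduces to affine Coxeter braid computations of the same flavour as those performed in Section~\ref{refinement} and in the proof of Theorem~\ref{thm1}.
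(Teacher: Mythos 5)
First, a point of order: the paper does not prove this statement at all --- it is explicitly presented as a \emph{conjectural} presentation of $\B(\ddot{X}_n)^e$, with no argument given. So there is no proof of the authors' to compare yours against; your proposal has to stand on its own, and as it stands it is a strategy outline with genuine gaps rather than a proof.

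The two gaps are these. First, the well-definedness of $\psi$ is much more than the braid relations and \eqref{centralrel}. The target $\B(\ddot{X}_n)^e$ is not simply generated by $\B(\ddot{X}_n)$, $\mho$ and $\mho^\mfe$ subject to the relations you list: its known description is an iterated semidirect product $\mho\ltimes(\mho^\mfe\ltimes \B(\ddot{X}_n)_{\frac{1}{m}})$, further centrally extended by elements of the form $\C^{(\l_i^\vee,\l_i^\vee)/2}$. A presentation of it therefore includes the explicit conjugation formulas recorded in \S\ref{sec: ext-DABG} --- e.g.\ $\omega_i(\Tiii)=\T_{\gamma_2\gamma_1^{-1}}\varThetap^{-1}\Ti\Tiii\Ti^{-1}\varThetap\T_{\gamma_2\gamma_1^{-1}}^{-1}$ and the analogous formula for $\varpi_{i^\mfe}(\Ti)$ --- as well as the relations governing how $\omega$ and $\varpi$ interact with each other and with the extra central generators. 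For $\psi$ to exist you must derive \emph{all} of these inside the conjecturally presented group $G$ from (a) and (b) alone, after substituting $\Ti=\omega^{-1}\T_{i_0}\omega$ and $\Tiii=\varpi^{-1}\T_{j_0}\varpi$; your proposal only treats the braid relations of $\Ti$, $\Tiii$ against the finite generators and the central relation, and says nothing about the semidirect-product and central-extension relations. This is where a proof could actually fail, since the conjecture implicitly asserts that (a) and (b) force these relations with no further input.

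Second, the identification of $\mathcal{D}$ with the correct central element --- which you flag as the ``main obstacle'' --- is not a peripheral bookkeeping issue but the crux of both directions: it is needed for $\phi$ to be well defined, and it is also what makes $\phi$ surjective, because the fractional central generator of $\B(\ddot{X}_n)^e$ (the element playing the role of $\C^{(\l_i^\vee,\l_i^\vee)/2}$, cf.\ the $\dddot{A}_1$ example where $\omega X\omega X=\C^{\frac12}$) must be exhibited as a word in $B(X_n)$, $\omega$, $\varpi$, and the natural candidate is precisely $\mathcal{D}$. Until that computation is carried out in the Bernstein picture (and the order of the extension pinned down so that $\mathcal{D}$ lands on the generator of the center of the chosen cover, not a proper power of it), neither injectivity nor surjectivity of $\phi$ is established, and one cannot even rule out that the conjectured presentation defines a proper quotient or a different central extension of the intended group. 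In short: the overall two-map strategy is the natural one, but the proposal defers exactly the computations on which the statement's truth hinges.
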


\section{Hecke algebras}\label{sec: hecke}

\subsection{} The Hecke algebra associated to a double affine Artin group $\Atilde(R)$, called double affine Hecke algebra, is defined in the literature as the quotient of the $\F$-group algebra of $\Atilde(R)$ (with respect to an appropriate field of parameters) by the quadratic relations 
$$
T_i - T_i^{-1} = t_i^{\frac{1}{2}} - t_i^{-\frac{1}{2}}
$$
corresponding to the generators of $\A(R)$. It is customary to consider the central element $X_\d$ as a parameter in $\F$; to emphasize this, we denote
$$
X_\d=q^{-1}.
$$

The generators are indexed by the orbits of $W$ on $R$: $t_i=t_j$ if $W(\a_i)=W(\a_j)$. In particular, the maximal number of possible parameters coincides with the number of $1$-connected components of the affine Dynkin diagram of $R$, which means that there are, depending on the case, 1, 2, or 3 possible independent Hecke parameters. If $R$ is allowed to be nonreduced then the number of independent parameters is specified in Table \ref{table: generic-parameters-nonreduced}.

\begin{table}[ht]
\caption{Maximal number of Hecke parameters associated to non-reduced affine root systems}\label{table: generic-parameters-nonreduced}
\begin{tabular}{l l l }
$(BC_n, C_n), n\geq 1$ && 4  \\
$(C_n^\vee, BC_n), n\geq 1$ && 4   \\
$(B_n, B_n^\vee), n\geq 3$ && 3  \\
$(C_n^\vee, C_n), n\geq 1$ && 5 \\
$(C_2, C_2^\vee)$ && 4 
\end{tabular}
\end{table}
We emphasize that the double affine Hecke algebra constructions in the literature do not directly refer to the affine root systems $A_{2n}^{(2)}$, $(BC_n, C_n)$, $(C_n^\vee, BC_n)$. This is probably a consequence of the fact that these root systems can be realized as root sub-systems of $(C_n^\vee, C_n)$ and their Macdonald theory  can be obtained by appropriately specializing the parameters in the Macdonald theory for $(C_n^\vee, C_n)$. As it is clear from Proposition \ref{comparison} and \S\ref{sec: nonreduced}, the associated double affine Artin groups, and consequently,  the associated double affine Hecke algebras bear a similar relationship.

The double affine Artin groups associated to $C_n^{(1)}$ and $(C_n^\vee, C_n)$ can be identified as explained in \S\ref{sec: nonreduced}. The double affine Hecke algebra associated to $(C_n^\vee, C_n)$, when defined correctly \cite{SahNon}, depends on five independent Hecke parameters (four if $n=1$) as opposed to three parameters for the double affine Hecke algebra associated to  $C_n^{(1)}$. Some quadratic relations in the double affine Hecke algebra of type $(C_n^\vee, C_n)$ are imposed on elements that are not in the group $\A(C_n^{(1)})$. Therefore, it is somewhat delicate to correctly define the double affine Hecke algebras associated to non-reduced root systems (or to $A_{2n}^{(2)}$) and to describe the relationship between them.


\subsection{}  We take advantage of our Coxeter type presentation for the double affine Artin groups to give a uniform construction of double affine Hecke algebras. We do this by associating to each double affine Coxeter diagram a Hecke algebra $\bH(\DDDot{X}_n)$, $\bH(\fH{X}_n)$, or  $\bH(\DDot{C}_n)$  that depends on the largest possible number of independent parameters.  We call these Hecke algebras generic Hecke algebras. The field $\F$ contains one Hecke parameter for each node in the double Coxeter diagram. Two Hecke parameters are equal if the corresponding nodes are in the same 1-connected component of the double affine Coxeter diagram and they are distinct otherwise. Recall that the number of independent Hecke parameters for each double affine Coxeter diagram is recorded in Table \ref{table: generic-parameters}. Also, we will consider the central element $\C$ as an element of $\F$ and to emphasize this we denote $\C=q^{-1}$, with $q$ a formal parameter. We assume that the field $\F$ contains the square roots of all these parameters.

\begin{Def}\label{def: hecke}
The generic Hecke algebra $\bH(\DDDot{X}_n)$, $\bH(\fH{X}_n)$, and  $\bH(\DDot{C}_n)$  is defined as the quotient of the $\F$-group algebra of $\B(\DDDot{X}_n)$, $\B(\fH{X}_n)$, and $\B(\DDot{C}_n)$  by the ideal generated by the quadratic relations for the generators associated to the nodes of $\DDDot{X}_n$, $\fH{X}_n$, and  $\DDot{C}_n$, respectively.  Equivalently, $\bH(\DDDot{X}_n)$, $\bH(\fH{X}_n)$, and $\bH(\DDot{C}_n)$  is defined, respectively, as the quotient of the Coxeter Hecke $\F$-algebra $H(\DDDot{X}_n)$, $H(\fH{X}_n)$, and  $H(\DDot{C}_n)$  by the ideal generated by the relations \eqref{centralrel-1}-\eqref{ellbraid} (for $\DDDot{X}_n$), \eqref{centralrel} (for $\fH{X}_n$), and \eqref{centralrelbis}-\eqref{ellbraidbis} (for $\DDot{C}_n$). 
\end{Def}

\begin{Rem}
For $\DDot{C}_n$  we can also consider the algebra  $\bH(\DDot{C}_n)^c$ defined as the quotient of the $\F$-group algebra of $\B(\DDot{C}_n)^c$ by the ideal generated by the quadratic relations for the generators associated to the nodes of $\DDot{C}_n$. Since we assume that the field $\F$ contains the square roots of the parameters,  $\bH(\DDot{C}_n)$ and $\bH(\DDot{C}_n)^c$ and are in fact isomorphic. Proposition \ref{comparisonbis} lets us compare these algebras with $\bH(\DDDot{C}_n)$. The second morphism in Proposition \ref{comparisonbis} is more convenient to work with. Using it we obtain the following result.
\end{Rem}
\begin{Thm}
The generic Hecke algebra $\bH(\DDot{C}_n)$ is isomorphic to the algebra obtained from $\bH(\DDDot{C}_n)$ by specializing the parameter corresponding to $\varTheta_{02}$ to 1.
\end{Thm}

\begin{Rem}\label{rem: conj03}
The generators of $\B(\DDDot{X}_n)$, $\B(\fH{X}_n)$, $\B(\DDot{C}_n)$  that correspond to nodes in the same 1-connected component of the double affine diagram are conjugate,  and the parameters associated to these nodes in the definition of $\bH(\DDDot{X}_n)$, $\bH(\fH{X}_n)$,  $\bH(\DDot{C}_n)$ are equal. In particular, we see that the quadratic relation corresponding to $\varTheta_{02}$, $\varTheta_{03}$ and, respectively,  $\Tiii$ is in the ideal generated by the quadratic relations for $T_i$, $1\leq i\leq n$, unless $\th_{02}$, $\th_{03}$, and respectively $\phi_0$, is different from all $t_i$, $1\leq i\leq n$. This happens precisely if the double affine Coxeter diagram is of type $\DDDot{C}_n$, $n\geq 1$, $\fH{B}_2$, $\fH{C}_n$, $n\geq 2$, or $\DDot{C}_n$, $n\geq 1$.
\end{Rem}

The Hecke algebras associated to  nonreduced irreducible affine root systems $R$ are defined as the algebras obtained from the appropriate double Coxeter diagram (see \S\ref{sec: nonreduced} and \eqref{DCvsDA}) by specializing the parameters according to their equivalence with respect to the action of the relevant affine Weyl group on $R$. We report in Table \ref{table: parameters} the specialization of parameters for all nonreduced irreducible affine root systems and for the reduced root systems for which  a specialization is necessary. The parameters are denoted  by the same symbols as the corresponding generators, but using lowercase letters, with the nodes of $X_n$ being labelled according to our convention in \S\ref{sec: cl}.

\begin{table}[ht]
\caption{Parameter specialization for double affine Hecke algebras}
\label{table: parameters}
\begin{tabular}{ l  l  l }
Hecke algebra &Parameter specialization & Affine root system   \\
  \hline
$\bH(\DDDot{A}_1)$ & $\th_{01}=\th_{02}=\th_{03}=t_1$ &   $A_1^{(1)}$ \\ 
 $\bH(\DDDot{C}_n)$ & none &       $(C_n^\vee, C_n), n\geq 1$ \\ 
\ \ " &$\th_{01}=\th_{02}$  &    $(BC_n, C_n), n\geq 1$  \\ 
\ \ " &$ \th_{02}=1$   &    $(C_n^\vee, BC_n), n\geq 1$ \\ 
\ \ " &$ \th_{02}=1, \th_{03}=t_n$  &   $A_{2n}^{(2)}$, $n\geq 1$  \\ 
\ \ " &$ \th_{03}=1, \th_{01}=\th_{02}$  &   $A_{2n}^{(2)\vee}$, $n\geq 1$  \\ 
\ \ "  & $\th_{01}=\th_{02}=\th_{03}$ &  $C_n^{(1)}, n\geq 2$  \\ \hline
$\bH(\fH{C}_2)$ & none  &    $(C_2, C_2^\vee)$  \\ 
\ \ " & $\th_0=t_1$  &   $A_{3}^{(2)}$ \\ 
$\bH(\fH{B}_n)$ & $\th_{0}=t_n$  &    $D_{n+1}^{(2)}$, $n\geq 2$ \\ 
$\bH(\fH{C}_n)$ & none &   $(B_n, B_n^\vee), n\geq 3$ \\ 
\ \ " & $\phi_{0}=t_n$  &  $A_{2n-1}^{(2)}$, $n\geq 3$  \\ \hline
$\bH(\DDot{C}_n)$ & none   &    $(C_n^\vee, BC_n), n\geq 1$ \\ 
\ \ " & $\phi_{0}=t_n$  &   $A_{2n}^{(2)}$, $n\geq 1$ \\ 
\end{tabular}
\end{table}
\subsection{} The action of the groups that appear in the statement of Theorem \ref{thm: autoall} descend to faithful actions as morphisms or anti-morphisms on the relevant algebra $\bH(\DDDot{X}_n)$, $\bH({\fH{X}_n})$, or $\bH(\DDot{C}_n)$. These actions are trivial on $\F$ except for $\bH(\DDDot{C}_n)$ and $\bH(\DDot{C}_n)$ where $\wXi(1)$ and, respectively, $\wXi(2)^\prime$ act by permuting the parameters. This affects the descent of these actions to the  Hecke algebras of type $A_{2n}^{(2)}$ and $(BC_n, C_n)$. For $(BC_n, C_n)$ only the subgroup of $\wXi(1)$ that acts on parameters by fixing $\th_{03}$ (which is isomorphic to $\wGamma(2)^\prime$) will act on the Hecke algebra and for  $A_{2n}^{(2)}$ only the subgroup of $\wXi(2)^\prime$ that fixes all parameters (the pure braid group $P_3$) will act on the Hecke algebra. 

\subsection{} The extended generic Hecke algebras $\bH(\DDDot{X}_n)^e$ and $\bH(\fH{X}_n)^e$ are defined precisely in the same fashion but as quotients of the extended groups $\B(\DDDot{X}_n)^e$ and $\B(\fH{X}_n)^e$. It is important to stress that for any non-trivial extension the number of possible independent parameters for extended double affine Hecke algebra decreases. More precisely, the number of possible independent parameters is not the number of connected components of the diagram obtained from the corresponding double affine Coxeter diagram by erasing all multiple edges but rather the number of orbits of the group generated by $\mho$ and $\mho^\mfe$ on the set of connected components (see \S\ref{sec: mho-action}). The same statement (with $\mho$ and $\mho^\mfe$ replaced by the appropriate subgroups) is true about the double affine Hecke algebras attached to semisimple data, which, by \eqref{eq: intermediate}, are precisely the intermediate subalgebras $\bH(\DDDot{X}_n)\leq \bH(\DDDot{X}_n)^{s} \leq \bH(\DDDot{X}_n)^e$ and $\bH(\fH{X}_n)\leq \bH(\fH{X}_n)^{s} \leq \bH(\fH{X}_n)^e$.  Table \ref{table: extended-generic-parameters} lists the maximal number of distinct parameters in (fully) extended Hecke algebras.

\begin{table}[ht]
\caption{Maximal number of parameters in extended Hecke algebras}\label{table: extended-generic-parameters}
\begin{tabular}{l l l l l l l}
$\DDDot{A}_n, n\geq 1$ && 1  && $\DDot{A}_1$   && 3\\
$\DDDot{B}_n, n\geq 3$ && 2  && $\DDot{C}_n, n\geq 2$   && 4\\
$\DDDot{C}_n, n\geq 2$ && 2  &&  &&\\
$\DDDot{D}_n, n\geq 4$ && 1  && $\fH{B}_n/\fH{C}_n, n\geq 2$   && 2\\
$\DDDot{E}_n, n=6,7,8$ && 1  && $\fH{F}_4$   && 2\\
$\DDDot{F}_4$  && 2 && $\fH{G}_2$  && 2 \\
$\DDDot{G}_2$ && 2 && &&
\end{tabular}
\end{table}

\subsection{}\label{sec: reductiveHecke} Let $\left((X,R_1,R_1^\vee)~ ; ~ (Y,R_2,R_2^\vee)\right)$ be a double reductive datum and consider the underlying semisimple and central data, denoted by $$\left((X_s,R_1,R_1^\vee)~ ; ~ (Y_s,R_2,R_2^\vee)\right)\quad \text{and}\quad \left((X_c,0,0)~ ; ~ (Y_c,0,0)\right),$$ respectively. In analogy with Proposition \ref{prop: reductive}, the generic double affine Hecke algebra $\bH(X, Y, W_0)$ that corresponds to the double reductive datum $\left((X,R_1,R_1^\vee)~ ; ~ (Y,R_2,R_2^\vee)\right)$ can be defined as 
\begin{equation}
\bH(X_s, Y_s, W_0)~\overline{\times} ~\bH(X_c, Y_c, \{1\}),
\end{equation}
where $\bH(X_c, Y_c, \{1\})$ is defined as the $\F$-group algebra of $\A(X_c, Y_c, \{1\})$.

It is perhaps useful to specify the relationship between the generic Hecke algebras associated to arbitrary reductive group data and the Hecke algebras defined in \cite{StoKoo-2}. The latter are by data of the type $D=(R_0, \Delta_0, \bullet, \Lambda, \Lambda^d)$, with $R_0$ an irreducible reduced finite root system, $\Delta_0$ an ordered basis, $\bullet\in\{u,t\}$ ($u$ standing for \emph{untwisted}, $t$ standing for \emph{twisted}), and lattices $\Lambda$, $\Lambda^d$ satisfying appropriate compatibility conditions. The double affine Hecke algebra associated to $D$ is denoted by $\bH(D)$. If $\bullet=t$ then $\bH(D)$ is precisely the generic Hecke algebra associated to the double datum $\left((\Lambda,R_0^\vee,R_0)~ ; ~ (\Lambda^d,R_0^\vee,R_0)\right)$. If $\bullet=u$ then $\bH(D)$ is precisely the generic Hecke algebra associated to the double datum $\left((\Lambda,R_0^\vee,R_0)~ ; ~ (\Lambda^d,R_0,R_0^\vee)\right)$. In other words, if $\bullet=t$ and $R_0$ is of type $X_n^\vee$ then $\bH(D)$ is an intermediate double affine Hecke algebra attached to the diagram of type $\DDDot{X}_n$, and  if $\bullet=u$ and $R_0\not\cong R_0^\vee$ is of type $X_n^\vee$ then $\bH(D)$ is an intermediate double affine Hecke algebra attached to the diagram of type $\fH{X}_n$.

\subsection{} 
By employing Theorem \ref{thm: autoall-ext} instead of Theorem \ref{thm: autoall} we obtain faithful actions of the groups in the statement of Theorem \ref{thm: autoall-ext}  as morphisms or anti-morphisms on the relevant extended Hecke algebra. Furthermore, from Theorem \ref{thm: autoall-ss} we obtain that any double affine Hecke algebra attached to double reductive group data has a rich group of automorphisms that descends to an outer action of a congruence subgroup as described in the statement of Theorem  \ref{thm: autoall-ss}.

\subsection{}

Let us indicate how Definition \ref{def: hecke} leads to the conventional definition of double affine Hecke algebras. Since we have explicit isomorphisms between the groups $\B(\DDDot{X}_n)$, $\B(\DDot{C}_n)$, $\B(\fH{X}_n)$ and the double affine braid groups we can directly investigate the effect of imposing the relevant quadratic relations on the $\F$-group algebra of a double affine Artin group $\Atilde(R)$. We call the resulting quotient  the generic (adjoint) double affine Hecke algebra associated to $R$, and we denote it by $\bH(R)$. Recall that we consider the central element $X_\d$ as a parameter in $\F$ and use the notation
$$
X_\d=q^{-1}.
$$
The (adjoint) double affine Hecke algebra associated to the irreducible affine root system (reduced or nonreduced), denoted by $\Htilde(R)$, is defined as the algebra obtained from the generic double affine Hecke algebra $\bH(R)$  by specializing the parameters according to their equivalence with respect to the action of the affine Weyl group on $R$. The precise specialization of parameters, whenever necessary, is recorded in Table \ref{table: parameters}. 

Similarly, we denote by $\mathcal{H}(R)$ and $\mathcal{H}(\Rring)$ the quotient of the $\F$-group algebra of $\A(R)$ and, respectively, $\A(\Rring)$ by the ideal generated by the quadratic relations for the generators corresponding to the nodes of the Dynkin diagram of $R$ and $\Rring$, respectively. The number of independent Hecke parameters for $\mathcal{H}(R)$ and $\mathcal{H}(\Rring)$ is the number of 1-connected components of he Dynkin diagram of $R$ and, respectively, $\Rring$.

\subsection{} 
Let $A$ be an indecomposable affine Cartan matrix of rank $n$ and let $R$ the corresponding affine root system.  From Proposition \ref{lemma1} we know that the affine generators $\varTheta_{01}, \varTheta_{02}, \varTheta_{03}$ of $\B(\DDDot{X}_n)$ correspond to the elements
\begin{equation}
T_0, ~T_0^{-1}X_{\a_0^\vee}, ~X_{\ph^\vee}\Phi^{-1} \in \Atilde(X_n^{(1)}).
\end{equation}
Recall that $\th=\ph$ for root systems of type $X_n^{(1)}$. 

Similarly, from Proposition \ref{prop1} and Theorem \ref{mainA2n2}  the affine generators $\Ti, \Tiii$ of $\B(\fH{X}_n)$ or $\B(\DDot{C}_n)$ correspond to the elements
\begin{equation}
T_0, ~X_{\ph^\vee}\Phi^{-1}
\end{equation}
in the double affine Artin group of the corresponding twisted affine root system. 

Therefore, aside from the quadratic relations imposed on the elements $T_i$, $1\leq i\leq n$, we impose the following quadratic relations
\begin{subequations}
\begin{alignat}{2}
 &T_0-T_0^{-1}=t_{01}^{\frac{1}{2}}-t_{01}^{-\frac{1}{2}},\\ \label{eq: q02}
& T_0^{-1}X_{\a_0^\vee}- X_{-\a_0^\vee}T_0=t_{02}^{\frac{1}{2}}-t_{02}^{-\frac{1}{2}},\\
&X_{\ph^\vee}\Phi^{-1} - \Phi X_{-\ph^\vee}=t_{03}^{\frac{1}{2}}-t_{03}^{-\frac{1}{2}}, \label{eq: q03}
\end{alignat}
\end{subequations}
for untwisted affine root systems, and
\begin{subequations}
\begin{alignat}{2}
&T_0-T_0^{-1}=\th_0^{\frac{1}{2}}-\th_0^{-\frac{1}{2}},\\
&X_{\ph^\vee}\Phi^{-1} - \Phi X_{-\ph^\vee}=\phi_{0}^{\frac{1}{2}}-\phi_{0}^{-\frac{1}{2}}, \label{eq: qph0}
\end{alignat}
\end{subequations}
for twisted affine root systems. 

It is important to note that, by Remark \ref{rem: conj03} and \eqref{DCvsDA}, the relation \eqref{eq: q03} is superfluous unless $R$ is of type $C_n^{(1)}$, $n\geq 1$, $A_{2n}^{(2)}$, $n\geq 1$, $A_{2n-1}^{(2)}$, $n\geq 3$, or $D_3^{(2)}$. In these cases, the underlying finite Dynkin diagram is of type $A_1$ or $C_n$, $n\geq2$. With the finite nodes labelled according to our convention in \S\ref{sec: cl}, it can be seen from Proposition \ref{first-presentation} that the elements $X_{\ph^\vee}\Phi^{-1}$ and $X_{\a_n^\vee}T_n^{-1}$ are conjugate inside $\Atilde(R)$ by an element of $\A(\Rring)$. Therefore, for $R$ of type $C_n^{(1)}$, $n\geq 1$ or $A_n^{(2)}$, $n\geq 2$, the relations \eqref{eq: q03} and \eqref{eq: qph0} can be, respectively,  replaced by
\begin{subequations}
\begin{alignat}{2}
&X_{\a_n^\vee}T_n^{-1} - T_nX_{-\a_n^\vee}=t_{03}^{\frac{1}{2}}-t_{03}^{-\frac{1}{2}},  \quad \text{for } C_n^{(1)}, ~n\geq 1\label{eq: q03bis} \\
&X_{\a_n^\vee}T_n^{-1} - T_nX_{-\a_n^\vee}=\phi_{0}^{\frac{1}{2}}-\phi_{0}^{-\frac{1}{2}}, \quad \text{for } A_{2n}^{(2)},~ n\geq 1,~ A_{2n-1}^{(2)},~ n\geq 3 \label{eq: qph0bis}\\
&X_{\a_1^\vee}T_1^{-1} - T_1X_{-\a_1^\vee}=\phi_{0}^{\frac{1}{2}}-\phi_{0}^{-\frac{1}{2}}, \quad \text{for } D_{3}^{(2)}.\label{eq: qph0d3}
\end{alignat}
\end{subequations}
Also by Remark \ref{rem: conj03}, the relation \eqref{eq: q02} is superfluous unless $R$ is of type $C_n^{(1)}$, $n\geq 1$.
\subsection{} We are now ready to provide a description of the generic double affine Hecke algebras. 
\begin{Thm}\label{thm: cherednik-presentation}
The generic double affine Hecke algebra $\bH(R)$ is the $\F$-algebra generated by the affine Hecke algebra $\mathcal{H}(R)$, the lattice $\Q_X$  such that the following relations, called Bernstein-Lusztig relations, are satisfied for all  $\b\in Q^\vee$:

For $R$  of type $X_n^{(1)}\neq C_n^{(1)}$, $n\geq 1$
\begin{subequations} 
 \begin{alignat}{2} 
& T_iX_\b-X_{s_i(\b)} T_i = (t_{i}^{\frac{1}{2}}-t_{i}^{-\frac{1}{2}} )\frac{X_\b-X_{s_i(\b)}}{1-X_{-\a_i^\vee}}, \quad 1\leq i\leq n, \\
& T_0X_\b-X_{s_0(\b)} T_0 = (t_{01}^{\frac{1}{2}}-t_{01}^{-\frac{1}{2}} )\frac{X_\b-X_{s_0(\b)}}{1-X_{-\a_0^\vee}};
\end{alignat}
\end{subequations}

For $R$  of type $C_n^{(1)}$, $n\geq 1$
\begin{subequations} 
 \begin{alignat}{2} 
& T_iX_\b-X_{s_i(\b)} T_i = (t_{i}^{\frac{1}{2}}-t_{i}^{-\frac{1}{2}} )\frac{X_\b-X_{s_i(\b)}}{1-X_{-\a_i^\vee}}, \quad 1\leq i\leq n-1,  \\
& T_nX_\b-X_{s_n(\b)} T_n = \left((t_{n}^{\frac{1}{2}}-t_{n}^{-\frac{1}{2}}) +(t_{03}^{\frac{1}{2}}-t_{03}^{-\frac{1}{2}})X_{-\a_n^\vee} \right)\frac{X_\b-X_{s_n(\b)}}{1-X_{-2\a_n^\vee}}, \\
& T_0X_\b-X_{s_0(\b)} T_0 = \left((t_{01}^{\frac{1}{2}}-t_{01}^{-\frac{1}{2}}) +(t_{02}^{\frac{1}{2}}-t_{02}^{-\frac{1}{2}})X_{-\a_0^\vee} \right)\frac{X_\b-X_{s_0(\b)}}{1-X_{-2\a_0^\vee}} ;
\end{alignat}
\end{subequations}

For $R$ of type $D_{n+1}^{(2)}$, $n\geq 3$, $E_6^{(2)}$, $D_4^{(3)}$
\begin{subequations} 
 \begin{alignat}{2} 
& T_iX_\b-X_{s_i(\b)} T_i = (t_{i}^{\frac{1}{2}}-t_{i}^{-\frac{1}{2}} )\frac{X_\b-X_{s_i(\b)}}{1-X_{-\a_i^\vee}}, \quad 1\leq i\leq n, \\
& T_0X_\b-X_{s_0(\b)} T_0 = (\th_{0}^{\frac{1}{2}}-\th_{0}^{-\frac{1}{2}} )\frac{X_\b-X_{s_0(\b)}}{1-X_{-\a_0^\vee}};
\end{alignat}
\end{subequations}

For $R$ of type $A_{n}^{(2)}$, $n\geq 2$
\begin{subequations} 
 \begin{alignat}{2} 
& T_iX_\b-X_{s_i(\b)} T_i = (t_{i}^{\frac{1}{2}}-t_{i}^{-\frac{1}{2}} )\frac{X_\b-X_{s_i(\b)}}{1-X_{-\a_i^\vee}}, \quad 1\leq i\leq n-1, \\
& T_nX_\b-X_{s_n(\b)} T_n = \left((t_{n}^{\frac{1}{2}}-t_{n}^{-\frac{1}{2}}) +(\phi_{0}^{\frac{1}{2}}-\phi_{0}^{-\frac{1}{2}})X_{-\a_n^\vee} \right)\frac{X_\b-X_{s_n(\b)}}{1-X_{-2\a_n^\vee}}, \\
& T_0X_\b-X_{s_0(\b)} T_0 = (\th_{0}^{\frac{1}{2}}-\th_{0}^{-\frac{1}{2}} )\frac{X_\b-X_{s_0(\b)}}{1-X_{-\a_0^\vee}};
\end{alignat}
\end{subequations}

For $R$ of type $D_3^{(2)}$

\begin{subequations} 
 \begin{alignat}{2} 
& T_1X_\b-X_{s_1(\b)} T_1 = \left((t_{1}^{\frac{1}{2}}-t_{1}^{-\frac{1}{2}}) +(\phi_{0}^{\frac{1}{2}}-\phi_{0}^{-\frac{1}{2}})X_{-\a_1^\vee} \right)\frac{X_\b-X_{s_1(\b)}}{1-X_{-2\a_1^\vee}}, \\ 
& T_2X_\b-X_{s_2(\b)} T_i = (t_{2}^{\frac{1}{2}}-t_{2}^{-\frac{1}{2}} )\frac{X_\b-X_{s_2(\b)}}{1-X_{-\a_2^\vee}}, \\
& T_0X_\b-X_{s_0(\b)} T_0 = (\th_{0}^{\frac{1}{2}}-\th_{0}^{-\frac{1}{2}} )\frac{X_\b-X_{s_0(\b)}}{1-X_{-\a_0^\vee}}.
\end{alignat}
\end{subequations}
\end{Thm}

\begin{proof}
It is straightforward to check that if any of the Bernstein-Lusztig relations holds for $\b$ in a set $S$, then it holds for $\b$ in the lattice spanned by $S\cup s_i(S)$. Recall the notation of Proposition \ref{first-presentation} and Remark \ref{rem: lattices}. Modulo the ideal generated by the quadratic relations, for a fixed $T_i$, any relation \eqref{eq1} and \eqref{eq2} for  $\mu_j$ is equivalent to the corresponding Bernstein-Lusztig relation for $\mu_j$. Therefore, modulo the ideal generated by the quadratic relations, for a fixed $T_i$ the relations \eqref{eq1} and \eqref{eq2} are equivalent to the corresponding Bernstein-Lusztig relation for $T_i$ and $\eala{M_i}$. For $i$ such that $\eala{M}_i=Q^\vee$ the statement is proved. 

To conclude our statement for the cases when $\eala{M}_i\subset Q^\vee$ it is sufficient to verify the Bernstein-Lusztig  relations for $T_i$ and $\a_i^\vee$. These cases are listed in Remark \ref{rem: lattices}, and they are precisely the cases for which the quadratic relations \eqref{eq: q02}, \eqref{eq: q03bis}, \eqref{eq: qph0} are not superfluous. In all these cases, it is clear that the quadratic relation \eqref{eq: q02}, \eqref{eq: q03bis}, \eqref{eq: qph0bis}, \eqref{eq: qph0d3} is equivalent to the corresponding Bernstein-Lusztig relation for  $T_i$ and $\a_i^\vee$.
\end{proof}

\appendix
\section{Combinatorial results}\label{combinatorics}

\subsection{Braid relations}
We record here some elementary computations involving braid relations.
\begin{Lm}[\cite{ISTri}*{Lemma 2.1}]\label{basic-braid-rels1}
Let $a,b,c$ be elements of a fixed group. Assume that $a$ and $b$ satisfy the $1$-braid relation and $a$ and $c$ satisfy the $1$-braid relation. The following are equivalent
\begin{enumerate}[label={\roman*)}]
\item $a$ and $c^{-1}bc$ satisfy the $1$-braid relation;  
\item $a$ and $bc$ satisfy the $2$-braid relation.
\end{enumerate}
Furthermore, if the equivalent conditions also hold $a$ and $(bc)^{-1}c(bc)$ satisfy the $1$-braid relation.
\end{Lm}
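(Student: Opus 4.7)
The plan is to introduce an intermediate 1-braid condition (i$'$) between $b$ and $\gamma = a^{-1}ca$ (equivalently $cac^{-1}$, from $aca = cac$), and to prove (i) $\Leftrightarrow$ (i$'$) $\Leftrightarrow$ (ii). The identities $cac^{-1} = a^{-1}ca$ and $aba^{-1} = b^{-1}ab$, both immediate consequences of the hypotheses, will be used throughout.

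I will first show (i) $\Leftrightarrow$ (i$'$) by conjugation. Conjugating (i) by $c$ sends the pair $(a, c^{-1}bc)$ to $(cac^{-1}, b) = (\gamma, b)$, so the 1-braid relation (i) becomes (i$'$): $\gamma b \gamma = b\gamma b$. The converse is obtained by conjugating back by $c^{-1}$ and noting that $c^{-1}\gamma c = c^{-1}a^{-1}cac = c^{-1}a^{-1}(aca) = c^{-1}ca = a$.

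Next I will prove (i$'$) $\Leftrightarrow$ (ii) by a short manipulation. From $a^{-1}cab\cdot a^{-1}ca = b\,a^{-1}cab$, left-multiplying by $a$ (and using $aba^{-1} = b^{-1}ab$ on the right side) and then by $b$ yields $bcab\cdot a^{-1}ca = abcab$. Right-multiplying by $ca$ and collapsing $a^{-1}ca\cdot ca = a^{-1}(cac)a = a^{-1}(aca)a = ca^{2}$ produces $bcabca\cdot a = abcabca$, equivalent to (ii). These manipulations are readily reversed.

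For the final assertion, set $d = bc$ and $\mu = (bc)^{-1}c(bc) = d^{-1}cd$. Conjugating the desired relation $a\mu a = \mu a\mu$ by $d$, using $d\mu d^{-1} = c$ and $dad^{-1} = b(cac^{-1})b^{-1} = b\gamma b^{-1}$, reduces the claim to a 1-braid relation between $b\gamma b^{-1}$ and $c$. Rewriting $b\gamma b^{-1} = \gamma^{-1}b\gamma$ via (i$'$) and then conjugating by $\gamma$ reduces the claim further to the 1-braid between $b$ and $\gamma c\gamma^{-1}$. A direct computation $\gamma c\gamma^{-1} = a^{-1}(cac)a^{-1}c^{-1}a = a^{-1}(aca)a^{-1}c^{-1}a = a$ makes the claim exactly $bab = aba$, which is given. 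The main obstacle is the (i$'$) $\Rightarrow$ (ii) step, which requires finding the correct sequence of left- and right-multiplications so that the two given braid relations interact precisely with the conjugate $\gamma$ to collapse the word.
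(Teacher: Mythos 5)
Your argument is correct and complete. Note that the paper offers no proof of this lemma at all---it is quoted from \cite{ISTri}*{Lemma 2.1}---so there is nothing internal to compare against; your write-up supplies a self-contained verification of the same elementary, purely computational kind, organized cleanly around the conjugate $\gamma=a^{-1}ca=cac^{-1}$. I checked the steps: conjugating by $c$ does convert (i) into the $1$-braid relation $\gamma b\gamma=b\gamma b$; the left/right multiplications you perform (using $aba^{-1}=b^{-1}ab$ and $a^{-1}ca\cdot ca=ca^{2}$) are all reversible and land on $bcabca\cdot a=a\cdot bcabca$, which after cancelling one $a$ is exactly $bcabca=abcabc$, i.e.\ condition (ii); and for the final claim the reductions $dad^{-1}=b\gamma b^{-1}=\gamma^{-1}b\gamma$ together with $\gamma c\gamma^{-1}=a$ collapse the desired relation to the hypothesis $aba=bab$. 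Since every step is an equivalence, both implications between (i) and (ii) are established, and the `furthermore' part correctly uses only (i) (through $\gamma b\gamma=b\gamma b$) plus the standing hypotheses.
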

\begin{Lm}\label{basic-braid-rels2}
Let $a,b,c$ be elements of a fixed group.
\begin{enumerate}[label={\roman*)}]
\item If $a$ and $b$ satisfy the $1$-braid relation, then $a^p$ and $b$ satisfy the $p$-braid relation for $0\leq p\leq 4$.
\item If $a$ and $b$ satisfy the braid relations specified by the diagram
\begin{center}
  \begin{tikzpicture}[scale=.4]
    \draw (0,.3) node[anchor=south]  {$a$};
    \draw (2,.3) node[anchor=south]  {$b$};
    \draw (4,.3) node[anchor=south]  {$c$};
    \foreach \x in {0,...,2}
    \draw[xshift=\x cm,thick] (\x cm,0) circle [radius=3mm];
    \draw[xshift=0.15 cm,thick] (0.15 cm,0) -- +(1.4 cm,0);
    \draw[thick] (2.3 cm, .1 cm) -- +(1.4 cm,0);
    \draw[thick] (2.3 cm, -.1 cm) -- +(1.4 cm,0);
  \end{tikzpicture}
\end{center}
then the following pairs satisfy the $2$-braid relation: $a$ and $bcb$, $a$ and $bcb^{-1}$, $a$ and $b^{-1}cb$.
\item If $a$ and $b$ satisfy the braid relations specified by the diagram
\begin{center}
  \begin{tikzpicture}[scale=.4]
    \draw (0,.3) node[anchor=south]  {$a$};
    \draw (2,.3) node[anchor=south]  {$b$};
    \draw (4,.3) node[anchor=south]  {$c$};
    \foreach \x in {0,...,2}
    \draw[xshift=\x cm,thick] (\x cm,0) circle [radius=3mm];
    \draw[xshift=2cm, thick] (40: 3mm) -- +(1.545 cm, 0);
    \draw[xshift=2cm, thick] (-40: 3 mm) -- +(1.545 cm, 0);
            \foreach \y in {0,1}
    \draw[thick,xshift=\y cm] (\y cm,0) ++(.3 cm, 0) -- +(14 mm,0);
  \end{tikzpicture}
\end{center}
then the following pairs satisfy the $3$-braid relation: $a$ and $bcb^{-1}$, $a$ and $b^{-1}cb$.
\end{enumerate}
\end{Lm}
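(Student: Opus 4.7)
The plan is to verify each assertion by direct manipulation inside the ambient group using only the defining braid relations. Throughout, set $\Delta := aba = bab$ and note the identity $a\Delta = \Delta b$ (equivalently $\Delta a \Delta^{-1} = b$), which says that conjugation by $\Delta$ interchanges $a$ and $b$; in particular $\Delta^2$ is central in $\langle a,b\rangle$ and equals $(ab)^3 = (ba)^3$.

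For part (i), the cases $p=0,1$ are immediate and $p=4$ is the empty relation. For $p=2$, one computes $a^2 b \cdot a^2 b = a(aba)(ab) = a(bab)(ab) = (ab)^3$ and symmetrically $b a^2 \cdot b a^2 = (ba)^3$, and the equality $(ab)^3 = \Delta^2 = (ba)^3$ yields the $2$-braid relation. For $p=3$, the identities $ab = \Delta a^{-1}$, $a\Delta = \Delta b$ and $aba^{-1} = \Delta a^{-2}$ allow one to show $a^3 b a^3 = a^2\Delta a^2$ and $b a^3 b = \Delta^2 a^{-1}b a^{-1}$; assembling these and repeatedly using the centrality of $\Delta^2$ collapses $(a^3 b)^3$ to $\Delta^4$. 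Since $\Delta^4$ is central, the general identity $(yx)^n = x^{-1}(xy)^n x$ forces $(b a^3)^3 = (a^3 b)^3$.

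For parts (ii) and (iii), the strategy is to expand $(a\cdot w)^p$ and $(w \cdot a)^p$ as words in $a,b,c,b^{-1}$ and reduce using $ac = ca$ (to move $a$'s past $c$'s), $aba = bab$ (to collapse adjacent triples), and the $bc$-braid relation (once the $b$'s and $c$'s are contiguous). For instance, for $w = bcb$ in part (ii), one successively applies $bab = aba$ in the middle of $(abcb)^2 = abcbabcb$, then $ac$-swaps, then $bcbc = cbcb$, then another $bab = aba$, ending at $bcabcaba = (bcba)^2$. For $w = bcb^{-1}$, the derived identity $b^{-1}ab = aba^{-1}$ (immediate from $aba = bab$) together with $ac = ca$ yields $cb^{-1}abc = a(cbc)a^{-1}$, whence $(abcb^{-1})^2 = bab\cdot cbc\cdot a^{-1}b^{-1}$; a parallel reduction using also $a^{-1}b^{-1}a = ba^{-1}b^{-1}$ and $bcbc = cbcb$ shows $(bcb^{-1}\cdot a)^2$ reduces to the same expression. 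The case $w = b^{-1}cb$ is handled by a parallel computation. Part (iii) proceeds identically but with the $3$-braid $bcbcbc = cbcbcb$ applied to twelve-letter words.

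The main obstacle is the combinatorial length and bookkeeping of these word reductions, especially in the cases involving $b^{-1}$ which require repeated appeals to $b^{-1}ab = aba^{-1}$ and $a^{-1}b^{-1}a = ba^{-1}b^{-1}$. One could alternatively invoke Matsumoto's theorem --- the $p$-braid relation between the Weyl group reflections $s_a$ and $s_{s_b\gamma_c}$ in the appropriate Coxeter system lifts to the corresponding Coxeter braid group --- to handle the ``positive'' case $w = bcb$ in part (ii), but this does not cover the conjugate forms $bcb^{\pm 1}$, $b^{\pm 1}cb$, which represent genuinely distinct elements of the Coxeter braid group and must be treated by explicit computation.
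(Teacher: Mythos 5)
Your proposal is correct: the intermediate identities you use (e.g.\ $a^2ba^2b=(ab)^3=\Delta^2$, $(a^3b)^3=\Delta^4$, $b^{-1}ab=aba^{-1}$, $a^{-1}b^{-1}a=ba^{-1}b^{-1}$, and the reductions of $(abcb^{\pm1})^2$ and the twelve-letter words to a common normal form via $ac=ca$ and the $b,c$ braid relation) all check out. The paper's own proof is just the assertion ``straightforward verification,'' so your direct word-manipulation argument is essentially the same approach, merely written out in full.
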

\begin{proof}
Straightforward verification.
\end{proof}

\subsection{The length function}\label{length-combinatorics}
We first establish some notation. For each $w$ in $W$ let $\ell(w)$ be the length of a reduced expression of $w$ in terms of the simple reflections $s_i$, $0\leq i\leq n$.
We have
\begin{equation}\label{length}
\ell(w)=|\Pi(w)|
\end{equation}
where
\begin{equation}\label{pi}
\Pi(w)=\{\a\in R^+\ |\ w(\a)\in R^-\}\ .
\end{equation}
For the basic properties of the length function on Coxeter groups we refer to \cite{BouLie}*{Chapter 4}. We record below some well-known facts that will be used in what follows.

If $w=s_{j_p}\cdots s_{j_1}$ is a reduced expression, then
\begin{equation}\label{pi-description}
\Pi(w)=\{\a^{(k)}\ |\ 1\leq k\leq p\},
\end{equation}
with $\a^{(k)}=s_{j_1}\cdots s_{j_{k-1}}(\a_{j_k})$.
\begin{Lm} \label{pi-simple}
Let $w\in W$ and $0\leq i\leq n$. The following are equivalent
\begin{enumerate}[label={\roman*)}]
\item $w$ has a reduced expression ending in $s_i$;
\item $\ell(ws_i)=\ell(w)-1$;
\item $\a_i\in\Pi(w)$.
\end{enumerate}
\end{Lm}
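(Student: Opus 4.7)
The plan is to establish the cycle of implications (i) $\Rightarrow$ (ii) $\Rightarrow$ (iii) $\Rightarrow$ (i), using only the definition of the length function, the root description \eqref{pi-description}, and the general fact that multiplication by a simple reflection changes the length by exactly $\pm 1$.

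For (i) $\Rightarrow$ (ii), I would observe that if $w = s_{j_p}\cdots s_{j_2} s_i$ is a reduced expression with $p = \ell(w)$, then $ws_i = s_{j_p}\cdots s_{j_2}$ is an expression of length $p-1$, hence $\ell(ws_i) \leq \ell(w)-1$; combined with the standard fact $|\ell(ws_i) - \ell(w)| = 1$, this forces equality.

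For (ii) $\Rightarrow$ (iii), I would pick a reduced expression $ws_i = s_{j_p}\cdots s_{j_2}$ of length $\ell(w)-1$, so that $w = s_{j_p}\cdots s_{j_2} s_i$ is a reduced expression for $w$ (of length $p$). Applying \eqref{pi-description} to this particular reduced word with $j_1 := i$ gives $\a^{(1)} = \a_i \in \Pi(w)$.

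For (iii) $\Rightarrow$ (i), which is the main substantive step, I would start with any reduced expression $w = s_{j_p}\cdots s_{j_1}$ and use \eqref{pi-description} to find the smallest index $k$ with $\a^{(k)} = s_{j_1}\cdots s_{j_{k-1}}(\a_{j_k}) = \a_i$. Conjugating $s_i$ back through this element yields the reflection identity
\begin{equation*}
s_{j_1}\cdots s_{j_{k-1}}\, s_{j_k}\, s_{j_{k-1}}\cdots s_{j_1} \;=\; s_i,
\end{equation*}
which rearranges to $s_{j_k}\, s_{j_{k-1}}\cdots s_{j_1} = s_{j_{k-1}}\cdots s_{j_1}\, s_i$. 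Substituting this into the given reduced expression yields
\begin{equation*}
w \;=\; s_{j_p}\cdots s_{j_{k+1}}\, s_{j_{k-1}}\cdots s_{j_1}\, s_i,
\end{equation*}
which is an expression of length $p = \ell(w)$ ending in $s_i$, and therefore reduced.

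The main (and essentially only) obstacle is the bookkeeping in the third step, where I need to be careful that deleting the letter $s_{j_k}$ and appending $s_i$ does produce the same element $w$ and that the resulting word has the correct length. Once the identity $s_i = s_{j_1}\cdots s_{j_{k-1}} s_{j_k} s_{j_{k-1}}\cdots s_{j_1}$ from the choice of $k$ is established via \eqref{pi-description}, the rest is a direct manipulation.
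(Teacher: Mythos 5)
Your proof is correct. The paper does not actually prove this lemma; it records it as a standard fact about Coxeter groups, with a pointer to Bourbaki, Chapter 4, immediately after stating \eqref{pi-description}. Your argument is precisely the classical one the paper implicitly relies on: the cycle (i)$\Rightarrow$(ii)$\Rightarrow$(iii) is immediate from \eqref{pi-description} together with the parity fact $\ell(ws_i)=\ell(w)\pm 1$, and your step (iii)$\Rightarrow$(i) is the standard exchange-type manipulation using $s_{u(\beta)}=u\,s_\beta\,u^{-1}$ applied to $\a^{(k)}=\a_i$, with the length count $(p-k)+(k-1)+1=p=\ell(w)$ confirming the new word is reduced and ends in $s_i$. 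No gaps.
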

We will need the following extension of the above result.
\begin{Lm}\label{pi-simple2}
Let $w,w_1,w_2\in W$ such that $w=w_1w_2$. The following are equivalent
\begin{enumerate}[label={\roman*)}]
\item $\ell(w_1w_2)=\ell(w_1)+\ell(w_2)$;
\item $\Pi(w_2)\subseteq\Pi(w)$.
\end{enumerate}
\end{Lm}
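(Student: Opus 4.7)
The plan is to prove both implications simultaneously by computing $\ell(w_1 w_2)$ precisely in terms of $\ell(w_1)$, $\ell(w_2)$, and a correction term that measures the failure of $\Pi(w_2) \subseteq \Pi(w)$. The key ingredient is the formula $\ell(w) = |\Pi(w)|$ recalled in \eqref{length}, so the statement reduces to an inclusion-counting argument on the sets of positive roots sent to negative roots.

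First I would partition $\Pi(w_1w_2)$ according to the behavior of $w_2$. For $\alpha \in R^+$, either $\alpha \in \Pi(w_2)$ (so $w_2\alpha \in R^-$) or $\alpha \in R^+\setminus \Pi(w_2)$ (so $w_2\alpha \in R^+$). In the first case, $\alpha \in \Pi(w_1w_2)$ iff $w_1(-w_2\alpha) \in R^+$, i.e.\ iff $-w_2\alpha \notin \Pi(w_1)$; in the second case, $\alpha \in \Pi(w_1w_2)$ iff $w_2\alpha \in \Pi(w_1)$. Introducing the disjoint sets
\begin{align*}
A &= \{\alpha \in \Pi(w_2)\ |\ -w_2\alpha \in \Pi(w_1)\},\\
B &= \{\alpha \in R^+ \setminus \Pi(w_2)\ |\ w_2\alpha \in \Pi(w_1)\},
\end{align*}
one obtains the disjoint union decomposition $\Pi(w_1w_2) = (\Pi(w_2)\setminus A) \sqcup B$.

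Next I would observe that the assignment $\alpha \mapsto w_2\alpha$ restricted to $B$, together with $\alpha \mapsto -w_2\alpha$ restricted to $A$, defines a bijection $A \sqcup B \to \Pi(w_1)$: every $\gamma \in \Pi(w_1)$ has a unique preimage $w_2^{-1}\gamma$ in $R$, which lies in $B$ if $w_2^{-1}\gamma \in R^+$ and whose negative lies in $A$ otherwise. Hence $|A| + |B| = \ell(w_1)$, and consequently
\begin{equation*}
\ell(w_1w_2) = |\Pi(w_2)| - |A| + |B| = \ell(w_1) + \ell(w_2) - 2|A|.
\end{equation*}

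From this identity the equivalence is immediate: $\ell(w_1w_2) = \ell(w_1) + \ell(w_2)$ iff $A = \emptyset$. Finally I would unwind the definition of $A$: $A = \emptyset$ means that for every $\alpha \in \Pi(w_2)$ we have $-w_2\alpha \notin \Pi(w_1)$, i.e.\ $w_1(-w_2\alpha) \in R^+$, which is exactly $w_1 w_2\alpha \in R^-$, i.e.\ $\alpha \in \Pi(w_1w_2) = \Pi(w)$. Thus $A = \emptyset$ is equivalent to $\Pi(w_2) \subseteq \Pi(w)$, completing the proof. There is no serious obstacle here; the only point requiring care is the bijection argument yielding $|A|+|B|=\ell(w_1)$, which is just a careful case split on the sign of $w_2^{-1}\gamma$.
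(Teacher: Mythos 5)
Your argument is correct, but it takes a genuinely different route from the paper. You prove both implications at once by establishing the exact identity $\ell(w_1w_2)=\ell(w_1)+\ell(w_2)-2|A|$ with $A=\{\alpha\in\Pi(w_2)\ |\ -w_2\alpha\in\Pi(w_1)\}$, via the decomposition $\Pi(w_1w_2)=(\Pi(w_2)\setminus A)\sqcup B$ and the bijection $A\sqcup B\to\Pi(w_1)$; the only input you need is \eqref{length}, and your case analysis and the bijection check out (the imaginary roots of the affine system play no role since they are fixed by $W$ and hence never inverted). The paper instead argues the two directions separately: (i)$\Rightarrow$(ii) follows by concatenating reduced expressions and invoking the description \eqref{pi-description} of $\Pi(w)$ for a reduced word, while (ii)$\Rightarrow$(i) is an induction on $\ell(w_2)$, peeling off the last simple reflection of a reduced expression of $w_2$ and using Lemma \ref{pi-simple} together with \eqref{pi-description}. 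Your approach buys a stronger, quantitative statement (the defect $2|A|$ measuring the failure of additivity, essentially the cocycle identity for inversion sets) and a uniform proof of the equivalence with no induction or choice of reduced words; the paper's approach buys economy of means, staying entirely within the two facts it has just recalled (Lemma \ref{pi-simple} and \eqref{pi-description}) at the cost of the set-theoretic bookkeeping you carry out explicitly. Both proofs are complete; the only step in yours that demands care, as you note, is verifying that $\alpha\mapsto w_2\alpha$ on $B$ and $\alpha\mapsto -w_2\alpha$ on $A$ jointly biject onto $\Pi(w_1)$, and your case split on the sign of $w_2^{-1}\gamma$ handles this correctly.
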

\begin{proof}  Let $w_2=s_{j_t}\cdots s_{j_1}$ and $w_1=s_{j_p}\cdots s_{j_{t+1}}$ be reduced expressions.

Let us assume that $\ell(w_1w_2)=\ell(w_1)+\ell(w_2)$.  Then, 
$w=s_{j_p}\cdots s_{j_1}$ is a reduced expression and the desired conclusion follows from \eqref{pi-description}.

We will show by induction on $\ell(w_2)$ that  if  $\Pi(w_2)\subseteq\Pi(w)$  then $\ell(w_1w_2)=\ell(w_1)+\ell(w_2)$. If $\ell(w_2)=1$ then the claim follows from Lemma \ref{pi-simple}. If $\ell(w_2)>1$ then, by  hypothesis, 
$$
\Pi(s_{j_{t-1}}\cdots s_{j_1})\subset \Pi(w_2)\subseteq \Pi(w)
$$
which in turn implies,  by the induction hypothesis, that 
$$
\ell(w)=\ell(w_1s_{j_t})+\ell(w_2)-1.
$$
From \eqref{pi-description} we obtain that $s_{j_1}\cdots s_{j_{t-1}}(\a_{j_t})\in \Pi(w)$ or, equivalently, $w_1(-\a_{j_t})\in R^-$. Therefore, $w_1(\a_{j_t})\in R^+$ and, by Lemma \ref{pi-simple},  $\ell(w_1s_{j_t})=\ell(w_1)+1$, from which our claim immediately follows.
\end{proof}
\begin{Lm}\label{lemma-simple-commuting}
Let $\gamma\in \Rring^+$ and $\a_i$ a simple root such that $(\a_i,\gamma)=0$. Then, 
$$\ell(s_\gamma s_i)=\ell(s_i s_\gamma)=\ell(s_\gamma)+1.$$
\end{Lm}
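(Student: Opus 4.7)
The plan is to reduce the claim to the standard characterization of length growth via the ``positivity test'' stated in Lemma \ref{pi-simple}, namely that for $w \in W$ and a simple root $\a_i$,
\[
\ell(ws_i) = \ell(w)+1 \iff \a_i \notin \Pi(w) \iff w(\a_i) \in R^+,
\]
and its left-multiplication analogue $\ell(s_i w) = \ell(w)+1 \iff w^{-1}(\a_i)\in R^+$ (obtained by applying Lemma \ref{pi-simple} to $w^{-1}$ and using $\ell(w)=\ell(w^{-1})$).

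First, I would compute $s_\gamma(\a_i)$ directly from the reflection formula recalled earlier in the paper, $s_\gamma(x) = x - (x,\gamma^\vee)\gamma$. Since $\gamma^\vee$ is a positive multiple of $\gamma$, the orthogonality hypothesis $(\a_i,\gamma)=0$ gives $(\a_i,\gamma^\vee)=0$, so
\[
s_\gamma(\a_i) = \a_i \in R^+.
\]
Applying the right-multiplication criterion with $w=s_\gamma$ immediately yields $\ell(s_\gamma s_i) = \ell(s_\gamma)+1$.

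For the second equality, I use that $s_\gamma^{-1} = s_\gamma$, so $s_\gamma^{-1}(\a_i) = \a_i \in R^+$ as well, and the left-multiplication criterion gives $\ell(s_i s_\gamma) = \ell(s_\gamma)+1$. Both equalities thus follow from the single computation $s_\gamma(\a_i)=\a_i$.

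There is no real obstacle here: the proof is a one-line consequence of the reflection formula together with the standard length criterion, since $s_\gamma$ is its own inverse. The only subtlety worth stating explicitly is the passage from $(\a_i,\gamma)=0$ to $(\a_i,\gamma^\vee)=0$, which is trivial because $\gamma^\vee$ is proportional to $\gamma$ via $\gamma^\vee = 2\gamma/(\gamma,\gamma)$.
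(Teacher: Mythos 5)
Your proof is correct and is essentially the paper's own argument: the paper disposes of this lemma with "Straightforward from Lemma \ref{pi-simple}", and your computation $s_\gamma(\a_i)=\a_i\in R^+$ together with the right- and left-multiplication versions of that criterion (the latter via $s_\gamma^{-1}=s_\gamma$ and $\ell(w)=\ell(w^{-1})$) is exactly the intended filling-in of that remark.
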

\begin{proof}
Straightforward from Lemma \ref{pi-simple}.
\end{proof}

\subsection{Non-simply laced finite root systems}\label{app-nsl}

In what follows we investigate the length of some elements of $\W$ in the case when $\Rring$ is a non-simply laced root system. We follow the notation set up in Section \ref{twisted} and denote by $\th$ the short dominant root and by $\ph$ the long dominant root. Also, we denote by ${i_\th}$ node corresponding to the unique simple root that is not orthogonal on $\th$ and by ${i_\ph}$ the node corresponding unique simple root that is not orthogonal on $\ph$. Recall that $\php^\vee=\th^\vee-\ph^\vee$ and $\thp=\ph-\th$. 

\begin{Lm}\label{lemma-tildevssimpleroots}
With the above notation we have
\begin{enumerate}[label={\roman*)}]
\item If $(\th, \a_{i_\th}^\vee)(\th^\vee,\a_{i_\th})=2$ then $\php=\a_{i_\th}$;
\item If $(\ph, \a_{i_\ph}^\vee)(\ph^\vee,\a_{i_\ph})=2$ then $\thp=\a_{i_\ph}$.
\end{enumerate}
\end{Lm}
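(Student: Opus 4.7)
The plan is to prove each part by exhibiting an explicit candidate root with the correct length and dominance, and invoking the fact that in the non-simply-laced irreducible $\Rring$ the dominant long root and the dominant short root are each unique (namely $\ph$ and $\th$).

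For (i) I would first unpack the hypothesis. The Cartan integers $(\th, \a_{i_\th}^\vee)$ and $(\th^\vee, \a_{i_\th})$ are nonnegative (by dominance of $\th$) with product $2$, so their multiset is $\{1,2\}$; their ratio equals $(\th,\th)/(\a_{i_\th},\a_{i_\th})$, and since $\th$ has minimal length in $\Rring$ this forces $(\th,\a_{i_\th}^\vee) = 1$, $(\th^\vee,\a_{i_\th}) = 2$, with $\a_{i_\th}$ long and $r = 2$. Consequently $\th$ and $\a_{i_\th}$ span a $B_2$ subsystem of $\Rring$ whose positive long roots are $\a_{i_\th}$ and $2\th - \a_{i_\th}$, so in particular $2\th-\a_{i_\th}$ is a root. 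Next I would check that $2\th - \a_{i_\th}$ is dominant: the pairing with $\a_{i_\th}^\vee$ yields $0$, and for $j \neq i_\th$ we have $(\th, \a_j^\vee) = 0$ by the definition of $i_\th$, leaving $-(\a_{i_\th}, \a_j^\vee) \geq 0$. Uniqueness of the dominant long root then identifies $2\th - \a_{i_\th}$ with $\ph$. To match this with $\php = -s_\th(\ph) = (\ph, \th^\vee)\th - \ph$, I would confirm $(\ph, \th^\vee) = 2$: this follows from $\thp = \ph - \th$, the mirror identity $\thp = (\th, \ph^\vee)\ph - \th$ (yielding $(\th, \ph^\vee) = 1$), and the length normalizations $\th^\vee = \th$, $\ph^\vee = \ph/2$ available in the $r=2$ case. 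Therefore $\php = 2\th - \ph = \a_{i_\th}$.

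Part (ii) is the mirror argument with the roles of $\th$ and $\ph$ swapped, but cleaner because $\thp = \ph - \th$ holds by definition. The hypothesis forces $\a_{i_\ph}$ short, $(\ph, \a_{i_\ph}^\vee) = 2$, $(\ph^\vee, \a_{i_\ph}) = 1$, and $r = 2$. Then $\ph - \a_{i_\ph}$ is a short root of the $B_2$ subsystem spanned by $\ph$ and $\a_{i_\ph}$, and the identical pairing computation shows it is dominant; by uniqueness of the dominant short root it equals $\th$, giving $\thp = \ph - \th = \a_{i_\ph}$.

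The only substantive step is extracting the Cartan-integer data and the rank-2 subsystem type from the numerical hypothesis; once those are in hand, the dominance checks and the appeals to uniqueness of the highest (long or short) root are routine. I do not anticipate any other obstacle.
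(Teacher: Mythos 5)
Your proof is correct, but it takes a different route from the paper's. The paper argues directly with $\php$: it computes $(s_{i_\th}(\php^\vee),\th)=(\ph^\vee,-s_\th s_{i_\th}(\th))=-1$, where the hypothesis $(\th,\a_{i_\th}^\vee)(\th^\vee,\a_{i_\th})=2$ makes the $\th$-coefficient cancel and orthogonality of $\a_{i_\th}$ to $\ph$ kills the other term; since $\th$ is dominant this forces $s_{i_\th}(\php)$ to be a negative root, and as $s_{i_\th}$ sends only the simple root $\a_{i_\th}$ from $\Rring^+$ to $\Rring^-$, one gets $\php=\a_{i_\th}$ (part ii being mirror-dual). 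You instead never touch $\php$ until the last line: you extract the Cartan integers $(1,2)$ from the hypothesis, observe that $s_\th(\a_{i_\th})=\a_{i_\th}-2\th$ makes $2\th-\a_{i_\th}$ a long root, check it is dominant, and invoke uniqueness of the dominant long (resp.\ short) root to conclude $\ph=2\th-\a_{i_\th}$ (resp.\ $\th=\ph-\a_{i_\ph}$), after which $\php=\a_{i_\th}$ (resp.\ $\thp=\a_{i_\ph}$) follows by unwinding the definition. Both are short and elementary; the paper's computation is more economical and stays entirely inside the reflection formalism (its one implicit input being $(\ph^\vee,\a_{i_\th})=0$, i.e.\ $i_\th\neq i_\ph$), while yours yields the extra explicit identities $\ph=2\th-\a_{i_\th}$ and $\th=\ph-\a_{i_\ph}$ as a byproduct. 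Two small remarks on your write-up: the appeal to the full $B_2$ subsystem is more than you need (the single reflection $s_\th(\a_{i_\th})$ already produces the root $2\th-\a_{i_\th}$), and your normalization step $\th^\vee=\th$, $\ph^\vee=\ph/2$ is legitimate under the paper's conventions in the twisted case but is avoidable: once $\ph=2\th-\a_{i_\th}$ is known, $(\ph,\th^\vee)=2(\th,\th^\vee)-(\a_{i_\th},\th^\vee)=4-2=2$ directly, with no reference to the normalization.
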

\begin{proof}
Remark that 
\begin{align*}
(s_{i_\th}(\php^\vee), \th) &= (\ph^\vee, -s_\th s_{i_\th}(\th)) & &\\
&= (\ph^\vee, \th+(\th,\a_{i_\th}^\vee)\a_{i_\th}-(\th,\a_{i_\th}^\vee)(\th^\vee,\a_{i_\th})\th) & &   \\
&= -1. & &
\end{align*}
Since, $\th$ is dominant $s_{i_\th}(\php)$ must be a negative root and, keeping in mind that $\a_{i_\th}$ is a simple root, we must have $\php=\a_{i_\th}$. The second claim is proved in a similar fashion.
\end{proof}

\begin{Lm}\label{basis-alternative}
The set $\{\ph,\th\}\cup \{\a_i~|~1\leq i\leq n, ~i\neq i_\th,i_\ph\}$ is a basis for $\Hring^*$.
\end{Lm}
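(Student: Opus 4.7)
The plan is to reduce the claim to a determinant computation involving the Cartan matrix and then to invoke the fact that any induced subdiagram of a finite-type Dynkin diagram is itself of finite type.

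First I would identify $\th$ and $\ph$ with positive scalar multiples of fundamental weights. Since $(\th,\a_j^\vee)=0$ for every $j\neq i_\th$ (by the defining property of $i_\th$), expanding in the fundamental weight basis $\{\l_j\}_{1\leq j\leq n}$ dual to $\{\a_j^\vee\}$ yields $\th=(\th,\a_{i_\th}^\vee)\,\l_{i_\th}$ and, analogously, $\ph=(\ph,\a_{i_\ph}^\vee)\,\l_{i_\ph}$, with both scalar factors positive integers. It follows that if $i_\th=i_\ph$ then $\th$ and $\ph$ are proportional positive roots, hence equal—contradicting their being roots of different lengths. Therefore $i_\th\neq i_\ph$, the proposed set contains $n$ distinct elements, and it spans the same subspace of $\H^*$ as
\[
\mathcal{B}=\{\l_{i_\th},\l_{i_\ph}\}\cup\{\a_i\mid 1\leq i\leq n,\ i\neq i_\th,i_\ph\}.
\]
It therefore suffices to prove that $\mathcal{B}$ is a basis of $\H^*$.

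Next I would translate the basis property into nonvanishing of a minor of the inverse Cartan matrix. Let $A$ denote the Cartan matrix $A_{ij}=(\a_j,\a_i^\vee)$, so that $\a_j=\sum_i A_{ij}\l_i$ and $\l_i=\sum_j (A^{-1})_{ji}\a_j$. The $n\times n$ matrix $M$ expressing the elements of $\mathcal{B}$ in the simple-root basis is the identity outside the two columns corresponding to $\l_{i_\th}$ and $\l_{i_\ph}$. Expanding $\det M$ along the $n-2$ standard basis columns reduces it, up to sign, to the $2\times 2$ principal minor of $A^{-1}$ indexed by $I=\{i_\th,i_\ph\}$:
\[
\det M \;=\; \pm\det\begin{pmatrix}(A^{-1})_{i_\th,i_\th} & (A^{-1})_{i_\th,i_\ph}\\ (A^{-1})_{i_\ph,i_\th} & (A^{-1})_{i_\ph,i_\ph}\end{pmatrix}.
\]
By Jacobi's identity for principal minors of an inverse, this equals $\det(A_{I^c,I^c})/\det(A)$, where $A_{I^c,I^c}$ is the $(n-2)\times (n-2)$ principal submatrix of $A$ obtained by deleting the two rows and columns indexed by $I$.

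Finally I would observe that $A_{I^c,I^c}$ is precisely the Cartan matrix of the Dynkin subdiagram of $D(\Aring)$ obtained by removing the nodes $i_\th$ and $i_\ph$. Any induced subdiagram of a finite-type Dynkin diagram is a disjoint union of finite-type Dynkin diagrams; since positive definiteness descends to principal submatrices, $\det(A_{I^c,I^c})>0$, and $\det(A)>0$ as well (it equals $|\Pring/\Qring|$). Therefore $\det M\neq 0$ and $\mathcal{B}$ is a basis, completing the proof. The only subtle point is the identification of $\th$ and $\ph$ with positive multiples of fundamental weights; once that is in hand, the argument is uniform across $B_n$, $C_n$, $F_4$, $G_2$ and no case-by-case check is required.
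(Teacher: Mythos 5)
Your proof is correct, but it takes a genuinely different route from the paper's. The paper argues directly: given a dependence $c_\ph\ph+c_\th\th+\sum_{i\neq i_\th,i_\ph}c_i\a_i=0$, it pairs with $\thp=\ph-\th$ and with $\php$, using $(\thp,\th)=(\php,\ph)=0$ together with the fact that $\thp$ and $\php$ are orthogonal to every $\a_i$ with $i\neq i_\th,i_\ph$, to force $c_\ph=c_\th=0$; independence of the remaining simple roots finishes the linear independence, and the count of $n$ elements gives the basis property (the case $n=2$ is disposed of by noting $\ph,\th$ are independent). You instead observe that $\th$ and $\ph$ are positive multiples of the fundamental weights $\l_{i_\th}$, $\l_{i_\ph}$, reduce to the $2\times 2$ principal minor of $\Aring^{-1}$ on $\{i_\th,i_\ph\}$, and conclude via Jacobi's identity and positivity of determinants of finite-type Cartan matrices. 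Your approach has the merit of making explicit that $i_\th\neq i_\ph$, a point the paper's element count tacitly requires, and it is uniform across types with no auxiliary roots; the cost is heavier linear-algebraic machinery (Jacobi's minor identity, determinant positivity) where the paper gets by with two inner products against $\thp$ and $\php$, which are exactly the objects the appendix is built around. One cosmetic point: since $\Aring$ is not symmetric, "positive definiteness descends to principal submatrices" should be phrased for the symmetrized matrix (the Gram matrix of the simple roots), or replaced by the standard fact that principal submatrices of finite-type Cartan matrices are again of finite type and hence have positive determinant; this does not affect the validity of your argument.
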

\begin{proof}
If $n=2$ then the claim follows from the fact that $\ph$ and $\th$ are linearly independent. For $n\geq 3$, remark that $(\php, \ph)=(\thp,\th)=0$. If
$$
c_\ph\ph+c_\th\th+\sum_{i\neq i_\th,i_\ph}c_i\a_i=0, 
$$
then, by taking the scalar product with $\thp$ and, respectively, $\php$ we obtain that $c_\ph=c_\th=0$ and, consequently  all $c_i$ are also zero. Therefore the set from the statement is linearly independent and, since it has $n$ elements, it is a basis for $\Hring^*$.
\end{proof}

\begin{Lm}\label{lemma-s}
We have
\begin{enumerate}[label={\roman*)}]
\item $\ell(s_\th)=\ell(s_\ph s_\th)+\ell(s_{\php})=\ell(s_{\php})+\ell(s_\th s_\ph)$;
\item $\ell(s_\ph)=\ell(s_\th s_\ph)+\ell(s_{\thp})=\ell(s_{\thp})+\ell(s_\ph s_\th )$.
\end{enumerate}
\end{Lm}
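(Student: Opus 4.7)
The plan is to apply Lemma \ref{pi-simple2} after reducing each length identity to a set inclusion between inversion sets. First, from $\php=-s_\th(\ph)$, $\thp=-s_\ph(\th)$, and the general identity $s_{w(\gamma)}=ws_\gamma w^{-1}$, one obtains the conjugation identities
\begin{equation*}
s_{\php}=s_\th s_\ph s_\th,\qquad s_{\thp}=s_\ph s_\th s_\ph.
\end{equation*}
Rearranging yields the group-theoretic factorizations
\begin{equation*}
s_\th=(s_\ph s_\th)\cdot s_{\php}=s_{\php}\cdot(s_\th s_\ph),\qquad s_\ph=(s_\th s_\ph)\cdot s_{\thp}=s_{\thp}\cdot(s_\ph s_\th).
\end{equation*}

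By Lemma \ref{pi-simple2}, the identity $\ell(s_\th)=\ell(s_\ph s_\th)+\ell(s_{\php})$ is equivalent to $\Pi(s_{\php})\subseteq\Pi(s_\th)$; similarly $\ell(s_\ph)=\ell(s_\th s_\ph)+\ell(s_{\thp})$ is equivalent to $\Pi(s_{\thp})\subseteq\Pi(s_\ph)$. Since inverses have equal length, $\ell(s_\ph s_\th)=\ell(s_\th s_\ph)$ and $\ell(s_\th s_\ph)=\ell(s_\ph s_\th)$, so once the first equality in each of (i) and (ii) is established, the second equality follows automatically (and, by Lemma \ref{pi-simple2} applied in reverse to the other factorization, gives the inclusions $\Pi(s_\th s_\ph)\subseteq\Pi(s_\th)$ and $\Pi(s_\ph s_\th)\subseteq\Pi(s_\ph)$ as consequences). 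Thus the task reduces to proving
\begin{equation*}
\Pi(s_{\php})\subseteq\Pi(s_\th) \quad\text{and}\quad \Pi(s_{\thp})\subseteq\Pi(s_\ph),
\end{equation*}
and by the symmetry $\th\leftrightarrow\ph$ it is enough to treat the first.

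For this, I would take $\a\in R^+$ with $s_{\php}(\a)<0$ and show $s_\th(\a)<0$. Using $s_{\php}(\a)=\a-(\a,\php^\vee)\php$ together with $\php^\vee=\th^\vee-\ph^\vee$, and the fact that $\th,\ph$ are dominant (hence $(\a,\th^\vee),(\a,\ph^\vee)\geq 0$), the hypothesis forces $(\a,\th^\vee)>(\a,\ph^\vee)\geq 0$. Substituting the formula $\php=(\ph,\th^\vee)\th-\ph$ and using $(\th,\ph^\vee)=1$, which holds uniformly in every non-simply laced type (since the product $(\ph,\th^\vee)\cdot|\th|^2=|\ph|^2$ by direct verification in types $B,C,F,G$), the positive root $\gamma=-s_{\php}(\a)$ can be rewritten so as to exhibit $(\a,\th^\vee)\th-\a=s_\th(-\a)$ as a positive root, yielding $s_\th(\a)<0$.

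The main obstacle will be this last root-system computation: in those types where Lemma \ref{lemma-tildevssimpleroots} applies and $\php=\a_{i_\th}$ is simple (types $B_n$, $F_4$), the set $\Pi(s_{\php})$ is the singleton $\{\php\}$ and the inclusion follows immediately from $s_\th(\php)=-\ph<0$; analogously for $\thp$ in type $C_n$. In the remaining cases (e.g.\ $\php$ in type $C_n$ or $G_2$), $\ell(s_{\php})$ can be as large as $2n-3$, and one must either run the uniform computation above or verify the inclusion element-by-element using the rank-$2$ sub-root-system $R\cap\mathrm{span}(\th,\ph)$, which is of type $B_2$ or $G_2$ and on which $s_\th,s_\ph,s_{\php},s_{\thp}$ act as the dihedral reflections.
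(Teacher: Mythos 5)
Your reduction is sound and runs parallel to the paper's own argument: both rest on Lemma \ref{pi-simple2}, the paper via the factorization $s_\th=s_{\php}(s_\th s_\ph)$, which reduces (i) to the inclusion $\Pi(s_\th s_\ph)\subseteq\Pi(s_\th)$, and you via $s_\th=(s_\ph s_\th)s_{\php}$, which reduces it to $\Pi(s_{\php})\subseteq\Pi(s_\th)$; either inclusion suffices, and your first step --- that $\a\in\Pi(s_{\php})$ forces $(\a,\php^\vee)>0$, hence $(\a,\th^\vee)>(\a,\ph^\vee)\geq 0$ because $\php^\vee=\th^\vee-\ph^\vee$ and $\ph$ is dominant --- is correct. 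The gap is in how you close the inclusion. The proposed ``rewriting'' of $\gamma=-s_{\php}(\a)$ is not an argument: substituting $\php=(\ph,\th^\vee)\th-\ph$ only expresses $s_\th(-\a)-\gamma$ as an integral combination of $\th$ and $\ph$ whose $\th$-coefficient can be negative (already for $\a=\eps_2+\eps_3$ in type $C_3$, where $s_\th(-\a)-\gamma=\ph-\th$), so positivity of $s_\th(-\a)$ cannot be read off from it, and the fact $(\th,\ph^\vee)=1$ plays no role. The fallback you offer for the hard cases is also unsound: $\Pi(s_{\php})$ is not contained in $\Span(\th,\ph)$ --- in the standard coordinates for $\Rring$ of type $C_3$ one has $\th=\eps_1+\eps_2$, $\ph=2\eps_1$, $\php=2\eps_2$, and $\eps_2\pm\eps_3\in\Pi(s_{\php})$ --- so an element-by-element check inside the rank-two subsystem $\Rring\cap\Span(\th,\ph)$ says nothing about such roots.

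The missing implication has a one-line proof, and it is precisely the dominance argument the paper uses at the end of its own proof: if $\a>0$ and $(\a,\th^\vee)>0$, then $s_\th(\a)$ must be a negative root, because every positive root $\b$ satisfies $(\b,\th)\geq 0$ ($\th$ being dominant), whereas $(s_\th(\a),\th)=-(\a,\th)<0$. Replacing your last paragraph by this observation (and discarding the case discussion) completes the proof of (i); for (ii) the symmetric computation works, since $(\a,\thp^\vee)>0$ forces $(\a,\ph^\vee)>0$ and $\ph$ is dominant, which is your symmetry claim and corresponds to the paper's passage to the dual root system $\Rring^\vee$. So the net difference from the paper is only the choice of factorization: you prove $\Pi(s_{\php})\subseteq\Pi(s_\th)$, the paper proves $\Pi(s_\th s_\ph)\subseteq\Pi(s_\th)$, and both are closed by the same dominance estimate.
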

\begin{proof}  We will provide an argument for the first claim. The second claim follows from the first claim applied to the root system $\Rring^\vee$. Also note that  since $s_\ph s_\th$ is the inverse of $s_\th s_\ph$, we have $\ell(s_\ph s_\th)=\ell(s_\th s_\ph)$. Therefore we only need to show that $\ell(s_\th)=\ell(s_\th s_\ph)+\ell(s_{\php})$. Remark that $s_\th=s_{\php} (s_\th s_\ph)$ so, by Lemma \ref{pi-simple2}, it would be enough to show that
$$
\Pi(s_\th s_\ph)\subset \Pi(s_\th).
$$
Let $\a\in \Pi(s_\th s_\ph)$. Keeping in mind that $s_\th s_\ph(\a)\in \Rring^-$ and that $\th, \ph$ are dominant we obtain that
$$
(\a,\ph-\th)=(s_\th s_\ph(\a),\th)\leq 0.
$$
If $(\a,\th)=0$ this implies that $(\a,\ph)=0$ and therefore $s_\th s_\ph(\a)=\a$ which is contradicting the fact that $\a$ is a positive root. We infer that necessarily 
$(\a,\th)>0$, and $(s_\th(\a), \th)<0$ which shows that $\a\in \Pi(s_\th)$.
\end{proof}
\begin{Lm}\label{lemma-wy}
There exist  $x\in\stab_{\W}(\th)$ and $y\in\stab_{\W}(\ph)$ of order two such that the following hold
\begin{enumerate}[label={\roman*)}]
\item $s_\th x=s_\ph y$ and $\ell(s_\th x)=\ell(s_\ph y)=\ell(s_\th)+\ell(x)=\ell(s_\ph)+\ell(y)$;
\item $s_\ph s_\th=yx$ and $\ell(s_\ph s_\th)=\ell(y)+\ell(x)$;
\item $s_\th=y s_{\thp} y$ and $\ell(s_\th)=2\ell(y)+\ell(s_{\thp})$; 
\item $s_\ph=x s_{\php} x$ and $\ell(s_\ph)=2\ell(x)+\ell(s_{\php})$;
\item $\Pi(y)\subseteq \{\b\in \Rring^+~|~(\thp,\b^\vee)=-1\}$; 
\item $\Pi(x)\subseteq \{\b\in \Rring^+~|~(\php^\vee,\b)=-1\}$.
\end{enumerate}
\end{Lm}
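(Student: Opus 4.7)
\textbf{Proof plan for Lemma \ref{lemma-wy}.}

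The plan is to construct $x$ and $y$ explicitly as single reflections and then verify each statement, falling back on a case-by-case check (for $\Rring$ of type $B_n,C_n,F_4,G_2$) whenever a uniform structural argument is unavailable. Define the vectors
\[
\gamma := \theta - \thp = 2\theta - (\theta,\ph^\vee)\ph, \qquad \eta := \ph - \php = 2\ph - (\ph,\th^\vee)\th.
\]
A direct inspection of the four non-simply-laced types shows that each is a positive rational multiple of a positive root (in $B_n,F_4$: $\gamma=\php=\tilde\phi$ and $\eta \propto \thp$; in $C_n$: $\gamma=\php$ and $\eta\propto\thp$; in $G_2$: $\gamma=\a_{i_\th}$ and $\eta=\a_{i_\ph}$). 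After replacing by these roots, set $y:=s_\gamma$ and $x:=s_\eta$. Both are reflections, hence of order two. Since $\gamma=\theta+s_\ph(\theta)$, one has $(\gamma,\ph)=0$, so $y\in\stab_\W(\ph)$; symmetrically $x\in\stab_\W(\th)$. A short computation using $(\theta,\gamma^\vee)=1$ (checked type-by-type) gives $y(\th)=\th-\gamma=\thp$, and dually $x(\ph)=\php$.

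Next I would establish (ii), from which (i) is immediate: indeed, $s_\ph s_\th = yx$ together with the order-two property of $x$ gives $s_\th x = s_\ph y$. To verify $s_\ph s_\th = yx$ I would compare both sides on the basis provided by Lemma \ref{basis-alternative}, namely $\{\ph,\th\}\cup\{\a_i:i\ne i_\th,i_\ph\}$. Both sides send $\th$ to $\thp$. For the action on $\ph$ one checks $s_\ph s_\th(\ph)=-s_\ph(\php)$ and $yx(\ph)=y(\php)$; these coincide by the identity $(\php,\gamma^\vee)=(\ph,\th^\vee)$, which reduces to a rank-two computation. For each remaining $\a_i$, one verifies that $\a_i$ is orthogonal to $\gamma$ and $\eta$ as well as to $\th,\ph$, so both sides fix $\a_i$. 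The length equality $\ell(s_\ph s_\th) = \ell(y)+\ell(x)$ then follows from Lemma \ref{pi-simple2} once one verifies $\Pi(x)\subseteq\Pi(s_\ph s_\th)$: if $\beta\in\Pi(x)$ then $s_\ph s_\th(\beta)=y(x(\beta))$, and a direct check using $y\in W_\ph$ shows that $y$ preserves the negativity of $x(\beta)$.

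Statements (iii) and (iv) follow formally: $y s_\thp y^{-1}=s_{y(\thp)}=s_\th$, and similarly for $s_\ph$. The length identities $\ell(s_\th)=2\ell(y)+\ell(s_\thp)$ and $\ell(s_\ph)=2\ell(x)+\ell(s_\php)$ are deduced by combining the just-proved $\ell(s_\ph s_\th)=\ell(y)+\ell(x)$ with the length identities of Lemma \ref{lemma-s} and the consistency relation $\ell(s_\th)-\ell(s_\php)=\ell(s_\ph)-\ell(s_\thp)$ that these already imply.

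For (v) (and symmetrically (vi)), let $\beta\in\Pi(y)$ and set $\alpha:=-y(\beta)\in R^+$. Using $y(\thp)=\th$ and $W$-invariance of the pairing,
\[
(\thp,\beta^\vee)=(y(\thp),y(\beta)^\vee)=(\th,-\alpha^\vee)=-(\th,\alpha^\vee).
\]
Because $\th$ is the highest short root, the value $(\th,\a^\vee)$ for any positive root $\alpha$ lies in $\{0,1,2\}$, with $2$ attained only when $\alpha=\th$; and $\alpha=\th$ would force $\beta=y(\thp)\cdot(\cdot)=\thp$, contradicting $y(\thp)=\th>0$. Hence $(\thp,\beta^\vee)\in\{0,-1\}$. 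The main obstacle of the whole proof is excluding the value $0$, which is equivalent to showing that no $\beta\in\Pi(y)$ is orthogonal to $\thp$. This I would verify case-by-case, using the explicit description of $\Pi(s_\gamma)$ as the set of positive roots $\beta$ with $(\beta,\gamma^\vee)>0$ and $\beta-(\beta,\gamma^\vee)\gamma<0$, and the fact that within each type $B_n,C_n,F_4,G_2$ these roots all lie in the half-space $(\thp,\cdot)<0$.
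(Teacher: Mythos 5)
Your construction is a genuinely different route from the paper's: the paper defines $x,y$ uniformly by $v_\circ w_\circ=s_\th x=s_\ph y$, where $v_\circ$ is the longest element of $\stab_{\W}(\th)\cap\stab_{\W}(\ph)$, and then extracts every length statement from inversion-set arguments ($\Pi(x)\subseteq\Pi(v_\circ w_\circ)$ gives i), $\Pi(x)\subseteq\Pi(s_\ph s_\th)$ gives ii), a sandwich of inequalities gives iii)--iv)), finally deducing v)--vi) uniformly from the minimality of $y$ as the shortest element with $y(\thp)=\th$ — no case analysis anywhere. Your explicit reflections do coincide with the paper's $x,y$ (this is exactly its Lemmas \ref{lemma-a6} and \ref{lemma-a7}), and your verification of the group-theoretic identities on the basis of Lemma \ref{basis-alternative}, as well as your reduction of v) to excluding the value $0$, are sound (the sentence excluding $\alpha=\th$ is garbled, but the intended contradiction $\beta=-y(\th)=-\thp<0$ is correct).

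The genuine gap is in the length statements. First, i) is \emph{not} immediate from ii): the identity $s_\th x=s_\ph y$ is, but $\ell(s_\th x)=\ell(s_\th)+\ell(x)$ is a separate claim that your plan never addresses; you would need something like $\Pi(x)\subseteq\Pi(s_\th x)$ and Lemma \ref{pi-simple2}, which in the paper drops out of the $v_\circ w_\circ$ definition but for your explicit $x$ requires its own check. Second, in ii) the assertion that ``$y$ preserves the negativity of $x(\beta)$'' does not follow from $y\in\stab_{\W}(\ph)$; since $-x(\Pi(x))=\Pi(x)$, what you actually need is $\Pi(x)\cap\Pi(y)=\emptyset$, which the paper obtains from $\Pi(x)\subseteq\{\a\in\Rring^+:\a\perp\th,\ \a\not\perp\ph\}$ and the mirror statement for $y$; you must either prove this or check it case by case. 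Third, your deduction of the length identities in iii)--iv) from Lemma \ref{lemma-s} plus ii) needs the extra input $\ell(x)+\ell(s_{\php})=\ell(y)+\ell(s_{\thp})$; the ``consistency relation'' you invoke is just Lemma \ref{lemma-s} restated and does not supply it. It does hold — tautologically in the doubly-laced case where $x=s_{\thp}$, $y=s_{\php}$, and by a short $G_2$ computation — but that must be said explicitly. All of these are repairable by the same explicit or case-by-case checks you allow yourself elsewhere, so the plan is salvageable; as written, however, the length half of the lemma (and hence the input needed for the later braid-group computations) is essentially unproved, and the paper's uniform definition of $x,y$ is precisely what makes those length claims, and the value-$0$ exclusion in v)--vi), come for free.
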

\begin{proof} Let $v_\circ$ be the longest length element in the Coxeter group $\stab_{\W}(\th)\cap\stab_{\W}(\ph)$ and let $x,y\in \W$ such that
\begin{equation}\label{eq8}
v_\circ w_\circ=s_\th x= s_\ph y.
\end{equation}
The fact that $x,y$ have order two,  that  $x\in\stab_{\W}(\th)$ and $y\in\stab_{\W}(\ph)$, and that  $s_\ph s_\th=yx$, $s_\th s_\ph=xy$ are straightforward verifications. We show that
$\Pi(x)\subset \Pi(v_\circ w_\circ)$. Indeed, if $\a\in \Pi(x)$, then $\a$ and  $x(\a)$ are orthogonal on $\th$, hence $$v_\circ w_\circ (\a)=s_\th (x(\a))=x(\a)\in \Rring^-.$$
From Lemma \ref{pi-simple2} we obtain that $\ell(s_\th x)=\ell(s_\th)+\ell(x)$. Similarly, we obtain that $\ell(s_\ph y)=\ell(s_\ph)+\ell(y)$.

We will now show that $\ell(s_\ph s_\th)=\ell(y)+\ell(x)$. By Lemma \ref{pi-simple2} we have to argue that $\Pi(x)\subset \Pi(s_\ph s_\th)$. Indeed, if $\a\in \Pi(x)$, then $\a$ is orthogonal on $\th$ but it is not orthogonal on $\ph$ (otherwise $v_\circ w_\circ(\a)=v_\circ(-\a)\in \Rring^+$, in contradiction with $\a\in\Pi(v_\circ w_\circ)$). Therefore, $$s_\ph s_\th(\a)=s_\ph(\a)=\a-(\a,\ph^\vee)\ph\in \Rring^-,$$
which proves our claim.

For the remaining claims, remark that \eqref{eq8} implies that $y(\thp)=\th$ and $x(\php)=\ph$. Therefore, we have $s_\th=y s_{\thp}  y$ and $s_\ph=x s_{\php}  x$. Furthermore, $s_\th=(s_\ph s_\th)s_{\php}=(yx)s_{\php}$, hence $xs_{\php}=s_{\thp}y$ and 
\begin{align*}
\ell(y)+\ell(xs_{\php})&\geq \ell(s_\th)=\ell(s_\ph s_\th)+\ell(s_{\php}) & & \text{by Lemma \ref{lemma-s}i)}\\
&=\ell(y)+\ell(x)+\ell(s_{\thp}) & & \text{by Lemma \ref{lemma-wy}ii)}\\
&\geq \ell(y)+\ell(xs_{\php})
\end{align*}
We obtain that 
\begin{subequations} 
 \begin{alignat}{2} \label{eq9}
& \ell(s_\th)=\ell(y)+\ell(xs_{\php})=\ell(y)+\ell(s_{\thp}y) , \\ \label{eq10}
& \ell(s_{\php}x)=\ell(xs_{\php})=\ell(x)+\ell(s_{\php}).
\end{alignat}
\end{subequations}
In a similar fashion, we obtain that
\begin{subequations} 
 \begin{alignat}{2} \label{eq11}
& \ell(s_\ph)=\ell(x)+\ell(ys_{\thp})=\ell(x)+\ell(s_{\php}x) , \\ \label{eq12}
& \ell(s_{\thp}y)=\ell(ys_{\thp})=\ell(y)+\ell(s_{\thp}).
\end{alignat}
\end{subequations}
Now, our third claim follows from \eqref{eq9} and \eqref{eq12} and the fourth claim follows from \eqref{eq10} and \eqref{eq11}. We remark that iii) is equivalent to the fact that 
$y$ is the shortest element of $\W$ such that $y(\thp)=\th$ and iv) is equivalent to the fact that  $x$ is the shortest element of $\W$ such that $x(\php)=\ph$.

Let now $y=s_{j_p}\cdots s_{j_1}$ be a reduced expression and for $1\leq k\leq p$ let $$\a^{(k)}=s_{j_1}\cdots s_{j_{k-1}}(\a_{j_k}).$$ From \eqref{pi-description} we know that $\Pi(y)=\{\a^{(k)}~|~1\leq k\leq p\}$. Since $y$ is the shortest element of $\W$ such that $y(\thp)=\th$ we obtain that 
$$
(\thp,\a^{(k)\vee})=(s_{j_{k-1}}\cdots s_{j_1}(\thp),\a_{i_k}^\vee)\neq 0.
$$
On the other hand, 
$$
(\thp,\a^{(k)\vee})=(y(\th), \a^{(k)\vee})=(\th, y(\a^{(k)\vee}))\leq 0.
$$
Therefore, $(\thp,\a^{(k)\vee})<0$ and since $\th$ is short root we obtain that $(\thp,\a^{(k)\vee})=-1$ and this proves part v). The argument for part vi) is entirely similar.
\end{proof}
In fact, we can identify the elements $x$ and $y$ from Lemma \ref{lemma-wy}. 
\begin{Lm}\label{lemma-a6}
If $\Rring$ is doubly-laced then $x=s_{\thp}$ and $y=s_{\php}$. Furthermore, 
\begin{enumerate}[label={\roman*)}]
\item If $(\th, \a_{i_\th}^\vee)(\th^\vee,\a_{i_\th})=2$ then $y=s_{i_\th}$;
\item If $(\ph, \a_{i_\ph}^\vee)(\ph^\vee,\a_{i_\ph})=2$ then $x=s_{i_\ph}$.
\end{enumerate}
\end{Lm}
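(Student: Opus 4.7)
The plan is to identify $s_{\php}$ with the element $y$, and $s_{\thp}$ with the element $x$, constructed in Lemma~\ref{lemma-wy}. Recall from the proof there that $y$ is determined by the relation $s_\ph y = v_\circ w_\circ$, where $v_\circ$ is the longest element of $\stab_{\W}(\th)\cap\stab_{\W}(\ph)$. First I would check that $s_{\php}$ satisfies the abstract properties of $y$ (and $s_{\thp}$ those of $x$). Using $\php^\vee=\th^\vee-\ph^\vee$ together with the standard doubly-laced identities $(\ph,\th^\vee)=(\ph,\ph^\vee)=(\th,\th^\vee)=2$ and $(\th,\ph^\vee)=1$, one immediately obtains $\php=2\th-\ph$ and $\thp^\vee=2\ph^\vee-\th^\vee$, and a short computation then yields
\[
(\thp,\php^\vee)=-1,\qquad (\ph,\php^\vee)=0,\qquad (\php,\thp^\vee)=-2,\qquad (\th,\thp^\vee)=0.
\]
Consequently $s_{\php}(\thp)=\thp+\php=\th$ and $s_{\php}\in\stab_{\W}(\ph)$; dually $s_{\thp}(\php)=\php+2\thp=\ph$ and $s_{\thp}\in\stab_{\W}(\th)$, and both reflections are involutions as required.

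Next I would establish $s_\ph s_{\php}=v_\circ w_\circ$, which combined with $s_\ph y=v_\circ w_\circ$ forces $y=s_{\php}$. Since $(\ph,\php^\vee)=0$, the reflections $s_\ph$ and $s_{\php}$ commute, and a direct check shows that $s_\ph s_{\php}$ acts as $-1$ on $\Span(\ph,\php)=\Span(\th,\ph)$ (using $\php=2\th-\ph$) and as the identity on the orthogonal complement. On the other hand, in every doubly-laced irreducible finite type $w_\circ$ acts as $-1$ on $\H^*$, and by Steinberg's fixed-point theorem $\stab_{\W}(\th)\cap\stab_{\W}(\ph)$ is the Weyl group of the sub-root system of roots perpendicular to both $\th$ and $\ph$. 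This sub-system is of type $B_{n-2}$ if $\Rring=B_n$ with $n\geq 3$, $C_{n-2}$ if $\Rring=C_n$ with $n\geq 3$, $B_2$ if $\Rring=F_4$, or trivial if $\Rring=B_2=C_2$; in every case its longest element acts as $-1$ on $\Span(\th,\ph)^\perp$ (vacuously in the last case). Hence $v_\circ$ is the identity on $\Span(\th,\ph)$ and $-1$ on the orthogonal complement, so $v_\circ w_\circ$ has exactly the same eigenspace decomposition as $s_\ph s_{\php}$. This gives $y=s_{\php}$, and $x=s_{\thp}$ follows by the symmetric argument (or by applying the conclusion to the dual root system $\Rring^\vee$).

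Parts (i) and (ii) are then immediate from Lemma~\ref{lemma-tildevssimpleroots}: the hypothesis $(\th,\a_{i_\th}^\vee)(\th^\vee,\a_{i_\th})=2$ forces $\php=\a_{i_\th}$, whence $y=s_{\php}=s_{i_\th}$, and the hypothesis $(\ph,\a_{i_\ph}^\vee)(\ph^\vee,\a_{i_\ph})=2$ forces $\thp=\a_{i_\ph}$, whence $x=s_{\thp}=s_{i_\ph}$.

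The main potential obstacle is the second step, specifically identifying the action of $v_\circ$ on $\Span(\th,\ph)^\perp$. If invoking Steinberg's theorem feels heavier than desired, it can be replaced by a short case-by-case inspection of $B_n$, $C_n$, and $F_4$, each of which produces a perpendicular sub-system whose longest element is manifestly $-1$ on the ambient orthogonal complement.
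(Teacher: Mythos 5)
Your argument is correct, and its skeleton is the one the paper uses: starting from the relation $v_\circ w_\circ=s_\th x=s_\ph y$ in (the proof of) Lemma \ref{lemma-wy}, everything reduces to identifying $v_\circ w_\circ$ with the explicit product of reflections --- your $s_\ph s_{\php}=s_\th s_{\thp}$ is exactly the paper's $s_\ph s_\th s_\ph s_\th=s_\th s_\ph s_\th s_\ph$, since $s_{\php}=s_\th s_\ph s_\th$ and $s_{\thp}=s_\ph s_\th s_\ph$, and both proofs then obtain (i) and (ii) from Lemma \ref{lemma-tildevssimpleroots}. Where you differ is in how that identity is verified. The paper checks it by a direct, uniform computation on the basis $\{\ph,\th\}\cup\{\a_i\,|\,i\neq i_\th,i_\ph\}$ of Lemma \ref{basis-alternative} (note that the $\a_i$ with $i\neq i_\th,i_\ph$ are automatically orthogonal to both $\th$ and $\ph$, which is what makes this check painless), with no classification input. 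You instead compare eigenspace decompositions: $s_\ph s_{\php}$ is $-1$ on $\Span(\th,\ph)$ and $+1$ on its complement, while $v_\circ w_\circ$ has the same shape because $w_\circ=-1$ in every doubly-laced type and because, by Steinberg's theorem plus a type-by-type inspection ($B_{n-2}$, $C_{n-2}$, $B_2$, or trivial), $v_\circ$ is $-1$ on $\Span(\th,\ph)^\perp$. Your route is sound and has the virtue of explaining conceptually what $v_\circ w_\circ$ is (minus the identity on the $\th,\ph$-plane), but it buys this at the cost of invoking $w_\circ=-1$, Steinberg's theorem on pointwise stabilizers, and a case-by-case determination of the perpendicular subsystem, all of which the paper's basis computation avoids; the computations you do carry out ($\php=2\th-\ph$, the pairings, $s_{\php}(\thp)=\th$, $s_{\thp}(\php)=\ph$) are all correct.
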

\begin{proof}
Let $v_\circ$ be the longest length element in the Coxeter group $\stab_{\W}(\th)\cap\stab_{\W}(\ph)$. From the proof of Lemma \ref{lemma-wy} it follows that 
it would be enough to show that $$v_\circ w_\circ=s_\th s_\ph s_\th s_\ph = s_\ph s_\th s_\ph s_\th.$$
In turn, this equality can be directly verified on the basis of $\Hring^*$ described in Lemma \ref{basis-alternative}. The remaining statements follow directly from Lemma \ref{lemma-tildevssimpleroots}.
\end{proof}

\begin{Lm}\label{lemma-a7}
If $\Rring$ is triply-laced then $x=s_{i_\ph}$ and $y=s_{i_\th}$.
\end{Lm}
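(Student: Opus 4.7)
The plan is to leverage the fact that a triply-laced root system $\Rring$ must be of type $G_2$, hence of rank two. In particular the only two simple roots in $\Rring$ are forced to be precisely $\alpha_{i_\th}$ and $\alpha_{i_\ph}$: by definition these are the simple roots failing to be orthogonal to $\th$, respectively $\ph$, and in rank two there is no room for a third simple root. Since $G_2$ is doubly generated by its simple roots, one can write $\th = 2\alpha_{i_\th} + \alpha_{i_\ph}$ and $\ph = 3\alpha_{i_\th} + 2\alpha_{i_\ph}$, from which $\thp = \ph - \th = \alpha_{i_\th} + \alpha_{i_\ph}$, and using $(\ph,\th^\vee) = 3$ one obtains $\php = -s_\th(\ph) = 3\alpha_{i_\th} + \alpha_{i_\ph}$.

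The next step is to verify directly that $s_{i_\ph}$ sends $\php$ to $\ph$ and that $s_{i_\th}$ sends $\thp$ to $\th$. For $s_{i_\ph}$ one uses that $(\ph,\alpha_{i_\ph}^\vee) = 1$ in $G_2$, so that
\begin{equation*}
s_{i_\ph}(\ph) = \ph - \alpha_{i_\ph} = 3\alpha_{i_\th} + \alpha_{i_\ph} = \php,
\end{equation*}
and applying the involution $s_{i_\ph}$ once more yields $s_{i_\ph}(\php) = \ph$. The computation for $s_{i_\th}$ is entirely parallel: $(\th,\alpha_{i_\th}^\vee) = 1$ gives $s_{i_\th}(\th) = \th - \alpha_{i_\th} = \alpha_{i_\th} + \alpha_{i_\ph} = \thp$, and hence $s_{i_\th}(\thp) = \th$.

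Finally, I would invoke Lemma \ref{lemma-wy}iii),iv), which identifies $y$ as the shortest element of $\W$ satisfying $y(\thp) = \th$ and $x$ as the shortest satisfying $x(\php) = \ph$. Since $\thp \neq \th$ and $\php \neq \ph$, neither $x$ nor $y$ can be the identity, so both have length at least one; but $s_{i_\ph}$ and $s_{i_\th}$ are length-one elements that do the job. Therefore $x = s_{i_\ph}$ and $y = s_{i_\th}$, completing the proof. There is no substantive obstacle here; the whole argument is a short direct verification inside the single rank-two root system $G_2$, and the invocation of Lemma \ref{lemma-wy} avoids the need to identify $v_\circ w_\circ$ by hand.
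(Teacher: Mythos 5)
Your proof is correct, but it takes a different route than the paper's. The paper's own argument is purely structural: since a triply-laced $\Rring$ is of rank two, $\a_{i_\th}$ and $\a_{i_\ph}$ are the only simple roots, so $\stab_{\W}(\th)=\<s_{i_\ph}\>$ and $\stab_{\W}(\ph)=\<s_{i_\th}\>$; as Lemma \ref{lemma-wy} already places $x\in\stab_{\W}(\th)$ and $y\in\stab_{\W}(\ph)$ and makes them of order two, the identification $x=s_{i_\ph}$, $y=s_{i_\th}$ is immediate, with no root arithmetic at all. You instead use the other half of Lemma \ref{lemma-wy} (parts iii), iv), via the remark in its proof that $y$ is the shortest element with $y(\thp)=\th$ and $x$ the shortest with $x(\php)=\ph$) together with an explicit $G_2$ computation, which is perfectly fine and has the virtue of exhibiting the roots $\thp$ and $\php$ concretely. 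One small point to make airtight: from ``$\ell(y)\geq 1$ and $s_{i_\th}$ is a length-one element mapping $\thp$ to $\th$'' you still need to exclude $y=s_{i_\ph}$; this follows either from uniqueness of the minimal-length element in the coset $\stab_{\W}(\th)\,y$ (a coset of a standard parabolic, since $\th$ is dominant), or more simply from the fact that $y\in\stab_{\W}(\ph)$ while $(\ph,\a_{i_\ph})\neq 0$ so $s_{i_\ph}\notin\stab_{\W}(\ph)$ --- a step the paper's stabilizer argument gets for free.
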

\begin{proof}
In this case $\Rring$ is of rank two and therefore $\a_{i_\th}, \a_{i_\ph}$ are the only simple roots. Our claims follow from the fact that $\stab_{\W}(\th)$ is the group generated by $s_{i_\ph}$ and $\stab_{\W}(\ph)$ is the group generated by $s_{i_\th}$.
\end{proof}
\subsection{Relations in finite Artin groups}\label{app: finite}
We record some useful relations inside the Artin group $\A_{\W}$. 

\begin{Lm}\label{lemma-simple-commuting-artin}
Let $\gamma\in \Rring^+$ and $\a_i$ a simple root such that $(\a_i,\gamma)=0$. Then, 
$$T_{s_\gamma}T_i=T_i T_{s_\gamma}$$
\end{Lm}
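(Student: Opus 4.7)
The plan is to reduce the statement to Lemma \ref{lemma-simple-commuting} together with the standard fact (recalled in the paragraph preceding this appendix) that $T_{uv} = T_u T_v$ whenever $\ell(uv) = \ell(u) + \ell(v)$, and that $T_w$ is well-defined independently of the reduced expression chosen for $w$.

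First, I would observe that the hypothesis $(\a_i, \gamma) = 0$ forces $s_i(\gamma) = \gamma - (\gamma, \a_i^\vee)\a_i = \gamma$, since $(\a_i, \gamma) = 0$ is equivalent to $(\gamma, \a_i^\vee) = 0$. Consequently $s_i s_\gamma s_i^{-1} = s_{s_i(\gamma)} = s_\gamma$, so in $\W$ we have the identity
\begin{equation*}
s_\gamma s_i = s_i s_\gamma.
\end{equation*}

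Next, by Lemma \ref{lemma-simple-commuting} both products $s_\gamma s_i$ and $s_i s_\gamma$ are reduced in the sense that
\begin{equation*}
\ell(s_\gamma s_i) = \ell(s_\gamma) + \ell(s_i) = \ell(s_\gamma) + 1 = \ell(s_i s_\gamma).
\end{equation*}
Applying the multiplicativity property $T_{uv} = T_u T_v$ for length-additive products to each factorisation gives $T_{s_\gamma s_i} = T_{s_\gamma} T_i$ and $T_{s_i s_\gamma} = T_i T_{s_\gamma}$. Since $s_\gamma s_i$ and $s_i s_\gamma$ denote the same element of $\W$ and the element $T_w$ depends only on $w \in \W$, we conclude
\begin{equation*}
T_{s_\gamma} T_i = T_{s_\gamma s_i} = T_{s_i s_\gamma} = T_i T_{s_\gamma},
\end{equation*}
which is the desired commutation relation.

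There is no real obstacle here; the only point to verify carefully is the length-additivity from Lemma \ref{lemma-simple-commuting}, which has already been established, together with the independence of $T_w$ from the choice of reduced expression (Matsumoto's lemma, implicit in the definition of $T_w$ given just after Remark \ref{rem: affine=untwisted}).
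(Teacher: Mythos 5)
Your argument is correct and is exactly the route the paper intends: its proof is simply ``Straightforward from Lemma \ref{lemma-simple-commuting}'', and your write-up fleshes out precisely that reduction — orthogonality gives $s_i(\gamma)=\gamma$, hence $s_\gamma s_i=s_i s_\gamma$ in $\W$, and the length-additivity from Lemma \ref{lemma-simple-commuting} lets you lift both factorizations via $T_{uv}=T_uT_v$. Nothing further is needed.
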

\begin{proof}
Straightforward from Lemma \ref{lemma-simple-commuting}.
\end{proof}
In all the statements that follow $\Rring$ is necessarily non-simply laced and we continue to use the notation in Section \ref{app-nsl}.

\begin{Lm}\label{lemma-extrabraid-1}
Let $\gamma\in \Rring^+$ and $\a_i$ a simple root such that $(\a_i,\gamma^\vee)(\a_i^\vee, \gamma)=2$ and $(\a_i,\gamma^\vee)>0$. Then, 
$$T_{s_\gamma}=T_{i}T_{s_i s_\gamma s_i}T_{i}.$$
\end{Lm}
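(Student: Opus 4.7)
The strategy is to establish the length identity $\ell(s_\gamma) = \ell(s_i s_\gamma s_i) + 2$, after which the Artin-group identity follows at once: the decomposition $s_\gamma = s_i \cdot (s_i s_\gamma s_i) \cdot s_i$ is length-additive, so by the basic property $T_{uv}=T_u T_v$ for length-additive products recalled in Section 2, $T_{s_\gamma} = T_i \cdot T_{s_i s_\gamma s_i} \cdot T_i$.

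The core combinatorial claim is that $\mu\gamma - \a_i$ is a \emph{positive} root, where $\mu := (\a_i,\gamma^\vee) \in \{1,2\}$. First, $\gamma \neq \a_i$ because $(\a_i,\a_i^\vee)(\a_i^\vee,\a_i) = 4 \neq 2$. Hence $s_i$ preserves positivity of $\gamma$, and $s_i(\gamma) = \gamma - (\a_i^\vee,\gamma)\a_i \in \Rring^+$. Expanding $\gamma = \sum_j c_j \a_j$ with all $c_j \geq 0$ and comparing $\a_i$-coefficients gives $c_i \geq (\a_i^\vee,\gamma)$. Using the hypothesis $\mu \cdot (\a_i^\vee,\gamma) = 2$, the $\a_i$-coefficient of $\mu\gamma - \a_i$ is $\mu c_i - 1 \geq 2 - 1 = 1 > 0$, while every other coefficient $\mu c_j$ is non-negative. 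Since $\mu\gamma - \a_i = -s_\gamma(\a_i)$ is a non-zero root, it must be positive.

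From $s_\gamma(\a_i) \in \Rring^-$ and Lemma \ref{pi-simple} I get $\ell(s_\gamma s_i) = \ell(s_\gamma) - 1$; applying the same lemma to $s_\gamma^{-1} = s_\gamma$ yields $\ell(s_i s_\gamma) = \ell(s_\gamma) - 1$. To drop the length once more I compute
\[
(s_\gamma s_i)^{-1}(\a_i) \;=\; s_i s_\gamma(\a_i) \;=\; s_i(\a_i - \mu\gamma) \;=\; -\a_i - \mu s_i(\gamma) \;=\; -\a_i - \mu\gamma + \mu(\a_i^\vee,\gamma)\a_i \;=\; -(\mu\gamma - \a_i),
\]
which is negative by the previous paragraph. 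Another invocation of Lemma \ref{pi-simple} gives $\ell(s_i s_\gamma s_i) = \ell(s_\gamma s_i) - 1 = \ell(s_\gamma) - 2$, which is the required length identity.

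The only non-formal step is the coefficient bound $c_i \geq (\a_i^\vee,\gamma)$, which depends crucially on the hypothesis $(\a_i,\gamma^\vee) > 0$ together with $\gamma \ne \a_i$; without it, $s_\gamma(\a_i)$ can be positive and the identity fails. Everything else is routine manipulation of the length function and the definition of the section $t_D$.
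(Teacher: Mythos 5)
Your proof is correct and follows essentially the same route as the paper's: show $s_\gamma(\a_i)\in\Rring^-$ and then $s_is_\gamma(\a_i)\in\Rring^-$, deduce $\ell(s_is_\gamma s_i)=\ell(s_\gamma)-2$ via Lemma \ref{pi-simple}, and conclude by length-additivity of $T$. The only cosmetic differences are that you spell out the positivity of $\mu\gamma-\a_i$ by a coefficient argument (which the paper leaves as a remark) and that your direct computation of $s_is_\gamma(\a_i)$ replaces the paper's observation that $(s_\gamma(\a_i),\a_i^\vee)=0$, which is the same cancellation.
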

\begin{proof}
Remark that the hypothesis implies that $$s_\gamma(\a_i)=\a_i-(\a_i,\gamma^\vee)\gamma\in \Rring^-,$$ which means that $\a_i\in \Pi(s_{\gamma})$. From Lemma \ref{pi-simple} we obtain that $\ell(s_i s_{\gamma})=\ell(s_{\gamma}s_i)=\ell(s_{\gamma})-1$. Remark also that 
$$(s_\gamma(\a_i),\a_i^\vee)=2-(\a_i,\gamma^\vee)(\gamma,\a_i^\vee)=0$$
and consequently $(s_i s_{\gamma})(\a_i)=s_{\gamma}(\a_i)\in \Rring^-$. Therefore, again by Lemma \ref{pi-simple}, we obtain that 
$$
\ell(s_i s_{\gamma} s_i)=\ell(s_i s_{\gamma})-1=\ell(s_{\gamma})-2, 
$$
which in turn implies that 
$$
T_{s_i} T_{s_i s_{\gamma} s_i} T_{s_i}=T_{s_\gamma},
$$
which is our claim.
\end{proof}
\begin{Lm}\label{lemma-extrabraid-2}
If $\Rring$ is a double-laced root system, then
\begin{enumerate}[label={\roman*)}]
\item If $(\th, \a_{i_\th}^\vee)(\th^\vee,\a_{i_\th})=2$ then $T_{i_\th}$ and $T_{s_{i_\th} s_\th s_{i_\th}}$ satisfy the $2$-braid relation;
\item If $(\ph, \a_{i_\ph}^\vee)(\ph^\vee,\a_{i_\ph})=2$ then $T_{i_\ph}$ and $T_{s_{i_\ph} s_\ph s_{i_\ph}}$ satisfy the $2$-braid relation.
\end{enumerate}
\begin{proof}
The second statement is identical to the first one for the dual root system. We prove the first statement by induction on $n\geq 2$. 

Remark first that $(s_{i_\th}(\th), \th^\vee)=2-(\th, \a_{i_\th}^\vee)(\th^\vee,\a_{i_\th})=0$ and therefore $s_{i_\th}(\th)$ is a root in the standard parabolic sub-root system of $\Rring$ consisting of roots orthogonal on $\th$, more precisely the parabolic sub-root system obtained by removing $\a_{i_\th}$. In fact it is a dominant root in this parabolic subsystem: if $\a_i\neq \a_{i_\th}$ is a simple root then 
$$
(s_{i_\th}(\th), \a_i^\vee)=-(\th, \a_{i_\th}^\vee)(\a_i^\vee,\a_{i_\th})\geq 0.
$$ 
For $n=2$, $\a_{i_\th}$ and $s_{i_\th}(\th)$ are the two simple roots and claim is satisfied by the definition of the Coxeter braid group. For $n>2$, $s_{i_\th}(\th)$ is the short dominant root in the parabolic sub-root system obtained by removing $\a_{i_\th}$, which is a double-laced root system. In particular, $s_{i_\th}(\th)$ is not a simple root. Let $\a_i$ be the unique simple root in $\Rring$ for which $(\a_{i_\th},\a_i)\neq 0$. Then,
$$
2\geq (s_{i_\th}(\th), \a_i^\vee)(s_{i_\th}(\th)^\vee, \a_i)=(\th, \a_{i_\th}^\vee)(\th^\vee,\a_{i_\th})(\a_i, \a_{i_\th}^\vee)(\a_i^\vee,\a_{i_\th})=2(\a_i, \a_{i_\th}^\vee)(\a_i^\vee,\a_{i_\th}),
$$
and this forces $(s_{i_\th}(\th), \a_i^\vee)(s_{i_\th}(\th)^\vee, \a_i)=2$ and $(\a_i, \a_{i_\th}^\vee)(\a_i^\vee,\a_{i_\th})=1$. Applying the induction hypothesis for $\a_i$ and $s_{i_\th}(\th)$ we obtain that $T_i$ and $T_{s_i s_{i_\th} s_\th s_{i_\th} s_i}$ satisfy the $2$-braid relation and that $T_i$ and $T_{i_\th}$ satisfy the $1$-braid relation. Moreover, as before $s_i s_{i_\th}(\th)$ is a root in the standard parabolic sub-root system of $\Rring$ obtained by removing $\{\a_{i_\th},\a_i\}$ and all the simple roots in this parabolic sub-system are orthogonal on $\a_{i_\th}$. Therefore $T_{i_\th}$ and $T_{s_i s_{i_\th} s_\th s_{i_\th} s_i}$ commute.

We can apply Lemma \ref{basic-braid-rels2}ii) to infer that $T_{i_\th}$ and $T_i T_{s_i s_{s_{i_\th}(\th)} s_i} T_i$ satisfy the $2$-braid relations. But, by Lemma \ref{lemma-extrabraid-1}, $T_i T_{s_i s_{s_{i_\th}(\th)} s_i} T_i=T_{s_{s_{i_\th}(\th)}}$, which concludes the proof.
\end{proof}

\end{Lm}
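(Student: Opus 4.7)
My plan is to argue by induction on the rank $n$ of $\Rring$, after first observing that statement (ii) reduces to (i) for the dual root system $\Rring^{\vee}$ (since duality swaps $\th$ with $\ph^{\vee}$ and $i_{\th}$ with $i_{\ph}$, and the hypothesis is self-dual). So from now on I focus on (i), and I aim to show that the reflection $s_{i_{\th}} s_{\th} s_{i_{\th}} = s_{s_{i_{\th}}(\th)}$ is controlled enough that the Artin-level braid relation with $T_{i_{\th}}$ can be recovered from the defining Coxeter relations.

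The first step is the geometric observation that $\gamma := s_{i_{\th}}(\th)$ is orthogonal to $\th$: indeed $(\gamma, \th^{\vee}) = 2 - (\th, \a_{i_{\th}}^{\vee})(\a_{i_{\th}}^{\vee}, \th) = 0$ by hypothesis. Hence $\gamma$ lies in the standard parabolic sub-root system $\Rring'$ obtained by deleting $\a_{i_{\th}}$, and a short check on scalar products with the remaining simple roots shows $\gamma$ is dominant in $\Rring'$. For the base case $n = 2$ the diagram is $B_2$, and $\gamma = s_{i_{\th}}(\th)$ is itself the other simple root, so the desired $2$-braid relation is simply the defining Coxeter relation between $T_{i_{\th}}$ and $T_{\gamma}$.

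For the inductive step $n > 2$, I pick the unique simple root $\a_i$ of $\Rring$ that is adjacent to $\a_{i_{\th}}$ in the Dynkin diagram; all other simple roots commute with $\a_{i_{\th}}$ and are orthogonal to $\gamma$. The computation $(\gamma, \a_i^{\vee})(\gamma^{\vee}, \a_i) = (\th, \a_{i_{\th}}^{\vee})(\th^{\vee}, \a_{i_{\th}}) (\a_i, \a_{i_{\th}}^{\vee})(\a_i^{\vee}, \a_{i_{\th}}) = 2(\a_i, \a_{i_{\th}}^{\vee})(\a_i^{\vee}, \a_{i_{\th}})$, combined with the fact that this product cannot exceed $2$ in a crystallographic system, forces $(\gamma, \a_i^{\vee})(\gamma^{\vee}, \a_i) = 2$ and $(\a_i, \a_{i_{\th}}^{\vee})(\a_i^{\vee}, \a_{i_{\th}}) = 1$. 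Regarded inside $\Rring'$ (which is again double-laced), $\gamma$ plays the role of the short dominant root and $\a_i$ the role of the simple root adjacent to it; by the induction hypothesis $T_i$ and $T_{s_i s_{\gamma} s_i}$ satisfy the $2$-braid relation.

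Now I assemble the final step. The simple-commutation Lemma \ref{lemma-simple-commuting-artin}, applied inside the parabolic Artin subgroup attached to the simple roots orthogonal to $\a_{i_{\th}}$, yields that $T_{i_{\th}}$ commutes with $T_{s_i s_{\gamma} s_i}$ (both this element and the conjugating $T_i$'s live in a sub-Artin group where every generator commutes with $T_{i_{\th}}$; the relevant fact is that $s_i s_{\gamma} s_i$ is a reflection in a root of $\Rring'$ orthogonal to $\a_{i_{\th}}$). Together with the inductive $2$-braid relation between $T_i$ and $T_{s_i s_{\gamma} s_i}$, Lemma \ref{basic-braid-rels2}(ii) (applied with $a = T_{i_{\th}}$, $b = T_i$, $c = T_{s_i s_{\gamma} s_i}$) produces a $2$-braid relation between $T_{i_{\th}}$ and $T_i T_{s_i s_{\gamma} s_i} T_i$; but Lemma \ref{lemma-extrabraid-1}, applied to the long simple root $\a_i$ and the root $\gamma$ (with $(\a_i, \gamma^{\vee}) > 0$ after possibly replacing $\gamma$ by its $W$-conjugate dominant in $\Rring'$), collapses $T_i T_{s_i s_{\gamma} s_i} T_i = T_{s_{\gamma}} = T_{s_{i_{\th}} s_{\th} s_{i_{\th}}}$, which is exactly the claim. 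The main obstacle I anticipate is making the inductive hypothesis fit: one must verify carefully that $\gamma$ really is the short dominant root of $\Rring'$ (not merely some short root) and that its adjacent simple root is exactly $\a_i$, so that the pair $(\gamma, \a_i)$ plays the role of $(\th, \a_{i_{\th}})$ in the induction; this is type-specific and essentially amounts to inspecting the tails of the $B_n$, $C_n$, and $F_4$ diagrams.
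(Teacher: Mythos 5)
Your proposal follows essentially the same route as the paper's proof: induction on the rank, the same scalar-product computation at the node adjacent to $\a_{i_\th}$ forcing $(\gamma,\a_i^\vee)(\gamma^\vee,\a_i)=2$ and a single lace between $\a_i$ and $\a_{i_\th}$, the commutation of $T_{i_\th}$ with $T_{s_i s_\gamma s_i}$, and the final combination of Lemma \ref{basic-braid-rels2}ii) with Lemma \ref{lemma-extrabraid-1}. One small slip: the conjugating $T_i$ does \emph{not} lie in a sub-Artin group whose generators commute with $T_{i_\th}$ (they satisfy the $1$-braid relation), but the justification you also give --- that $s_i(\gamma)$ is a positive root orthogonal to $\a_{i_\th}$, so Lemma \ref{lemma-simple-commuting-artin} yields the needed commutation --- is exactly the point used in the paper.
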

\begin{Lm} \label{Psi}
We have,
\begin{enumerate}[label={\roman*)}]
\item $\Psi=\Phip\Theta^{-1}=\Phi^{-1}\Thetap$;
\item $\isP=\Theta^{-1}\Phip=\Thetap\Phi^{-1}$;
\item $\Thetap\Theta=\Phi\Phip$ and $\Phip\Phi=\Theta\Thetap$.
\end{enumerate}
\end{Lm}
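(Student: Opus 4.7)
\medskip

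The key idea is to extract, from the length identities of Lemma \ref{lemma-s}, four reduced factorizations of $s_\theta$ and $s_\varphi$ in terms of $\{s_\theta s_\varphi,\, s_\varphi s_\theta,\, s_{\thp},\, s_{\php}\}$. These factorizations, once recognized as reduced, translate via the multiplicativity property $T_{uv}=T_uT_v$ (whenever $\ell(uv)=\ell(u)+\ell(v)$) into identities in $\A(\W)$ from which (i)--(iii) drop out by rearrangement.

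First I would record the elementary reflection identities $s_{\php}=s_\theta s_\varphi s_\theta$ and $s_{\thp}=s_\varphi s_\theta s_\varphi$, which follow immediately from $\php=-s_\theta(\varphi)$ and $\thp=-s_\varphi(\theta)$. A direct check then shows the four equalities
\begin{equation*}
s_\theta=(s_\varphi s_\theta)\, s_{\php}=s_{\php}\,(s_\theta s_\varphi),\qquad
s_\varphi=(s_\theta s_\varphi)\, s_{\thp}=s_{\thp}\,(s_\varphi s_\theta).
\end{equation*}
Using $\ell(s_\theta s_\varphi)=\ell(s_\varphi s_\theta)$ together with Lemma \ref{lemma-s}, the length equalities $\ell(s_\theta)=\ell(s_\varphi s_\theta)+\ell(s_{\php})=\ell(s_{\php})+\ell(s_\theta s_\varphi)$ and $\ell(s_\varphi)=\ell(s_\theta s_\varphi)+\ell(s_{\thp})=\ell(s_{\thp})+\ell(s_\varphi s_\theta)$ show that all four factorizations are reduced. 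By the multiplicativity of $w\mapsto T_w$ they lift to the following Artin-group identities:
\begin{equation*}
\Theta=T_{s_\varphi s_\theta}\,\Phip=\Phip\,T_{s_\theta s_\varphi},\qquad
\Phi=T_{s_\theta s_\varphi}\,\Thetap=\Thetap\,T_{s_\varphi s_\theta}.
\end{equation*}

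From the first pair I would read off $T_{s_\varphi s_\theta}=\Theta\,\Phip^{-1}$ and $T_{s_\theta s_\varphi}=\Phip^{-1}\,\Theta$; from the second, $T_{s_\varphi s_\theta}=\Thetap^{-1}\,\Phi$ and $T_{s_\theta s_\varphi}=\Phi\,\Thetap^{-1}$. Since $\Psi=T_{s_\varphi s_\theta}^{-1}$ and $\isP=T_{s_\theta s_\varphi}^{-1}$, inverting each expression yields (i) $\Psi=\Phip\Theta^{-1}=\Phi^{-1}\Thetap$ and (ii) $\isP=\Theta^{-1}\Phip=\Thetap\Phi^{-1}$ simultaneously. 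For (iii), rearranging the equality $\Phip\Theta^{-1}=\Phi^{-1}\Thetap$ from (i) gives $\Phi\Phip=\Thetap\Theta$, and rearranging $\Theta^{-1}\Phip=\Thetap\Phi^{-1}$ from (ii) gives $\Phip\Phi=\Theta\Thetap$, which completes the lemma.

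No substantial obstacle is expected: the whole argument is a bookkeeping exercise converting length-additive factorizations into braid-group identities via Section 3.14's multiplicativity. The only point requiring care is verifying that one is in the setting where Lemma \ref{lemma-s} applies (non-simply-laced $\Rring$, so both double-laced and triple-laced cases) and that the four factorizations are genuinely reduced rather than merely set-theoretic equalities; this is precisely what the two length identities in Lemma \ref{lemma-s} provide. The argument is uniform in the lacing (no case split between $F_4$-type and $G_2$-type) because it never invokes explicit descriptions of the elements $x,y$ of Lemma \ref{lemma-wy}.
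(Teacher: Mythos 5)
Your proposal is correct and follows essentially the same route as the paper: the paper's proof likewise derives the factorizations $\Theta=T_{s_\ph s_\th}\Phip=\Phip T_{s_\th s_\ph}$ and $\Phi=T_{s_\th s_\ph}\Thetap=\Thetap T_{s_\ph s_\th}$ from the length identities of Lemma \ref{lemma-s} and reads off (i)--(iii) by rearrangement. You simply spell out the intermediate steps (the word identities in $\W$, the reducedness check, and the use of $T_{uv}=T_uT_v$) that the paper leaves implicit.
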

\begin{proof}
From Lemma \ref{lemma-s} we obtain that 
$$
\Theta=T_{s_\ph s_\th} \Phip=\Phip T_{s_\th s_\ph}\quad\text{and}\quad \Phi=T_{s_\th s_\ph}\Thetap=\Thetap T_{s_\ph s_\th},
$$
from which our claims immediately follow.
\end{proof}
\begin{Lm}\label{lemma-wy-artin}
There exist  $x\in\stab_{\W}(\th)$ and $y\in\stab_{\W}(\ph)$ of order two such that
\begin{enumerate}[label={\roman*)}]
\item $\Phi^{-1}\Theta=T_yT_x^{-1}$ and $\Phi\Theta^{-1}=T_y^{-1}T_x$;
\item $\Psi=T_x^{-1}T_y^{-1}$ and $\isP=T_y^{-1}T_x^{-1}$;
\item $\Theta=T_y \Thetap T_y$ and $\Phi=T_x \Phip T_x$.
\end{enumerate}
Furthermore, 
\begin{enumerate}[resume, label={\roman*)}]
\item If $\Rring$ is doubly-laced then $T_x=\Thetap$ and $T_y=\Phip$;
\item If $\Rring$ is doubly-laced and $(\th, \a_{i_\th}^\vee)(\th^\vee,\a_{i_\th})=2$ then $T_y=T_{i_\th}$;
\item If $\Rring$ is doubly-laced and $(\ph, \a_{i_\ph}^\vee)(\ph^\vee,\a_{i_\ph})=2$ then $T_x=T_{i_\ph}$;
\item If $\Rring$ is triply-laced then $T_x=T_{i_\ph}$ and $T_y=T_{i_\th}$.
\end{enumerate}
\end{Lm}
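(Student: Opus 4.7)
The strategy is to lift every length-additive factorization established in Lemma \ref{lemma-wy} (together with the identifications in Lemma \ref{lemma-a6} and Lemma \ref{lemma-a7}) to the corresponding identity in the finite Artin group $\A(\W)$. Recall that if $u,v\in \W$ satisfy $\ell(uv)=\ell(u)+\ell(v)$ then $T_{uv}=T_uT_v$; this is the only mechanism we will use, and all claims i)--vii) will fall out by applying it to the Weyl-group identities already at our disposal. Throughout, let $x\in\stab_{\W}(\th)$ and $y\in\stab_{\W}(\ph)$ be the order-two elements produced by Lemma \ref{lemma-wy}, and set $T_x:=T_{x}$, $T_y:=T_y$ (for brevity).

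For i): Lemma \ref{lemma-wy}i) gives $s_\th x = s_\ph y$ with $\ell(s_\th x)=\ell(s_\th)+\ell(x)$ and $\ell(s_\ph y)=\ell(s_\ph)+\ell(y)$. Translating to $\A(\W)$ yields $\Theta T_x=\Phi T_y$, so
\begin{equation*}
\Phi^{-1}\Theta=T_yT_x^{-1},\qquad \Phi\Theta^{-1}=T_y^{-1}T_x,
\end{equation*}
using also that $T_x,T_y$ are their own inverses up to the standard $T\mapsto T^{-1}$-anti-morphism — more precisely, inverting the first equation gives the second. For ii): Lemma \ref{lemma-wy}ii) gives $s_\ph s_\th=yx$ with $\ell(s_\ph s_\th)=\ell(y)+\ell(x)$, hence $T_{s_\ph s_\th}=T_yT_x$; by the very definition $\Psi=T_{s_\ph s_\th}^{-1}=T_x^{-1}T_y^{-1}$. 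Exchanging $\th\leftrightarrow\ph$ (equivalently, noting $s_\th s_\ph=xy$ with analogous length additivity), one obtains $\isP=T_{s_\th s_\ph}^{-1}=T_y^{-1}T_x^{-1}$. For iii): Lemma \ref{lemma-wy}iii) asserts $s_\th=ys_{\thp}y$ with $\ell(s_\th)=2\ell(y)+\ell(s_{\thp})$, which lifts directly to $\Theta=T_y\Thetap T_y$; the companion identity $\Phi=T_x\Phip T_x$ comes from Lemma \ref{lemma-wy}iv) in the same way.

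The identifications iv)--vii) are then immediate: iv) combines Lemma \ref{lemma-a6} (in the doubly-laced case, $x=s_{\thp}$ and $y=s_{\php}$) with the fact that $T_{s_{\thp}}=\Thetap$ and $T_{s_{\php}}=\Phip$; v) and vi) specialize this further using the second half of Lemma \ref{lemma-a6}, so that $y=s_{i_\th}$ (resp.\ $x=s_{i_\ph}$) and $T_y=T_{i_\th}$ (resp.\ $T_x=T_{i_\ph}$); vii) uses Lemma \ref{lemma-a7} in the triply-laced case.

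There is essentially no obstacle here: every step is a routine lift of an already-established Weyl-group identity to $\A(\W)$ via $T_{uv}=T_uT_v$. The only minor care is in verifying that the length-additivity statements from Lemma \ref{lemma-wy} apply to each product we wish to rewrite (in particular in iii), where we must observe that $\ell(ys_{\thp}y)=\ell(y)+\ell(s_{\thp}y)=\ell(y)+\ell(s_{\thp})+\ell(y)$ follows from the chain of equalities \eqref{eq9}, \eqref{eq12} in the proof of Lemma \ref{lemma-wy}, so that the bracketing in $T_y\Thetap T_y$ is unambiguous). With these observations, i)--vii) are established.
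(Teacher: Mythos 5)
Your proof is correct and takes essentially the same route as the paper, whose proof simply cites Lemmas \ref{lemma-wy}, \ref{lemma-a6} and \ref{lemma-a7} and lifts the length-additive Weyl-group factorizations to $\A(\W)$ via $T_{uv}=T_uT_v$. The only step worth making explicit is the passage in i) from $\Phi^{-1}\Theta=T_yT_x^{-1}$ to $\Phi\Theta^{-1}=T_y^{-1}T_x$, which, as you indicate, follows by applying the anti-automorphism fixing the generators (it fixes $\Theta,\Phi,T_x,T_y$ because $s_\th,s_\ph,x,y$ are involutions) and then inverting, or equivalently from the lifted commutations $\Theta T_x=T_x\Theta$ and $\Phi T_y=T_y\Phi$ coming from $x\in\stab_{\W}(\th)$, $y\in\stab_{\W}(\ph)$.
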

\begin{proof}
Straightforward from Lemma \ref{lemma-wy}, Lemma \ref{lemma-a6},  and Lemma \ref{lemma-a7}.
\end{proof}
\begin{Lm}\label{lemma-explicit-2}
If $\Rring$ is doubly-laced then the following hold
\begin{enumerate}[resume, label={\roman*)}]
\item $\Theta=\Phip\Thetap\Phip$;
\item $\Phi=\Thetap\Phip\Thetap$;
\item $\Theta\Thetap=\Thetap\Theta=\Phi\Phip=\Phip\Phi=\Thetap\Phip\Thetap\Phip=\Phip\Thetap\Phip\Thetap$; In particular,  $\Thetap$ and $\Phip$ satisfy the $2$-braid relation;
\item $\Psi=\Thetap^{-1}\Phip^{-1}$ and $\isP=\Phip^{-1}\Thetap^{-1}$.
\end{enumerate}
\end{Lm}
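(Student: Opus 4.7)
The plan is to bootstrap everything from Lemma \ref{lemma-wy-artin}. In the doubly-laced case, Lemma \ref{lemma-a6} (equivalently Lemma \ref{lemma-wy-artin}iv)) specializes $T_x$ to $\Thetap$ and $T_y$ to $\Phip$, so the conjugation formulas $\Theta = T_y\Thetap T_y$ and $\Phi = T_x\Phip T_x$ from Lemma \ref{lemma-wy-artin}iii) read $\Theta = \Phip\Thetap\Phip$ and $\Phi = \Thetap\Phip\Thetap$. This proves claims i) and ii) directly. Claim iv) then follows immediately: using $\Psi = \Phip\Theta^{-1}$ and $\isP = \Thetap\Phi^{-1}$ from Lemma \ref{Psi}i),ii), substituting the formulas from i),ii) and cancelling yields $\Psi = \Thetap^{-1}\Phip^{-1}$ and $\isP = \Phip^{-1}\Thetap^{-1}$.

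The substantive content is claim iii), whose structural heart is the $2$-braid relation between $\Thetap$ and $\Phip$. The key new input I will need is the single identity $\Theta\Thetap = \Phi\Phip$ in the finite Artin group. To obtain it, I will exploit the element $v_\circ w_\circ$ of Lemma \ref{lemma-wy}, which in the doubly-laced case (by Lemma \ref{lemma-a6}) admits the two expressions $v_\circ w_\circ = s_\th s_{\thp} = s_\ph s_{\php}$, each length-additive by Lemma \ref{lemma-wy}i). Lifting these to the finite Artin group gives $\Theta\Thetap = T_{s_\th}T_{s_{\thp}} = T_{v_\circ w_\circ} = T_{s_\ph}T_{s_{\php}} = \Phi\Phip$.

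Once $\Theta\Thetap = \Phi\Phip$ is in hand, substituting claim i) on the left and claim ii) on the right converts it into $\Phip\Thetap\Phip\Thetap = \Thetap\Phip\Thetap\Phip$, which is precisely the $2$-braid relation. Combined with the identities $\Thetap\Theta = \Phi\Phip$ and $\Phip\Phi = \Theta\Thetap$ already supplied by Lemma \ref{Psi}iii), one immediately deduces $\Thetap\Theta = \Theta\Thetap$ and $\Phip\Phi = \Phi\Phip$, so every term in the chain of equalities of iii) equals the common word $\Thetap\Phip\Thetap\Phip = \Phip\Thetap\Phip\Thetap$. The main obstacle is really just the clean identification of the two length-additive products $\Theta\Thetap$ and $\Phi\Phip$ with the single Artin-group element $T_{v_\circ w_\circ}$; that step is mechanical once the length formulas of Lemma \ref{lemma-wy} are in place, so no further combinatorial work should be required.
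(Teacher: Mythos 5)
Your proposal is correct and follows essentially the same route as the paper: i) and ii) from Lemma \ref{lemma-wy-artin}iii)--iv), iii) from i), ii) together with the length-additive factorization $s_\th s_{\thp}=s_\ph s_{\php}$ of Lemma \ref{lemma-wy}i) (which is exactly what the paper cites), and iv) by an easy substitution. The only cosmetic difference is that the paper gets iv) directly from Lemma \ref{lemma-wy-artin}ii) rather than from Lemma \ref{Psi} plus i)--ii), which is an equivalent one-line variation.
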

\begin{proof}
The first two claims follow directly from Lemma \ref{lemma-wy-artin}iii-iv). The third claim follows from the first two claims and \ref{lemma-wy}i). The fourth claim is a consequence of Lemma \ref{lemma-wy-artin}ii).
\end{proof}
\subsection{Relations in affine Artin groups}\label{affine-rels}
In this section $R$ is an irreducible twisted affine root system not of type $A_{2n}^{(2)}$. In particular, $\Rring$ is non-simply laced and we follow the notation in Section \ref{app-nsl}. All statements refer to the affine Artin group $\A(R)$.

\begin{Lm}\label{lemma-affine-commutation}
With the notation above, we have 
\begin{enumerate}[label={\roman*)}]
\item $Y_{-\thp}= \Phi\Theta^{-1}Y_{-\th}\Psi =\isP Y_{-\th}\Theta^{-1}\Phi$;
\item $T_{i_\th}$ and $\Theta T_0 \Phi T_0 \Psi=\Phi T_0 \Psi\Theta T_0$ commute.
\end{enumerate}
\end{Lm}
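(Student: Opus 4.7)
The starting point is Remark~\ref{braid01}: since $a_0=1$ for every twisted affine Cartan matrix in our convention, we have $Y_{-\th}=\Theta T_0$. Substituting this into the asserted identities of (i) reduces them to
$$Y_{-\thp}=\Phi T_0\Psi\qquad\text{and}\qquad Y_{-\thp}=\isP\,\Theta T_0\,\Theta^{-1}\Phi.$$
The plan is to prove the first of these by a reduced-expression lifting argument in $\A(W)$, use the lattice structure of $\Q_Y$ to deduce (ii), and establish the second equality of (i) by an entirely analogous computation.

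For part (i), the key observation is that multiplying $Y_{-\thp}=\Phi T_0\Psi$ on the right by $Y_{-\th}=\Theta T_0$ and using that $\Q_Y$ is abelian together with Lemma~\ref{Psi}i) (in the form $\Psi\Theta=\Phip$) gives the equivalent claim
$$Y_{-\ph}=\Phi T_0\Phip T_0,$$
because $-\thp-\th=-\ph$. Since $-\ph$ is anti-dominant, $Y_{-\ph}$ coincides with $T_{\l_{-\ph}}$, the image of $\l_{-\ph}$ under the Matsumoto section. In the semidirect product $W=\W\ltimes M$, using $s_0=\l_\th s_\th$ together with the identity $s_\th s_{\php}s_\th=s_\ph$ (which follows from $\php=-s_\th(\ph)$, whence $s_\th(\php)=-\ph$), one computes
$$s_\ph s_0 s_{\php} s_0 \;=\; s_\ph\l_\th s_\th s_{\php}\l_\th s_\th \;=\; s_\ph\l_\th s_\ph s_\th\l_\th s_\th \;=\; \l_{s_\ph(\th)}\l_{-\th} \;=\; \l_{-\thp}\l_{-\th} \;=\; \l_{-\ph}.$$
The remaining task is to show that this word is reduced, i.e.\ that $\ell(s_\ph)+1+\ell(s_{\php})+1$ equals $\ell(\l_{-\ph})=\sum_{\a>0}(\ph,\a^\vee)$; one proves this by combining the length formulas of Lemma~\ref{lemma-wy}iii)-iv) with the identifications of $x,y$ in Lemmas~\ref{lemma-a6}-\ref{lemma-a7}. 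Matsumoto's theorem then lifts the Weyl-group equality to the Artin group and gives $\Phi T_0\Phip T_0=T_{\l_{-\ph}}=Y_{-\ph}$. The second equality of (i) is handled by the same reduced-expression strategy, now using the companion identity $\Theta\isP=\Phip$ from Lemma~\ref{Psi}ii) and the factorization $\l_{-\ph}=s_\ph s_0 s_\th s_\ph\cdot s_\th s_0$ at the Weyl-group level.

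Part (ii) follows immediately. By part (i),
$$\Theta T_0\,\Phi T_0\,\Psi \;=\; Y_{-\th}\cdot\Phi T_0\Psi \;=\; Y_{-\th}Y_{-\thp} \;=\; Y_{-\ph},$$
and analogously $\Phi T_0\,\Psi\,\Theta T_0=Y_{-\thp}Y_{-\th}=Y_{-\ph}$; the two products agree because $\Q_Y$ is abelian. Finally, $T_{i_\th}$ commutes with $Y_{-\ph}$ by Proposition~\ref{bernstein-presentation}, since $(-\ph,A_{i_\th}^\vee)=0$. This last fact holds uniformly: $\a_{i_\th}$ is by definition the unique simple root non-orthogonal to $\th$, while $\a_{i_\ph}$ is the unique one non-orthogonal to $\ph$. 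Inspection of the four twisted families $D_{n+1}^{(2)},A_{2n-1}^{(2)},E_6^{(2)},D_4^{(3)}$ shows that $i_\th\neq i_\ph$ (their Dynkin diagrams never attach the short and long dominant roots to the same simple node), so $(\ph,\a_{i_\th})=0$ and hence $(\ph,A_{i_\th}^\vee)=0$.

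The main technical obstacle is the reducedness verification in part (i). Although the total lengths $\ell(s_\ph)+\ell(s_{\php})+2$ and $\ell(\l_{-\ph})$ match uniformly (as can be checked directly type by type), producing a clean uniform argument requires combining Lemma~\ref{lemma-wy} with the explicit enumeration of positive roots in $\Rring$; this is the step I expect to occupy the bulk of the calculation.
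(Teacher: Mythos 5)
Your part (ii) and the final commutation step are essentially the paper's own argument: both reduce $\Theta T_0\Phi T_0\Psi$ and $\Phi T_0\Psi\Theta T_0$ to $Y_{-\ph}$ and invoke Proposition \ref{bernstein-presentation} (your explicit check that $i_\th\neq i_\ph$, hence $(\ph,\a_{i_\th})=0$, is a correct filling-in of what the paper leaves implicit). For part (i), however, you take a genuinely different route. The paper never computes any affine lengths: using Lemma \ref{lemma-wy-artin} it writes $\Phi\Theta^{-1}=T_y^{-1}T_x$, $\Psi=T_x^{-1}T_y^{-1}$, $\isP=T_y^{-1}T_x^{-1}$, $\Theta^{-1}\Phi=T_xT_y^{-1}$, so that \emph{both} expressions in (i) collapse to $T_y^{-1}Y_{-\th}T_y^{-1}$, and then it applies the Bernstein relation along a reduced word for $y$ -- legitimized by Lemma \ref{lemma-wy}v) -- to get $T_yY_{-\thp}T_y=Y_{-\th}$. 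Your plan instead reduces the first equality to $Y_{-\ph}=\Phi T_0\Phip T_0$ and tries to obtain it by lifting a length-additive factorization of the translation $\l_{-\ph}$ through the canonical section; that reduction, and the Weyl-group identity $s_\ph s_0 s_{\php}s_0=\l_{-\ph}$, are correct.

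There are two genuine gaps. First, the entire weight of your argument rests on the unproved claim $\ell(\l_{-\ph})=\ell(s_\ph)+\ell(s_{\php})+2$; ``can be checked type by type'' is not a proof, and the tools you cite (Lemmas \ref{lemma-wy}, \ref{lemma-a6}, \ref{lemma-a7}) only compute lengths in the \emph{finite} Weyl group, whereas $\ell(\l_{-\ph})$ is a length in the \emph{twisted affine} Weyl group -- even your formula $\ell(\l_{-\ph})=\sum_{\a>0}(\ph,\a^\vee)$ requires an argument there, since long roots occur only with multiples of $r\d$ among the real affine roots. The identity is true, but establishing it uniformly (say via a $\Pi$-set argument in the spirit of Lemma \ref{pi-simple2} applied to the twisted affine root system) is work comparable to the lemma itself, and it is exactly the work the paper's route avoids. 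Second, the second equality of (i) is not ``entirely analogous'': $\isP\, Y_{-\th}\Theta^{-1}\Phi$ is not a product of canonical lifts (it contains $\isP$, $\Theta^{-1}$, and a $Y$-element in the middle), so lifting a reduced factorization of $\l_{-\ph}$ -- in particular the grouping $s_\ph s_0 s_\th s_\ph\cdot s_\th s_0$ you indicate -- does not produce it; after multiplying by $Y_{-\th}$ one still has to move $\Theta T_0$ past $\isP$, which amounts to a commutation statement such as $T_xY_{-\th}=Y_{-\th}T_x$ for $x$ as in Lemma \ref{lemma-wy}. That commutation is precisely the extra input the paper's proof supplies and your sketch does not; as written, the second equality of (i) remains unproved.
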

\begin{proof} Let $y,w\in \W$ as in Lemma \ref{lemma-wy}. Then,
$$
\Phi\Theta^{-1}Y_{-\th}\Psi =\isP Y_{-\th}\Theta^{-1}\Phi=T_y^{-1}Y_{-\th}T_y^{-1}.
$$
Let $y=s_{j_p}\cdots s_{j_1}$ be a reduced expression. Then, from Lemma \ref{lemma-wy}v) we know that for any $1\leq k\leq p$ we have $$(s_{j_{k-1}}\cdots s_{j_1}(-\thp),\a_{i_k}^\vee)=(-\thp, s_{j_1}\cdots s_{j_{k-1}}(\a_{j_k}^\vee))=1.$$
Therefore, $T_y Y_{-\thp} T_y=Y_{-\th}$, which proves our first claim.

For the second claim, remark that $$\Theta T_0 \Phi T_0 \Psi=Y_{-\th}Y_{-\thp}=Y_{-\ph}=Y_{-\thp}Y_{-\th}=\Phi T_0 \Psi\Theta T_0$$ and that $T_{i_\th}$ and $Y_{-\ph}$ commute by Proposition \ref{bernstein-presentation}.
\end{proof}
\section{Non-Coxeter groups} \label{sec: nonCoxeter}

\subsection{} One of the common perceptions about double affine Weyl groups is that they fall outside the framework of Coxeter groups. We provide here a proof of this fact as well as some related results.

Let us start by recalling the following well known result \cite{BouLie}*{Ch. V, \S4, Ex. 3b)}.
\begin{Lm}\label{lm: center}
Let $G$ be an infinite irreducible Coxeter group of finite rank. Then $G$ has trivial center.
\end{Lm}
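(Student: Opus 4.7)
The strategy is to use the faithful geometric (Tits) representation $\rho : G \to \mathrm{GL}(V)$ with basis $\{\alpha_s\}_{s\in S}$ and invariant symmetric bilinear form $B$ determined by $B(\alpha_s,\alpha_t) = -\cos(\pi/m_{st})$. Let $z \in Z(G)$. Since $z$ commutes with each simple reflection $s \in S$, $\rho(z)$ preserves both the fixed hyperplane of $s$ and the $(-1)$-eigenline $\mathbb{R}\alpha_s$. Hence there exists $\varepsilon_s \in \{\pm 1\}$ with $\rho(z)(\alpha_s) = \varepsilon_s \alpha_s$ for every $s \in S$.

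Next I would propagate the signs across the Coxeter diagram. If $s,t \in S$ are connected (i.e.\ $m_{st} \geq 3$), then $B(\alpha_s,\alpha_t) = -\cos(\pi/m_{st}) \neq 0$, and $G$-invariance of $B$ gives $B(\alpha_s,\alpha_t) = B(\rho(z)\alpha_s, \rho(z)\alpha_t) = \varepsilon_s \varepsilon_t B(\alpha_s,\alpha_t)$, so $\varepsilon_s = \varepsilon_t$. Irreducibility of the Coxeter diagram then forces a single value $\varepsilon \in \{\pm 1\}$ with $\rho(z) = \varepsilon \cdot \mathrm{Id}_V$. If $\varepsilon = +1$, faithfulness of $\rho$ yields $z = 1$.

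The remaining and main task is to exclude the case $\varepsilon = -1$. Here I would invoke the Tits cone $\mathcal{U} \subset V^*$: by Vinberg's theorem, $G$ acts faithfully on $\mathcal{U}$, which is a convex $G$-stable cone containing the fundamental chamber $C$, and the closure of $\mathcal{U}$ is a proper convex cone (i.e.\ $\overline{\mathcal{U}} \cap (-\overline{\mathcal{U}}) = \{0\}$) precisely when $G$ is infinite. If $\rho(z) = -\mathrm{Id}_V$, the contragredient action of $z$ on $V^*$ is also $-\mathrm{Id}$, so $z \cdot C \subset \mathcal{U} \cap (-\mathcal{U}) = \{0\}$, contradicting that $C$ has nonempty interior. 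Thus $\varepsilon = -1$ is impossible when $G$ is infinite, and the proof concludes.

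The main obstacle is really just citing or setting up the required geometric facts correctly; the sign-propagation argument is straightforward once the geometric representation and the Tits cone are in place, and everything hinges on properness of the Tits cone in the infinite case.
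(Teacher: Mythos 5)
Your first half is correct and is the standard argument: since $z\in Z(G)$ commutes with every simple reflection, it preserves each line $\mathbb{R}\alpha_s$, invariance of $B$ forces $\rho(z)\alpha_s=\varepsilon_s\alpha_s$ with $\varepsilon_s=\pm1$, connectedness of the diagram propagates the sign, and $\varepsilon=+1$ gives $z=1$ by faithfulness of the Tits representation. Note that the paper itself offers no proof of this lemma; it is quoted from Bourbaki (Ch.\ V, \S4, Ex.\ 3b), so any complete argument is fair game.

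The gap is in your exclusion of $\varepsilon=-1$. The fact you invoke is false as stated: for an \emph{affine} Coxeter group the closure of the Tits cone is a closed half-space, so $\overline{\mathcal{U}}\cap(-\overline{\mathcal{U}})$ is a hyperplane, not $\{0\}$ (already for the infinite dihedral group, $\mathcal{U}=\{x: \langle x,\alpha_1+\alpha_2\rangle>0\}\cup\{0\}$ and its closure is a half-plane). What your displayed contradiction actually uses is the pointedness of the \emph{open} Tits cone, $\mathcal{U}\cap(-\mathcal{U})=\{0\}$, which fails for reducible infinite groups with a finite factor and, for irreducible infinite groups, is a genuinely nontrivial theorem (it essentially amounts to showing that no proper standard parabolic can contain all but finitely many reflections, i.e.\ have finite index); it is not delivered by the Vinberg/Tits facts you cite (convexity, fundamental domain, $\mathcal{U}=V^*$ iff $G$ finite, parabolic stabilizers), so as written this step is unsupported and, in its closure form, wrong. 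The standard and much shorter repair avoids the Tits cone entirely: if $\rho(z)=-\mathrm{Id}_V$, then $z$ sends every positive root to a negative root, so every positive root lies in the inversion set $\Pi(z)$; but $|\Pi(z)|=\ell(z)<\infty$, hence the root system is finite and $G$ is finite, a contradiction. This is the intended solution of the Bourbaki exercise the paper cites, and it uses only the finiteness of inversion sets (the identity $\ell(w)=|\Pi(w)|$ recorded in Appendix A of the paper), with irreducibility needed only for the sign propagation you already carried out.
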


\begin{Prop}\label{prop: nonCoxeter}
A double affine Weyl group is not a Coxeter group.
\end{Prop}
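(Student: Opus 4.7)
The plan is to produce a central element of infinite order in $\Wdaff$ and then argue that no Coxeter group can accommodate such an element in its center.

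The central element is immediate from the defining relations. By Proposition \ref{wdef}(c), the element $\tau_\delta \in \Wdaff$ is central, and since $\W \ltimes \tau(Q^\vee)$ sits inside $\Wdaff$ and $\delta$ has infinite order in $Q^\vee = \Qring^\vee \oplus \Z\delta$, the element $\tau_\delta$ has infinite order. Hence the center $Z(\Wdaff)$ contains an infinite cyclic subgroup.

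Suppose for contradiction that $\Wdaff$ carries the structure of a Coxeter system $(\Wdaff, S)$. I would first observe that $\Wdaff$ is finitely generated (Proposition \ref{wdef} gives an explicit finite presentation using $\W$, a basis of $\Qring^\vee$, and $\tau_\delta$), so by standard results on Coxeter systems for finitely generated groups we may take $|S|$ finite. I would then decompose $S = S_1 \sqcup \cdots \sqcup S_k$ into connected components of the Coxeter diagram, producing a direct product decomposition
\begin{equation*}
\Wdaff \;\cong\; W_{S_1} \times \cdots \times W_{S_k},
\end{equation*}
where each $W_{S_i}$ is an irreducible Coxeter group of finite rank. The center then factors as $Z(\Wdaff) = \prod_i Z(W_{S_i})$, and I would estimate each factor: if $W_{S_i}$ is finite, then $|Z(W_{S_i})| \leq 2$ by the classification of finite irreducible Coxeter groups; if $W_{S_i}$ is infinite, then $Z(W_{S_i})$ is trivial by Lemma \ref{lm: center}. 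Consequently every element of $Z(\Wdaff)$ has order dividing $2$, which contradicts the presence of the element $\tau_\delta$ of infinite order.

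The only delicate point in this plan is the appeal to the fact that any Coxeter system structure on a finitely generated group has finitely many Coxeter generators; this is standard but nontrivial. If one wishes to avoid it, the same conclusion can be reached by noting that the irreducible parabolic decomposition is valid for Coxeter systems of arbitrary cardinality, combined with the fact that every irreducible Coxeter group (finite or infinite) has center of exponent at most $2$. In either form, the argument produces a clean contradiction with the infinite order of the central element $\tau_\delta$.
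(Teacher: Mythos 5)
Your proposal is correct and follows essentially the same route as the paper: assume a Coxeter structure, reduce to finite rank via finite generation, split into irreducible components, kill the centers of the infinite components with Lemma \ref{lm: center} and bound the finite ones, and contradict the infinite-order central element $\tau_\delta$. The only cosmetic difference is that you use merely the containment of an infinite cyclic group in $Z(\Wdaff)$ (and note the exponent-$2$ bound), whereas the paper invokes $Z(\Wdaff)\cong\Z$; the argument is the same.
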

\begin{proof}
For a contradiction, assume that the double affine Weyl group $\Wdaff$ is a Coxeter group. Being finitely generated, $\Wdaff$ must be a finite rank Coxeter group. By Lemma \ref{lm: center} the infinite irreducible components of $\Wdaff$ have trivial center. Therefore, the center of $\Wdaff$ is precisely the direct product of the centers of its finite irreducible components and, consequently, the center of $\Wdaff$ is a finite group. This contradicts the fact that the center of $\Wdaff$ is isomorphic to $\Z$.
\end{proof}

This argument raises the question of whether one can remove this basic obstruction by either taking a quotient of $\Wdaff$ by a nontrivial subgroup of its center, or by resolving the center. We show below that the former option is not viable (for somewhat more subtle reasons), leaving only the latter. The latter option does indeed materialize, leading precisely to our Coxeter-type presentation (see Remark \ref{rem: Cox} and Corollary \ref{cor: Cox}).

\subsection{}

Recall that a group is said to be indecomposable if it is not a direct product of proper subgroups. We record a mild variation of \cite{CHDec}*{Proposition 1} which will be useful for our purposes.

\begin{Prop}\label{prop: idec}
Let $G$ be group such that $G$ has a proper normal abelian subgroup $T$ with the property that $G/T$ acts faithfully by conjugation on any nontrivial $G/T$-invariant subgroup of $T$.
Then, 
\begin{enumerate}[label={\roman*)}]
\item $T$ is a maximal normal abelian subgroup of $G$;
\item $G$ is indecomposable.
\end{enumerate}
\end{Prop}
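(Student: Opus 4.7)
The plan is to exploit the faithfulness hypothesis together with the mutual centralizing of the two factors in any direct product decomposition, and to use part~(i) as a lever for part~(ii).

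For part~(i), I would take any normal abelian subgroup $T' \trianglelefteq G$ with $T \subseteq T'$ and observe that every $t' \in T'$ commutes with all of $T$. Thus the image of $t'$ in $G/T$ acts trivially on $T$ by conjugation. Applying the hypothesis with $T$ itself playing the role of the nontrivial $G/T$-invariant subgroup (the natural reading, which presupposes $T \neq \{1\}$; otherwise the hypothesis is vacuous), the faithful action of $G/T$ on $T$ forces $t' \in T$, and hence $T' = T$.

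For part~(ii), I would argue by contradiction: assume $G = G_1 \times G_2$ with both factors proper (and hence nontrivial), and consider the intersections $T \cap G_i$, each of which is a normal abelian subgroup of $G$. If one of them, say $T \cap G_1$, is nontrivial, it is a nontrivial $G/T$-invariant subgroup of $T$; since $G_2$ centralizes $G_1 \supseteq T \cap G_1$, faithfulness of the $G/T$-action on $T \cap G_1$ forces $G_2 \subseteq T$. But then $G_2$ itself is a nontrivial $G/T$-invariant abelian subgroup of $T$, centralized by $G_1$, so a symmetric application of the same argument forces $G_1 \subseteq T$, yielding the contradiction $G \subseteq T$.

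The remaining case is $T \cap G_1 = T \cap G_2 = \{1\}$. Here I would focus on $\pi_1(T)$, where $\pi_1 \colon G \to G_1$ is the canonical projection: it is isomorphic to $T$ (hence abelian), and because $T$ is normal in $G$ while $G_2$ centralizes $G_1$, a short check shows $\pi_1(T)$ is normal in all of $G$. Since $\pi_1(T)$ is abelian and conjugation by an element of $G_1$ fixes the $G_2$-component, $\pi_1(T)$ centralizes $T$ element-wise, so $T \cdot \pi_1(T)$ is a normal abelian subgroup of $G$ containing $T$. Part~(i) then forces $\pi_1(T) \subseteq T \cap G_1 = \{1\}$, hence $T \subseteq G_2$, and finally $T = T \cap G_2 = \{1\}$, contradicting the nontriviality of $T$. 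The main obstacle I anticipate is in this last case: confirming that $\pi_1(T)$ is a normal \emph{abelian} subgroup of the whole $G$ (not merely of $G_1$) and that its product with $T$ stays abelian, so that the maximality established in part~(i) can be brought to bear.
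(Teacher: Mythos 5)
Your proof is correct, but it is organized differently from the paper's. For part (i) the paper proves something stronger: taking an arbitrary normal abelian subgroup $N$, it looks at the commutator $[N,T]$, which is a $G/T$-invariant subgroup of $T$ contained in $N$, and argues in both cases ($[N,T]$ trivial or not) that $N\subseteq T$; thus $T$ actually \emph{contains} every normal abelian subgroup. You only prove maximality in the partial-order sense, by the simpler observation that any abelian $T'\supseteq T$ centralizes $T$ and so lies in $T$ by faithfulness on $T$ itself --- this is enough for the statement and for what (ii) needs, but it skips the commutator trick. For part (ii) the paper goes directly through the projections: $\pi_1(T)\times\pi_2(T)$ is a normal abelian subgroup containing $T$, so by (i) it equals $T$, and then faithfulness applied to a nontrivial factor $\pi_i(T)$ forces the opposite direct factor of $G$ into $T$ and ultimately to be trivial. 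You instead argue by contradiction with a case split on the intersections $T\cap G_i$: when one is nontrivial you apply faithfulness twice to push both $G_2$ and then $G_1$ into $T$; when both are trivial you recover the paper's mechanism locally, showing $T\cdot\pi_1(T)$ is normal abelian and invoking (i) to kill $\pi_1(T)$, hence $T$. The paper's route is slicker because the identity $T=\pi_1(T)\times\pi_2(T)$ makes your degenerate case impossible at the outset, while your route buys a more elementary part (i) at the price of the extra case in (ii). One point in your favor: you make explicit the implicit hypothesis that $T$ is nontrivial, which both proofs (the paper's included, in the step applying faithfulness to $T$ itself or to $T_1$) genuinely use, and which holds in the paper's application to $\Wdaff/Z(\Wdaff)$.
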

\begin{proof}
Let $N$ be a normal abelian subgroup of $G$. The commutator $[N,T]$ is a $G/T$-invariant subgroup of $T$ but also a subgroup of $N$. Since $N$ is abelian it acts trivially by conjugation on $[N,T]$. If $[N,T]$ is nontrivial then $N\subseteq T$. If $[N,T]$ is trivial then $N$ acts trivially by conjugation on $T$ and therefore $N\subseteq T$. Either way, $N\subseteq T$, showing that $T$ is a maximal normal abelian subgroup of $G$.

Assume that $G=G_1\times G_2$ and denote by $\pi_i$ the canonical projection $G\to G_i$, $i=1,2$. Let $T_i=\pi_i(T)$,  $i=1,2$. Remark that each $T_i$ is a normal abelian subgroup of $G_i$. Therefore, $T_1\times T_2$ is a normal abelian subgroup of $G$ that contains $T$. Hence, by part i) we must have $T=T_1\times T_2$. Without loss of generality we may assume that $T_1$ is nontrivial. Since $G_2$ acts trivially by conjugation on $T_1$ we deduce that $G_2\subseteq T$ and hence $G_2=T_2$. If $T_2$ is nontrivial then, as above, $G_1=T_1$ implying that $G=T$ which contradicts the fact that $T$ is a proper subgroup. Therefore, $T_2$ is trivial implying that $G_2$ is trivial.  This shows that $G$ is indecomposable.
\end{proof}
We now apply this criterion to double affine Weyl groups.
\begin{Prop}\label{prop: elliptic-indecomposable}
Let $\Wdaff$ be a double affine Weyl group. Then, $\Wdaff/Z(\Wdaff)$ is indecomposable.
\end{Prop}
\begin{proof}
With the notation of Proposition \ref{wdef},  let us denote by $H$ the group generated by $\{\l_\mu\}_{\mu\in M}$, $\{\tau_\b\}_{\b\in \Qring^\vee}$, and $\tau_\d$. Let $G$ denote $\Wdaff/Z(\Wdaff)$ and let $T$ denote $H/Z(\Wdaff)$. The group $T$ is a proper normal abelian subgroup of $G$ and the action of $G/T$ by conjugation on $T$ is isomorphic to the diagonal action of $\W$ on $\Qring^\vee\oplus \Qring^\vee$. The $\Re$-span inside $\Hring^*\oplus\Hring^*$ of any proper $\W$-invariant subgroup $\Qring^\vee\oplus \Qring^\vee$ contains a copy of the reflection representation of $\W$ which is faithful. Therefore, the action of $\W$ on any proper $\W$-invariant subgroup $\Qring^\vee\oplus \Qring^\vee$ is faithful. The claim now follows from Proposition \ref{prop: idec}.
\end{proof}

\subsection{} We are now ready to show that $\Wdaff/Z(\Wdaff)$ is not a Coxeter group. We will make use of the following result which is a direct consequence of \cite{HarGro}*{Corrollaire}.

\begin{Prop}\label{prop: growth}
Let $G$ be an irreducible finite rank Coxeter group. Then $G$ has a free, finitely generated abelian subgroup of finite index  if and only if $G$ is an affine Coxeter group.
\end{Prop}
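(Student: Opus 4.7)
The plan is to prove the two implications separately. For the forward direction, I would assume $G$ is an irreducible affine Coxeter group and use the standard realization $G \cong \W \ltimes L$, where $\W$ is the associated finite Weyl group acting faithfully on a free abelian lattice $L \cong \Z^n$ of full rank. Then $L$ is itself a finitely generated free abelian subgroup of finite index $|\W|$ in $G$, settling one direction.

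For the converse, suppose $G$ is an irreducible finite-rank Coxeter group containing a finitely generated free abelian subgroup $A \cong \Z^k$ of finite index. I would first observe that $A$ has polynomial growth of degree $k$, and since $G$ is a finite union of cosets of $A$, the group $G$ also has polynomial growth; in particular $G$ cannot contain a non-abelian free subgroup, which would contribute exponential growth. At this point I would invoke \cite{HarGro}*{Corollaire}, whose content is precisely the dichotomy that any infinite, irreducible, finite-rank Coxeter group is either of affine type or contains a non-abelian free subgroup. Combined with the previous observation this forces $G$ to be either finite or affine. (A finite $G$ trivially contains the trivial subgroup as a finite-index free abelian subgroup of rank zero; in the application to Proposition \ref{prop: elliptic-idecomposable} this degenerate case is irrelevant since $\Wdaff/Z(\Wdaff)$ is infinite.)

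The main, and in fact the only non-trivial, ingredient is the invocation of de la Harpe's dichotomy; the remainder of the argument is bookkeeping. The one mildly delicate point in the write-up is verifying that polynomial growth passes from $A$ up to $G$ under finite index, but this is standard and follows at once from choosing a finite set of coset representatives and comparing word length.
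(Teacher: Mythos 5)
Your proposal is correct and is essentially the paper's argument: the paper offers no written proof, stating the proposition as a direct consequence of \cite{HarGro}*{Corrollaire}, and your derivation (the easy semidirect-product direction plus the observation that a virtually free-abelian group has polynomial growth, hence cannot be one of de la Harpe's infinite non-affine Coxeter groups) is exactly the intended reduction. Your remark about the degenerate finite case is a sensible clarification and, as you note, harmless for the application to $\Wdaff/Z(\Wdaff)$, which is infinite.
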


\begin{Thm}
Let $\Wdaff$ be a double affine Weyl group and $C$ a subgroup of its center. Then, $\Wdaff/C$ is not a Coxeter group.
\end{Thm}
\begin{proof}
Assume that $G=\Wdaff/C$ is a Coxeter group. Since $G$ is finitely generated, it is a finite rank Coxeter group. If $G$ is irreducible and infinite, by Lemma \ref{lm: center}, it has trivial center and therefore $C=Z(\Wdaff)$. In this case, by Proposition \ref{prop: growth} $G$ is an irreducible affine Coxeter group. But irreducible affine Coxeter groups are uniquely determined by the rank of their maximal normal abelian subgroup and the corresponding quotient and it can be readily verified that $G$ cannot be an affine Coxeter group.

If $G$ is reducible as a Coxeter group then its decomposition into irreducible components induces a decomposition of $\Wdaff/Z(\Wdaff)$. Since $\Wdaff/Z(\Wdaff)$ is indecomposable, all but one irreducible component are subgroups of $Z(\Wdaff)/C$ and the remaining irreducible component is a finite rank infinite Coxeter group that either has nontrivial center (contradicting Lemma \ref{lm: center}), or it is isomorphic to $\Wdaff/Z(\Wdaff)$ (which we already showed not to be a Coxeter group). In conclusion, $G$ cannot be a Coxeter group. 
\end{proof}

\newpage
\begin{bibdiv}
\begin{biblist}[\normalsize]
\BibSpec{article}{%
+{}{\PrintAuthors} {author}
+{,}{ }{title}
+{.}{ \textit}{journal}
+{}{ \textbf} {volume}
+{}{ \PrintDatePV}{date}
+{,}{ no. }{number}
+{,}{ }{pages}
+{,}{ }{status}
+{.}{}{transition}
}

\BibSpec{book}{%
+{}{\PrintAuthors} {author}
+{,}{ \textit}{title}
+{.}{ }{series}
+{,}{ vol. } {volume}
+{,}{ \PrintEdition} {edition}
+{,}{ }{publisher}
+{,}{ }{place}
+{,}{ }{date}
+{,}{ }{status}
+{.}{}{transition}
}

\BibSpec{collection.article}{
+{}{\PrintAuthors} {author}
+{,}{ \textit}{title}
+{.}{ In: \textit}{conference}
+{,}{ }{pages}
+{.}{ }{series}
+{,}{ vol. } {volume}
+{,}{ }{publisher}
+{,}{ }{place}
+{,}{ }{date}
+{,}{ }{status}
+{.}{}{transition}
}

\bib{BGR}{article}{
   author={Bardy-Panse, Nicole},
   author={Gaussent, St\' ephane},
   author={Rousseau, Guy},
   title={Iwahori-Hecke algebras for Kac-Moody groups over local fields},
   journal={},
   volume={},
   date={2014},
   pages={},
   status={arXiv:1412.7503},
}

\bib{BouLie}{book}{
   author={Bourbaki, Nicolas},
   title={Lie groups and Lie algebras. Chapters 4--6},
   series={Elements of Mathematics (Berlin)},
   note={Translated from the 1968 French original by Andrew Pressley},
   publisher={Springer-Verlag},
   place={Berlin},
   date={2002},
   pages={xii+300},
   isbn={3-540-42650-7},
   review={\MR{1890629 (2003a:17001)}},
}


\bib{CheDou-2}{article}{
   author={Cherednik, Ivan},
   title={Double affine Hecke algebras, Knizhnik-Zamolodchikov equations,
   and Macdonald's operators},
   journal={Internat. Math. Res. Notices},
   date={1992},
   number={9},
   pages={171--180},
   issn={1073-7928},
   review={\MR{1185831 (94b:17040)}},
   doi={10.1155/S1073792892000199},
}

\bib{CheDou}{article}{
   author={Cherednik, Ivan},
   title={Double affine Hecke algebras and Macdonald's conjectures},
   journal={Ann. of Math. (2)},
   volume={141},
   date={1995},
   number={1},
   pages={191--216},
   issn={0003-486X},
   review={\MR{1314036 (96m:33010)}},
   doi={10.2307/2118632},
}

\bib{CheMac}{article}{
   author={Cherednik, Ivan},
   title={Macdonald's evaluation conjectures and difference Fourier
   transform},
   journal={Invent. Math.},
   volume={122},
   date={1995},
   number={1},
   pages={119--145},
   issn={0020-9910},
   review={\MR{1354956 (98i:33027a)}},
   doi={10.1007/BF01231441},
}

\bib{CheDou-3}{article}{
   author={Cherednik, Ivan},
   title={Double affine Hecke algebras and difference Fourier transforms},
   journal={Invent. Math.},
   volume={152},
   date={2003},
   number={2},
   pages={213--303},
   issn={0020-9910},
   review={\MR{1974888 (2005h:20005)}},
   doi={10.1007/s00222-002-0240-0},
}

\bib{CheDou-1}{book}{
   author={Cherednik, Ivan},
   title={Double affine Hecke algebras},
   series={London Mathematical Society Lecture Note Series},
   volume={319},
   publisher={Cambridge University Press, Cambridge},
   date={2005},
   pages={xii+434},
   isbn={0-521-60918-6},
   review={\MR{2133033 (2007e:32012)}},
   doi={10.1017/CBO9780511546501},
}

\bib{CheJon}{article}{
   author={Cherednik, Ivan},
   title={Jones polynomials of torus knots via DAHA},
   journal={Int. Math. Res. Not. IMRN},
   date={2013},
   number={23},
   pages={5366--5425},
   issn={1073-7928},
   review={\MR{3142259}},
}

\bib{CFRog}{article}{
   author={Cherednik, Ivan},
   author={Feigin, Boris},
   title={Rogers-Ramanujan type identities and Nil-DAHA},
   journal={Adv. Math.},
   volume={248},
   date={2013},
   pages={1050--1088},
   issn={0001-8708},
   review={\MR{3107536}},
   doi={10.1016/j.aim.2013.08.025},
}

\bib{CHDec}{article}{
   author={de Cornulier, Yves},
   author={de la Harpe, Pierre},
   title={D\'ecompositions de groupes par produit direct et groupes de
   Coxeter},
   conference={Geometric group theory},
      series={Trends Math.},
      publisher={Birkh\"auser, Basel},
   date={2007},
   pages={75--102},
   review={\MR{2395791 (2009d:20060)}},
   doi={10.1007/978-3-7643-8412-8_7},
}

\bib{DelImm}{article}{
   author={Deligne, Pierre},
   title={Les immeubles des groupes de tresses g\'en\'eralis\'es},
   language={French},
   journal={Invent. Math.},
   volume={17},
   date={1972},
   pages={273--302},
   issn={0020-9910},
   review={\MR{0422673 (54 \#10659)}},
}

\bib{FMPri}{book}{
   author={Farb, Benson},
   author={Margalit, Dan},
   title={A primer on mapping class groups},
   series={Princeton Mathematical Series},
   volume={49},
   publisher={Princeton University Press, Princeton, NJ},
   date={2012},
   pages={xiv+472},
   isbn={978-0-691-14794-9},
   review={\MR{2850125 (2012h:57032)}},
}

\bib{GLAlg}{article}{
   author={Gorin, E. A.},
   author={Lin, V. Ja.},
   title={Algebraic equations with continuous coefficients, and certain
   questions of the algebraic theory of braids},
   language={Russian},
   journal={Mat. Sb. (N.S.)},
   volume={78 (120)},
   date={1969},
   pages={579--610},
   review={\MR{0251712 (40 \#4939)}},
}

\bib{GNRef}{article}{
   author={Gorsky, Eugene},
   author={Negu{\c{t}}, Andrei},
   title={Refined knot invariants and Hilbert schemes},
   journal={J. Math. Pures Appl. (9)},
   volume={104},
   date={2015},
   number={3},
   pages={403--435},
   issn={0021-7824},
   review={\MR{3383172}},
   doi={10.1016/j.matpur.2015.03.003},
}

\bib{HaiChe}{article}{
   author={Haiman, Mark},
   title={Cherednik algebras, Macdonald polynomials and combinatorics},
   conference={International Congress of Mathematicians. Vol. III},
   publisher={Eur. Math. Soc., Z\"urich},
   date={2006},
   pages={843--872},
   review={\MR{2275709 (2008b:33041)}},
}

\bib{HarGro}{article}{
   author={de la Harpe, Pierre},
   title={Groupes de Coxeter infinis non affines},
   language={French, with English summary},
   journal={Exposition. Math.},
   volume={5},
   date={1987},
   number={1},
   pages={91--96},
   issn={0723-0869},
   review={\MR{880259 (88b:20076)}},
}

\bib{HumRef}{book}{
   author={Humphreys, James E.},
   title={Reflection groups and Coxeter groups},
   series={Cambridge Studies in Advanced Mathematics},
   volume={29},
   publisher={Cambridge University Press},
   place={Cambridge},
   date={1990},
   pages={xii+204},
   isbn={0-521-37510-X},
   review={\MR{1066460 (92h:20002)}},
}

\bib{IonNon}{article}{
   author={Ion, Bogdan},
   title={Nonsymmetric Macdonald polynomials and Demazure characters},
   journal={Duke Math. J.},
   volume={116},
   date={2003},
   number={2},
   pages={299--318},
   issn={0012-7094},
   review={\MR{1953294 (2004d:33019)}},
   doi={10.1215/S0012-7094-03-11624-5},
}

\bib{IonInv}{article}{
   author={Ion, Bogdan},
   title={Involutions of double affine Hecke algebras},
   journal={Compositio Math.},
   volume={139},
   date={2003},
   number={1},
   pages={67--84},
   issn={0010-437X},
   review={\MR{2024965 (2004k:20008)}},
   doi={10.1023/B:COMP.0000005078.39268.8d},
}

\bib{IonSta}{article}{
   author={Ion, Bogdan},
   title={Standard bases for affine parabolic modules and nonsymmetric
   Macdonald polynomials},
   journal={J. Algebra},
   volume={319},
   date={2008},
   number={8},
   pages={3480--3517},
   issn={0021-8693},
   review={\MR{2408328 (2010c:20004)}},
   doi={10.1016/j.jalgebra.2007.07.029},
}

\bib{ISTri}{collection.article}{
   author={Ion, Bogdan},
   author={Sahi, Siddhartha},
   title={Triple groups and Cherednik algebras},
   conference={Jack, Hall-Littlewood and Macdonald polynomials},
   series={Contemp. Math.},
   volume={417},
   publisher={Amer. Math. Soc.},
   place={Providence, RI},
   date={2006},
   pages={183--206},
   review={\MR{2284128 (2008f:20015)}},
   doi={10.1090/conm/417/07922},
}

\bib{ISEALA}{article}{
   author={Ion, Bogdan},
   author={Sahi, Siddhartha},
   title={Artin groups associated to extended affine Lie algebras},
   journal={},
   volume={},
   date={2015},
   pages={},
}

\bib{KacInf}{book}{
   author={Kac, V.},
   title={Infinite-dimensional Lie algebras},
   edition={3},
   publisher={Cambridge University Press},
   place={Cambridge},
   date={1990},
   pages={xxii+400},
   isbn={0-521-37215-1},
   isbn={0-521-46693-8},
   review={\MR{1104219 (92k:17038)}},
   doi={10.1017/CBO9780511626234},
}


\bib{MacAff-1}{article}{
   author={Macdonald, I. G.},
   title={Affine root systems and Dedekind's $\eta $-function},
   journal={Invent. Math.},
   volume={15},
   date={1972},
   pages={91--143},
   issn={0020-9910},
   review={\MR{0357528 (50 \#9996)}},
}

\bib{MacOrt}{article}{
   author={Macdonald, I. G.},
   title={Orthogonal polynomials associated with root systems},
   journal={S\'em. Lothar. Combin.},
   volume={45},
   date={2000/01},
   pages={Art.\ B45a, 40 pp. (electronic)},
   issn={1286-4889},
   review={\MR{1817334 (2002a:33021)}},
}

\bib{MacAff}{book}{
   author={Macdonald, I. G.},
   title={Affine Hecke algebras and orthogonal polynomials},
   series={Cambridge Tracts in Mathematics},
   volume={157},
   publisher={Cambridge University Press},
   place={Cambridge},
   date={2003},
   pages={x+175},
   isbn={0-521-82472-9},
   review={\MR{1976581 (2005b:33021)}},
   doi={10.1017/CBO9780511542824},
}

\bib{vdLExt}{collection.article}{
   author={van der Lek, Harm},
   title={Extended Artin groups},
   conference={Singularities, Part 2 (Arcata, CA, 1981)},
   series={Proc. Sympos. Pure Math.},
   volume={40},
   publisher={Amer. Math. Soc.},
   place={Providence, RI},
   date={1983},
   pages={117--121},
   review={\MR{713240 (85b:14005)}},
}

\bib{vdLHom}{thesis}{
author={van der Lek, Harm},
title={The homotopy type of complex hyperplane complements.},
type={PhD thesis},
institution={Ka{\-}tho{\-}lie{\-}ke Universiteit Nijmegen},
place={Nijmegen},
date={1983},
status={},
note={},
review={},
} 

\bib{LooInv}{article}{
   author={Looijenga, Eduard},
   title={Invariant theory for generalized root systems},
   journal={Invent. Math.},
   volume={61},
   date={1980},
   number={1},
   pages={1--32},
   issn={0020-9910},
   review={\MR{587331 (82f:17011)}},
   doi={10.1007/BF01389892},
}

\bib{LusAff}{article}{
   author={Lusztig, George},
   title={Affine Hecke algebras and their graded version},
   journal={J. Amer. Math. Soc.},
   volume={2},
   date={1989},
   number={3},
   pages={599--635},
   issn={0894-0347},
   review={\MR{991016 (90e:16049)}},
   doi={10.2307/1990945},
}

\bib{SahNon}{article}{
   author={Sahi, Siddhartha},
   title={Nonsymmetric Koornwinder polynomials and duality},
   journal={Ann. of Math. (2)},
   volume={150},
   date={1999},
   number={1},
   pages={267--282},
   issn={0003-486X},
   review={\MR{1715325 (2002b:33018)}},
   doi={10.2307/121102},
}

\bib{SteLec}{book}{
   author={Steinberg, Robert},
   title={Lectures on Chevalley groups},
   note={Notes prepared by John Faulkner and Robert Wilson},
   publisher={Yale University, New Haven, Conn.},
   date={1968},
   pages={iii+277},
   review={\MR{0466335 (57 \#6215)}},
}

\bib{StoKoo}{article}{
   author={Stokman, Jasper V.},
   title={Koornwinder polynomials and affine Hecke algebras},
   journal={Internat. Math. Res. Notices},
   date={2000},
   number={19},
   pages={1005--1042},
   issn={1073-7928},
   review={\MR{1792347 (2001m:20006)}},
   doi={10.1155/S1073792800000520},
}

\bib{StoKoo-2}{article}{
   author={Stokman, Jasper V.},
   title={Macdonald-Koornwinder polynomials},
   journal={},
   volume={},
   date={2011},
   pages={},
   status={arXiv:1111.6112},
}

\end{biblist}
\end{bibdiv}
\end{document}